\newtheorem{thm}{Theorem}[section]
\newtheorem{lem}[thm]{Lemma}
\newtheorem{prop}[thm]{Proposition}
\newtheorem{cor}[thm]{Corollary}
\newtheorem{defn}[thm]{Definition}
\newtheorem{rem}[thm]{Remark}
\newtheorem*{prob*}{Problem}
\def\R{\mathbb{R}}
\def\N{\mathbb{N}}
\def\Z{\mathbb{Z}}
\def\Q{\mathbb{Q}}
\def\E{\mathbb{E}}
\def\FF{F}
\def\O{\mathcal{O}}
\def\F{\mathbb{F}}
\def\X{\mathcal{X}}
\def\C{\mathbb{C}}
\def\K{\mathcal{K}}
\def\PP{\mathbb{P}}
\def\1{\mathbbm{1}}
\def\an {\text{\, and \,}}
\def\tr{\mathrm{tr}}
\def\rank{\mathrm{rank}}
\def\vol{\mathrm{vol}}
\def\GL{\mathrm{GL}}
\def\Mat{\mathrm{Mat}}
\def\Cut{\mathrm{Cut}}
\def\diag{\mathrm{diag}}
\def\supp{\mathrm{supp}}
\def\modulo{\mathrm{mod\,}}
\def\ORB{\mathscr{ORB}}
\def\Sym{\mathrm{Sym}}
\def\Xint#1{\mathchoice
{\XXint\displaystyle\textstyle{#1}}%
{\XXint\textstyle\scriptstyle{#1}}%
{\XXint\scriptstyle\scriptscriptstyle{#1}}%
{\XXint\scriptscriptstyle\scriptscriptstyle{#1}}%
\!\int}
\def\XXint#1#2#3{{\setbox0=\hbox{$#1{#2#3}{\int}$ }
\vcenter{\hbox{$#2#3$ }}\kern-.6\wd0}}
\def\dashint{\Xint-}
\begin{document}

\title[Ergodic measures on infinite  matrices]{Ergodic measures on  spaces of infinite matrices over non-Archimedean locally compact fields}

%\date{\today}

\author%[authorlabel1]
{Alexander I. Bufetov}
\address%[authorlabel1]
{Alexander I. Bufetov: Aix-Marseille Universit{\'e}, Centrale Marseille, CNRS, I2M, UMR7373,  39 Rue F. Juliot Curie 13453, Marseille; Steklov Institute of Mathematics, Moscow; Institute for Information Transmission Problems, Moscow; National Research University Higher School of Economics, Moscow}

\email{bufetov@mi.ras.ru}
\author%[authorlabel1]
{Yanqi Qiu}
\address%[authorlabel1]
{Yanqi QIU: CNRS, Institut de Math{\'e}matiques de Toulouse, Universit{\'e} Paul Sabatier, 118 Route de Narbonne, F-31062 Toulouse Cedex 9, France}
\email{yqi.qiu@gmail.com}

%\thanks{}

\begin{abstract}
Let $F$ be a non-discrete non-Archimedean locally compact  field and  $\mathcal{O}_F$  the ring of integers in $F$. The main results of this paper are Theorem \ref{CT-ns} that  classifies ergodic probability measures on the space $\mathrm{Mat}(\mathbb{N}, F)$ of infinite matrices with enties in $F$ with respect to the natural action of the group $\mathrm{GL}(\infty,\mathcal{O}_F) \times \mathrm{GL}(\infty,\mathcal{O}_F)$ and Theorem \ref{CT-sym} that,  for non-dyadic $F$,   classifies ergodic probability measures on the space $\mathrm{Sym}(\mathbb{N}, F)$ of infinite symmetric matrices with respect to the natural action of the group $\mathrm{GL}(\infty,\mathcal{O}_F)$.
\end{abstract}

\subjclass[2010]{Primary 37A35; Secondary 60B10, 60B15}
\keywords{ergodic measures, random matrices, non-Archimedean locally compact fields, Vershik-Kerov ergodic method, Ismagilov-Olshanski multiplicativity,  orbital integrals, inductively compact groups}

\maketitle

\tableofcontents

\setcounter{equation}{0}

\section{Introduction}

Given a  group action on a topological space,  it is natural to try to describe the corresponding space of  ergodic invariant probability measures.  For some very classical actions, such as, for example, that 
of the shift on the space of infinite binary sequences, the space of ergodic measures is huge and does not seem to admit a reasonable description. On the other hand, for a number of  natural actions of infinite-dimensional groups, a complete classification is possible. For example,   De Finetti's  Theorem  (1937) claims that for the action of the infinite symmetric group on the space of infinite binary sequences, 
all ergodic probability measures are Bernoulli, and Schoenberg's Theorem (1951) claims that for the action of the infinite orthogonal group on the space of infinite ${\mathbb R}$-valued sequences, all ergodic probability measures are Gaussian. In both these examples, the space of ergodic probability measures is one-dimensional.
 Pickrell \cite{Pickrell3, Pickrell-jfa90, Pickrell1} and, by a different method, Olshanski and Vershik \cite{OV-ams96}, classified all ergodic unitarily invariant measures on the space of infinite Hermitian matrices. In this case, an ergodic measure is determined by infinitely many parameters.

In this paper, we study classification of ergodic measures for actions related to the following  inductive limit group 
\begin{align}\label{def-gl-infty}
\GL(\infty, \O_F):= \lim_{\longrightarrow} \GL(n, \O_F),
\end{align}
 where $\O_F$ is the ring of integers in a  non-discrete  locally compact  non-Archimedean  field $\FF$ and $\GL(n, \O_F)$ is the compact group of invertible $n\times n$ matrices over $\O_F$.  Denote by $\Mat(\N, \FF)$ (resp. $\Sym(\N, \FF)$) the space  of infinite matrices (resp. infinite symmetric matrices) over $\FF$. Our main results are: 

(i)  the classification of the ergodic probability measures for the group action of $\GL(\infty, \O_F) \times \GL(\infty, \O_F)$ on $\Mat(\N, \FF)$ defined by
\begin{align*}
((g_1, g_2), M) \mapsto g_1 M g_2^{-1}, \quad g_1, g_2 \in \GL(\infty, \O_F), M \in \Mat(\N, \FF); 
\end{align*}

(ii) the classification of the ergodic probability measures for the group action of $\GL(\infty, \O_F)$ on $\Sym(\N, \FF)$ defined by 
\begin{align*}
(g, M) \mapsto g M g^{t}, \quad g \in \GL(\infty, \O_F), S \in \Sym(\N, \FF),
\end{align*}
where  and $g^t$ is the transposition of $g$.

We proceed to the precise formulation.  Let $\FF$ be a non-discrete locally compact  non-Archimedean field (for example,   the field of $p$-adic numbers). Let $| \cdot |$ be the absolute value on $\FF$. The  ring $\O_F$ of integers in $F$ is   given by $\{x\in \FF: |x| \le 1\}$. The unique maximal and principal ideal of  $\O_F$ is given by  $\{x\in \FF: | x| < 1\}$.  Throughout the paper, we fix any generator $\varpi$ of $\{x\in \FF: | x| < 1\}$, that is, $\{x\in \FF: | x| < 1\}= \varpi \O_F$.  The quotient $\O_F/\varpi \O_F$ is a finite field with $q$ elements.

Define the inductively compact group $\GL(\infty, \O_F)$ by \eqref{def-gl-infty}. Set 
 \begin{align*}
 \Mat(\N, \FF):  = \{ X= (X_{ij})_{i, j \in \N}| X_{ij} \in \FF\}. 
 \end{align*}
 Let $\Mat(\infty, \FF)$ denote the subspace of $\Mat(\N,\FF)$ consisting of matrices whose all but a finite number of  coefficients are zero. 
Define also 
 \begin{align*}
 \Sym(\N, \FF):  = \{ X\in \Mat(\N, \FF) | X_{ij}  = X_{ji},  \forall  i, j \in \N\},
 \end{align*}
and let  $\Sym(\infty, \FF ) : = \Sym(\N, \FF) \cap \Mat(\infty, \FF)$.

\subsection{Classification of ergodic measures on $\Mat(\N, F)$}

Let $\Delta$ be the set of non-increasing sequences in $\Z\cup\{-\infty\}$, that is, 
\begin{align}\label{def-Delta}
\Delta: =\left\{ \mathbbm{k}= (k_j)_{j = 1}^\infty\Big|  k_j \in \Z \cup\{-\infty\}; k_1 \ge k_2 \ge \cdots \right\}. 
\end{align}
By the inclusion $\Delta\subset (\Z\cup\{-\infty\})^\N$, we equip $\Delta$ with the induced topology of the Tychonoff's product topology on  $(\Z\cup\{-\infty\})^\N$.

To each sequence $\mathbbm{k} \in \Delta$, we now assign an ergodic  $\GL(\infty, \O_F) \times \GL(\infty, \O_F)$-invariant probability measure on $\Mat(\N, F)$.  Let 
 \begin{align*}
 X_i^{(n)},  \quad Y_i^{(n)},  \quad Z_{ij}, \quad i,j, n = 1, 2, \cdots
 \end{align*}
 be independent random variables, each sampled with respect to the normalized Haar measure on the compact additive group $\O_F$.  In what follows, we use the convention $\varpi^\infty=0$.

\begin{defn}\label{defn-rm}
Given an element $\mathbbm{k} \in\Delta$, let 
\begin{align*}
\mu_{\mathbbm{k}} : =  \mathcal{L} (M_{\mathbbm{k}})
\end{align*}
be the probability distribution of the  infinite random matrix $M_{\mathbbm{k}}$ defined as follows. Denote $k: = \lim\limits_{n\to\infty}k_n \in \Z\cup\{-\infty\}$ and set 
\begin{align*}
M_{\mathbbm{k}} : =  \Big [ \sum\limits_{n: \, k_n > k} \varpi^{-k_n}X_i^{(n)} Y_j^{(n)} + \varpi^{-k}Z_{ij}\Big]_{i, j\in\N}.
\end{align*}

\end{defn}

Let $\mathcal{P}_{\mathrm{erg}}(\Mat(\N, F))$ be the space of ergodic $\GL(\infty, \O_F) \times \GL(\infty, \O_F)$-invariant probability measures on $\Mat(\N, F)$, endowed with the induced weak topology.  The classification of  $\mathcal{P}_{\mathrm{erg}}(\Mat(\N, F))$ is given by the following

\begin{thm}\label{CT-ns}
The  map $\mathbbm{k} \mapsto \mu_{\mathbbm{k}}$ is a homeomorphism between  $\Delta$ and $\mathcal{P}_{\mathrm{erg}}(\Mat(\N, F))$. 
\end{thm}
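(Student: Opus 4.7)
The proof would follow the Vershik-Kerov ergodic method adapted to the inductively compact group $\GL(\infty,\O_F)\times \GL(\infty,\O_F)$, combined with the Ismagilov-Olshanski multiplicativity principle listed in the paper's keywords. The starting point is the finite-dimensional orbit structure: by the Smith normal form over the discrete valuation ring $\O_F$, every $\GL(n,\O_F)\times \GL(n,\O_F)$-orbit in $\Mat(n,F)$ is uniquely labeled by a non-increasing tuple $\mathbbm{k}^{(n)}=(k_1^{(n)}\ge\cdots\ge k_n^{(n)})\in (\Z\cup\{-\infty\})^n$ of valuations of elementary divisors. Consequently every $\GL(n,\O_F)\times\GL(n,\O_F)$-invariant probability measure on $\Mat(n,F)$ is uniquely encoded by a probability measure on such tuples, and invariant measures on the projective limit $\Mat(\N,F)$ correspond to coherent systems $(\nu_n)_{n\ge1}$, with ergodic measures corresponding to extremal coherent systems.

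\textbf{Ergodicity and injectivity for $\mu_{\mathbbm{k}}$.} I would first check that the measures of Definition \ref{defn-rm} are invariant: the normalized Haar measure on $\O_F$ is $\GL(n,\O_F)$-invariant under multiplication on either side, and each summand $\varpi^{-k_n}X^{(n)}(Y^{(n)})^T$ is a rank-one outer product, so the change of variables is transparent. Ergodicity reduces to a Kolmogorov $0$-$1$ law: the defining variables $X_i^{(n)},Y_j^{(n)},Z_{ij}$ are independent and $\GL(\infty,\O_F)\times\GL(\infty,\O_F)$-action only permutes and $\O_F^\times$-rescales suitable tail subfamilies, so any invariant Borel function is almost surely a tail function. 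I would then compute the Smith normal form of the $n\times n$ truncation of $M_{\mathbbm{k}}$ and show that $\mathbbm{k}^{(n)}\to \mathbbm{k}$ coordinatewise almost surely as $n\to\infty$; this simultaneously establishes injectivity of the map $\mathbbm{k}\mapsto\mu_{\mathbbm{k}}$ and its continuity (with the convention $\varpi^{-\infty}=0$).

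\textbf{Completeness.} For an arbitrary $\mu\in\mathcal{P}_{\mathrm{erg}}(\Mat(\N,F))$, the central task is to classify all extremal coherent systems $(\nu_n)$. The branching $\nu_{n+1}\mapsto \nu_n$ is given by an explicit orbital-integral kernel counting matrices of given Smith normal form; in the non-Archimedean setting this kernel admits a tractable Hall-Littlewood-type combinatorial expression with parameter $q=|\O_F/\varpi\O_F|$. Applying Ismagilov-Olshanski asymptotic multiplicativity to the associated positive harmonic functions on the Bratteli diagram of Smith parameters, extremal systems should be identified with multiplicative functionals, which in turn are parametrized by $\mathbbm{k}\in\Delta$. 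Combined with a Vershik-Kerov type ergodic theorem showing that each coordinate $k_j^{(n)}$ stabilizes as $n\to\infty$ to a deterministic limit $\mu$-a.s., one identifies $\mu=\mu_{\mathbbm{k}}$ for this limit. Continuity of the inverse map follows from weak continuity in $\mu$ of the singular-value statistics that recover $\mathbbm{k}$.

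\textbf{Main obstacle.} The hardest piece is establishing the Ismagilov-Olshanski multiplicativity in this $p$-adic setting and proving the almost-sure convergence and determinism of $\lim_n k_j^{(n)}$ under an arbitrary ergodic measure. This parallels the Vershik-Kerov analysis of Young diagrams and the Olshanski-Vershik treatment of Hermitian matrices, but the non-Archimedean arithmetic forces a fresh computation of orbital integrals and a careful control of a new branching graph whose combinatorics, while explicit, differ from those appearing for characters of $U(\infty)$. A secondary technical point is to verify that the rank-one-plus-tail structure of the ansatz in Definition \ref{defn-rm} genuinely realizes every extremal multiplicative functional produced by Step 3, i.e., to match parameters between the analytic construction and the algebraic classification.
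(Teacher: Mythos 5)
Your plan differs in several places from the paper's actual route, and at least one of those places contains a genuine gap. For ergodicity you propose to invoke the Kolmogorov $0$-$1$ law, asserting that the $\GL(\infty,\O_F)\times\GL(\infty,\O_F)$-action \emph{only permutes and $\O_F^\times$-rescales suitable tail subfamilies} of the independent variables $X_i^{(n)},Y_j^{(n)},Z_{ij}$; that is not what the action does. Acting by $g_1\in\GL(n,\O_F)$ on the left sends $X^{(m)}\mapsto g_1 X^{(m)}$ and $Z\mapsto g_1 Z$, i.e.\ it hits each column vector by an \emph{arbitrary} element of $\GL(n,\O_F)$, not merely a signed permutation. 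Consequently an invariant Borel set is not obviously a tail event for the i.i.d.\ family $\{X_i^{(n)},Y_j^{(n)},Z_{ij}\}$, and the $0$-$1$ law does not directly apply. The paper instead (Theorem~\ref{thm-erg-1}) pushes $\mu_{\mathbbm{k}}$ forward along the diagonal map $M\mapsto (M_{ii})_i$, observes that the image is a Bernoulli measure on $F^{\N}$, applies De Finetti to conclude extremality among $S(\infty)$-invariant measures, and then pulls extremality back through the affine embedding $\Psi$ of Lemma~\ref{lem-diag-inj1}. If you want to keep a $0$-$1$-type argument, you would at least need to justify why the invariant $\sigma$-field is contained in an exchangeable or tail $\sigma$-field of an i.i.d.\ array; the De Finetti route sidesteps this entirely.

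For completeness you sketch the Borodin--Olshanski style programme: encode orbital measures as a coherent system on a branching graph of Smith parameters, identify extremal coherent systems with multiplicative harmonic functions, and classify those using Hall--Littlewood combinatorics. That is a plausible alternative strategy, but it is not what the paper does and it is by far the hardest part of your proposal, left entirely as a promise. The paper works in Fourier dual coordinates: it computes the $\GL(n,\O_F)\times\GL(n,\O_F)$ orbital integrals directly (Theorems~\ref{thm-asy-mul},~\ref{thm-uam}), proves uniform asymptotic multiplicativity of characteristic functions, deduces that every limit of orbital measures has an exactly multiplicative characteristic function (Theorem~\ref{thm-multiplicativity-ns}), combines Vershik's theorem with a tightness criterion expressed in terms of singular numbers (Lemma~\ref{lem-compact}), and then \emph{reads off} the parameter $\mathbbm{k}$ from the rank-one characteristic function $\ell\mapsto\widehat\mu(\varpi^{-\ell}e_{11})$ via the explicit formula of Proposition~\ref{prop-explicit} and the injectivity Lemma~\ref{lem-inj}. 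In particular the Ismagilov--Olshanski multiplicativity is \emph{derived} as Corollary~\ref{cor-IO-ns}, not taken as an input. Your step that needs justification --- the almost-sure coordinatewise stabilization $k_j^{(n)}\to k_j$ of truncated Smith parameters under an arbitrary ergodic $\mu$ --- is precisely what the paper replaces by the tightness/subsequence argument and comparison of characteristic functions in Theorem~\ref{ct-ns}; as stated it is an unproven law-of-large-numbers for $p$-adic singular values, and without it your identification $\mu=\mu_{\mathbbm{k}}$ does not close.

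Finally, you say little about the homeomorphism claim beyond ``continuity follows from weak continuity of the singular-value statistics''; the paper needs, and proves, a concrete continuity statement (Lemma~\ref{lem-inj-con}) for the map $\mathbbm{k}\mapsto\sum_j(k_j+\ell)\mathbbm{1}_{\{k_j+\ell\ge1\}}$ together with the tightness criterion of Lemma~\ref{lem-compact} to run a compactness argument in both directions. This should be included explicitly, since $\Delta$ carries the Tychonoff topology on $(\Z\cup\{-\infty\})^{\N}$ and the topology on $\mathcal{P}_{\mathrm{erg}}$ is weak convergence, so the equivalence is not a formality.
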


\begin{rem}
By Theorem \ref{CT-ns},  the space $\mathcal{P}_{\mathrm{erg}}(\Mat(\N, F))$  is weakly closed in the space of all Borel measures  on $\Mat(\N, F)$ and  is $\sigma$-compact;  moreover, any measure  $\mu_{\mathbbm{k}}\in \mathcal{P}_{\mathrm{erg}}(\Mat(\N, F))$ is compactly supported.
\end{rem}

Let us explain Theorem \ref{CT-ns} in more detail.   We have the following   elementary ergodic measures:
\begin{itemize}
\item (Haar type measures) For any $k\in \Z$, the normalized Haar measure on $\Mat(\N, \varpi^{-k} \O_F)$  is $\GL(\infty, \O_F) \times \GL(\infty, \O_F)$-ergodic.
\item (Non-symmetric Wishart type measures) Let $X_1, Y_1, X_2, Y_2, \cdots$ be independent and uniformly distributed on $\O_F$.  For any $k\in \Z$, the  probability distribution of  the random matrix: 
\begin{align*}
 [\varpi^{-k} X_i Y_j ]_{i, j \in\N}
\end{align*}
is  $\GL(\infty, \O_F) \times \GL(\infty, \O_F)$-ergodic.
\end{itemize}
Theorem \ref{CT-ns}  implies that any  ergodic $\GL(\infty, \O_F) \times \GL(\infty, \O_F)$-invariant probability measure on $\Mat(\N, F)$  can be obtained as a possibly infinite convolution of the above two types of elementary ones.

\subsection{Classification of ergodic measures on $\Sym(\N, F)$}
In what follows, when dealing with the ergodic measures on $\Sym(\N, F)$, we always assume that the field $F$ is non-dyadic, that is, the cardinality of the field of residue class $\O_F/\varpi\O_F$ is not a power of $2$.

The  group of units of $\O_F$ is given by  $\O_F^\times :  = \{x\in \FF: |x| = 1\}.$ 
Denote by $(\O_F^{\times})^2$ the subgroup of  $\O_F^{\times}$ defined by
\begin{align*}
(\O_F^{\times})^2: = \{ x \in \O_F^\times: \text{there exists $a\in F$ such that $x = a^2$} \}.
\end{align*}
If  $F$ is non-dyadic,  then the quotient $\O_F^{\times} / (\O_F^{\times})^2$ has two elements. Throughout the paper,  we fix a non-square unit $\varepsilon \in \O_F^{\times} \setminus (\O_F^{\times})^2$.   

We now explicitly  describe the parametrization of ergodic $\GL(\infty, \O_F)$-invariant probability measures on $\Sym(\N, F)$. 

Recall the definition \eqref{def-Delta} of the set $\Delta$ of non-increasing sequences in $\Z\cup\{-\infty\}$. A sequence $(k_j)_{j\in\N} \in \Delta$ is called  finite if $k_j=-\infty$ for all sufficiently large $j$.  In this case, either $k_1 = -\infty$, then we identity the sequence with an empty sequence, or  $j_0: = \max\{j| k_j \in \Z\}\in\N$, then we identify $(k_j)_{j\in\N}$ with $(k_j)_{j=1}^{j_0}$ and $j_0$ is called the {\it length} of the sequence.  Conversely, for any finite non-increasing sequence $(k_j)_{j=1}^n$ in $\Z$, we identify it with the element in $\Delta$ by adding infinitely many $-\infty$ at the end of $(k_j)_{j=1}^n$. 
 
For any $k\in\Z$, let $\Z_{>k}$ denote the set of integers strictly larger than $k$. We introduce the following  four subsets  of $\Delta$:    
\begin{itemize}
\item $\Delta[k]$,  the set of {\it non-increasing} sequences  of {\it finite length} in $\Z_{>k}$; 
\item $\Delta^{\sharp}[k]$, the set of {\it strictly decreasing} sequences  in $\Z_{>k}$ (which are automatically of finite length). 
\item $\Delta[-\infty]$,  the set of {\it  non-increasing} sequences in $\Z$  {\it of finite length} or {\it of infinite length tending to $-\infty$}; 
\item $\Delta^{\sharp}[-\infty]$, the set of {\it strictly decreasing} sequences of {\it finite or infinite length} in $\Z$. 
\end{itemize}
Note that  for any $k\in\Z\cup\{-\infty\}$, the following relations hold: 
\begin{align*}
\Delta^{\sharp}[k] \subset \Delta[k], \quad     \Delta^{\sharp}[k] \subset \Delta^{\sharp}[-\infty] \an  \Delta^{\sharp}[k] \subset \Delta[-\infty].  
\end{align*}

We introduce the parameter space 
\begin{align}\label{def-Omega}
\Omega: = \bigsqcup_{k\in\Z\cup\{-\infty\}} \{k\} \times  \Delta[k] \times \Delta^{\sharp}[k],
\end{align}
where $\{k\}$ is the singleton with a single element $k$. The space $\Omega$ is equipped with the topology induced by the inclusion: 
\begin{align*}
\Omega \subset (\Z\cup\{-\infty\}) \times (\Z\cup\{-\infty\})^{\N}\times (\Z\cup\{-\infty\})^{\N}.
\end{align*}

To each element $\mathbbm{h}\in\Omega$, we assign an ergodic $\GL(\infty, \O_F)$-invariant probability measure on $\Sym(\N, F)$ as follows.  Let 
 \begin{align*}
 X_i^{(n)}, \quad Y_i^{(n)},  \quad H_{ij}, \quad i \le j, \, n = 1, 2, \cdots
 \end{align*}
 be   independent random variables uniformly distributed on $\O_F$. In particular, 
 define 
 \begin{align}\label{def-H}
H = [H_{ij}]_{i, j \in \N}
\end{align} 
by setting $H_{ij} = H_{ji}$ if $i > j$, then $H$ is an infinite symmetric random matrix sampled uniformly from $\Sym(\N, \O_F)$.

\begin{defn}\label{defn-sym-1}
For any  $\mathbbm{h} \in  \Omega$  given by 
 \begin{align*}
 \mathbbm{h} =(k; \mathbbm{k},  \mathbbm{k}'), \text{ with }  k\in\Z\cup \{-\infty\}, \, \mathbbm{k} \in \Delta[k], \, \mathbbm{k}' \in \Delta^{\sharp}[k],
 \end{align*} 
we define 
\begin{align*}
\nu_{\mathbbm{h}} : =  \mathcal{L} (S_{\mathbbm{h}}),
\end{align*} 
as the probability distribution of the  infinite symmetric random matrix $S_{\mathbbm{h}}$ defined as follows. First let
\begin{align*}
W_{\mathbbm{k}}: =  \Big [ \sum\limits_{n=1}^\infty \varpi^{- k_n}X_i^{(n)} X_j^{(n)}\Big]_{i, j\in\N},\quad W_{\mathbbm{k}'}: =  \Big [ \sum\limits_{n=1}^\infty \varpi^{- k_n'}Y_i^{(n)} Y_j^{(n)}\Big]_{i, j\in\N},
\end{align*}
we set
\begin{align*}
S_{\mathbbm{h}} : =   W_{\mathbbm{k}} +   \varepsilon W_{\mathbbm{k}'} + \varpi^{-k} H.
\end{align*}
\end{defn}

\begin{rem}\label{rem-id-2}
The strictly decreasing assumption on the sequences $\mathbbm{k}' \in \Delta^\sharp[k]$ is imposed for the uniqueness of parametrization. The reason is the following: 
\begin{align}\label{id-2}
\mathcal{L} \Big(\Big [ \sum\limits_{n=1}^2 X_i^{(n)} X_j^{(n)}\Big]_{i, j\in\N}\Big)  = \mathcal{L} \Big(  \varepsilon \Big [ \sum\limits_{n=1}^2 X_i^{(n)} X_j^{(n)}\Big]_{i, j\in\N}\Big). 
\end{align}
For the detail, see Remark \ref{rem-id-2-pf}  and  the proof of Proposition \ref{prop-unique-sym} below. 
\end{rem}

Let $\mathcal{P}_{\mathrm{erg}}(\Sym(\N, F))$ be the space of ergodic $\GL(\infty, \O_F)$-invariant probability measures on $\Sym(\N, F)$, endowed with the induced weak topology.  The classification of  $\mathcal{P}_{\mathrm{erg}}(\Sym(\N, F))$ is given by the following

\begin{thm}\label{CT-sym}
Assume that $F$ is non-dyadic. Then the  map $\mathbbm{h} \mapsto \nu_{\mathbbm{h}}$ is a homeomorphism between  $\Omega$ and $\mathcal{P}_{\mathrm{erg}}(\Sym(\N, F))$. 
\end{thm}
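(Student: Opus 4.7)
The proof will follow the Vershik--Kerov ergodic method, in the same spirit as Theorem \ref{CT-ns} but now for the non-linear congruence action $S\mapsto gSg^t$. The forward direction is by direct construction: each $\nu_{\mathbbm{h}}$ is $\GL(\infty,\O_F)$-invariant because the i.i.d.\ $\O_F$-uniform column $(X_i^{(n)})_{i\in\N}$ has law invariant under left multiplication by $\GL(\infty,\O_F)$, so each rank-one piece $\varpi^{-k_n}X^{(n)}(X^{(n)})^{t}$ is invariant under $S\mapsto gSg^t$; likewise for the $\varepsilon$-pieces and for the Haar part $\varpi^{-k}H$, which transforms correctly because $\varpi^{-k}\Sym(\N,\O_F)$ is a coset of a congruence-invariant additive subgroup. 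Ergodicity then follows from Kolmogorov's zero-one law: the generating variables $X^{(n)}, Y^{(n)}, H_{ij}$ are independent, and the $\GL(\infty,\O_F)$-invariant $\sigma$-algebra is contained in their tail $\sigma$-algebra, which is trivial.

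The bulk of the work is the bijectivity of $\mathbbm{h}\mapsto\nu_{\mathbbm{h}}$. First I would establish the orbit classification for $\GL(n,\O_F)$ acting on $\Sym(n,F)$: when $F$ is non-dyadic, every $S\in\Sym(n,F)$ is congruent to a unique diagonal matrix of the form $\mathrm{diag}(\varpi^{a_1},\ldots,\varpi^{a_r},\varepsilon\varpi^{b_1},\ldots,\varepsilon\varpi^{b_s})$ with $a_1\ge\cdots\ge a_r$ and strictly decreasing $b_1>\cdots>b_s$. This is the symmetric analogue of the Smith normal form used in Theorem \ref{CT-ns}, and the ``strictly decreasing'' constraint on $(b_j)$ is exactly the content of Remark \ref{rem-id-2}: two $\varepsilon$-blocks at the same level are congruent to two non-$\varepsilon$ blocks at the same level. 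This normal form produces the indexing set $\Omega$ and the distinction between $\mathbbm{k}\in\Delta[k]$ and $\mathbbm{k}'\in\Delta^{\sharp}[k]$.

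For surjectivity, given an arbitrary ergodic $\nu$, I would project onto $\Sym(n,F)$ and read off the random Jordan--Witt parameters $(a^{(n)}_j, b^{(n)}_j)$ of the top-left $n\times n$ corner. The Vershik--Kerov orbital-averaging argument approximates $\nu$ by uniform measures on $\GL(n,\O_F)$-orbits; combining this with ergodicity and an Ismagilov--Olshanski multiplicativity for the spherical functions of the associated Gelfand pair, the parameters must stabilize almost surely as $n\to\infty$ to deterministic sequences $\mathbbm{k},\mathbbm{k}'$, while the Haar floor reads off as $k=\lim_j a_j$. Reconstructing the matrix pointwise then identifies $\nu$ with $\nu_{\mathbbm{h}}$ for $\mathbbm{h}=(k;\mathbbm{k},\mathbbm{k}')\in\Omega$. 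Injectivity is Proposition \ref{prop-unique-sym}, which recovers the invariants from finite-dimensional projections, and continuity in both directions reduces to a routine weak-convergence argument on those projections.

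The main obstacle is the asymptotic orbital analysis in the last step. It is genuinely harder than in Theorem \ref{CT-ns} because the two square-classes in $\O_F^\times$ interact through the identity of Remark \ref{rem-id-2}, so the multiplicativity argument must be run on a two-parameter character rather than a single one, and the stabilization of the $b^{(n)}$-sequence requires carefully tracking cancellations of $\varepsilon$-pairs against contributions from the $\mathbbm{k}$-sequence. The non-dyadic hypothesis is essential here: it is exactly the condition $|\O_F^\times/(\O_F^\times)^2|=2$ that makes the Witt decomposition terminate in two families and lets the Jordan--Witt normal form fit into the clean parameter space $\Omega$.
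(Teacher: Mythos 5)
Your high-level architecture (explicit construction, Vershik--Kerov orbital approximation, multiplicativity, uniqueness, topology) matches the paper's, and your remarks on the normal form under congruence and the role of the non-dyadic hypothesis are correct. But there are two genuine gaps.

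First, the ergodicity step: you invoke Kolmogorov's zero-one law, claiming the $\GL(\infty,\O_F)$-invariant $\sigma$-algebra is contained in the tail of the generating variables $X^{(n)},Y^{(n)},H_{ij}$. This is not obviously true and is not the argument the paper uses. An invariant event depends only on the orbit $\GL(\infty,\O_F)\cdot S$, and there is no evident inclusion of the invariant $\sigma$-algebra in the Kolmogorov tail -- the congruence action $S\mapsto gSg^t$ mixes all the generating variables nonlinearly, and even in the simplest prototype (De Finetti itself) the exchangeable $\sigma$-algebra is not contained in the Kolmogorov tail; one needs Hewitt--Savage, not Kolmogorov. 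The paper instead follows an argument of Okounkov--Olshanski: the diagonal projection $\Phi: \mathcal{P}(\Sym(\N,F))\to\mathcal{P}(F^\N)$, $S\mapsto(S_{ii})_i$, sends invariant measures to $S(\infty)$-invariant measures and is an affine \emph{embedding} (Lemma \ref{lem-diag-inj2}); the image $\Phi(\nu_{\mathbbm h})$ is Bernoulli, hence extreme by De Finetti, and extremality pulls back along the affine embedding to give $\nu_{\mathbbm h}\in Ext(\mathcal{P}_{\mathrm{inv}})\subset\mathcal{P}_{\mathrm{erg}}$. You would need to replace your zero-one law claim with this (or an equivalent) argument.

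Second, in the surjectivity step your phrase ``the parameters must stabilize almost surely as $n\to\infty$'' skips the hard analytic content. The paper does not extract a pointwise Jordan--Witt normal form from the corners of the ergodic matrix; it works entirely at the level of characteristic functions. The essential technical input you omit is the uniform asymptotic multiplicativity of the $\GL(n,\O_F)$-orbital integrals (Theorems \ref{thm-sym-asy} and \ref{thm-sym-uam}), proved by explicit combinatorics over $\F_q$, together with the explicit Gauss-sum evaluation of the one-dimensional orbital integral $\theta(x)=\int_{\O_F}\chi(z^2x)\,dz$ (Proposition \ref{prop-theta-detail}). These give: (i) exact multiplicativity of $\widehat\nu$ on diagonal arguments for any orbital limit $\nu$; (ii) a tightness criterion (Lemma \ref{lem-comp-sym}) forcing the singular values of the approximating orbital generators to be bounded, which is what lets you pass to a convergent subsequence of the parameters (here a diagonal/compactness argument, not an a.s.\ stabilization); and (iii) the continuity statement (Lemma \ref{lem-cont-sym}) needed to pass the parameter convergence through the infinite product formula for $\widehat{\nu_{\mathbbm h}}$. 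Your mention of ``Ismagilov--Olshanski multiplicativity'' is in the right spirit, but in this paper that multiplicativity is a \emph{corollary} of the orbital-integral asymptotics, not the starting point; without the orbital estimates the argument is not closed.
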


\begin{rem}
By Theorem \ref{CT-sym},  the space $\mathcal{P}_{\mathrm{erg}}(\Sym(\N, F))$  is weakly closed in the space of all Borel measures  on $\Sym(\N, F)$ and  is $\sigma$-compact. Moreover,  any measure  $\nu_{\mathbbm{h}}\in \mathcal{P}_{\mathrm{erg}}(\Sym(\N, F))$ is compactly supported.
\end{rem}

Theorem \ref{CT-sym} can be explained  as follows.   We have the following elementary ergodic measures. 
\begin{itemize}
\item (Haar type measures)  For any $k\in \Z$, the normalized Haar measure on $\Sym(\N,    \varpi^{-k} \O_F)$ is  $\GL(\infty, \O_F)$-ergodic.
\item (Symmetric Wishart type measures) Let $X_1, X_2, \cdots$ be independent copies of $\FF$-valued random variables,  all of which are uniformly distributed on $\O_F$.  For any $k\in \Z$, the   distributions of the infinite rank one  random matrices: 
$$
  [ \varpi^{-k} X_i X_j ]_{i, j \in\N} \an [ \varepsilon \varpi^{-k} X_i X_j ]_{i, j \in\N}
$$ 
are $\GL(\infty, \O_F)$-ergodic.
\end{itemize}
Theorem \ref{CT-sym}  implies that any  ergodic $\GL(\infty, \O_F)$-invariant probability measure on $\Sym(\N, F)$  can be obtained as a possibly infinite convolution of the above two types of elementary ones.

\subsection{Characteristic functions  of ergodic measures.}

Let $\chi\in\widehat{\FF}$ be a {\it fixed character} of $\FF$ such that
\begin{align}\label{ass-chi-intro}  
\text{  $\chi|_{\O_F} \equiv 1 $ and $\chi$ is  not constant on  $\varpi^{-1} \O_F$.}
\end{align}
Given a Borel probability measure $\mu$ on $\Mat(\N, \FF)$, its characteristic function, or Fourier transform,   $\widehat{\mu}$ is defined on $\Mat(\infty, \FF)$  by the formula 
\begin{align*}
\widehat{\mu} (A): =  \int_{\Mat(\N,\FF)} \chi(\tr(AM)) \mu(dM), \quad A \in\Mat(\infty, F).
\end{align*}
Similarly, given a Borel probability measure $\nu$ on $\Sym(\N, \FF)$, its characteristic function  $\widehat{\nu}$ is defined on $\Sym(\infty, \FF)$  by the formula 
\begin{align*}
\widehat{\nu} (A): =  \int_{\Sym(\N,\FF)} \chi(\tr(AS)) \nu(dS), \quad  A \in\Sym(\infty,F).
\end{align*}

Let $\Mat(n, F)$ be the space of   $n\times n$ matrices with entries in $F$. Every $A\in \Mat(n, F)$ can be written (see Lemma \ref{lem-s-num} below) in the form
\begin{align*}
A = a \cdot \diag(\varpi^{-k_1}, \cdots, \varpi^{-k_n}) \cdot b, \quad a, b \in \GL(n, \O_F), k_i \in \Z\cup\{-\infty\}.
\end{align*}
 These numbers $k_1, k_2, \cdots, k_n $, taken with  multiplicities, are uniquely determined by  $A$  and are called the {\it singular numbers} of $A$. The collection of the singular numbers of the matrix $A$ is   denoted $\mathrm{Sing}(A)$.

After the computation of characteristic functions for the probability  measures in the list of measures defined in  Definition \ref{defn-rm}  (see Proposition \ref{prop-explicit} below),   Theorem \ref{CT-ns} can be reformulated in the following form.
\begin{thm}\label{thm-equiv}
The characteristic functions of ergodic $\GL(\infty, \O_F) \times \GL(\infty, \O_F)$-invariant probability measures on $\Mat(\N, \FF)$ are exactly of the form 
\begin{align*}
\varphi(A) = \prod_{ \ell \in  \mathrm{Sing}(A)}     \exp\Big(- \log q  \cdot \sum\limits_{j=1}^\infty (k_j+\ell)\mathbbm{1}_{\{k_j+\ell\ge 1\}}\Big), \quad A \in  \Mat(\infty, F), 
\end{align*}
where  $\mathbbm {k} = (k_j)_{j\in\N}\in\Delta$ is the parameter sequence.  
\end{thm}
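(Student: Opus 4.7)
The plan is to deduce Theorem \ref{thm-equiv} from Theorem \ref{CT-ns} by computing the Fourier transform of each model measure $\mu_{\mathbbm{k}}$ in closed form; this computation is the content of Proposition \ref{prop-explicit}, and the exponential formula stated in Theorem \ref{thm-equiv} is simply a repackaging of the answer. Thus the problem reduces to establishing, for every $\mathbbm{k}=(k_j)_{j\ge 1}\in\Delta$ and every $A\in\Mat(\infty,\FF)$, the identity
\begin{align*}
\widehat{\mu_{\mathbbm{k}}}(A)=\prod_{\ell\in\mathrm{Sing}(A)}\exp\Big(-\log q\cdot\sum_{j=1}^{\infty}(k_j+\ell)\mathbbm{1}_{\{k_j+\ell\ge 1\}}\Big).
\end{align*}

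The first step is to reduce to diagonal $A$. Since the law of $M_{\mathbbm{k}}$ is bi-invariant under $\GL(\infty,\O_F)\times\GL(\infty,\O_F)$, the change-of-variables $\tr(A\,g_1Mg_2^{-1})=\tr(g_2^{-1}Ag_1\cdot M)$ gives $\widehat{\mu_{\mathbbm{k}}}(A)=\widehat{\mu_{\mathbbm{k}}}(g_2^{-1}Ag_1)$ for all $g_1,g_2\in\GL(\infty,\O_F)$. Choose $n$ so that $A$ is supported in the top-left $n\times n$ block; by the Smith-type decomposition of Lemma \ref{lem-s-num}, extended trivially outside the block, one may assume $A=\diag(\varpi^{-\ell_1},\ldots,\varpi^{-\ell_n},0,0,\ldots)$ with $(\ell_1,\ldots,\ell_n)$ the finite singular numbers of $A$. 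Then
\begin{align*}
\tr(AM_{\mathbbm{k}})=\sum_{i=1}^{n}\sum_{j:\,k_j>k}\varpi^{-(k_j+\ell_i)}X_i^{(j)}Y_i^{(j)}+\sum_{i=1}^{n}\varpi^{-(k+\ell_i)}Z_{ii},
\end{align*}
and by independence of all the Haar variables the characteristic function factors into one-dimensional integrals. The elementary computations $\mathbb{E}[\chi(\varpi^{-s}U)]=\mathbbm{1}_{\{s\le 0\}}$ for $U$ Haar on $\O_F$, and $\mathbb{E}[\chi(\varpi^{-s}UV)]=q^{-s\mathbbm{1}_{\{s\ge 1\}}}$ (obtained by integrating $V$ first, which forces $\varpi^{-s}U\in\O_F$, and then taking the Haar measure of $\varpi^{s}\O_F$), together yield
\begin{align*}
\widehat{\mu_{\mathbbm{k}}}(A)=\mathbbm{1}_{\{\max_i\ell_i\le -k\}}\cdot\exp\Big(-\log q\cdot\sum_{i=1}^{n}\sum_{j:\,k_j>k}(k_j+\ell_i)\mathbbm{1}_{\{k_j+\ell_i\ge 1\}}\Big).
\end{align*}

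It then remains to match this expression with the formula of Theorem \ref{thm-equiv}, and the main obstacle is the neat but slightly subtle way in which the Haar indicator is absorbed into the infinite product over $j$. Since $\mathbbm{k}\in\Delta$ is non-increasing with limit $k$, whenever $k\in\Z$ the integer sequence $(k_j)$ must eventually be constantly equal to $k$, contributing infinitely many copies of the summand $(k+\ell_i)\mathbbm{1}_{\{k+\ell_i\ge 1\}}$ in the theorem's exponent. If some $\ell_i>-k$, this summand is a fixed positive integer and the exponent diverges to $-\infty$, giving a product equal to $0$; this reproduces precisely the indicator $\mathbbm{1}_{\{\max_i\ell_i\le -k\}}$. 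On the complementary event these summands all vanish, leaving exactly the terms with $k_j>k$. The case $k=-\infty$ is easier: the Haar factor is absent by the convention $\varpi^{\infty}=0$, and the inner sum automatically runs over $\{j:k_j>-\infty\}$. Finally, any $\ell=-\infty$ in $\mathrm{Sing}(A)$ contributes a trivial factor $1$ to the product, and the identification is complete.
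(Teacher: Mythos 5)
Your proposal is correct and is essentially the route the paper itself takes: Theorem \ref{thm-equiv} is deduced from Theorem \ref{CT-ns} by computing $\widehat{\mu_{\mathbbm{k}}}$, which is exactly Proposition \ref{prop-explicit}, relying on bi-invariance (Remark \ref{rem-fourier-inv}), the diagonalization of Lemma \ref{lem-s-num}, independence of the Haar coordinates, and the two one-dimensional Fourier computations (Lemma \ref{ball-fourier} and Lemma \ref{lem-Theta}). The only presentational difference is that you treat a general diagonal $A$ in one pass, whereas the paper first does the rank-one case \eqref{explicit-ns-1} and then invokes multiplicativity \eqref{explicit-fourier}, and the paper encodes the Haar-indicator reconciliation as the identity \eqref{indicator-tran} rather than spelling out the divergence of the infinite sum as you do; both are the same argument.
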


For formulating a similar statement in symmetric case, we need to introduce a function $\theta: F \rightarrow \C$ by 
\begin{align}\label{defn-theta}
\theta (x) : = \int_{\O_F} \chi(z^2 \cdot x ) dz.
\end{align}
Properties of the function $\theta$ are summarized in Proposition \ref{prop-theta-detail} below. 

Let $\Sym(n, F)$ be the space of $n\times n$ symmetric matrices with entries in $F$.  Note that if the field $F$ is non-dyadic, then any $A\in \Sym(n, F)$ can be written  (see Lemma \ref{lem-q-diag} below) in the form 
\begin{align*}
A = g \cdot \diag (x_1, \cdots, x_n) \cdot g^t, \quad g \in \GL(n, \O_F). 
\end{align*}
After the computation of characteristic functions for the probability  measures in the list of measures defined in Definition \ref{defn-sym-1} (see Proposition \ref{prop-ch-sym}),  Theorem \ref{CT-sym} can be reformulated in the following form.

\begin{thm}\label{thm-equiv-sym}
Assume that $F$ is non-dyadic.  Then the characteristic functions of the ergodic $\GL(\infty, \O_F)$-invariant probability measures on $\Sym(\N, F)$ are exactly given by
\begin{align*}
 \Phi(\diag(x_1, \cdots, x_m, 0, \cdots))  = \prod_{i=1}^m \Big[  \1_{\O_F} (\varpi^{-k} x_i)    \prod_{j=1}^\infty  \theta(  \varpi^{- k_j} x_i) \prod_{j=1}^\infty  \theta(  \varepsilon \varpi^{- k_j'} x_i) \Big],
\end{align*}
where $\mathbbm{h} = (k; (k_j)_{j\in\N},  (k_j')_{j\in\N})$ is a parameter in $\Omega$ introduced in \eqref{def-Omega}.
 \end{thm}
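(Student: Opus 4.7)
The plan is to invoke Theorem \ref{CT-sym}, which already asserts that every ergodic $\GL(\infty, \O_F)$-invariant probability measure on $\Sym(\N, F)$ equals $\nu_{\mathbbm{h}}$ for a unique $\mathbbm{h}\in\Omega$, and then to reduce Theorem \ref{thm-equiv-sym} to the explicit computation of $\widehat{\nu_{\mathbbm{h}}}$ carried out in Proposition \ref{prop-ch-sym}. Thus the real task is to establish that formula for $\widehat{\nu_{\mathbbm{h}}}$.

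First, I would use the $\GL(\infty,\O_F)$-invariance of $\nu_{\mathbbm{h}}$ together with the identity $\tr\bigl((gAg^t)S\bigr)=\tr\bigl(A(g^tSg)\bigr)$ to reduce the computation of $\widehat{\nu_{\mathbbm{h}}}(A)$ to the case in which $A$ is diagonal; this uses Lemma \ref{lem-q-diag} which gives the canonical form $A=g\,\diag(x_1,\ldots,x_m)\,g^t$ with $g\in\GL(n,\O_F)$ (here the non-dyadic hypothesis is essential). For diagonal $A=\diag(x_1,\ldots,x_m,0,\ldots)$ and symmetric $S=(S_{ij})$ we have $\tr(AS)=\sum_{i=1}^m x_i S_{ii}$, so only the diagonal of $S$ enters the integrand.

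Next, I would split $S_{\mathbbm{h}}=W_{\mathbbm{k}}+\varepsilon W_{\mathbbm{k}'}+\varpi^{-k}H$ into its three mutually independent summands and factor the characteristic function accordingly. For the Haar summand $\varpi^{-k}H$, each diagonal entry $\varpi^{-k}H_{ii}$ is uniform on $\varpi^{-k}\O_F$, and the orthogonality relation for additive characters combined with the normalization \eqref{ass-chi-intro} gives
\[
\int_{\O_F}\chi(\varpi^{-k}x_i z)\,dz=\1_{\O_F}(\varpi^{-k}x_i),
\]
producing the prefactor $\prod_{i=1}^m\1_{\O_F}(\varpi^{-k}x_i)$. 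For each rank-one summand $[\varpi^{-k_n}X_i^{(n)}X_j^{(n)}]_{ij}$, its contribution to $\tr(AS)$ is $\sum_{i=1}^m x_i\varpi^{-k_n}(X_i^{(n)})^2$, and independence of the $X_i^{(n)}$ in $i$ yields $\prod_{i=1}^m\theta(\varpi^{-k_n}x_i)$ by the very definition \eqref{defn-theta} of $\theta$. The $\varepsilon$-Wishart summand contributes the analogous factor with $\varepsilon\varpi^{-k_n'}$. Independence of the three pieces then lets me multiply.

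The one genuinely delicate point is the convergence of the infinite products over $n$ and the interchange of expectation with limit. Non-Archimedean convergence of $\sum_n\varpi^{-k_n}X_i^{(n)}X_j^{(n)}$ is automatic because $k_n\to k$, so the partial sums stabilize modulo any fixed $\varpi^{-N}$; this lets me realize $W_{\mathbbm{k}}$ as an almost-sure limit and pass expectations across via dominated convergence. The requisite behavior $\theta(y)\to 1$ as $|y|\to 0$, needed so that $\prod_n\theta(\varpi^{-k_n}x_i)$ converges absolutely, is part of Proposition \ref{prop-theta-detail}. Finally, the converse direction of the word ``exactly'' in Theorem \ref{thm-equiv-sym} is immediate: any function of the displayed form is the Fourier transform of some $\nu_{\mathbbm{h}}$ with $\mathbbm{h}\in\Omega$, and all such measures are ergodic by Theorem \ref{CT-sym}. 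I expect the main obstacle to lie not in any single step but in carefully marshaling the properties of $\theta$ in Proposition \ref{prop-theta-detail} so that the infinite product makes sense and the bookkeeping between $\mathbbm{k}$, $\mathbbm{k}'$ and the common tail $k$ is consistent with the parametrization of $\Omega$.
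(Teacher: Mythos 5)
Your proposal is correct and follows the paper's own route: the paper treats Theorem \ref{thm-equiv-sym} precisely as the conjunction of Theorem \ref{CT-sym} (which gives the bijection $\mathbbm{h}\mapsto\nu_{\mathbbm{h}}$) with the explicit Fourier computation of Proposition \ref{prop-ch-sym}, and your reconstruction of that computation — reducing to diagonal $A$ via \eqref{inv-sym-diag} and Lemma \ref{lem-q-diag}, splitting $S_{\mathbbm{h}}$ into the three independent summands $W_{\mathbbm{k}}$, $\varepsilon W_{\mathbbm{k}'}$, $\varpi^{-k}H$, and applying Lemma \ref{ball-fourier} and the definition of $\theta$ — matches the paper step for step. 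One small imprecision: you justify convergence of $\prod_n\theta(\varpi^{-k_n}x_i)$ by ``$\theta(y)\to 1$ as $|y|\to 0$,'' but that alone does not give convergence of an infinite product; what the paper actually uses (Proposition \ref{prop-theta-detail}(i) and Lemma \ref{lem-inf-prod}) is the exact identity $\theta(y)=1$ for $|y|\le 1$, so the product stabilizes to a finite product (or is $0$) after finitely many factors.
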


\subsection{Spherical representations}
Our classification theorems, Theorem \ref{CT-ns} and Theorem \ref{CT-sym},  can equivalently be formulated as a classification  of {\it spherical representations} of the infinite dimensional Cartan motion groups 
\begin{align*}
\Mat(\infty, \FF) \rtimes (\GL(\infty, \O_F) \times \GL(\infty, \O_F))\  \mathrm{and} \  \Sym(\infty, \FF) \rtimes \GL(\infty, \O_F)
\end{align*}
respectively. We explain this in more detail for  $\Sym(\infty, \FF) \rtimes \GL(\infty, \O_F)$. 

Recall that $\Sym(n, F) \rtimes \GL(n, \O_F)$ is the semi-direct product of the additive group $\Sym(n, F)$ and the general linear group $\GL(n, \O_F)$. Elements of  $\Sym(n, \FF) \rtimes \GL(n, \O_F)$ are pairs $(A, g), A \in \Sym(n, F), g\in \GL(n, \O_F)$ and the rule of multiplication  is given by 
$$
(A, g ) \cdot (B, h) =  ( A + gBg^{t},    gh). 
$$
The group $\Sym(\infty, \FF) \rtimes \GL(\infty, \O_F)$ is defined in a similar way and is of course the inductive limit of  the sequence $\Sym(n, \FF) \rtimes \GL(n, \O_F)$.  The groups $\Sym(\infty, \FF)$ and  $\GL(\infty, \O_F)$ are canonically identified with subgroups of  $\Sym(\infty, \FF) \rtimes \GL(\infty, \O_F)$ by the following embeddings 
$$
A \mapsto (A, 1) \an    g \mapsto (0, g), 
$$
where $A \in \Sym(\infty, \FF)$ and $g \in \GL(\infty, \O_F)$.

A unitary representation $\rho$ of $\Sym(\infty, \FF) \rtimes \GL(\infty, \O_F)$ in a Hilbert space $H(\rho)$  is called spherical if it is irreducible and the subspace $H(\rho)^{\GL(\infty, \O_F)}$ of $\GL(\infty, \O_F)$-invariant vectors in $H(\rho)$  is nontrivial; in which case, by irreducibility,   the subspace $H(\rho)^{\GL(\infty, \O_F)}$ has dimension one.  A vector $h\in H(\rho)^{\GL(\infty, \O_F)}$ of norm 1 is called a {\it spherical vector} of $\rho$  and the function
$$
\varphi_\rho(g) : = (\rho(g)h, h), \quad g \in \GL(\infty, \O_F)
$$
is called the {\it spherical function} of $\rho$. The spherical function $\varphi_\rho$ is  an invariant of the spherical representation $\rho$ and it uniquely determines $\rho$. As a bi-invariant function with respect to the subgroup $\GL(\infty, \O_F)\subset   \Sym(\infty, \FF) \rtimes \GL(\infty, \O_F)$, the function $\varphi_\rho$ is uniquely determined by its restriction $\varphi_\rho|_{\Sym(\infty, \FF)}$.

Given an ergodic $\GL(\infty, \O_F)$-invariant probability measure $\nu$ on $\Sym(\N, F)$, one may define a spherical representation $\rho_\nu$  in the Hilbert space $L^2(\Sym(\N, F), \nu)$ as follows: 
\begin{align*}
&(\rho(g) \xi) (S) = \xi ( g^{-1}S), \quad \quad  g  \in \GL(\infty, \O_F), 
\\
&(\rho(A) \xi)(S)  = \chi(\tr(AS)) \xi(S), \quad A \in \Sym(\infty, \FF),
\end{align*}
where $\xi\in L^2(\Sym(\N, F), \nu)$ and $S\in \Sym(\N, F)$.  The spherical vector can be chosen as the constant function $\xi_0(S) \equiv 1$.  
\begin{prop}\label{prop-spherical}
The map $\nu \mapsto \rho_\nu$ defines a bijection between the set of ergodic $\GL(\infty, \O_F)$-invariant probability measures on $\Sym(\N, F)$ and the set of spherical representations of the group $ \Sym(\infty, \FF) \rtimes \GL(\infty, \O_F)$.
\end{prop}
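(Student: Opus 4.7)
The plan is to realise Proposition~\ref{prop-spherical} as the infinite-dimensional Cartan-motion-group analogue of the classical correspondence between positive-definite spherical functions and ergodic invariant measures, the key ingredient being Bochner's theorem for the abelian subgroup $\Sym(\infty,\FF)$. In the forward direction, I verify that for each ergodic $\nu$ the prescription $\rho_\nu$ defines a spherical representation. Unitarity of $\rho_\nu(A)$ is immediate from $|\chi(\tr(AS))|=1$; unitarity of $\rho_\nu(g)$ is the $\GL(\infty,\O_F)$-invariance of $\nu$; and the semidirect-product relation $(A,g)(B,h)=(A+gBg^t,gh)$ is satisfied on generators. Strong continuity in the inductive-limit topology reduces to the routine continuity on each level $\Sym(n,\FF)\rtimes\GL(n,\O_F)$. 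The constant function $\xi_0\equiv 1$ is plainly $\GL(\infty,\O_F)$-fixed, and ergodicity of $\nu$ is exactly the statement that the $\GL$-invariant $L^2$ functions are the constants, so $L^2(\nu)^{\GL(\infty,\O_F)}=\C\xi_0$ is one-dimensional.

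The delicate point is irreducibility of $\rho_\nu$. Let $P$ be the orthogonal projection onto a closed $\rho_\nu$-invariant subspace. Commutation of $P$ with every $\rho_\nu(A)=M_{\chi(\tr(A\cdot))}$, $A\in\Sym(\infty,\FF)$, places $P$ in the commutant of the von Neumann algebra $\mathcal A$ generated by these multiplication operators. Under the assumption \eqref{ass-chi-intro}, Pontryagin duality identifies $\widehat{\Sym(\infty,\FF)}$ with $\Sym(\N,\FF)$ via $(A,S)\mapsto\chi(\tr(AS))$; hence these characters separate points of $\Sym(\N,\FF)$ and, being cylinder functions, generate its Borel $\sigma$-algebra. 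Consequently the linear span of the characters is dense in $L^2(\nu)$, so $\mathcal A$ coincides with the maximal abelian subalgebra $L^\infty(\nu)\subset B(L^2(\nu))$ acting by multiplication. Therefore $P=M_{\1_B}$ for some Borel $B\subset\Sym(\N,\FF)$; commutation of $P$ with every $\rho_\nu(g)$ now reads $\1_B=\1_{g^{-1}B}$ modulo $\nu$, and ergodicity of $\nu$ forces $\nu(B)\in\{0,1\}$, so $P\in\{0,I\}$.

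For the inverse direction, given a spherical $\rho$ with spherical vector $h$, the positive-definite function $A\mapsto\langle\rho(A)h,h\rangle$ on the countable abelian group $\Sym(\infty,\FF)$ is, by Bochner's theorem, the Fourier transform of a unique Borel probability measure $\nu$ on the dual, identified via \eqref{ass-chi-intro} with $\Sym(\N,\FF)$. $\GL(\infty,\O_F)$-invariance of $\nu$ is forced by the covariance $\rho(g)\rho(A)\rho(g)^{-1}=\rho(gAg^t)$ combined with $\rho(g)h=h$, which yields $\widehat\nu(gAg^t)=\widehat\nu(A)$; ergodicity of $\nu$ is extracted from irreducibility of $\rho$ by the projection-valued spectral measure of $\rho|_{\Sym(\infty,\FF)}$, whose spectral projection over any non-trivial $\GL$-invariant Borel set would commute with all of $\rho$ and contradict irreducibility. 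Finally, the two composites are identities: $\nu\mapsto\rho_\nu\mapsto\nu$ is tautological since the scalar spectral measure of $\xi_0$ in $\rho_\nu$ is $\nu$ by construction, and for $\rho\mapsto\nu\mapsto\rho_\nu$ the linear extension of $\rho(A)h\mapsto\chi(\tr(A\cdot))\in L^2(\nu)$ is, by the Bochner identity, an isometric intertwiner from the $\rho(\Sym(\infty,\FF))$-cyclic subspace of $h$ onto $L^2(\nu)$, which by irreducibility of $\rho$ must be all of $H(\rho)$. The principal obstacle is the density/maximality step used in the irreducibility argument, namely upgrading point-separation of the characters $\{\chi(\tr(A\cdot))\}$ into the statement that $\mathcal A=L^\infty(\nu)$; this rests on the fact that these characters generate the Borel $\sigma$-algebra of the Polish space $\Sym(\N,\FF)$, so any bounded measurable function is the $L^2(\nu)$-limit of finite linear combinations.
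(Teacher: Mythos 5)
The paper does not actually write out a proof of Proposition~\ref{prop-spherical}; it delegates it to Olshanski--Vershik \cite[Proposition 1.5]{OV-ams96}. Your argument is a correct, self-contained version of that standard dictionary — Bochner on the abelian part, the MASA/commutant step translating ergodicity into irreducibility, the spectral projection argument in the converse direction, and the intertwiner through the cyclic subspace of the spherical vector — so it matches the paper's intended route.

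Two small points deserve a cleanup. First, $\Sym(\infty,\FF)$ is \emph{not} a countable group; it is an inductive limit of second-countable LCA groups $\Sym(n,\FF)$. What your Bochner step actually uses is classical Bochner on each level $\Sym(n,\FF)$ together with consistency of the resulting family, i.e.\ Kolmogorov extension along the projective system $\Sym(\N,\FF)=\varprojlim \Sym(n,\FF)$. Second, your line ``the semidirect-product relation $\ldots$ is satisfied on generators'' glosses over a genuine sign-and-transpose check: with the naive reading $(\rho_\nu(g)\xi)(S)=\xi(g^{-1}S(g^{-1})^t)$, one gets $\rho_\nu(g)\rho_\nu(A)\rho_\nu(g)^{-1}=\rho_\nu((g^{-1})^t A g^{-1})$, which is \emph{not} $\rho_\nu(gAg^t)$; the formula must be taken as $(\rho_\nu(g)\xi)(S)=\xi(g^tSg)$, i.e.\ the contragredient action on $\Sym(\N,\FF)$. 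With that reading your intertwiner $U\rho(A)h=e_A$ does satisfy $U\rho(g)\rho(A)h=e_{gAg^t}=\rho_\nu(g)e_A=\rho_\nu(g)U\rho(A)h$. Since $g\mapsto (g^{-1})^t$ is an automorphism of $\GL(\infty,\O_F)$, the two actions on $\Sym(\N,\FF)$ have the same invariant sets and the same ergodic measures, so nothing in the statement changes — but the verification should be done explicitly rather than asserted.
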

The proof of Proposition \ref{prop-spherical} is the same as that of Olshanski and Vershik  \cite[Proposition 1.5]{OV-ams96}.

\subsection{An outline of the argument}

Our argument relies on the Vershik-Kerov ergodic method in the spirit of Olshanski and Vershik  \cite{OV-ams96}. The  implementation of individual steps is, however, quite different. In the case of measures on $\Mat(\N, F)$ and the case of measures on $\Sym(\N, F)$, the main steps are
\begin{itemize}
\item {\it Explicit construction of ergodic measures, see Definition \ref{defn-rm} and Definition \ref{defn-sym-1}.}
\item {\it The  asymptotic formulae for the analogues of Harish-Chandra--Izykson-Zuber orbital integrals, see Theorem  \ref{thm-asy-mul}, Theorem \ref{thm-uam} in the case of measures on $\Mat(\N, F)$ and Theorem \ref{thm-sym-asy}, Theorem \ref{thm-sym-uam} in the case of measures on $\Sym(\N, F)$.} 
\item {\it Proof of completeness of the lists of ergodic measures, see Theorem \ref{ct-ns} and Theorem \ref{ct-sym} respectively.}   
\end{itemize}

We now explain our method in greater detail in the  case of measures on $\Sym(\N, F)$. 

{\flushleft \it 1) The Vershik-Kerov method: approximation of ergodic measures by orbital measures.}
While we follow the general scheme of Vershik and Kerov, a number of details are different. 

 Given $x\in\Sym(\N, F)$ and $n\in\N$, let  $m_{\GL(n, \O_F)}(x)$  denote the unique $\GL(n, \O_F)$-invariant probability measure on $\Sym(\N, F)$  supported on the orbit $\GL(n, \O_F)\cdot x \subset \Sym(\N, F)$.  Let $\ORB_\infty(\Sym(\N, F))$ be the class of  probability measures $\nu$ on $\Sym(\N, F)$ verifying the  condition:  there exists a sequence of positive integers $n_1< n_2 < \cdots$  and  a sequence $(\nu_{n_k})_{k\in\N}$  of probability measures with $\nu_{n_k}$ being a $\GL(n_k, \O_F)$-orbital measure supported on $\Sym(n_k, F) \subset \Sym(\N, F)$,  such that $\nu_{n_k}$ converges weakly to $\nu$. 

As a variant of Vershik's Theorem (see Theorem \ref{Vershik-thm} below),  we obtain the following  inclusion: 
\begin{align*}
\mathcal{P}_{\mathrm{erg}}(\Sym(\N, F)) \subset \ORB_\infty(\Sym(\N, F)). 
\end{align*}
Note that, a priori, we do not know whether the inverse inclusion holds.  

{\flushleft \it 2) Main ingredients: Classification of $\ORB_\infty(\Sym(\N, F))$.}

{ \it i): Computation of orbital integrals.}

To describe $\ORB_\infty(\Sym(\N, F))$, we need to understand the asymptotic behaviour of the characteristic functions of orbital measures of the compact groups $\GL(n, \O_F)$.  Recalling the assumption on the character $\chi \in \widehat{F}$ in \eqref{ass-chi-intro},  we obtain an asymptotic formula for the following orbital integral:
\begin{align}\label{orbital-int-intro}
 \int\limits_{\GL(n, \O_F)}    \chi   (\tr ( g  \cdot  \diag ( x_1, \cdots, x_n)  \cdot g^t  \cdot \diag(a_1, \cdots, a_r, 0, \cdots )) d g, 
\end{align}
where $dg$ is the normalized Haar measure of  $ \GL(n, \O_F)$.  The formula we obtain for the integral \eqref{orbital-int-intro} is {\it uniformly asymptotically multiplicative},  that is, the orbital integral  \eqref{orbital-int-intro} has the same asymptotic behaviour, uniformly on the choices of $x_1, \cdots, x_n$,  as the following product of much simpler orbital integrals:
\begin{align*}
 \prod_{j=1}^r \int\limits_{\GL(n, \O_F)}    \chi   (\tr ( g  \cdot  \diag ( x_1, \cdots, x_n)  \cdot g^t  \cdot \diag(a_j, 0, 0, \cdots )) d g.
\end{align*}
See Theorem \ref{thm-sym-uam} for the details.  

Explicit computation of the above orbital integral requires some Fourier analysis on the field $\FF$ and quite a few combinatorial  arguments in which we compute the cardinality of various sets of matrices over the finite field $\F_q$.

{ \it ii): Multiplicativity of characteristic functions for limits of orbital measures.}

 An immediate consequence of the uniform asymptotic multiplicativity for the orbital integral \eqref{orbital-int-intro} is that for any  $\nu \in\ORB_\infty(\Sym(\N, F))$, its characteristic function $\widehat{\nu}$ possesses an exact multiplicativity property, that is, for any $r \in\N$ and any $x_1, \cdots, x_r \in F$, 
 \begin{align}\label{intro-mul}
  \widehat{\nu} (\diag (x_1, \cdots, x_r, 0, 0, \cdots)) = \prod_{j=1}^r  \widehat{\nu}  (x_j e_{11}),
  \end{align}
  where $e_{11}$ is the elementary matrix whose $(1,1)$-coefficient is $1$.  
   This multiplicativity result implies in particular that the classification of the class $\ORB_\infty(\Sym(\N, F))$  is reduced  to the classification of the class  of functions on $F$ defined by $x \mapsto \widehat{\nu}(x e_{11)}$.

{\flushleft \it  3) Ergodicity: Proof of  the inclusion: \begin{align*}\ORB_\infty(\Sym(\N, F)) \subset \mathcal{P}_{\mathrm{erg}}(\Sym(\N, F)).\end{align*}}
Our direct proof of ergodicity for all measures in $\ORB_\infty(\Sym(\N, F))$ uses an  argument of Okounkov and Olshanski \cite{OkOl}: ergodicity for measures in $\ORB_\infty(\Sym(\N, F))$ is derived from the De Finetti Theorem, see Theorem \ref{thm-erg-2} below for the details.  This approach of proving ergodicity can be also applied to different situations, such as  that of Olshanski and Vershik in \cite{OV-ams96}.

{\flushleft \it 4) Proof of the equality \begin{align}\label{equ-erg-list}\mathcal{P}_{\mathrm{erg}}(\Sym(\N, F))  = \{\nu_{\mathbbm{h}}: h \in \Omega\}.\end{align}}
By comparing the characteristic functions $\widehat{\nu_{\mathbbm{h}}}$ for all $\Omega$ and that of measures in $\ORB_\infty(\Sym(\N, F))$, we obtain the equality 
\begin{align*}
\ORB_\infty(\Sym(\N, F)) = \{\nu_{\mathbbm{h}}: \mathbbm{h} \in \Omega\}.
\end{align*}
Combining this equality with the results obtained in the previous steps, we finally get the desired equality \eqref{equ-erg-list}.

\subsection{Organization of the paper}
The exposition, which we tried to make essentially self-contained, is organized as follows. 

In \S \ref{sec-pre}, we  recall the definition of ergodic measures and  the necessary definitions related to non-discrete locally compact non-Archimedean fields, linear groups over them and Fourier transforms in this setting.

In \S \ref{sec-inv-erg},  we prove that all the measures on $\Mat(\N, F)$ from the family 
$\{ \mu_{\mathbbm{k}} | \mathbbm{k} \in \Delta \}$ introduced in Definition \eqref{defn-rm}  are $\GL(\infty, \O_F) \times \GL(\infty, \O_F)$-invariant and ergodic and that all the measures on $\Sym(\N, F)$ from the family 
$\{ \nu_{\mathbbm{h}} | \mathbbm{h} \in \Omega \}$ introduced in Definition \eqref{defn-sym-1}  are $\GL(\infty, \O_F)$-invariant and ergodic.

In  \S \ref{sec-explicit}, we give explicit formulae for characteristic functions of measures from the two families  $\{\mu_{\mathbbm{k}}:  \mathbbm{k}\in\Delta\}$ and $\{\nu_{\mathbbm{h}}:  \mathbbm{h}\in\Omega\}$. 

In \S \ref{sec-unique}, we prove that the parametrization maps $\mathbbm{k} \to \mu_{\mathbbm{k}} $  from $\Delta$ to $\mathcal{P}_{\mathrm{erg}}(\Mat(\N, F))$ and $\mathbbm{h} \mapsto \nu_{\mathbbm{h}}$ from $\Omega$ to $\mathcal{P}_{\mathrm{erg}}(\Sym(\N, F))$ are  injective.

In \S \ref{sec-erg}, we  introduce  orbital measures and recall the Vershik-Kerov ergodic method for dealing with ergodic measures for inductively compact groups. 

In \S \ref{sec-ch},  we obtain the asymptotic formula for orbital integrals of the type \eqref{orbital-int-intro}. 

In \S \ref{sec-cl-F}, we complete the classifications by proving that  the parametrization maps $\mathbbm{k} \to \mu_{\mathbbm{k}} $ and $\mathbbm{h} \mapsto \nu_{\mathbbm{h}}$ are  surjective.

In \S \ref{sec-cont-p}, we show that   the parametrization maps $\mathbbm{k} \to \mu_{\mathbbm{k}} $   and $\mathbbm{h} \mapsto \nu_{\mathbbm{h}}$  are  homeomorphisms between corresponding topological spaces. 

Proofs of some  routine technical lemmata are given in the appendix.

\subsection{Acknowledgements}
We are deeply grateful to  Grigori Olshanski for helpful discussions.    We are deeply grateful to Alexei Klimenko for useful discussions and for suggesting to us a simpler proof of Lemma \ref{lem-inj-con} than our original one. 

The research of A. Bufetov on this project has received funding from the European Research Council (ERC) under the European Union's Horizon 2020 research and innovation programme under grant agreement No 647133 (ICHAOS).
It was also supported  by a subsidy granted to the HSE by the Government of the Russian Federation for the implementation of the Global Competitiveness Programme.  Y.Qiu is supported by the grant IDEX UNITI - ANR-11-IDEX-0002-02, financed by Programme ``Investissements d'Avenir'' of the Government of the French Republic managed by the French National Research Agency.
This work started  as ``research in pairs" at the CIRM, and we are deeply grateful to the  CIRM for its warm hospitality.

\section{Preliminaries}\label{sec-pre}

\subsection{Ergodic measures}
Let $\mathcal{X}$ be a Polish space, that is, it is homeomorphic to a complete metric space that has a countable dense subset. Denote by $\mathcal{P}(\X)$ the set of Borel probability measures on $\X$.   Denote  by $C_b(\X)$  the space of bounded continuous complex-valued functions on $\X$.   Recall that a sequence  $(\mu_n)_{n\in\N}$ in $\mathcal{P}(\X)$  is said to converge weakly to  $\mu \in \mathcal{P}(\X)$ and is denoted by $\mu_n \Longrightarrow \mu$ if for any $f\in C_b(\X)$, we have
 \begin{align*}
\lim_{n\to\infty }\int_{\X} f(x) \mu_n(d x)= \int_{\X} f(x) \mu(d x).
\end{align*}

Given a group action of a  group $G$ on $\mathcal{X}$, we denote by $\mathcal{P}_{\mathrm{inv}}^{G}(\X)$ the set of $G$-invariant Borel probability measures on $\X$. By definition,  a $G$-invariant Borel probability measure $\mu \in \mathcal{P}_{\mathrm{inv}}^{G}(\X)$  is ergodic,  if  for any $G$-invariant Borel subset $\mathcal{A} \subset \mathcal{X}$, either $\mu(\mathcal{A}) =0$ or $\mu(\X\setminus \mathcal{A}) = 0$. The totality of ergodic $G$-invariant probability measures on $\X$ is denoted by  $\mathcal{P}_{\mathrm{erg}}^{G}(\X)$. If the group action  is clear from the context, we denote $\mathcal{P}_{\mathrm{inv}}^{G}(\X)$ and $\mathcal{P}_{\mathrm{erg}}^{G}(\X)$ simply by $\mathcal{P}_{\mathrm{inv}}(\X)$ and $\mathcal{P}_{\mathrm{erg}}(\X)$ respectively.

%%%%%%%%%
%%%%%%%%%
%%%%%%%%%
%%%%%%%%%
%%%%%%%%%
%%%%%%%%%
%%%%%%%%%
%%%%%%%%%
%%%%%%%%%
%%%%%%%%%
%%%%%%%%%
%%%%%%%%%
%%%%%%%%%
%%%%%%%%%

\subsection{Fields and integers}

Let $\FF$ be a non-discrete locally compact  non-Archimedean  field.   The classification of local fields (see, e.g.,  Ramakrishnan and Valenza's book \cite[Theorem 4-12]{DR-fourier}) implies that $F$ is isomorphic to one of the following fields: 
\begin{itemize}
\item a finite extension of the field $\Q_p$ of $p$-adic numbers for some prime $p$.
\item the field of formal Laurent series  over a finite field.
\end{itemize}

Let $| \cdot |$ be the  {\em absolute value} on $\FF$ and denote by $d(\cdot, \cdot)$ the ultrametric on $F$ defined by $d(x, y) = | x-y|$.     The ring of integers in $\FF$ is given by 
\begin{align*}
\O_F: = \{x\in \FF: |x| \le 1\}.
\end{align*}
 The subset $\mathfrak{m}:  =\{x\in \FF: | x| < 1\}$
 is the unique maximal  ideal of $\O_F$.  The ideal $\mathfrak{m}$ is principal. Any generator of $\mathfrak{m}$ is called a {\it uniformizer} of $F$. Throughout the paper, we fix any uniformizer $\varpi$ of $F$, that is,  $\mathfrak{m}= \varpi \O_F$.
 The field  $\O_F/\varpi\O_F$ is  finite with $q = p^f$ elements for a prime number $p$ and a positive integer $f \in\N$.     If $q = 2^f$, then we say that $F$ is {\it dyadic}, otherwise, we say that $F$ is {\it non-dyadic}. 
 
 We denote  $\F_q : =\O_F/\varpi\O_F.$
The quotient map is denoted by  
\begin{align*}
\pi: \O_F \rightarrow \F_q= \O_F/\varpi \O_F.
\end{align*} 
Fix a {\it complete set of representatives}  $ \mathcal{C}_q \subset \O_F$ of cosets of $\varpi\O_F$  in $\O_F$ and assume that $0\in \mathcal{C}_q$. The restriction of the quotient map $\pi$ on the finite set $\mathcal{C}_q$ is a bijection: 
 \begin{align}\label{pi-1}
 \pi: \mathcal{C}_q \xrightarrow{\text{bijection}} \F_q. 
 \end{align}
 
  Any element of $\FF$ is uniquely expanded as a convergent series in $\FF$: \begin{align}\label{p-adic-expansion}
    x= \sum_{n= v}^\infty a_n \varpi^{n} \qquad (v\in \mathbb{Z}, a_n \in \mathcal{C}_q, a_{v}\neq 0).
\end{align}
If $x\in F$ is given by the series \eqref{p-adic-expansion}, then we define the {\it $F$-valuation} of $x$ by $\mathrm{ord}_F(x): = v.$
By convention, we set $\mathrm{ord}_F(0) = \infty$. The absolute value and the $F$-valuation of any element $x\in F$ are related by the formula $|x|=q^{-\mathrm{ord}_F(x)}.$

\subsection{Group actions}

Let $\GL(n, \FF)$ and $\GL(n, \O_F)$ denote the groups of invertible $n \times n$ matrices over $\FF$ and $\O_F$ respectively.  
 $\GL(n, \O_F)$  is embedded naturally into $\GL(n+1, \O_F)$ by 
\begin{align}\label{group-em}
a \in \GL(n, \O_F) \mapsto    
\left(
\begin{array}{cc} 
a & 0 
\\
 0 & 1
\end{array}
\right)
\in \GL(n+1, \O_F).
\end{align}
Define an inductive limit group 
\begin{align*}
\GL(\infty, \O_F): = \lim\limits_{\longrightarrow} \GL(n, \O_F).
\end{align*}
 Equivalently, $\GL(\infty, \O_F)$ is the group of  infinite invertible matrices $g = (g_{ij})_{i, j \in \N}$ over $\O_F$ such that $g_{ij} = \delta_{ij}$ if  $i +j$ is large enough.

Let $\Mat(n, \FF)$ and $\Mat(n, \O_F)$ denote the spaces of all $n \times n $ matrices over $\FF$ and $\O_F$ respectively.  Define 
$$
\Mat(\N, \FF) : = \{ X= (X_{ij})_{i, j \in \N}| X_{ij} \in \FF\}. 
$$
Let $\Mat(\infty, \FF)$ denote the subspace of $\Mat(\N,\FF)$ consists of matrices whose all but a finite number of  coefficients are zeros. Define also 
 \begin{align*}
\Sym(n, \FF): &= \{ X\in \Mat(n, \FF) | X_{ij}  = X_{ji},  \forall 1 \le i, j \le n\},
 \\
 \Sym(\N, \FF): & = \{ X\in \Mat(\N, \FF) | X_{ij}  = X_{ji},  \forall  i, j \in \N\}. 
 \end{align*}
Set $\Sym(\infty, \FF ) : = \Sym(\N, \FF) \cap \Mat(\infty, \FF)$.

Two natural group actions under consideration in this paper are the following:
\begin{itemize}
\item The  group action of $\GL(\infty, \O_F) \times \GL(\infty, \O_F)$ on $\Mat(\N, \FF)$ defined by: 
\begin{align*}
\qquad ((g_1, g_2), M ) \mapsto  g_1M g_2^{-1}, \quad g_1, g_2 \in \GL(\infty, \O_F), M \in \Mat(\N, F).  
\end{align*}
\item The group action of  $\GL(\infty, \O_F)$ on $\Sym(\N, \FF)$ defined by:
\begin{align*}
(g, M) \mapsto g M g^{t},  \quad  g \in \GL(\infty, \O_F), S \in \Sym(\N, F) ,
\end{align*}
where  $g^t$ is the transposition of $g$.
\end{itemize}

\subsection{Conventions}

Given a finite set $B$, we denote $\# B$ its cardinality.

Let  $(\Sigma,  \mathcal{B}, m)$ be a measured space and let $f$ be a real or complex valued integrable function  defined on $\Sigma$ . If $A\subset\Sigma$ is  measurable and $0 < m(A) < \infty$, then we denote 
\begin{align}\label{dashint}
\dashint_A f(x) dm(x):  = \frac{1}{m(A)} \int_A f(x)dm(x).
\end{align}

For any random variable $Y$, we denote its distribution by $\mathcal{L}(Y)$.

 Conventions concerning the empty set $\emptyset$: let $(r_i)_{i\in I}$ be a family of real numbers (or complex numbers for the last two formulae), we set

$$
\inf_{i \in \emptyset} r_i = +\infty, \quad  \sup_{i \in \emptyset} r_i  = -\infty, \quad \sum_{i \in\emptyset} r_i = 0, \quad \prod_{i \in\emptyset} r_i = 1.
$$ 

The following conventions will also  be used:
\begin{itemize}
\item As elements in $\FF$: $\varpi^{\infty} = \varpi^{+\infty} = 0\in\FF.$ 
\item As elements in $\R\cup\{+\infty\}$:  $q^{\infty}  = q^{+\infty} = +\infty \an q^{-\infty} = 0\in\R.$
\end{itemize}

%%%%%%%%%
%%%%%%%%%
%%%%%%%%%
%%%%%%%%%
%%%%%%%%%
%%%%%%%%%
%%%%%%%%%
%%%%%%%%%
%%%%%%%%%
%%%%%%%%%
%%%%%%%%%
%%%%%%%%%
%%%%%%%%%
%%%%%%%%%
\subsection{Haar measure on $\GL(n, \O_F)$}

For any $n \in \N$, denote by $d\vol_n $  the Haar measure on $\FF^n$ normalized by  the condition $\vol_n (\O_F^n) = 1.$ If there is no confusion, we will use the simplified notation $\vol(\cdot)$ for $\vol_n(\cdot)$.  

\begin{rem}\label{rem-mp}
The Haar measure $\vol_n$ on $F^n$ is preserved by any linear map represented by a matrix from  the group $\GL(n, \O_F)$.
\end{rem}

For any $n$, we fix a Haar measure $\vol(\cdot)$ on $\Mat(n, \FF)$ normalized by $\vol(\Mat(n, \O_F)) =1.$ Upper to a multiplicative constant, the Haar measure on the locally compact group $\GL(n, \FF)$ is uniquely given (see, e.g., Neretin \cite{Neretin-p-adic-Hua}) by
\begin{align}\label{haar-linear}
|\det(M)|^{-n} \cdot \vol(dM).
\end{align}

Let  $\GL(n, \F_q)$ be the group of invertible $n\times n$ matrices over $\F_q$.  Set
\begin{align*}
\GL(n, \mathcal{C}_q) :  = \{t= (t_{ij})_{1\le i, j \le n}\in  \GL(n, \O_F): t_{ij} \in \mathcal{C}_q\}. 
\end{align*}

\begin{prop}\label{prop-T}
A standard partition of $\GL(n, \O_F)$ is given by 
\begin{align}\label{partition}
\GL(n, \O_F) = \bigsqcup_{t \in \GL(n,\, \mathcal{C}_q)} (t + \Mat(n, \varpi \O_F)).
\end{align}
In particular, we have
\begin{align}\label{vol-gl}
\vol(\GL(n, \O_F) ) = \prod_{j=1}^n ( 1- q^{-j}).
\end{align}
\end{prop}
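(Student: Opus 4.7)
The plan is to prove both statements via the coordinate-wise reduction map $\pi:\Mat(n,\O_F)\to\Mat(n,\F_q)$ induced by the quotient $\O_F\to\O_F/\varpi\O_F$. The key observation is that a matrix $g\in\Mat(n,\O_F)$ belongs to $\GL(n,\O_F)$ if and only if $\det(g)\in\O_F^\times$, and since $\pi$ is a ring homomorphism with $\pi(\O_F^\times)=\F_q^\times$, this happens if and only if $\det(\pi(g))=\pi(\det g)\ne 0$ in $\F_q$, i.e., $\pi(g)\in\GL(n,\F_q)$. Hence
\[
\GL(n,\O_F)=\pi^{-1}(\GL(n,\F_q)).
\]

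Next, I would use the bijection $\pi:\mathcal{C}_q\to\F_q$ from \eqref{pi-1}, extended entrywise to a bijection $\pi:\GL(n,\mathcal{C}_q)\to\GL(n,\F_q)$ (it is well-defined on $\GL(n,\mathcal{C}_q)$ by the criterion just established, and its inverse sends each element of $\GL(n,\F_q)$ to the unique lift with all entries in $\mathcal{C}_q$). For any $t\in\GL(n,\mathcal{C}_q)$, the fiber $\pi^{-1}(\pi(t))$ in $\Mat(n,\O_F)$ consists of all $g$ with $g_{ij}-t_{ij}\in\varpi\O_F$ for every $i,j$, which is exactly the coset $t+\Mat(n,\varpi\O_F)$. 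Since distinct elements of $\GL(n,\mathcal{C}_q)$ have distinct images under $\pi$, these cosets are pairwise disjoint, giving the decomposition \eqref{partition}.

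For the volume, I would combine the partition with the additive Haar measure's translation invariance to obtain
\[
\vol(\GL(n,\O_F))=\#\GL(n,\mathcal{C}_q)\cdot\vol(\Mat(n,\varpi\O_F)).
\]
The cardinality equals $\#\GL(n,\F_q)=\prod_{j=0}^{n-1}(q^n-q^j)$ by the standard counting of bases of $\F_q^n$. Since $\vol(\varpi\O_F)=q^{-1}$ (as $\O_F/\varpi\O_F$ has $q$ elements and $\vol(\O_F)=1$), the normalization $\vol(\Mat(n,\O_F))=1$ gives $\vol(\Mat(n,\varpi\O_F))=q^{-n^2}$. Substituting and simplifying,
\[
\vol(\GL(n,\O_F))=q^{-n^2}\prod_{j=0}^{n-1}(q^n-q^j)=\prod_{j=0}^{n-1}(1-q^{j-n})=\prod_{j=1}^{n}(1-q^{-j}),
\]
which yields \eqref{vol-gl}.

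There is no real obstacle here; the argument is essentially the classical reduction-mod-$\varpi$ device. The only step requiring any care is verifying that $\GL(n,\O_F)=\pi^{-1}(\GL(n,\F_q))$, which rests on the fact that $\O_F$ is a local ring with residue field $\F_q$ so that the determinant criterion for invertibility lifts cleanly from $\F_q$ to $\O_F$.
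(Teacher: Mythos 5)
Your proof is correct and takes essentially the same approach as the paper: both establish the partition via the reduction-mod-$\varpi$ criterion for invertibility and then count. The paper proves $a+x\in\GL(n,\O_F)$ by directly estimating $|\det(a+x)|$ via ultrametricity, whereas you package the same fact more structurally as $\GL(n,\O_F)=\pi^{-1}(\GL(n,\F_q))$ using that $\O_F$ is local; this is a cosmetic difference, not a different route.
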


\begin{proof}
By definition, $a = (a_{ij})_{1\le i, j \le n} \in \GL(n, \O_F)$ implies that $a_{ij} \in \O_F$ and
$| \det (a) | =1$.  Now take any $x \in \Mat(n, \varpi \O_F)$. First, we have  $ a + x\in \Mat(n, \O_F)$. Second, write $x =  \varpi y $ with $y \in \Mat(n, \O_F)$. By  definition, there exists $z \in \O_F$, such that  $\det(a + x) = \det(a) + \varpi z. $
Since $|\varpi z| \le q^{-1}$, by ultrametricity , we obtain 
$
|\det(a +x)|  = |\det(a)  + \varpi z| =1,
$
whence  $a+ x\in \GL(n, \O_F)$ and  the set on the right hand side of \eqref{partition} is contained in $\GL(n,\O_F)$.   Conversely, since $\mathcal{C}_q$ is a complete set of representatives of the cosets of $\varpi \O_F$ in $\O_F$, for any $A\in\GL(n,\O_F)$, there exists a unique $t\in\GL(n,\mathcal{C}_q)$, such that  $A \equiv t (\modulo  \varpi \O_F). $
This completes the proof of \eqref{partition}. 

Recalling that 
\begin{align*}
  \# \GL(n, \mathcal{C}_q)  =  \# \GL(n, \F_q) = \prod_{j = 0}^{n-1} (q^n-q^j), 
\end{align*}
we arrive at \eqref{vol-gl}. 
\end{proof}

By Proposition \ref{prop-T}, $\GL(n, \O_F)$ is an open subgroup of $\GL(n, \FF)$. It follows that the restriction on $\GL(n, \O_F)$ of the Haar measure \eqref{haar-linear} on $\GL(n, \FF)$ is a Haar measure on $\GL(n, \O_F)$. Consequently, the normalized Haar measure on $\GL(n, \O_F)$ is given by 
\begin{align}\label{haar}
 \frac{\vol(\cdot)}{\prod_{j=1}^n ( 1- q^{-j})}.
\end{align}

Let  $T(n)$ be sampled uniformly from  the finite set $\GL(n, \mathcal{C}_q)$; let $V(n)$ be sampled with respect to the normalized Haar measure on  $\Mat(n, \varpi \O_F)$ and independent of $T(n)$.
\begin{prop}\label{prop-haar-rm}
The random matrix $T(n) + V(n)$ is a Haar random matrix on $\GL(n, \O_F)$;  that is, the distribution law $\mathcal{L}(T(n) + V(n))$ coincides with the normalized Haar measure on $\GL(n, \O_F)$. 
\end{prop}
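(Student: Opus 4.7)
The plan is to verify directly, for every Borel set $A \subset \GL(n, \O_F)$, that $\PP(T(n) + V(n) \in A)$ equals the normalized Haar mass $\vol(A)/\vol(\GL(n, \O_F))$ prescribed by \eqref{haar}. The only nontrivial input will be the coset partition \eqref{partition}; everything else is bookkeeping with independence and translation invariance of the additive Haar measure on $\Mat(n, F)$.

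First I would observe that, by Proposition \ref{prop-T}, each coset $t + \Mat(n, \varpi\O_F)$ with $t \in \GL(n, \mathcal{C}_q)$ lies inside $\GL(n, \O_F)$, so $T(n) + V(n) \in \GL(n, \O_F)$ almost surely. Translation invariance of $\vol$ on $\Mat(n, F)$ ensures that, conditionally on $\{T(n) = t\}$, the pushforward of the law of $V(n)$ under $x \mapsto t + x$ is the normalized Haar measure on the coset $t + \Mat(n, \varpi\O_F)$. Conditioning on the value of $T(n)$, independence and the law of total probability then give
\begin{align*}
\PP(T(n) + V(n) \in A) = \frac{1}{\#\GL(n, \mathcal{C}_q)} \sum_{t \in \GL(n, \mathcal{C}_q)} \frac{\vol(A \cap (t + \Mat(n, \varpi \O_F)))}{\vol(\Mat(n, \varpi \O_F))}.
\end{align*}

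Next I would collapse the numerators using \eqref{partition}: since the cosets are disjoint and cover $\GL(n, \O_F)$, their intersections with $A$ partition $A$, so the numerators sum to $\vol(A)$. It remains to identify the common denominator. From $\vol_1(\varpi \O_F) = q^{-1}$ and product structure, $\vol(\Mat(n, \varpi \O_F)) = q^{-n^2}$, and then $\#\GL(n, \mathcal{C}_q) \cdot q^{-n^2} = \vol(\GL(n, \O_F))$ is precisely the computation that produced \eqref{vol-gl}. Putting these together yields $\PP(T(n) + V(n) \in A) = \vol(A)/\vol(\GL(n, \O_F))$, which identifies $\mathcal{L}(T(n) + V(n))$ with the normalized Haar measure \eqref{haar}.

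There is no genuine obstacle: the content is entirely absorbed into Proposition \ref{prop-T}. The only point that merits a line of justification is the equality $\vol(\Mat(n, \varpi \O_F)) = q^{-n^2}$, which follows from the standard coset decomposition $\O_F = \bigsqcup_{c \in \mathcal{C}_q}(c + \varpi \O_F)$ into $q$ pieces of equal volume, applied coordinatewise over the $n^2$ entries.
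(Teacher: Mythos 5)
Your proof is correct and follows the same route the paper has in mind: the paper's one-line justification simply cites Proposition \ref{prop-T} and formula \eqref{haar}, and your argument is the unpacked version of that, conditioning on $T(n)$ and summing the coset volumes via \eqref{partition}. The bookkeeping $\vol(\Mat(n,\varpi\O_F)) = q^{-n^2}$ and $\#\GL(n,\mathcal{C}_q)\cdot q^{-n^2} = \prod_{j=1}^n(1-q^{-j}) = \vol(\GL(n,\O_F))$ is exactly the reconciliation the paper leaves implicit.
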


\begin{proof}
The proof follows immediately from Proposition \ref{prop-T} and the formula \eqref{haar} for the normalized Haar measure on $\GL(n, \O_F)$. 
\end{proof}
%%%%%%%%%
%%%%%%%%%
%%%%%%%%%
%%%%%%%%%
%%%%%%%%%
%%%%%%%%%
%%%%%%%%%
%%%%%%%%%
%%%%%%%%%
%%%%%%%%%
%%%%%%%%%
%%%%%%%%%
%%%%%%%%%
%%%%%%%%%
\subsection{Diagonalization  in $\Mat(n, F)$}

\begin{lem}[{See, e.g., Neretin \cite[\S 1.3]{Neretin-p-adic-Hua}}]\label{lem-s-num}
Every matrix $A \in \Mat(n, \FF)$ can be written in the form
\begin{align}\label{A-dec}
A = a \cdot \diag (\varpi^{-k_1}, \varpi^{-k_2}, \cdots, \varpi^{-k_n}) \cdot b, \quad (a, b \in \GL(n, \O_F)),
\end{align}
where  $k_1 \ge k_2 \ge \cdots \ge k_n \ge - \infty$ and $\diag (\varpi^{-k_1}, \varpi^{-k_2}, \cdots, \varpi^{-k_n})$ is the diagonal matrix with diagonal coefficients $\varpi^{-k_1}, \varpi^{-k_2}, \cdots, \varpi^{-k_n}$.  Moreover, the $n$-tuple $(k_1, k_2, \cdots, k_n)$ is uniquely determined by the matrix $A$.
\end{lem}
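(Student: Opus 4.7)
My plan is the standard Smith-normal-form argument adapted to the discrete valuation ring $\O_F$, split into existence via a reduction algorithm and uniqueness via invariants built from minors.

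For existence, I argue by induction on $n$. If $A = 0$, take $k_1 = \cdots = k_n = -\infty$ and $a = b = I$. Otherwise, among all nonzero entries of $A$ choose one of minimal $F$-valuation, say $a_{i_0 j_0}$; left- and right-multiplication by appropriate permutation matrices in $\GL(n, \O_F)$ moves it to position $(1,1)$. Every other nonzero entry $a_{ij}$ then satisfies $\mathrm{ord}_F(a_{ij}) \ge \mathrm{ord}_F(a_{11})$, hence $a_{ij}/a_{11} \in \O_F$, so elementary $\O_F$-row and column operations (which belong to $\GL(n, \O_F)$) annihilate the remainder of the first row and column. This reduces $A$ to a block matrix $\diag(\varpi^{-k_1}, A')$ with $A' \in \Mat(n-1, F)$ whose entries still have $F$-valuation at least $-k_1$. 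Applying the inductive hypothesis to $A'$ and using this valuation bound yields a diagonalization with the required non-increasing ordering $k_1 \ge k_2 \ge \cdots \ge k_n$.

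For uniqueness, for each $1 \le j \le n$ define the invariant
\[
m_j(A) := \max \{ |\det B| : B \text{ is a } j\times j \text{ submatrix of } A\} \in q^{\Z}\cup\{0\}.
\]
By the Cauchy--Binet formula, every $j\times j$ minor of $aAb$ is an $\O_F$-linear combination of $j\times j$ minors of $A$, so by the ultrametric inequality $m_j(aAb) \le m_j(A)$; applying the same bound in the opposite direction gives $m_j(aAb) = m_j(A)$ for all $a, b \in \GL(n, \O_F)$. A direct inspection on $\diag(\varpi^{-k_1}, \ldots, \varpi^{-k_n})$ with $k_1 \ge \cdots \ge k_n$ yields $m_j = q^{k_1 + \cdots + k_j}$ (with $q^{-\infty} := 0$), since the top-left $j \times j$ block realizes the maximum. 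Hence each partial sum $k_1 + \cdots + k_j$ is an invariant of $A$, and by subtraction each $k_j$ individually is uniquely determined.

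The only genuine technical obstacle is the consistent handling of $-\infty$ entries, which correspond to the rank-deficient case: one must check that the inductive reduction terminates when a subsequent block becomes zero and that $m_j(A) = 0$ corresponds exactly to $k_j = -\infty$ in the partial-sum formula. Both are routine with the convention $q^{-\infty}=0$, and the rest of the argument is entirely parallel to the classical Smith normal form over a principal ideal domain.
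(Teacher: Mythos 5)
Your proof is correct, and it is the standard Smith normal form argument over a discrete valuation ring: existence by a Gaussian-elimination-style reduction pivoting on the entry of minimal valuation, and uniqueness via the determinantal divisors $m_j$, which are invariant under $\GL(n,\O_F)\times\GL(n,\O_F)$ by Cauchy--Binet and the ultrametric inequality. The paper itself gives no proof of this lemma but simply cites Neretin's text, so there is nothing to contrast with; your argument is exactly what such a reference would supply, and the one spot that needs care in the existence step --- that the cleared-out $(n-1)\times(n-1)$ block still has all entries in $\varpi^{-k_1}\O_F$, ensuring the non-increasing ordering propagates through the induction --- is present in your write-up.
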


\subsection{Square-units in $F$}
The group of units of the ring $\O_F$ is given by $\O_F^\times :  = \{x\in \FF: |x| = 1\}$.
Let $(\O_F^{\times})^2$ be the subgroup of  $\O_F^{\times}$ defined by 
\begin{align*}
(\O_F^{\times})^2: = \{ x \in \O_F^\times: \text{there exists $a\in F$ such that $x = a^2$} \}.
\end{align*}
Let $\mathcal{C}_q^{\times}$ denote the set $\mathcal{C}_q\setminus \{0\}$ and define
\begin{align*}
(\mathcal{C}_q^{\times})^2 : = \{ a\in \mathcal{C}_q^{\times} |\text{ there exists $ y \in F$ such that $a = y^2$} \}. 
\end{align*}
 Denote 
\begin{align*}
(\F_q^{\times})^2 : = \{ a \in \F_q^{\times} |\text{ there exists $ c \in \F_q^{\times}$ such that $a= c^2$} \}.
\end{align*}

\begin{lem}\label{lem-clopen}
Assume that $F$ is non-dyadic.  Then 
\begin{align}\label{square-st}
(\O_F^{\times})^2 =  \bigsqcup_{a \in (\mathcal{C}_q^{\times})^2} (a+ \varpi \O_F). 
\end{align}
Consequently,  the map $\pi$ in \eqref{pi-1} induces a bijection: 
\begin{align}\label{pi-2}
\pi: (\mathcal{C}_q^{\times})^2 \xrightarrow{\text{bijection}} (\F_q^{\times})^2. 
\end{align}
\end{lem}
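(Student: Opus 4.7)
The plan is to reduce both statements to a single equivalence for units and to invoke Hensel's Lemma, whose applicability is precisely where the non-dyadic hypothesis on $F$ is used.

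\emph{Step 1: Key equivalence.} First I would establish the following claim: for any $u \in \O_F^{\times}$,
$$ u \in (\O_F^{\times})^2 \iff \pi(u) \in (\F_q^{\times})^2. $$
The forward direction is automatic, since $u = b^2$ with $b \in F$ forces $|b|^2 = |u| = 1$, so $b \in \O_F^{\times}$ and $\pi(u) = \pi(b)^2$. For the converse, lift any square root $\bar c \in \F_q^{\times}$ of $\pi(u)$ to some $c \in \O_F^{\times}$, and apply Hensel's Lemma to the polynomial $P(X) = X^2 - u \in \O_F[X]$. One has $P(c) = c^2 - u \in \varpi\O_F$ and $P'(c) = 2c$; since $F$ is non-dyadic, $2 \in \O_F^{\times}$, hence $|P'(c)| = 1$. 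Hensel's Lemma then produces a root of $P$ in $\O_F$, exhibiting $u$ as a square.

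\emph{Step 2: The partition \eqref{square-st}.} Combined with the fact that $\mathcal{C}_q$ is a complete set of coset representatives of $\varpi\O_F$ in $\O_F$, the key equivalence yields both inclusions almost immediately. Indeed, if $a \in (\mathcal{C}_q^{\times})^2$ and $x \in a + \varpi\O_F$, then $|x| = |a| = 1$ (since $\pi(a) \neq 0$) and $\pi(x) = \pi(a) \in (\F_q^{\times})^2$ because $a$ is itself a square in $F$, so the equivalence gives $x \in (\O_F^{\times})^2$. Conversely, given $u \in (\O_F^{\times})^2$, the equivalence supplies $\pi(u) \in (\F_q^{\times})^2$; then the unique $a \in \mathcal{C}_q$ with $u - a \in \varpi\O_F$ is in fact in $\mathcal{C}_q^{\times}$, satisfies $\pi(a) = \pi(u) \in (\F_q^{\times})^2$, and therefore, by the equivalence applied to $a$, belongs to $(\mathcal{C}_q^{\times})^2$. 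Disjointness of the union is inherited from the disjointness of the cosets $\{a + \varpi\O_F\}_{a \in \mathcal{C}_q}$.

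\emph{Step 3: The bijection \eqref{pi-2}.} The map $\pi : (\mathcal{C}_q^{\times})^2 \to (\F_q^{\times})^2$ is the restriction of the already-established bijection $\pi : \mathcal{C}_q^{\times} \to \F_q^{\times}$, hence automatically injective. Surjectivity reduces to showing that the unique preimage in $\mathcal{C}_q^{\times}$ of any $\bar c \in (\F_q^{\times})^2$ actually lies in $(\mathcal{C}_q^{\times})^2$, which is again the content of the key equivalence.

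The only nontrivial point---and the main obstacle---is the Hensel step: without non-dyadicity, $P'(c) = 2c$ could be divisible by $\varpi$, and the structure of squares in dyadic fields is genuinely more delicate, requiring congruences modulo higher powers of $\varpi$. Everything else is formal bookkeeping about cosets and representatives.
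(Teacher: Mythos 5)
Your proof is correct and relies on the same essential mechanism as the paper's: applying Hensel's Lemma to $X^2 - u$, with the non-dyadic hypothesis guaranteeing $|P'| = |2c| = 1$. The only organizational difference is that you first isolate the clean equivalence $u \in (\O_F^{\times})^2 \Leftrightarrow \pi(u) \in (\F_q^{\times})^2$ and deduce everything from it, whereas the paper runs Hensel centered at a known square root $\alpha$ of $x$ and derives the coset inclusion $x + \varpi\O_F \subset (\O_F^{\times})^2$ directly; these are cosmetic variants of the same argument.
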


\begin{proof}
Since $F$ is non-dyadic, we have $|2| = 1$. For any $x = \alpha^2 \in (\O_F^{\times})^2$,  since $\mathcal{C}_q$ is a complete set of representatives for $\O_F/\varpi\O_F$,  there exists $a\in \mathcal{C}_q^{\times}$, such that $x \equiv a ( \modulo \varpi \O_F)$, that is,   
\begin{align}\label{x-a-1}
| x  - a | <1.
\end{align}
 Take any $b \in \O_F^{\times}$  such that $| b - x | <1  =  | 2 \alpha|^2$.
Then the polynomial $P_b(X) = X^2 - b \in \O_F[X]$ satisfies 
\begin{align}\label{hensel-ass}
| P_b(\alpha) | < | P_b'(\alpha)|^2.
\end{align}
 By Hensel's lemma (see Cassels \cite[pp. 49-51]{Cassels}), the inequality \eqref{hensel-ass} implies that there exists $\beta \in F$ such that
 \begin{align}\label{root}
  P_b(\beta) = \beta^2 - b= 0 \an | \beta - \alpha | \le   \frac{|P_b(\alpha)|}{|P_b'(\alpha)|}  < | P_b'(\alpha)|= 1.
 \end{align}
In particular,  we have
  \begin{align}\label{x-ball}
  \{b \in \O_F^{\times}:  | b - x | < 1 \} = x+ \varpi \O_F \subset  (\O_F^{\times})^2. 
  \end{align}
  Combining \eqref{x-a-1} and \eqref{x-ball}, we get $a \in  (\O_F^{\times})^2$. Hence
\begin{align*}
(\O_F^{\times})^2\subset  \bigsqcup_{a \in (\mathcal{C}_q^{\times})^2} (a+ \varpi \O_F). 
\end{align*}
Conversely, since  $(\mathcal{C}_q^{\times})^2\subset (\O_F^{\times})^2$, for any $a\in (\mathcal{C}_q^{\times})^2$, replacing $x$ by $a$ in the above argument, the inclusion \eqref{x-ball} implies  $a+ \varpi \O_F \subset  (\O_F^{\times})^2$.  Hence
\begin{align*}
 \bigsqcup_{a \in (\mathcal{C}_q^{\times})^2} (a+ \varpi \O_F) \subset (\O_F^{\times})^2.
\end{align*}
\end{proof}

\begin{rem}\label{rem-card2}
Recall that if $F$ is non-dyadic, then the quotient group $\O_F^{\times} / (\O_F^{\times})^2$ has two elements.
\end{rem}

\subsection{Diagonalization  in $\Sym(n, F)$}

In what follows, we fix a non-square unit $\varepsilon \in \O_F^{\times} \setminus (\O_F^{\times})^2$. By Remark \ref{rem-card2}, the following set
\begin{align}\label{R-T}
\mathcal{T}: =   \{ \varpi^{-k}  | k \in \Z\} \sqcup  \{ \varpi^{-k} \varepsilon | k \in \Z\} \sqcup\{0\}
\end{align}
is a complete set of representatives for the quotient $F/(\O_F^{\times})^2$. 

\begin{lem}\label{lem-q-diag}
Assume that $F$ is non-dyadic. Then  any symmetric matrix $A\in \Sym(n, F)$ can be written in the form 
\begin{align*}
A = g \cdot \diag (x_1, \cdots, x_n) \cdot g^t, \quad x_1, \cdots, x_n \in \mathcal{T},  g \in \GL(n, \O_F). 
\end{align*}
\end{lem}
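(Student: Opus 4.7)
The plan is to proceed by induction on $n$. The base case $n=1$ is immediate: any nonzero element $a \in F$ can be written $a = u \varpi^{-k}$ with $u \in \O_F^\times$ and $k \in \Z$, and by Lemma \ref{lem-clopen} together with Remark \ref{rem-card2} one has either $u = s^2$ or $u = \varepsilon s^2$ for some $s \in \O_F^\times$; conjugating the $1 \times 1$ matrix $(a)$ by the unit scalar $s^{-1}$ puts the entry into $\mathcal{T}$.

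For the inductive step, the first task is to arrange, by a $\GL(n, \O_F)$-congruence, that $A_{11}$ attains the largest absolute value among all entries of $A$. Let $v = \min_{i,j} \mathrm{ord}_F(A_{ij})$. If the minimum is already achieved by some diagonal entry $A_{ii}$, a permutation matrix in $\GL(n, \O_F)$ moves it to position $(1,1)$. Otherwise the minimum is achieved only at some off-diagonal entry $A_{ij}$ with $i < j$; conjugating by the elementary matrix $I + e_i e_j^t \in \GL(n, \O_F)$ then produces a new $(i,i)$-entry equal to $A_{ii} + 2A_{ij} + A_{jj}$. Since $\mathrm{ord}_F(A_{ii}), \mathrm{ord}_F(A_{jj}) > v$ and $|2| = 1$ (this is where the non-dyadic hypothesis enters), the ultrametric inequality forces this new diagonal entry to have valuation exactly $v$, and a permutation brings it to position $(1,1)$.

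Once $A_{11}$ has maximum absolute value, each ratio $A_{1j}/A_{11}$ with $j \ge 2$ lies in $\O_F$, so the lower unitriangular matrix $P = I - \sum_{j \ge 2}(A_{1j}/A_{11}) e_j e_1^t$ belongs to $\GL(n, \O_F)$. A short computation using the symmetry $A = A^t$ shows that $P A P^t = \diag(A_{11}, C)$ for some $C \in \Sym(n-1, F)$. Writing $A_{11} = u \varpi^{-k}$ and normalizing $u$ modulo $(\O_F^\times)^2$ to lie in $\{1, \varepsilon\}$ by conjugating with $\diag(s^{-1}, 1, \ldots, 1) \in \GL(n, \O_F)$ puts the $(1,1)$ entry into $\mathcal{T}$. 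I would then apply the induction hypothesis to $C$ and embed the resulting $(n-1)$-dimensional congruence into the lower block via $\diag(1, h) \in \GL(n, \O_F)$, which completes the diagonalization.

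The one genuinely nontrivial ingredient is the off-diagonal-to-diagonal step, and this is precisely where the non-dyadic assumption is essential: it is the identity $|2| = 1$ that makes $A_{ii} + 2 A_{ij} + A_{jj}$ have the desired valuation. In the dyadic case $|2| < 1$, this cancellation can fail and the classification of symmetric forms over $F$ becomes substantially more delicate, consistent with the paper's standing hypothesis that $F$ be non-dyadic throughout the symmetric setting.
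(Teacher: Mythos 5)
Your proof is correct and follows essentially the same approach as the paper: in both, the key move is to conjugate by an elementary matrix $I + e_ie_j^t$ to produce a diagonal entry $A_{ii}+2A_{ij}+A_{jj}$ of maximal absolute value (which is exactly where $|2|=1$, hence the non-dyadic hypothesis, is used), then clear the first row and column with a lower unitriangular matrix and recurse on the complementary block. The paper handles the reduction of the diagonal entries to representatives in $\mathcal{T}$ a bit more tersely, simply observing that $\mathcal{T}$ is a complete set of representatives for $F/(\O_F^\times)^2$, but the substance is identical to your normalization by $\diag(s^{-1},1,\dots,1)$.
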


\begin{proof}
Since $\mathcal{T}$ is a complete set of representatives for the quotient $F/(\O_F^{\times})^2$, it suffices to show that any symmetric matrix  $A \in \Sym(n, F)$ is diagonalizable.   Assume that $A$ is not a zero matrix. We claim that, up to passing $A$ to $gAg^t$ for some $g\in \GL(n, \O_F)$, we may assume that the $(1,1)$-coefficient of $A$ has maximal absolute value. 

{\it Case 1:} Assume first that  there exists $1 \le i_0 \le n$, such that  $|A_{i_0i_0}| = \max_{1\le i, j \le n} | A_{ij}|$. If $i_0 = 1$, then there is nothing to prove. Otherwise,  take $g= M_{(1i_0)}$, where $M_{(1i_0)}$ is the permutation matrix associated with the transposition $(1i_0)$.  Then the matrix $gAg^t$ has $A_{i_0i_0}$ as its $(1,1)$-coefficient. 

 {\it Case 2: } Now assume that there exists $i_0 < j_0$ such that 
\begin{align}\label{off-diag-max}
|A_{i_0j_0}| = \max_{1\le i, j \le n}| A_{ij}| > \max_{1\le i \le n} | A_{ii}|.
\end{align} 
Let us do operations on the submatrix indexed by $\{i_0, j_0\} \times \{i_0, j_0\}$ as follows:
\begin{align*}
& \left[\begin{array}{cc}  2 A_{i_0j_0}  + A_{i_0i_0}  + A_{j_0j_0}  &  A_{i_0j_0}  + A_{j_0j_0} \\ A_{i_0j_0}  + A_{j_0j_0} &   A_{j_0j_0} \end{array}\right] 
\\
&= \left[\begin{array}{cc} 1 & 1 \\ 0 & 1 \end{array}\right] \left[\begin{array}{cc} A_{i_0i_0} & A_{i_0j_0} \\ A_{i_0j_0} & A_{j_0j_0} \end{array}\right] \left[\begin{array}{cc} 1 & 0 \\ 1 & 1 \end{array}\right].
\end{align*}
Since $F$ is non-dyadic and non-Archimedean,  \eqref{off-diag-max} implies 
\begin{align*}
|2 A_{i_0j_0}  + A_{i_0i_0}  + A_{j_0j_0} | = |  A_{i_0j_0}  + A_{j_0j_0}  | =  | A_{i_0j_0}|.
\end{align*}
This shows that we can reduce the second case to the first case where a diagonal coefficient has maximal absolute value.  

Now assume that 
\begin{align*}
A = \left[\begin{array}{cc} x & c^t \\ c & A_1 \end{array}\right],  \quad c\in F^{n-1},
\end{align*}
such that $x$ attains the  maximal absolute value of all coefficients of $A$. Then $x^{-1}c \in \O_F^{n-1}$ and  we have
\begin{align*}
 \left[\begin{array}{cc} 1&0 \\  - x^{-1}c & 1 \end{array}\right] \left[\begin{array}{cc} x & c^t \\ c & A_1 \end{array}\right]  \left[\begin{array}{cc} 1 & - x^{-1} c^t \\ 0& 1 \end{array}\right] =  \left[\begin{array}{cc} x & 0 \\ 0 & A_1 - x^{-1} cc^t\end{array}\right].
\end{align*} 
By continuing the above procedure on the submatrix $A_1 - x^{-1} cc^t$, we prove finally that $A$ is diagonalizable. 
\end{proof}

\begin{rem}
The assumption that $F$ is non-dyadic is necessary in Lemma \ref{lem-q-diag}. Indeed, if 
\begin{align*}
\left[\begin{array}{cc} \lambda_1 & 0 \\ 0 & \lambda_2 \end{array}\right] = \left[\begin{array}{cc} a & b \\ c & d \end{array}\right] \left[\begin{array}{cc} 0 & 1 \\ 1 & 0 \end{array}\right] \left[\begin{array}{cc} a & c \\ b & d \end{array}\right], \quad  \left[\begin{array}{cc} a & c \\ b & d \end{array}\right] \in \GL(2, \O_F),
\end{align*}
then 
\begin{align*}
bc + ad = 0, \quad ad - bc \in \O_F^{\times}.
\end{align*}
It follows that $2 ad \in \O_F^{\times}$ and hence $2 \in \O_F^{\times}$. This implies that $F$ is non-dyadic.  
\end{rem}

%%%%%%%%%
%%%%%%%%%
%%%%%%%%%
%%%%%%%%%
%%%%%%%%%
%%%%%%%%%
%%%%%%%%%
%%%%%%%%%
%%%%%%%%%
%%%%%%%%%
%%%%%%%%%
%%%%%%%%%
%%%%%%%%%
%%%%%%%%%

\subsection{Characteristic functions}\label{sec-def-char}
Denote by $\widehat{\FF}$ the Pontryagin dual of the additive group $\FF$. Elements in $\widehat{F}$ are  called characters of $F$.  Throughout the paper, we fix a non-trivial   character $\chi \in \widehat{\FF}$ such that 
\begin{align}\label{assumption-chi}  
\text{  $\chi|_{\O_F} \equiv 1 $ and $\chi$ is  not constant on  $\varpi^{-1} \O_F$.}
\end{align}
For any $y \in \FF$, define a character   $\chi_y \in \widehat{\FF}$ by
$\chi_y(x) =\chi(yx)$.  The map $y\mapsto \chi_{y}$ from $\FF$ to $\widehat{\FF}$ defines a group isomorphism.

We write explicitly characteristic functions of probability measures in the following situations. 

(i) If $\mu$ is a Borel probability measure    on  $F^m$, then $\widehat{\mu}$ is defined on $F^m$  by 
\begin{align*}
\widehat{\mu} (y): =  \int_{\FF^m} \chi( x\cdot y) \mu(dx), 
\end{align*}
where $x \cdot y : = \sum_{j = 1}^m x_j y_j$.

(ii) If $\mu$ is a Borel probability measure    on  $\Mat(n,\FF)$, then $\widehat{\mu}$ is defined on $\Mat(n, F)$  by 
\begin{align*}
\widehat{\mu} (A): =  \int_{\Mat(n,\FF)} \chi(\tr(AM)) \mu(dM).
\end{align*}

(iii) If $\mu$  is  a Borel probability measure   on  $\Mat(\N, F)$, then $\widehat{\mu}$ is defined on $\Mat(\infty, F)$  by 
\begin{align}\label{def-ch-inf}
\widehat{\mu} (A): =  \int_{\Mat(\N,\FF)} \chi( \tr(AM)) \mu(dM).
\end{align}

(iv)  If $\nu$ is a Borel probability measure    on  $\Sym(n,\FF)$,  then $\widehat{\nu}$ is defined on $\Sym(n,\FF)$ by 
\begin{align*}
\widehat{\nu} (A): =  \int_{\Sym(n,\FF)} \chi(\tr(AS)) \nu(dS). 
\end{align*}

(v) If $\nu$  is  a Borel probability measure   on  $\Sym(\N, \FF)$,  then $\widehat{\nu}$ is defined on $\Sym(\infty, F)$ by 
\begin{align*}
\widehat{\nu} (A): =  \int_{\Sym(\N,\FF)} \chi( \tr(AS)) \nu(dS).
\end{align*}

Since the corresponding groups are locally compact, Theorem 31.5 in Hewitt and Ross \cite[p.212]{Hewitt-Ross} implies that in the cases (i) (ii) and (iv), the characteristic function $\widehat{\mu}$ determines $\mu$ uniquely. The same statement holds for the cases (iii) and (v). Indeed,  although the additive groups $\Mat(\N, \FF)$  and $\Mat(\infty, \FF)$ are not locally compact and we can not apply  the result on locally compact groups directly, we may use the fact that any Borel probability  measure $\mu$ on $\Mat(\N, \FF)$ is uniquely determined by its finite dimensional projections $(\Cut_n^\infty)_{*}(\mu)$ and \eqref{def-ch-inf} contains all information for $\widehat{(\Cut_n^\infty)_{*}(\mu)}$, $n = 1, 2, \cdots$. The case (v) is treated similarly.

\begin{rem}\label{rem-fourier-inv}
If $\mu$ is a probability measure on $\Mat(n, \FF)$  which is invariant under the action of the group $\GL(n, \O_F)\times \GL(n, \O_F)$, then for any $a, b \in \GL(n, \O_F)$, we have 
\begin{align}\label{inv-radial}
\widehat{\mu} (a \cdot \diag (\varpi^{-k_1}, \varpi^{-k_2}, \cdots, \varpi^{-k_n}) \cdot b) = \widehat{\mu}(\diag (\varpi^{-k_1}, \varpi^{-k_2}, \cdots, \varpi^{-k_n})).
\end{align}
Similarly, if $\nu$ is a $\GL(n, \O_F)$-invariant probability measure on $\Sym(n,F)$, then for any $g \in \GL(n,\O_F)$, we have
\begin{align}\label{inv-sym-diag}
 \widehat{\nu}􏰀(g \cdot \diag(x_1, \cdots, x_n) \cdot g^t) =\widehat{\nu}􏰀(\diag(x_1, \cdots, x_n)).
\end{align}
Similar statements hold for  $\GL(\infty, \O_F)\times \GL(\infty, \O_F)$-invariant probability measures on $\Mat(\N, \FF)$ and for $\GL(\infty, \O_F)$-invariant probability measures on $\Sym(\N, \FF)$. 
\end{rem}

Let $m\in\N$.  Given any Borel probability measures $\mu_1, \cdots, \mu_m$ on $\Mat(\N, F)$ (resp. $\Sym(\N, F)$), their convolution $\mu_1 * \cdots * \mu_m$ is defined as follows:   let $M_1, \cdots, M_m$ be independent random matrices such that $\mathcal{L}(M_i) = \mu_i, i = 1, \cdots, m$ and set 
\begin{align*}
\mu_1 * \cdots * \mu_m := \mathcal{L} (M_1 + \cdots + M_m). 
\end{align*}
The characteristic function of $\mu_1 * \cdots * \mu_m$ is given by the formula
\begin{align}\label{ch-conv}
(\mu_1 * \cdots * \mu_m)^{\wedge}  = \prod_{i=1}^m \widehat{\mu_i}. 
\end{align}

\section{Invariance and Ergodicity}\label{sec-inv-erg}
In this section, we prove that all the measures on $\Mat(\N, F)$ from the family 
$\{ \mu_{\mathbbm{k}} = \mathcal{L}(M_{\mathbbm{k}})| \mathbbm{k} \in \Delta \}$ introduced in Definition \eqref{defn-rm}  are $\GL(\infty, \O_F) \times \GL(\infty, \O_F)$-invariant and ergodic and that all the measures on $\Sym(\N, F)$ from the family 
$\{ \nu_{\mathbbm{h}} = \mathcal{L}(S_{\mathbbm{h}})| \mathbbm{h} \in \Omega \}$ introduced in Definition \eqref{defn-sym-1}  are $\GL(\infty, \O_F)$-invariant and ergodic.

\subsection{$\GL(\infty, \O_F) \times \GL(\infty, \O_F)$-invariance for $\mu_{\mathbbm{k}}$'s}

\begin{prop}\label{prop-inv}
 For any $\mathbbm{k}\in \Delta$, the probability measure $\mu_{\mathbbm{k}} $ on $\Mat(\N, F)$  is  $\GL(\infty, \O_F) \times \GL(\infty, \O_F)$-invariant.
\end{prop}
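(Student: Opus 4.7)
The strategy is to exhibit $M_{\mathbbm{k}}$ as an independent sum of building blocks each of whose distribution is manifestly $\GL(\infty,\O_F)\times\GL(\infty,\O_F)$-invariant, and then transfer the action onto the underlying i.i.d.\ Haar ``source'' variables using Remark~\ref{rem-mp}. Write
\[
M_{\mathbbm{k}} \;=\; R_{\mathbbm{k}}+\varpi^{-k} Z,\qquad R_{\mathbbm{k}}\;=\;\sum_{n:\,k_n>k}\varpi^{-k_n}\,X^{(n)}\bigl(Y^{(n)}\bigr)^{t},
\]
where $X^{(n)}=(X^{(n)}_i)_{i\in\N}$, $Y^{(n)}=(Y^{(n)}_j)_{j\in\N}$ are viewed as column vectors, $Z=[Z_{ij}]$, and, by our conventions, the Haar term vanishes when $k=-\infty$. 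The families $\{X^{(n)}\}_n$, $\{Y^{(n)}\}_n$, $\{Z_{ij}\}_{i,j}$ are mutually independent, and within each family the entries are i.i.d.\ Haar on $\O_F$.

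Fix $(g_1,g_2)\in\GL(\infty,\O_F)\times\GL(\infty,\O_F)$; then there exists $m\in\N$ such that both $g_1$ and $g_2$ are supported in the top-left $m\times m$ block. Using the elementary identity $g_1(X Y^{t})g_2^{-1}=(g_1 X)(g_2^{-t}Y)^{t}$ and linearity, we obtain
\[
g_1 M_{\mathbbm{k}}\,g_2^{-1}=\sum_{n:\,k_n>k}\varpi^{-k_n}(g_1 X^{(n)})(g_2^{-t}Y^{(n)})^{t}+\varpi^{-k}\,g_1 Z g_2^{-1}.
\]
It therefore suffices to show that the joint law of $\bigl((g_1 X^{(n)})_n,(g_2^{-t}Y^{(n)})_n,g_1 Z g_2^{-1}\bigr)$ coincides with that of $\bigl((X^{(n)})_n,(Y^{(n)})_n,Z\bigr)$. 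For the rank-one inputs: each $X^{(n)}$ is distributed as product Haar on $\O_F^\N$, and $g_1$ acts only on the first $m$ coordinates through an element of $\GL(m,\O_F)$, which preserves Haar on $\O_F^m$ by Remark~\ref{rem-mp}; hence the distribution of $X^{(n)}$ is preserved, and independence across $n$ persists because $g_1$ is deterministic. The same reasoning applies to $Y^{(n)}$ under $g_2^{-t}$, noting that $g_2^{-t}\in\GL(m,\O_F)\subset\GL(\infty,\O_F)$. For the Haar part: block-decomposing $Z$ according to the splitting $\N=\{1,\dots,m\}\sqcup\{m+1,m+2,\dots\}$, the transformation $Z\mapsto g_1 Z g_2^{-1}$ leaves the bottom-right $\N\times\N$ block fixed, translates the two strips by Haar-preserving $\GL(m,\O_F)$-actions on columns (resp.\ rows), and acts bilaterally on the top-left $m\times m$ block; all four actions preserve Haar by Remark~\ref{rem-mp} applied column- or row-wise, so the product Haar on $\Mat(\N,\O_F)$ is preserved.

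The only point requiring a moment's care is the infinite-series case $k=-\infty$, where $R_{\mathbbm{k}}$ is a genuinely infinite sum; but since $k_n\to-\infty$ forces $|\varpi^{-k_n}|=q^{k_n}\to 0$, the series converges coordinatewise in the ultrametric sense, and the linear transformations above commute with termwise summation, so no analytic subtlety arises. Combining the three independent invariances yields $g_1 M_{\mathbbm{k}}g_2^{-1}\stackrel{d}{=}M_{\mathbbm{k}}$, which is precisely the claim. I anticipate no serious obstacle; the main thing to get right is the bookkeeping that cleanly separates the three independent sources and records that the left action $g_1\cdot$ affects the $X^{(n)}$'s and the row-structure of $Z$, while the right action $g_2^{-1}\cdot$ affects the $Y^{(n)}$'s (via $g_2^{-t}$) and the column-structure of $Z$.
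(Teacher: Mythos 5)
Your proof is correct and takes essentially the same route as the paper: decompose $M_{\mathbbm{k}}$ into an independent sum of rank-one and Haar blocks, transfer the $(g_1,g_2)$-action onto the underlying i.i.d.\ Haar source vectors $X^{(n)}, Y^{(n)}, Z$, and invoke the fact that $\GL(m,\O_F)$ preserves the normalized Haar measure on $\O_F^m$. The only cosmetic difference is that you cite Remark~\ref{rem-mp} directly for the Haar-invariance fact, whereas the paper routes it through Lemma~\ref{lem-inv-vec}, which re-derives the same invariance via characteristic functions.
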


Recall that the normalized integral $\dashint$ is introduced in \eqref{dashint}. 

\begin{rem}
For any  $n\ge 1$, we have 
\begin{align}\label{vanishing-ch}
\dashint_{\varpi^{-n} \O_F} \chi(x) dx  = 0. 
\end{align}
Indeed, for any fixed $n\ge 1$, the character $\chi$ defines a non-trivial character  $\widetilde{\chi}$ of the finite group $\Gamma_n: = \varpi^{-n} \O_F /\O_F$ by 
\begin{align*}
\widetilde{\chi}( \gamma ) : = \chi(x) \quad (\text{ if } \gamma  = x + \O_F,  x\in \varpi^{-n} \O_F). 
\end{align*}
By orthogonality of the character $\widetilde{\chi}$ and the trivial character, we have 
\begin{align*}
\dashint_{\varpi^{-n} \O_F} \chi(x) dx = \frac{1}{\# \Gamma_n}\sum_{\gamma \in \Gamma_n } \widetilde{\chi}(\gamma)  =0,
\end{align*}
and \eqref{vanishing-ch} is proved. 
\end{rem}

\begin{lem}\label{ball-fourier}
For any $y \in \FF$ and any $l\in \Z$, we have
\begin{align}\label{b-fourier}
\dashint_{  \varpi^l \O_F }  \chi(xy) dx = \mathbbm{1}_{\varpi^{-l}\O_F} (y).
\end{align}
\end{lem}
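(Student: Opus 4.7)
The plan is to reduce the statement to the vanishing identity \eqref{vanishing-ch} via a change of variables. The character $x \mapsto \chi(xy)$ restricts to a character of the compact additive group $\varpi^l\O_F$, and the normalized integral of a character over such a group equals $1$ if the character is trivial and $0$ otherwise, so the whole content is to identify precisely when this restricted character is trivial and, in the nontrivial case, to match it with the known vanishing \eqref{vanishing-ch}.

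First I would set $z := \varpi^l y$ and perform the affine change of variables $x = \varpi^l u$ with $u \in \O_F$. Since $\GL(1, \O_F)$-like scalings preserve the Haar measure up to the normalization handled by the averaging bar, this transforms the integral into $\dashint_{\O_F} \chi(uz)\,du$. The condition $y \in \varpi^{-l}\O_F$ is equivalent to $z \in \O_F$. In that case $uz \in \O_F$ for all $u \in \O_F$, so $\chi(uz) \equiv 1$ by the hypothesis $\chi|_{\O_F}\equiv 1$, and the normalized integral is $1$; this matches the right-hand side $\mathbbm{1}_{\varpi^{-l}\O_F}(y) = 1$.

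Next I would handle the complementary case $z \notin \O_F$. Writing $|z| = q^n$ with $n \geq 1$, factor $z = \varpi^{-n} v$ with $v \in \O_F^\times$. The substitution $u' = vu$ is a measure-preserving bijection of $\O_F$ (multiplication by a unit), reducing the integral to $\dashint_{\O_F} \chi(\varpi^{-n} u')\,du'$. A second substitution $w = \varpi^{-n} u'$ turns this into $\dashint_{\varpi^{-n}\O_F} \chi(w)\,dw$ (the Jacobian factor and the change in volume of the ambient domain cancel in the averaged integral), and by \eqref{vanishing-ch} this vanishes, matching $\mathbbm{1}_{\varpi^{-l}\O_F}(y) = 0$.

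I do not expect a real obstacle here; the only thing to be careful about is to verify that the two successive changes of variable preserve the normalized (averaged) integral, so that no stray powers of $q$ appear. Once that bookkeeping is done, both cases follow immediately, completing the proof.
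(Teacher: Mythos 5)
Your proof is correct and follows essentially the same route as the paper's: split into the two cases according to whether $\varpi^l y \in \O_F$, use $\chi|_{\O_F}\equiv 1$ in the first case, and in the second case use invariance under multiplication by units together with a rescaling to reduce to the vanishing identity \eqref{vanishing-ch}.
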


\begin{proof}
First assume that $y \in \varpi^{-l}\O_F $. Then for any $x\in \varpi^l\O_F$, we have $xy \in \O_F$. Consequently, by \eqref{assumption-chi}, we have
$
\dashint_{  \varpi^l \O_F }  \chi(xy) dx  =1. 
$
Now assume that $y \notin \varpi^{-l}\O_F$. Since the Haar measure $d \vol$ on $\FF$ is invariant under the multiplication action by any element $u \in \O_F^{\times}$, without loss of generality, we may assume that $y = \varpi^k$ with $k \le -l-1$. By \eqref{vanishing-ch}, we have
\begin{align*}
 \dashint_{  \varpi^l \O_F }  \chi(xy) dx = \dashint_{  \varpi^l \O_F }  \chi ( \varpi^k x ) dx = \dashint_{  \O_F }  \chi (\varpi^{k+l} z ) dz  =0. 
\end{align*}
This completes the proof of \eqref{b-fourier}. 
\end{proof}

 \begin{lem}\label{lem-inv-vec}
For any $m \in\N$, the distribution of the random vector $(X_i^{(1)})_{i =1}^m$  is $\GL(m, \O_F)$-invariant. 
\end{lem}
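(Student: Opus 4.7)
The plan is to invoke Remark \ref{rem-mp} together with the observation that elements of $\GL(m,\O_F)$ preserve the compact subset $\O_F^m \subset F^m$.

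First I would identify the distribution of $(X_i^{(1)})_{i=1}^m$. Since the $X_i^{(1)}$ are independent and each is distributed according to the normalized Haar measure on the compact additive group $\O_F$, their joint distribution is the product measure, which is precisely the Haar measure $\vol_m$ restricted to $\O_F^m$ (note that $\vol_m(\O_F^m) = 1$ by the chosen normalization).

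Next, for any $g \in \GL(m,\O_F)$, both $g$ and $g^{-1}$ have entries in $\O_F$, so the linear map $x \mapsto gx$ is a bijection of $\O_F^m$ onto itself. By Remark \ref{rem-mp}, this map also preserves the Haar measure $\vol_m$ on $F^m$. Combining these two facts, the pushforward of $\vol_m |_{\O_F^m}$ under $x \mapsto gx$ is again $\vol_m |_{\O_F^m}$, which gives exactly the $\GL(m,\O_F)$-invariance of the law of $(X_i^{(1)})_{i=1}^m$.

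There is no serious obstacle here: the statement is essentially a reformulation of Remark \ref{rem-mp} applied to the particular product measure at hand. The only subtlety worth spelling out is that one must check $g \cdot \O_F^m = \O_F^m$ (rather than just $g \cdot \O_F^m \subseteq \O_F^m$), which is immediate from $g^{-1} \in \GL(m,\O_F)$.
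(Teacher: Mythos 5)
Your proof is correct, but it takes a genuinely different route from the paper. The paper proves Lemma \ref{lem-inv-vec} via characteristic functions: it shows $\E[\chi(X\cdot y)] = \E[\chi((AX)\cdot y)]$ for all $y\in F^m$ by using Lemma \ref{ball-fourier} to compute both sides as $\mathbbm{1}_{\O_F^m}(y)$, the point being that $A^t$ preserves $\O_F^m$. You instead argue directly at the level of measures: the joint law of $(X_i^{(1)})_{i=1}^m$ is the normalized Haar measure on $\O_F^m$, which by Remark \ref{rem-mp} is preserved by any $g\in\GL(m,\O_F)$ since $g$ is a measure-preserving bijection of $\O_F^m$ (using $g^{-1}\in\GL(m,\O_F)$ for surjectivity). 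Your approach is more elementary in that it bypasses the Fourier-analytic lemma entirely and relies only on the measure-invariance remark already stated in \S\ref{sec-pre}; the paper's Fourier approach has the mild advantage of fitting the general pattern of characteristic-function computations used throughout the rest of the paper (e.g.\ Proposition \ref{prop-explicit}), but for this particular lemma both arguments are equally valid and roughly equal in length.
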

 \begin{proof}
 Denote $X = (X_i^{(1)})_{i =1}^m$. It suffices to prove that for any  $A\in\GL(m,\O_F)$ and any $y= (y_1, \cdots, y_m) \in F^m$, we have 
 \begin{align}\label{X-y-X}
 \E [\chi (X \cdot y)] =  \E [\chi( (A X) \cdot y )]. 
 \end{align}
 By the independence between $X_i^{(1)}: i =1, \cdots, m$ and  Lemma \ref{ball-fourier},  we have 
\begin{align*}
\E [\chi (X \cdot y)]  =\prod_{j = 1}^n\E[\chi( X_j y_j)] = \prod_{j = 1}^n\mathbbm{1}_{\O_F}(y_j) = \mathbbm{1}_{\O_F^m}(y).
\end{align*}
Similarly, 
\begin{align*}
 \E [\chi( (A X) \cdot y )]  = \E [\chi(X \cdot (A^{t}y) )]  =\mathbbm{1}_{\O_F^m}(A^{t}y) = \mathbbm{1}_{\O_F^m}(y). 
\end{align*}
Hence we get  \eqref{X-y-X}. The proof of Lemma \ref{lem-inv-vec} is completed.  
 \end{proof}
 
\begin{proof}[Proof of Proposition \ref{prop-inv}] 
It suffices to prove that the following probability measures
\begin{align*}
\mathcal{L}􏰇[X_i^{(1)} Y_j^{(1)}]_{i, j \in \N}  􏰈 \an \mathcal{L}([Z_{ij}]_{i, j\in\N} ).
\end{align*}
are $\GL(\infty, \O_F) \times \GL(\infty, \O_F)$-invariant. The invariance of both measures follows immediately from Lemma \ref{lem-inv-vec}.
\end{proof}

\subsection{$\GL(\infty, \O_F) \times \GL(\infty, \O_F)$-ergodicity for $\mu_{\mathbbm{k}}$'s}

\begin{thm}\label{thm-erg-1}
 For any $\mathbbm{k}\in \Delta$, the probability measure $\mu_{\mathbbm{k}} $ on $\Mat(\N, F)$  is  $\GL(\infty, \O_F) \times \GL(\infty, \O_F)$-ergodic.
\end{thm}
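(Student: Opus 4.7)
The plan is to reduce ergodicity under the inductive limit group $\GL(\infty, \O_F) \times \GL(\infty, \O_F)$ to ergodicity under the permutation subgroup $S_\infty \times S_\infty$, and then to deduce the latter from de Finetti's theorem combined with a Kolmogorov zero-one argument, in the spirit of the Okounkov--Olshanski strategy advertised in the Introduction. Since permutation matrices lie in $\GL(\infty, \O_F)$, any $\GL(\infty, \O_F) \times \GL(\infty, \O_F)$-invariant Borel set $A\subset \Mat(\N, F)$ is also invariant under $S_\infty \times S_\infty$ acting by row- and column-permutations; hence it suffices to prove $S_\infty \times S_\infty$-ergodicity of $\mu_{\mathbbm k}$.

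I will work in the underlying probability space on which the independent uniform variables $(X_i^{(n)}), (Y_j^{(n)}), (Z_{ij})$ of Definition \ref{defn-rm} are defined, and introduce the three mutually independent $\sigma$-algebras $\mathcal{F}_X = \sigma(X_i^{(n)}: i,n\ge 1)$, $\mathcal{F}_Y = \sigma(Y_j^{(n)}: j,n\ge 1)$, $\mathcal{F}_Z = \sigma(Z_{ij}: i,j\ge 1)$. The key structural observation is that the $i$-th row of $M_{\mathbbm k}$,
\begin{align*}
R_i = \Big(\sum_{n:\, k_n > k} \varpi^{-k_n} X_i^{(n)} Y_j^{(n)} + \varpi^{-k} Z_{ij}\Big)_{j\in\N},
\end{align*}
depends on the row-specific family $((X_i^{(n)})_n, (Z_{ij})_j)$ together with the shared family $(Y_j^{(n)})_{j,n}$, and the row-specific families are i.i.d.\ across $i$. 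Consequently, conditional on $\mathcal{F}_Y$ the rows $(R_i)_{i\in\N}$ are i.i.d.\ in $F^\N$. By symmetric reasoning, conditional on $\mathcal{F}_X$ the columns $(C_j)_{j\in\N}$ are i.i.d.\ in $F^\N$.

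De Finetti's theorem applied to the exchangeable sequence $(R_i)_i$ produces a directing random measure $\Pi_{\mathrm{row}}$, obtained as the a.s.\ limit of the empirical row-distributions, whose generated $\sigma$-algebra $\mathcal{E}_{\mathrm{row}}$ is the tail $\sigma$-algebra of the rows; it is the minimal $\sigma$-algebra conditional on which the rows are i.i.d. Since $\mathcal{F}_Y$ is itself a conditioning $\sigma$-algebra with that i.i.d.\ property, $\Pi_{\mathrm{row}}$ is $\mathcal{F}_Y$-measurable, whence $\mathcal{E}_{\mathrm{row}} \subset \mathcal{F}_Y$ modulo $\mu_{\mathbbm k}$-null sets. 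Analogously, $\mathcal{E}_{\mathrm{col}} \subset \mathcal{F}_X$ modulo null sets. Now if $A$ is any $S_\infty \times S_\infty$-invariant Borel set, then $\mathbbm{1}_A$ is invariant under row-permutations and under column-permutations, hence is $\mathcal{E}_{\mathrm{row}}$-measurable and $\mathcal{E}_{\mathrm{col}}$-measurable; therefore it is measurable with respect to both $\mathcal{F}_X$ and $\mathcal{F}_Y$ modulo null sets. Independence of $\mathcal{F}_X$ and $\mathcal{F}_Y$ then forces $\mathbbm{1}_A$ to be $\mu_{\mathbbm k}$-almost surely constant, so $\mu_{\mathbbm k}(A)\in \{0, 1\}$, finishing the proof.

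The main obstacle in this plan is the identification $\mathcal{E}_{\mathrm{row}} \subset \mathcal{F}_Y$: it requires confirming that the directing measure $\Pi_{\mathrm{row}} = \lim_N N^{-1}\sum_{i=1}^N \delta_{R_i}$ is actually $\mathcal{F}_Y$-measurable, which follows from the conditional strong law of large numbers once the conditional i.i.d.\ structure of the rows given $\mathcal{F}_Y$ has been checked. That check itself is essentially bookkeeping from Definition \ref{defn-rm}: the row-specific blocks $((X_i^{(n)})_n, (Z_{ij})_j)$ are i.i.d.\ across $i$, and the formula for $R_i$ involves only the $Y$'s and the $i$-th block; once this is in place, no further field-specific input about $F$ is needed, and the same skeleton of argument will apply verbatim to the symmetric case in Definition \ref{defn-sym-1}.
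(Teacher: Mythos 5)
Your proof is correct, and it takes a genuinely different route from the paper's. Both reduce to the permutation subgroup and both rely on De Finetti, but the paper pushes forward to $F^{\N}$ through the \emph{diagonal} map $\Psi\colon M\mapsto(M_{ii})_i$ and argues via extreme points: Lemma \ref{lem-diag-inj1} shows $\Psi$ is an affine \emph{embedding} of $\mathcal{P}_{\mathrm{inv}}(\Mat(\N,F))$ into $\mathcal{P}_{\mathrm{inv}}^{S(\infty)}(F^{\N})$ (using that the characteristic function of an invariant measure is already determined by its restriction to diagonal matrices), and since $\Psi(\mu_{\mathbbm{k}})$ is Bernoulli, hence extreme among exchangeable laws, extremality pulls back through the embedding to give $\mu_{\mathbbm{k}}\in Ext(\mathcal{P}_{\mathrm{inv}}(\Mat(\N,F)))$. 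You instead keep the full rows and columns, read off from Definition \ref{defn-rm} the conditional i.i.d.\ structure of the rows given $\mathcal{F}_Y$ (resp.\ the columns given $\mathcal{F}_X$), and use the directing-measure form of De Finetti together with Hewitt--Savage to show that any permutation-invariant event lies modulo null sets in both $\mathcal{F}_X$ and $\mathcal{F}_Y$, which are independent. Both are valid; the paper's is more compact and purely structural, while yours makes the probabilistic mechanism more transparent --- the $S_\infty\times S_\infty$-tail splits across the independent $\sigma$-algebras $\mathcal{F}_X$ and $\mathcal{F}_Y$ --- at the cost of invoking the directing-measure machinery and a conditional law of large numbers.

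One caution regarding the final sentence of your write-up: the skeleton does \emph{not} transfer verbatim to $\Sym(\N,F)$. There the single group $\GL(\infty,\O_F)$ permutes rows and columns \emph{simultaneously} by the same $\sigma$, so the rows are only jointly exchangeable in the Aldous--Hoover sense, not exchangeable as a plain sequence; and because $H_{ij}=H_{ji}$, the rows of $S_{\mathbbm{h}}$ share randomness and are not conditionally i.i.d.\ given $\sigma(X,Y)$. There is also no natural pair of independent $\sigma$-algebras playing the role of $\mathcal{F}_X$ and $\mathcal{F}_Y$. The diagonal map sidesteps all of this, since $(S_{ii})_i$ is still a genuinely exchangeable sequence under the simultaneous permutation action, which is exactly why the paper proves Theorem \ref{thm-erg-2} by the same device as Theorem \ref{thm-erg-1}.
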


The   map $M  \mapsto   (M_{ii})_{i\in\N}$ from $\Mat(\N, F)$ to $F^\N$ induces an affine map 
\begin{align*}
\Psi: \mathcal{P}(\Mat(\N, F)) \rightarrow  \mathcal{P}(F^\N) .
\end{align*}
Let $S(n)$ denote the group of  permutations of the set $\{1, 2, \cdots, n\}$ and set
$S(\infty):  = \bigcup_{n\in\N} S(n)$. 
The group $S(\infty)$ acts naturally on $F^\N$ by permutations of coordinates.

\begin{lem}\label{lem-diag-inj1}
For any $\mu \in \mathcal{P}_{\mathrm{inv}}(\Mat(\N, F))$, we have $\Psi(\mu) \in \mathcal{P}_{\mathrm{inv}}^{S(\infty)}(F^\N)$. Moreover, the restriction map 
\begin{align}\label{diag-inj1}
\Psi: \mathcal{P}_{\mathrm{inv}}(\Mat(\N, F)) \rightarrow \mathcal{P}_{\mathrm{inv}}^{S(\infty)}(F^\N)
\end{align}
is an affine embedding. 
\end{lem}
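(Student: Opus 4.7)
The plan is to deduce $S(\infty)$-invariance from invariance under a diagonal copy of the group, then prove injectivity via characteristic functions.

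\textbf{Invariance.} For each $\sigma \in S(\infty)$, the permutation matrix $P_\sigma$ (with entries in $\{0,1\} \subset \O_F$) lies in $\GL(\infty,\O_F)$ with inverse $P_{\sigma^{-1}}$. Applying the diagonal embedding $\sigma \mapsto (P_\sigma, P_\sigma) \in \GL(\infty,\O_F) \times \GL(\infty,\O_F)$ and using that $\mu$ is invariant under $(g_1, g_2)\cdot M = g_1 M g_2^{-1}$, we obtain $\mu$-invariance of the map $M \mapsto P_\sigma M P_\sigma^{-1}$. A direct computation shows $(P_\sigma M P_\sigma^{-1})_{ii} = M_{\sigma^{-1}(i)\sigma^{-1}(i)}$, so the pushforward $\Psi(\mu)$ is invariant under the coordinate-permutation action of $\sigma^{-1}$. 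As $\sigma$ ranges over $S(\infty)$, this gives $\Psi(\mu) \in \mathcal{P}_{\mathrm{inv}}^{S(\infty)}(F^\N)$.

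\textbf{Affinity.} The pushforward by a fixed Borel map is affine on the space of probability measures, so $\Psi$ is affine.

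\textbf{Injectivity.} This is the main content. I would argue that the characteristic function $\widehat{\mu}$ on $\Mat(\infty, F)$ is entirely determined by $\Psi(\mu)$, and then invoke the uniqueness statement for characteristic functions on $\Mat(\N, F)$ recalled in \S\ref{sec-def-char}. Given any $A \in \Mat(\infty, F)$, choose $n$ large enough that $A$ is supported inside the upper-left $n \times n$ block, and apply Lemma \ref{lem-s-num} to write
\begin{align*}
A \,=\, a \cdot \diag(\varpi^{-k_1}, \ldots, \varpi^{-k_n}, 0, 0, \ldots) \cdot b, \qquad a, b \in \GL(n, \O_F) \hookrightarrow \GL(\infty, \O_F).
\end{align*}
The bi-invariance of $\mu$ under $\GL(\infty,\O_F) \times \GL(\infty,\O_F)$ yields, exactly as in \eqref{inv-radial},
\begin{align*}
\widehat{\mu}(A) \,=\, \widehat{\mu}\bigl(\diag(\varpi^{-k_1}, \ldots, \varpi^{-k_n}, 0, 0, \ldots)\bigr).
\end{align*}
For a diagonal $D = \diag(y_1, \ldots, y_n, 0, 0, \ldots)$, the definition \eqref{def-ch-inf} and the definition of the pushforward give directly
\begin{align*}
\widehat{\mu}(D) \,=\, \int_{\Mat(\N, F)} \chi\!\Bigl(\sum_{i=1}^n y_i M_{ii}\Bigr)\, \mu(dM) \,=\, \widehat{\Psi(\mu)}(y_1, \ldots, y_n, 0, 0, \ldots).
\end{align*}
Hence $\widehat{\mu}$ is completely recovered from $\widehat{\Psi(\mu)}$. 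Since $\mu$ is uniquely determined by $\widehat{\mu}$ (case (iii) in \S\ref{sec-def-char}) and $\Psi(\mu)$ is uniquely determined by $\widehat{\Psi(\mu)}$, the map $\Psi$ is injective.

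The only potential obstacle is making sure that the passage from finite to infinite matrices in the bi-invariance step causes no trouble; this is handled cleanly by the fact that $A$ has finite support and by the standard embedding $\GL(n, \O_F) \hookrightarrow \GL(\infty, \O_F)$ defined in \eqref{group-em}, so no new argument is required beyond what is already set up in \S\ref{sec-pre}.
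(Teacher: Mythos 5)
Your proof is correct and follows essentially the same route as the paper: invariance via permutation matrices $P_\sigma \in \GL(\infty,\O_F)$ acting on both sides, and injectivity by recovering $\widehat{\mu}$ on diagonal matrices from $\widehat{\Psi(\mu)}$ and then using bi-invariance together with Lemma \ref{lem-s-num} (i.e.\ Remark \ref{rem-fourier-inv}). The paper's proof is just a terser version of the same argument.
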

\begin{proof}
Let $\mu \in \mathcal{P}_{\mathrm{inv}}(\Mat(\N, F))$. For any $\sigma \in S(\infty)$, the associated permutation matrix $M_\sigma$, defined by  
\begin{align*}
M_{\sigma} (i, j) : = 1_{\sigma(i)  = j},
\end{align*} is an element in $\GL(\infty, \O_F)$.  By the invariance of $\mu$ under the multiplication by all permutation matrices $M_\sigma, \sigma\in S(\infty)$ on left and on right, it is easy to see that $\Psi(\mu)\in \mathcal{P}_{\mathrm{inv}}^{S(\infty)}(F^\N)$. 

Now we  show that the map \eqref{diag-inj1} is injective. By the definition of pushforward map, the Fourier transform of $\Psi(\mu)$ is given as follows: for any $x_1, \cdots, x_r \in F$, 
\begin{align}\label{fourier-push}
\widehat{\Psi(\mu)} ((x_1, \cdots, x_r, 0, 0, \cdots)) = \widehat{\mu} (\diag (x_1, \cdots, x_r, 0, 0, \cdots)).
\end{align}  By Remark \ref{rem-fourier-inv},  $\widehat{\mu}$ is determined by 
\begin{align*}
\widehat{\mu} (\diag (x_1, \cdots, x_r, 0, 0, \cdots)), \quad x_1, \cdots, x_r \in F.
\end{align*}
Now,  the equality \eqref{fourier-push} implies that $\widehat{\mu}$ and hence $\mu$ itself is determined uniquely by $\Psi(\mu)$.  The injectivity of the map \eqref{diag-inj1} is proved.
\end{proof}

In what follows, for any convex set $C$,  we denote by $Ext(C)$ the set of extreme points of $C$. 

\begin{rem}\label{rem-erg-ext}
For any group action of a group $G$ on a Polish space $\X$, we have 
\begin{align*}
Ext(\mathcal{P}_{\mathrm{inv}}^G(\X)) \subset \mathcal{P}_{\mathrm{erg}}^G(\X).
\end{align*}
The converse inclusion is in general not true. However, the De Finetti Theorem claims that for the group action of $S(\infty)$ on $\X^\N$, we indeed have
\begin{align*}
Ext(\mathcal{P}_{\mathrm{inv}}^{S(\infty}(\X^\N)) = \mathcal{P}_{\mathrm{erg}}^{S(\infty)}(\X^\N).
\end{align*}
\end{rem}

\begin{proof}[Proof of Theorem \ref{thm-erg-1}]
By construction, for any $\mathbbm{k}\in \Delta$, the measure $\Psi(\mu_{\mathbbm{k}})$ is Bernoulli.  Consequently, by the De Finetti Theorem, 
\begin{align*}
\Psi(\mu_{\mathbbm{k}}) \in \mathcal{P}_{\mathrm{erg}}^{S(\infty)}(F^\N)  = Ext(\mathcal{P}_{\mathrm{inv}}^{S(\infty)}(F^\N)). 
\end{align*}
It is clear that for any convex subset $C$ of $Ext(\mathcal{P}_{\mathrm{inv}}^{S(\infty)}(F^\N))$, we have
$$
C \cap Ext(\mathcal{P}_{\mathrm{inv}}^{S(\infty)}(F^\N)) \subset Ext(C). 
$$
Taking $C =\Psi(\mathcal{P}_{\mathrm{inv}}(\Mat(\N, F))) $, we see that 
\begin{align}\label{ext}
\Psi( \mu_{\mathbbm{k}}) \in Ext(\Psi(\mathcal{P}_{\mathrm{inv}}(\Mat(\N, F)))).
\end{align}
 By Lemma \ref{lem-diag-inj1}, $\Psi$ is an affine embedding.  Hence
 \begin{align*}
 Ext(\Psi(\mathcal{P}_{\mathrm{inv}}(\Mat(\N, F)))) = \Psi(Ext(\mathcal{P}_{\mathrm{inv}}(\Mat(\N, F)))). 
 \end{align*}
  The relation \eqref{ext} implies $ \mu_{\mathbbm{k}} \in  Ext(\mathcal{P}_{\mathrm{inv}}(\Mat(\N, F))).$ By Remark \ref{rem-erg-ext}, we get the desired relation $\mu_{\mathbbm{k}}\in \mathcal{P}_{\mathrm{erg}}(\Mat(\N, F))$. 
\end{proof}

\begin{rem}
As a corollary of our classification theorem, in our situation, we indeed have 
\begin{align*}
Ext(\mathcal{P}_{\mathrm{inv}}(\Mat(\N, F))) =  \mathcal{P}_{\mathrm{erg}}(\Mat(\N, F)). 
\end{align*}
\end{rem}

\subsection{$\GL(\infty, \O_F)$-invariance for $\nu_{\mathbbm{h}}$'s}

\begin{prop}\label{prop-inv-sym}
For any  $\mathbbm{h} \in  \Omega$, the probability measure $\nu_{\mathbbm{h}}$ on $\Sym(\N, F)$  is $\GL(\infty, \O_F)$-invariant.
\end{prop}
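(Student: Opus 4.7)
The plan is to exploit the decomposition $S_{\mathbbm{h}} = W_{\mathbbm{k}} + \varepsilon W_{\mathbbm{k}'} + \varpi^{-k} H$ together with the linearity of the action $S \mapsto g S g^t$ and the independence of the three summands: since $gS_{\mathbbm{h}}g^t = gW_{\mathbbm{k}}g^t + \varepsilon\, gW_{\mathbbm{k}'}g^t + \varpi^{-k}\, gHg^t$ and the three pieces are independent, the invariance of the full measure will reduce to the invariance of the laws of $W_{\mathbbm{k}}$, $W_{\mathbbm{k}'}$ and $H$ separately. This is the same pattern as Proposition \ref{prop-inv}, only with the two-sided action replaced by the quadratic one.

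For the Wishart summands, I would write $W_{\mathbbm{k}} = \sum_{n} \varpi^{-k_n} X^{(n)}(X^{(n)})^t$ with $X^{(n)} = (X_i^{(n)})_{i \in \N}$; the series converges in $F$ entrywise because $k_n \to -\infty$ while the entries remain in $\O_F$. Then $g W_{\mathbbm{k}} g^t = \sum_n \varpi^{-k_n}(gX^{(n)})(gX^{(n)})^t$, and it suffices to show that the joint law of $(gX^{(n)})_{n \ge 1}$ equals that of $(X^{(n)})_{n \ge 1}$. Any $g \in \GL(\infty, \O_F)$ comes from $\GL(m, \O_F)$ for some $m$ via the embedding \eqref{group-em}, so $g$ left-multiplies the top $m$ coordinates of each $X^{(n)}$ by an element of $\GL(m, \O_F)$ and fixes the remaining ones. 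Lemma \ref{lem-inv-vec} gives the finite-dimensional invariance, and combining it with the independence of the top-$m$ and bottom coordinate blocks of each $X^{(n)}$ and with the mutual independence across $n$ yields the claim. The same argument handles $\varepsilon W_{\mathbbm{k}'}$.

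For the Haar piece $\varpi^{-k} H$, I would reduce to the finite-dimensional statement that, for any $g \in \GL(n, \O_F)$, the map $S \mapsto g S g^t$ preserves the uniform probability measure on $\Sym(n, \O_F)$. This follows from two observations: the map is a bijection of $\Sym(n, \O_F)$ onto itself, since both $g$ and $g^{-1}$ lie in $\Mat(n, \O_F)$; and its Jacobian with respect to the additive Haar measure on $\Sym(n, F)$ equals $|\det g|^{n+1} = 1$, so the restriction of that Haar measure to the invariant compact-open set $\Sym(n, \O_F)$ is preserved. To lift the statement to infinite dimensions, I would project $gHg^t$ to $\Sym(n, F)$ for $n \ge m$ (where $g$ is supported in the top-left $m \times m$ block): the projection is exactly $g^{(n)} H^{(n)} (g^{(n)})^t$ for the top-left $n \times n$ blocks, so its law coincides with that of $H^{(n)}$ by the finite-dimensional invariance, and this identity of all cylinder distributions forces $gHg^t \stackrel{d}{=} H$.

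The argument does not call on any deep tool: the only non-trivial input is Lemma \ref{lem-inv-vec} for the Wishart pieces and the standard transformation law of Haar measure on $\Sym(n, F)$ for the Haar piece. The only real work is bookkeeping, namely passing from the inductive limit $\GL(\infty, \O_F)$ to a single $\GL(m, \O_F)$ and then matching up cylinder projections with the pushforward under $S \mapsto gSg^t$.
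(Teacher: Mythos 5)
Your proof is correct and follows essentially the same approach as the paper: reduce via independence to the invariance of the Wishart pieces (handled by Lemma \ref{lem-inv-vec}) and of the Haar piece (handled by observing the map $S\mapsto gSg^t$ is a measure-preserving linear bijection of $\Sym(n,\O_F)$). The only cosmetic difference is that you compute the Jacobian as $|\det g|^{n+1}=1$ directly, whereas the paper's Lemma \ref{lem-haar-inv} identifies $\Sym(n,\O_F)$ with $\O_F^{(n^2+n)/2}$ and invokes Remark \ref{rem-mp}; both arguments express the same fact.
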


\begin{lem}\label{lem-haar-inv}
The normalized Haar measure on $\Sym(n, \O_F)$ is invariant under the natural action of the group $\GL(n, O_F)$.
\end{lem}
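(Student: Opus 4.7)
The plan is to exploit the fact that $\Sym(n,\O_F)$ is a compact abelian topological group under matrix addition, so by uniqueness of normalized Haar measure on a compact group, it suffices to verify that, for each $g\in\GL(n,\O_F)$, the transformation $\phi_g\colon S\mapsto gSg^{t}$ is a continuous group automorphism of $(\Sym(n,\O_F),+)$.

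First I would record the three properties of $\phi_g$. Additivity is immediate from the identity $g(S_1+S_2)g^{t}=gS_1g^{t}+gS_2g^{t}$; continuity is clear since $\phi_g$ is given by polynomials in the matrix entries. The main check is that $\phi_g$ is a bijection of $\Sym(n,\O_F)$ onto itself. Since $g\in\GL(n,\O_F)$ we have $g,g^{-1}\in\Mat(n,\O_F)$, so $gSg^{t}$ has entries in $\O_F$ whenever $S$ does, and similarly $\phi_{g^{-1}}=\phi_g^{-1}$ maps $\Sym(n,\O_F)$ into itself. Hence $\phi_g$ is a continuous additive bijection of the compact abelian group $\Sym(n,\O_F)$.

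Any continuous automorphism of a compact group pushes the normalized Haar measure forward to another Haar probability measure, and by uniqueness these coincide. This gives $(\phi_g)_{\ast}m_{\Sym(n,\O_F)}=m_{\Sym(n,\O_F)}$, which is exactly the invariance asserted.

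I do not foresee any real obstacle: the only point requiring care is the stability of $\Sym(n,\O_F)$ under $\phi_g$, which reduces to $\det(g)\in\O_F^{\times}$. As an alternative one could compute directly on $\Sym(n,F)$: viewing $\phi_g$ as an $F$-linear map on the $F$-vector space $\Sym(n,F)$, a short calculation (first on diagonal $g$, then by linearity and density) shows its determinant equals $\det(g)^{n+1}$, so its modulus of distortion of the additive Haar measure on $\Sym(n,F)$ is $|\det(g)|^{n+1}=1$; restricting to the $\phi_g$-invariant open compact subgroup $\Sym(n,\O_F)$ yields the same conclusion. I prefer the automorphism argument above since it avoids any Jacobian computation.
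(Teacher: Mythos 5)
Your proof is correct and is essentially the paper's argument: both rest on showing that $S\mapsto gSg^t$ is a continuous additive bijection of $\Sym(n,\O_F)$ onto itself, and then concluding invariance of the normalized Haar measure. The paper first identifies $\Sym(n,\O_F)\simeq\O_F^{n(n+1)/2}$ and appeals to its Remark \ref{rem-mp} that matrices in $\GL(m,\O_F)$ preserve the Haar measure on $\O_F^m$, whereas you invoke uniqueness of Haar measure under a continuous automorphism directly on the compact group $\Sym(n,\O_F)$ — the same idea in a marginally more coordinate-free form.
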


\begin{proof} 
For any $g \in \GL(n, \O_F)$, the linear map
\begin{align}\label{linear-map}
\begin{split}
\begin{array}{ccc}
\Sym(n, \O_F) & \longrightarrow& \Sym(n, \O_F) 
\\
S 􏰅& \mapsto  & gSg^t
\end{array}
\end{split}
\end{align}
is invertible. Clearly, if we use the group identification  
$$
\Sym(n, \O_F) \simeq \O_F^{\frac{n^2 +n}{2}},
$$
then the linear map \eqref{linear-map} is represented by an invertible matrix from $\GL( \frac{n^2+n}{2}, \O_F)$. Hence by Remark \ref{rem-mp}, it preserves the normalized Haar measure. 􏰢
\end{proof}

\begin{proof}[Proof of Proposition \ref{prop-inv-sym}]
It suffices to prove the $\GL(\infty, \O_F)$-invariance of the following probability measures on $\Sym(\N, F)$:
\begin{align*}
\mathcal{L}( 􏰇[X^{(1)}_i X^{(1)}_j]_{i, j \in \N}) 􏰈\an \mathcal{L} ([H_{ij}]_{i, j \in\N} ).
\end{align*}
The $\GL(\infty, \O_F)$-invariance of  $\mathcal{L}( 􏰇[X^{(1)}_i X^{(1)}_j]_{i, j \in \N})$ 􏰈 follows immediately  from Lemma \ref{lem-inv-vec}, while the $\GL(\infty, \O_F)$-invariance of $\mathcal{L} ([H_{ij}]_{i, j \in\N} )$ follows from Lemma \ref{lem-haar-inv}. 􏰢
\end{proof}

\subsection{$\GL(\infty, \O_F)$-ergodicity for $\nu_{\mathbbm{h}}$'s}

\begin{thm}\label{thm-erg-2}
For any  $\mathbbm{h} \in  \Omega$, the probability measure $\nu_{\mathbbm{h}}$ on $\Sym(\N, F)$  is $\GL(\infty, \O_F)$-ergodic.
\end{thm}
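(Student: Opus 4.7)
The plan is to mirror the proof of Theorem \ref{thm-erg-1}, replacing the non-symmetric setting by its symmetric analogue. First, I would introduce the affine map
$$\Psi : \mathcal{P}(\Sym(\N, F)) \longrightarrow \mathcal{P}(F^\N)$$
induced by $S \mapsto (S_{ii})_{i\in\N}$, and establish a symmetric analogue of Lemma \ref{lem-diag-inj1}: it sends $\GL(\infty, \O_F)$-invariant measures to $S(\infty)$-invariant measures (since every permutation matrix $M_\sigma$ with $\sigma\in S(\infty)$ lies in $\GL(\infty,\O_F)$, and conjugation $S \mapsto M_\sigma S M_\sigma^t$ permutes diagonal entries by $\sigma$), and the resulting restriction is injective. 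For injectivity I would invoke Lemma \ref{lem-q-diag} (which requires non-dyadic $F$) together with the invariance formula \eqref{inv-sym-diag} to conclude that, for any $\nu\in\mathcal{P}_{\mathrm{inv}}(\Sym(\N,F))$, the characteristic function $\widehat{\nu}$ is determined by its values on finite-rank diagonal matrices $\diag(x_1,\dots,x_r,0,\dots)$; combined with the identity
$$\widehat{\Psi(\nu)}((x_1,\dots,x_r,0,\dots)) \;=\; \widehat{\nu}(\diag(x_1,\dots,x_r,0,\dots)),$$
this recovers $\widehat{\nu}$, and hence $\nu$, from $\Psi(\nu)$.

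Second, I would verify that $\Psi(\nu_{\mathbbm{h}})$ is a Bernoulli (product) measure on $F^\N$. Writing $\mathbbm{h}=(k;\mathbbm{k},\mathbbm{k}')$, the diagonal of $S_{\mathbbm{h}}$ reads
$$(S_{\mathbbm{h}})_{ii} \;=\; \sum_{n\ge 1}\varpi^{-k_n}\bigl(X_i^{(n)}\bigr)^2 \;+\; \varepsilon\sum_{n\ge 1}\varpi^{-k_n'}\bigl(Y_i^{(n)}\bigr)^2 \;+\; \varpi^{-k}H_{ii}.$$
By Definition \ref{defn-sym-1} the families $\{X_i^{(n)},Y_i^{(n)},H_{ii}\}_{n\ge 1}$ indexed by different $i$ are mutually independent and identically distributed in $i$. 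Hence $\bigl((S_{\mathbbm{h}})_{ii}\bigr)_{i\in\N}$ is i.i.d.\ and $\Psi(\nu_{\mathbbm{h}})$ is Bernoulli.

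Finally, I would close the argument exactly as in Theorem \ref{thm-erg-1}. By De Finetti's theorem,
$$\Psi(\nu_{\mathbbm{h}}) \;\in\; \mathcal{P}_{\mathrm{erg}}^{S(\infty)}(F^\N) \;=\; Ext\bigl(\mathcal{P}_{\mathrm{inv}}^{S(\infty)}(F^\N)\bigr).$$
Setting $C := \Psi(\mathcal{P}_{\mathrm{inv}}(\Sym(\N,F)))$, the general principle $C\cap Ext(\mathcal{P}_{\mathrm{inv}}^{S(\infty)}(F^\N))\subset Ext(C)$ yields $\Psi(\nu_{\mathbbm{h}})\in Ext(C)$. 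Since $\Psi$ restricts to an affine embedding on invariant measures, $Ext(C) = \Psi(Ext(\mathcal{P}_{\mathrm{inv}}(\Sym(\N,F))))$, so $\nu_{\mathbbm{h}}\in Ext(\mathcal{P}_{\mathrm{inv}}(\Sym(\N,F)))$, and Remark \ref{rem-erg-ext} delivers the desired $\GL(\infty,\O_F)$-ergodicity.

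The only genuinely nontrivial step is the injectivity of $\Psi$ on invariant measures, and this is precisely where the non-dyadic hypothesis enters: without the diagonalization Lemma \ref{lem-q-diag}, one could not reduce $\widehat{\nu}$ to its values on diagonal matrices, and the extremality-transfer argument would break down. Everything else is a routine adaptation of Lemma \ref{lem-diag-inj1} and Theorem \ref{thm-erg-1}.
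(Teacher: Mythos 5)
Your proof is correct and is essentially the argument the paper has in mind: the paper simply says the proof of Theorem~\ref{thm-erg-2} is "similar to that of Theorem~\ref{thm-erg-1} by using Lemma~\ref{lem-diag-inj2} instead of Lemma~\ref{lem-diag-inj1}," and you have spelled out exactly that adaptation (introduce the diagonal-pushforward map, establish it as an affine embedding via Lemma~\ref{lem-q-diag} and the invariance formula \eqref{inv-sym-diag}, observe $\Psi(\nu_{\mathbbm{h}})$ is Bernoulli, then transfer extremality through the embedding and invoke De Finetti). Your closing observation that the non-dyadic hypothesis enters precisely through the diagonalization Lemma~\ref{lem-q-diag}, which underlies the injectivity of the embedding, is accurate and a worthwhile remark.
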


The    map  $S \mapsto  (S_{ii})_{i\in\N}$ from $\Sym(\N, F)$ to $F^\N$ induces an affine map  
\begin{align*}
\Phi: \mathcal{P}(\Sym(\N, F)) \rightarrow  \mathcal{P}(F^\N) .
\end{align*}

\begin{lem}\label{lem-diag-inj2}
For any $\nu \in \mathcal{P}_{\mathrm{inv}}(\Sym(\N, F))$, we have $\Phi(\nu) \in \mathcal{P}_{\mathrm{inv}}^{S(\infty)}(F^\N)$. Moreover, the restriction map 
\begin{align*}
\Phi: \mathcal{P}_{\mathrm{inv}}(\Sym(\N, F)) \rightarrow \mathcal{P}_{\mathrm{inv}}^{S(\infty)}(F^\N)
\end{align*}
is an affine embedding. 
\end{lem}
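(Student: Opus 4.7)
The plan is to mirror the argument used for Lemma \ref{lem-diag-inj1}, with the adjustment that the group action here is congruence $S \mapsto gSg^t$ rather than two-sided multiplication, and relying on Lemma \ref{lem-q-diag} in place of Lemma \ref{lem-s-num} to reduce the Fourier transform on $\Sym(\infty,F)$ to its values on diagonal matrices. Throughout I assume $F$ is non-dyadic, so that Lemma \ref{lem-q-diag} is available.

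First, I would verify the $S(\infty)$-invariance of $\Phi(\nu)$. For any $\sigma \in S(\infty)$, the permutation matrix $M_\sigma$ defined by $M_\sigma(i,j) = \mathbbm{1}_{\sigma(i) = j}$ lies in $\GL(\infty, \O_F)$ and is orthogonal in the sense that $M_\sigma^t = M_\sigma^{-1}$. Direct computation shows that the $(i,i)$-entry of $M_\sigma S M_\sigma^t$ equals $S_{\sigma(i)\sigma(i)}$, so pushing the congruence action $S \mapsto M_\sigma S M_\sigma^t$ through $\Phi$ yields the coordinate permutation of $F^\N$ by $\sigma$. Invariance of $\nu$ under this action therefore forces invariance of $\Phi(\nu)$ under $S(\infty)$.

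Next, I would establish injectivity of $\Phi$ on $\mathcal{P}_{\mathrm{inv}}(\Sym(\N, F))$ via a Fourier-transform argument. For any finite tuple $x_1,\dots,x_r \in F$, the identity $\tr(\diag(x_1,\dots,x_r,0,\dots) \cdot S) = \sum_{i=1}^r x_i S_{ii}$ gives
\begin{align*}
\widehat{\Phi(\nu)}(x_1,\dots,x_r,0,0,\dots) \;=\; \widehat{\nu}\bigl(\diag(x_1,\dots,x_r,0,0,\dots)\bigr).
\end{align*}
Thus $\Phi(\nu)$ determines the values of $\widehat{\nu}$ on all diagonal matrices in $\Sym(\infty, F)$. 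By Lemma \ref{lem-q-diag}, every element of $\Sym(n, F)$ has the form $g \cdot \diag(x_1,\dots,x_n) \cdot g^t$ with $g \in \GL(n, \O_F)$; combined with the $\GL(\infty, \O_F)$-invariance of $\nu$ and formula \eqref{inv-sym-diag} of Remark \ref{rem-fourier-inv}, this propagates the determination of $\widehat{\nu}$ from the diagonal matrices to all of $\Sym(\infty, F)$. Since the characteristic function on $\Sym(\infty, F)$ determines $\nu$ uniquely (as recorded in \S \ref{sec-def-char}, by reducing to finite-dimensional projections), we conclude $\nu$ is determined by $\Phi(\nu)$.

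Finally, affineness of $\Phi$ is automatic: it is the pushforward under the fixed continuous map $S \mapsto (S_{ii})_{i\in\N}$, and pushforward of measures is a linear, hence affine, operation. The main subtle point in the argument is the use of non-dyadicity; without Lemma \ref{lem-q-diag}, one cannot diagonalize by congruence and the reduction from general symmetric matrices to diagonal matrices breaks down. Since that lemma is already at our disposal in the non-dyadic setting, I do not anticipate further obstacles.
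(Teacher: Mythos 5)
Your proposal is correct and takes essentially the same route the paper intends: the paper's proof of Lemma~\ref{lem-diag-inj2} is simply ``similar to that of Lemma~\ref{lem-diag-inj1},'' and you have spelled out precisely that adaptation, replacing Lemma~\ref{lem-s-num} by Lemma~\ref{lem-q-diag} and the two-sided action by the congruence action, and correctly flagging that the diagonalization step is where non-dyadicity enters.
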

\begin{proof}
The proof is similar to that of Lemma \ref{lem-diag-inj1}.
\end{proof}

\begin{proof}[Proof of Theorem \ref{thm-erg-2}]
The proof is similar to that of Theorem \ref{thm-erg-1} by using Lemma \ref{lem-diag-inj2} instead of Lemma \ref{lem-diag-inj1}.
\end{proof}

\section{Explicit computation of characteristic functions}\label{sec-explicit}

In this section, we give explicit formulae for characteristic functions of measures from the two families  $\{\mu_{\mathbbm{k}}:  \mathbbm{k}\in\Delta\}$ and $\{\nu_{\mathbbm{h}}:  \mathbbm{h}\in\Omega\}$.

\subsection{Measures on $\Mat(\N, F)$}

 By the elementary observation \eqref{inv-radial}, for studying the characteristic functions of $\mu_{\mathbbm{k}}$, it suffices to compute 
 \begin{align*}
 \widehat{\mu_{\mathbbm{k}}}(\diag (\varpi^{-\ell_1}, \cdots, \varpi^{-\ell_r}, 0, 0, \cdots))
  \end{align*}
   for any $r\in\N$ and any $\ell_1, \cdots, \ell_r \in\Z$.

\begin{prop}\label{prop-explicit}
For any $\mathbbm{k} = (k_n)_{n\in\N}\in \Delta$ and any $\ell \in \Z$, we have  
 \begin{align}\label{explicit-ns-1}
 \widehat{\mu_{\mathbbm{k}}}(\varpi^{-\ell} e_{11}) =   \exp\Big(- \log q  \cdot \sum\limits_{j=1}^\infty (k_j+\ell )\mathbbm{1}_{\{k_j+\ell \ge 1\}}\Big).
 \end{align}
More generally, for  any $\ell_1, \cdots, \ell_r\in\Z$, we have 
 \begin{align}\label{explicit-fourier}
 \widehat{\mu_{\mathbbm{k}}}(\diag (\varpi^{-\ell_1}, \cdots, \varpi^{-\ell_r}, 0, 0, \cdots)) = \prod_{ i=1}^r    \widehat{\mu_{\mathbbm{k}}}(\varpi^{-\ell_i} e_{11}).
 \end{align}
\end{prop}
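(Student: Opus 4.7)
The strategy is direct computation of the Fourier integral, exploiting the independence built into Definition \ref{defn-rm} and invoking Lemma \ref{ball-fourier} twice.

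First I would dispose of \eqref{explicit-fourier}. For a diagonal matrix $A = \diag(\varpi^{-\ell_1},\dots,\varpi^{-\ell_r},0,\dots)$ one has $\tr(A M_{\mathbbm{k}}) = \sum_{i=1}^r \varpi^{-\ell_i}(M_{\mathbbm{k}})_{ii}$, so the issue is to show that the diagonal entries $(M_{\mathbbm{k}})_{ii}$ for $i=1,\dots,r$ are mutually independent. This is immediate from Definition \ref{defn-rm}: the $i$-th diagonal entry depends only on $\{X_i^{(n)}, Y_i^{(n)}\}_{n\ge 1}$ and $Z_{ii}$, and these families for distinct $i$ lie in disjoint blocks of the globally independent collection. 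Consequently the expectation of $\chi(\tr(AM_{\mathbbm{k}}))$ factorizes, which is precisely \eqref{explicit-fourier} once we recognize the $i$-th factor as $\widehat{\mu_{\mathbbm{k}}}(\varpi^{-\ell_i} e_{11})$.

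Next I would prove \eqref{explicit-ns-1}. Write $k = \lim_n k_n$ and fix $\ell \in \Z$. Using the independence of the family $\{X_1^{(n)}, Y_1^{(n)}\}_{n\ge 1} \cup \{Z_{11}\}$, we have
\begin{equation*}
\widehat{\mu_{\mathbbm{k}}}(\varpi^{-\ell}e_{11}) \;=\; \Bigg(\prod_{n:\, k_n>k} \E\bigl[\chi(\varpi^{-(\ell+k_n)} X_1^{(n)} Y_1^{(n)})\bigr]\Bigg) \cdot \E\bigl[\chi(\varpi^{-(\ell+k)} Z_{11})\bigr].
\end{equation*}
The $Z$-factor is $\int_{\O_F}\chi(\varpi^{-(\ell+k)} z)\,dz$; by Lemma \ref{ball-fourier} this equals $\mathbbm{1}_{\O_F}(\varpi^{-(\ell+k)})$, i.e. $1$ if $\ell+k\le 0$ and $0$ if $\ell+k\ge 1$ (and $1$ in the convention $k=-\infty$, when the term is absent). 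For the $X$-$Y$ factor with $m := \ell+k_n$, Fubini plus Lemma \ref{ball-fourier} applied first in $x$ gives $\int_{\O_F}\mathbbm{1}_{\varpi^m\O_F}(y)\,dy$, which equals $1$ if $m\le 0$ and $q^{-m}$ if $m\ge 1$. Thus each factor contributes $q^{-(k_n+\ell)\mathbbm{1}_{k_n+\ell\ge 1}}$.

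Finally I would combine the factors and check the three regimes against the right-hand side of \eqref{explicit-ns-1}, written as $q^{-\sum_j(k_j+\ell)\mathbbm{1}_{k_j+\ell\ge 1}}$. If $k=-\infty$, every $n$ satisfies $k_n>k$, only finitely many terms have $k_n+\ell\ge 1$ (since $k_n\to-\infty$), and the product matches the sum. If $k$ is finite with $\ell+k\le 0$, then $k_n=k$ for large $n$ forces the tail of the sum to vanish, the $Z$-factor equals $1$, and both sides coincide. If $k$ is finite with $\ell+k\ge 1$, the $Z$-factor forces the left side to $0$, while on the right the sum diverges and $q^{-\infty}=0$ by the conventions fixed in \S\ref{sec-pre}. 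The only mildly delicate point is this last consistency check with the convention $q^{-\infty}=0$ and the infinite-product convergence; once that bookkeeping is settled the identity \eqref{explicit-ns-1} follows, and together with the factorization already established this completes the proof.
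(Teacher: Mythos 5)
Your proof is correct and follows essentially the same route as the paper: factorization of $\widehat{\mu_{\mathbbm{k}}}$ over the independent diagonal entries, then factorization of the $(1,1)$-entry expectation over the independent Wishart factors $X_1^{(n)}Y_1^{(n)}$ and the Haar factor $Z_{11}$, evaluating each via Lemma~\ref{ball-fourier}. The only (minor) organizational difference is that the paper separates the computation of \eqref{explicit-ns-1} into two cases — $\lim k_n=-\infty$, done directly, and $\lim k_n=k\in\Z$, reduced by the convolution formula \eqref{ch-conv} to the pure Haar sequence $(k,k,\dots)$ — and quotes Lemma~\ref{lem-Theta} for the Wishart factor, whereas you keep the Wishart and Haar contributions side by side in a single product, re-derive the value of the Wishart factor from Lemma~\ref{ball-fourier} directly, and defer the case distinction to the final matching against the closed-form right-hand side; the underlying identities used are the same.
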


Introduce a function  $\Theta: F \rightarrow \C$ by
\begin{align}\label{defn-Theta} 
\Theta (x) = \int_{\O_F\times \O_F} \chi(z_1 z_2 \cdot x ) dz_1 dz_2.
\end{align}

\begin{lem}\label{lem-Theta}
The function $\Theta$ is given by 
\begin{align*}
\Theta(x)= q^{-\ell \1_{\{\ell\ge 1\}}}, \quad |x| = q^{\ell}. 
\end{align*}
\end{lem}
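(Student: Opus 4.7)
\medskip

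\noindent\textbf{Proof plan.} The plan is to apply Fubini's theorem to separate the two variables and then invoke Lemma \ref{ball-fourier} with $l=0$, which states $\int_{\O_F}\chi(xy)\,dx=\mathbbm{1}_{\O_F}(y)$ since $\vol(\O_F)=1$. First I would write
\begin{equation*}
\Theta(x)=\int_{\O_F}\Bigl(\int_{\O_F}\chi(z_1\cdot z_2 x)\,dz_1\Bigr)dz_2=\int_{\O_F}\mathbbm{1}_{\O_F}(z_2 x)\,dz_2,
\end{equation*}
so that $\Theta(x)$ equals the Haar measure of the set $\{z_2\in\O_F : z_2 x\in\O_F\}$.

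Next I would evaluate this volume according to the size of $x$. Write $|x|=q^{\ell}$, equivalently $\mathrm{ord}_F(x)=-\ell$. If $\ell\le 0$ then $x\in\O_F$, hence $z_2 x\in\O_F$ for every $z_2\in\O_F$ and $\Theta(x)=1=q^{0}$. If $\ell\ge 1$, then $x=\varpi^{-\ell}u$ for some unit $u\in\O_F^{\times}$, and the condition $z_2 x\in\O_F$ is equivalent to $z_2\in\varpi^{\ell}\O_F$; since $\ell\ge 1$ this set is already contained in $\O_F$, so its volume is $q^{-\ell}$. Combining the two cases yields $\Theta(x)=q^{-\ell\mathbbm{1}_{\{\ell\ge 1\}}}$, which is exactly the stated formula.

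There is no real obstacle here: the only ingredients are Fubini (justified by the compactness of $\O_F$ and boundedness of $\chi$), the normalization $\vol(\O_F)=1$, and Lemma \ref{ball-fourier}. The case split according to whether $x\in\O_F$ or not corresponds precisely to whether multiplication by $x$ shrinks $\O_F$ inside itself or stretches it outside $\O_F$.
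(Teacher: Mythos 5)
Your proof is correct and follows essentially the same route as the paper: the paper also reduces to Lemma~\ref{ball-fourier} via Fubini (phrased there as a conditional expectation $\E[\E[\chi(\varpi^{-\ell}XY)\mid Y]]$, with a preliminary reduction to $x=\varpi^{-\ell}$ by rotation invariance), and then evaluates the resulting Haar volume exactly as you do.
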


\begin{proof}
 Let $|x| = q^{\ell}$. Then there exists $u\in \O_F^{\times}$, such that $x = \varpi^{-\ell} u$.  By rotation invariance, $\Theta(x)  = \Theta(\varpi^{-\ell})$.  
 Now by Lemma \ref{ball-fourier}, 
 \begin{align*}
 & \Theta(\varpi^{-\ell})  = \E [ \chi( \varpi^{-\ell}  X_1^{(1)} Y_1^{(1)} )] 
  = \E\Big( \E \Big[  \chi( \varpi^{-\ell}  X_1^{(1)} Y_1^{(1)} ) \Big| Y_1^{(1)} \Big]\Big) 
  \\
 & = \E(\1_{\O_F}(\varpi^{-\ell} Y_1^{(1)} ))
  = \PP(|Y_1^{(1)} | \le q^{-\ell}) =  q^{-\ell \1_{\{\ell\ge 1\}}}.
 \end{align*}
\end{proof}

\begin{rem}
In the formulae below, for graphical convenience, we write $\exp(-\log q \cdot  \ell  \1_{\{\ell \ge 1\}})$ instead of $q^{-\ell \1_{\{\ell\ge 1\}}}$.
\end{rem}

\begin{proof}[Proof of Proposition \ref{prop-explicit}]
The identity \eqref{explicit-fourier} follows from the independence between  all diagonal coefficient of $M_{\mathbbm{k}}$. So we only need to prove the identity \eqref{explicit-ns-1}. 

First assume that $\mathbbm{k} = (k_n)_{n\in\N}\in \Delta$ is such that $\lim\limits_{n\to\infty} k_n = -\infty.$
By the independence between all $X_1^{(n)}$ and $Y_1^{(n)}, n \in\N$, we have
 \begin{align*}
 &\widehat{\mu_{\mathbbm{k}}}(\varpi^{-\ell} e_{11})  
 = \E\Big[\chi\Big(   \tr \Big(    \Big [ \sum_{n=1}^\infty \varpi^{-k_n}X_i^{(n)} Y_j^{(n)}\Big]_{i, j\in\N}  \varpi^{-\ell} e_{11} \Big) \Big)\Big]
 \\
 &=   \E[\chi(     \sum_{n=1}^\infty \varpi^{-k_n- \ell}X_1^{(n)} Y_1^{(n)}  )]
 =        \prod_{n=1}^\infty \E[\chi(  \varpi^{-k_n- \ell}X_1^{(n)} Y_1^{(n)}  )]  .
 \end{align*}
 By Lemma \ref{lem-Theta}, we get 
  \begin{align*}
 \widehat{\mu_{\mathbbm{k}}}(\varpi^{-\ell} e_{11})   
 =&       \prod_{n=1}^\infty   \exp\Big(-\log q \cdot   (k_n+ \ell) \1_{\{   k_n+\ell \ge 1\}} \Big)
 \\
   =&     \exp\Big(- \log q  \cdot \sum\limits_{n=1}^\infty (k_n+\ell)\mathbbm{1}_{\{k_n+\ell\ge 1\}}\Big). 
 \end{align*}
 
Now assume that there exists $m \in\N\cup\{0\}$ and $k\in \Z$, such that 
\begin{align*}
\text{$k_1 \ge \cdots \ge k_m > k $ and  $k_n  = k$ for any $n \ge m+1$.}
\end{align*}
By previous computation and the formula \eqref{ch-conv} for the characteristic functions of  convolutions of probability measures, we only need to consider the case  when $\mathbbm{k} = (k_n)_{n\in\N} $ is such that 
\begin{align*}
\text{$k_n = k\in\Z$ for any $n\in\N$.}
\end{align*} In this case,  $\mu_\mathbbm{k}  = \mathcal{L}(  \varpi^{-k}Z)$  with $ Z$ an infinite   random matrix sampled uniformly from  $\Mat(\N, \O_F)$.  Hence by Lemma \ref{ball-fourier}, we obtain
 \begin{align*}
  & \widehat{\mu_\mathbbm{k}}(\varpi^{-\ell} e_{11})  
   = \E [ \chi( \varpi^{-\ell- k} Z_{11})]   =  \1_{ \O_F}(\varpi^{-\ell-k}) =  \1_{\{k + \ell \le 0 \}}.
   \end{align*}
   But if $k_n = k$ for any $n\in\N$, we have
   \begin{align}\label{indicator-tran}
   \1_{\{k + \ell \le 0 \}} =  \exp\Big(- \log q  \cdot \sum\limits_{n=1}^\infty (k_n+\ell)\mathbbm{1}_{\{k_n+\ell \ge 1\}}\Big). 
 \end{align}
 This proves the identity \eqref{explicit-ns-1} in the second case and we complete the proof of Proposition \ref{prop-explicit}. 
\end{proof}

\subsection{Measures on $\Sym(\N, F)$}

By the elementary observation \eqref{inv-sym-diag}, for studying the characteristic function of $\nu_{\mathbbm{h}}$, it suffices to compute 
\begin{align*}
\widehat{\nu_{\mathbbm{h}}} (\diag(x_1 ,\cdots ,x_r, 0, 0,\cdots))  
\end{align*}
for any $r \in\N$ and  any $ x_1,\cdots,x_r \in F$.

Recall the definition \eqref{defn-theta} for the function $\theta$:  
\begin{align*}
\theta (x) = \int_{\O_F} \chi(z^2 \cdot x ) dz.
\end{align*}

\begin{prop}\label{prop-ch-sym}
Let  $ \mathbbm{h} =(k; \mathbbm{k}, \mathbbm{k}') \in \Omega$. Then for any $x\in F$, we have
\begin{align}\label{ex-ch-1}
\widehat{\nu_{\mathbbm{h}}}(x e_{11})  =  \1_{\O_F}(\varpi^{-k} x) \cdot  \prod_{n=1}^\infty  \theta(  \varpi^{- k_n} x) \prod_{n=1}^\infty  \theta( \varepsilon \varpi^{- k_n'} x).
\end{align}
More generally, for any $r\in \N$ and  any $ x_1,\cdots,x_r \in F$, we have
\begin{align}\label{ex-ch-sym}
\widehat{\nu_{\mathbbm{h}}} (\diag(x_1 ,\cdots ,x_r, 0, 0,\cdots))    = \prod_{i = 1}^r\widehat{\nu_{\mathbbm{h}}}(x_i e_{11}). 
\end{align}
\end{prop}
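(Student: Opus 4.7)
The plan is to unpack the definition of $S_{\mathbbm{h}}$, isolate the diagonal entries involved, and then exploit the independence of the underlying atomic random variables to factorize the characteristic function into one-variable integrals, each of which matches either $\theta$ or the indicator $\1_{\O_F}$.

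First I would prove \eqref{ex-ch-1}. Since $\tr(x e_{11} S_{\mathbbm{h}}) = x (S_{\mathbbm{h}})_{11}$, and the $(1,1)$-entry of $S_{\mathbbm{h}}$ is
\[
(S_{\mathbbm{h}})_{11} = \sum_{n=1}^\infty \varpi^{-k_n}(X_1^{(n)})^2 + \varepsilon \sum_{n=1}^\infty \varpi^{-k_n'}(Y_1^{(n)})^2 + \varpi^{-k} H_{11},
\]
one observes that all the summands on the right hand side are independent random variables. Consequently, the expectation of $\chi(x(S_{\mathbbm{h}})_{11})$ factors as the product of three kinds of terms: terms of the form $\E[\chi(\varpi^{-k_n} x (X_1^{(n)})^2)]$, which by definition \eqref{defn-theta} equal $\theta(\varpi^{-k_n} x)$; terms of the form $\E[\chi(\varepsilon \varpi^{-k_n'} x (Y_1^{(n)})^2)] = \theta(\varepsilon \varpi^{-k_n'} x)$; and the single term $\E[\chi(\varpi^{-k} x H_{11})]$. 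The latter is the integral of $\chi(\varpi^{-k} x \cdot z)$ over the uniform $z \in \O_F$, which by Lemma \ref{ball-fourier} equals $\1_{\O_F}(\varpi^{-k}x)$.

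Before combining these factors I should address convergence of the (a priori infinite) products over $n$. This is where the conventions on $\mathbbm{k} \in \Delta[k]$ and $\mathbbm{k}' \in \Delta^{\sharp}[k]$ become essential: if the sequence is finite the issue vanishes because $\varpi^{-\infty}=0$ and $\theta(0)=1$; if $\mathbbm{k}$ is infinite then it belongs to $\Delta[-\infty]$ and so $k_n \to -\infty$. But $\theta(y) = 1$ whenever $y \in \O_F$, hence as soon as $k_n \le \mathrm{ord}_F(x)$ the corresponding factor equals $1$. Thus only finitely many factors in each infinite product are nontrivial, justifying \eqref{ex-ch-1}.

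For the multiplicativity assertion \eqref{ex-ch-sym}, the trace gives $\tr(\diag(x_1,\dots,x_r,0,\dots) S_{\mathbbm{h}}) = \sum_{i=1}^r x_i (S_{\mathbbm{h}})_{ii}$, so the question reduces to checking that the diagonal entries $(S_{\mathbbm{h}})_{11},\dots,(S_{\mathbbm{h}})_{rr}$ are mutually independent and identically distributed. They are identically distributed because for each $i$ we have the same structural formula in terms of independent $\O_F$-uniform random variables. They are mutually independent because the collections $\{X_i^{(n)},Y_i^{(n)}\}_{n}$ and $\{H_{ii}\}$ used for distinct indices $i$ involve entirely disjoint families of the underlying independent variables defining $S_{\mathbbm{h}}$ (the off-diagonal entries $H_{ij}$ play no role here). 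Therefore $\E[\chi(\sum_i x_i (S_{\mathbbm{h}})_{ii})] = \prod_{i=1}^r \widehat{\nu_{\mathbbm{h}}}(x_i e_{11})$ and \eqref{ex-ch-sym} follows.

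The argument is essentially a direct calculation; the only mildly delicate point is the convergence of the infinite products, which is resolved by the vanishing of $\theta$'s exponent on $\O_F$ together with the constraint $k_n \to -\infty$ built into the parameter space $\Omega$.
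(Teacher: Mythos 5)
Your proof is correct and follows essentially the same approach as the paper's: both compute $(S_{\mathbbm{h}})_{11}$ (respectively the diagonal entries for \eqref{ex-ch-sym}) and factor the expectation by independence of the underlying random variables, identifying each factor with $\theta$ via Definition \eqref{defn-theta} or with the indicator via Lemma \ref{ball-fourier}. The only cosmetic difference is organizational: the paper splits into cases $k=-\infty$ and $k\in\Z$ and phrases the factorization via convolution of measures, whereas you treat all cases at once directly at the level of random variables and handle convergence by noting that only finitely many factors differ from $1$; both routes amount to the same calculation.
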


Define a function $L_2: \O_F^{\times} \rightarrow \{-1, 1\}$ by setting $L_2(u) = 1$ if $u$ is a square element in $\O_F^{\times}$ and setting $L_2(u) = -1$ if $u$ is a non-square element in $\O_F^{\times}$.   Denote
\begin{align}\label{def-rho-q}
\begin{split}
\varrho_{q} : = 
\left\{ \begin{array}{cl}
                1, & \text{ if $q \equiv 1 (\modulo 4)$}
        \\ 
                 i, & \text{ if  $q \equiv 3 (\modulo 4)$} 
      \end{array}
 \right..
 \end{split}
\end{align}

Recall that for any $x \in F$, we have $| x| = q^{-\mathrm{ord}_F(x)}$.

\begin{prop}\label{prop-theta-detail}
The function $\theta: F \rightarrow \C$ is continuous and  satisfies the following properties:
\begin{itemize}
\item[(i)] If $ |x| \le 1$, then $\theta (x ) = 1$. 
\item[(ii)] If  $|x|  >1$ and $\mathrm{ord}_{F} (x) \equiv 0 (\modulo 2)$, then 
$\theta(x) = |x|^{-1/2}.$
\item[(iii)] If  $|x|  >1$ and $\mathrm{ord}_{F} (x) \equiv 1 (\modulo 2)$, then by writing $x  = \varpi^{-\ell}u$ with $\ell \in \N$ and $u\in \O_F^{\times}$, we have
\begin{align}\label{odd-ord}
 \theta(x)= s_{\chi} \varrho_q \cdot \frac{ L_2(u) }{|x|^{1/2}},
\end{align}
where $s_\chi \in\{-1, 1\}$ depends on the choice of $\chi$. 
\end{itemize}
In particular, $\theta$  satisfies the following property: 
\begin{align}\label{property-theta}
\theta(x)^2 = \theta(\varepsilon x)^2 \ne 0, \text{ for all $x\in F$.}
\end{align}   Moreover, if $x  = \varpi^{-\ell}u$ with $\ell \in \Z\cup\{-\infty\}$ and $u\in \O_F^{\times}$
\begin{align}\label{sup-1}
|\theta(x) |^2 &=    \exp(-  \log   |x|  \cdot \1_{|x| >1}) =  \exp(- \ell \log  q  \cdot \1_{\{\ell \ge 1\}}).
\end{align}
\end{prop}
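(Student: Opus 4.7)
My plan is to analyze $\theta(x)$ by reducing the integral over $\mathcal{O}_F$ to a finite sum after splitting $\mathcal{O}_F$ into cosets of a suitable power of $\varpi$, and then recognising the resulting sum either as a trivial ``delta'' or as a classical quadratic Gauss sum over the residue field $\mathbb{F}_q$.

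First, continuity of $\theta$ follows from dominated convergence (the integrand is bounded by $1$ and continuous in $x$ for each $z$). Part (i) is immediate: if $|x|\le 1$ then $z^2x\in\O_F$ for all $z\in\O_F$, hence $\chi(z^2x)=1$ by \eqref{assumption-chi}. For (ii) and (iii), I write $x=\varpi^{-\ell}u$ with $\ell\ge 1$ and $u\in\O_F^{\times}$, and split $\O_F=\bigsqcup_{z_0} (z_0+\varpi^{m+1}\O_F)$, where $m=\lfloor\ell/2\rfloor$ and $z_0$ ranges over a set of representatives. For $v\in\varpi^{m+1}\O_F$,
\[
(z_0+v)^2 x = z_0^2 x + 2 z_0 v x + v^2 x,
\]
and because $F$ is non-dyadic and $2m+2\ge \ell$, one has $v^2x\in\O_F$, so $\chi(v^2x)=1$. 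Applying Lemma~\ref{ball-fourier} to the linear term $\int_{\varpi^{m+1}\O_F}\chi(2z_0 x v)\,dv$ (here $2\in\O_F^{\times}$) selects exactly those $z_0$ for which $2z_0\varpi^{-m-1+\ell}u\in\O_F$, i.e. $z_0\in\varpi^{\ell-m-1}\O_F$.

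In case (ii) with $\ell=2m$, this forces $z_0=0$ as a representative modulo $\varpi^{m+1}\O_F$... actually $\ell-m-1=m-1<m+1$, so we need $z_0\in\varpi^{m-1}\O_F$, giving representatives in $\varpi^{m-1}\O_F/\varpi^{m+1}\O_F$; but then the resulting sum $\sum \chi(z_0^2 x)$ can be re-examined directly. Cleaner: take the coset decomposition relative to $\varpi^{m}\O_F$ in case (ii) and $\varpi^{m+1}\O_F$ in case (iii). In the even case $\ell=2m$, the linear-term vanishing forces $z_0=0\pmod{\varpi^m}$, yielding $\theta(x)=q^{-m}=|x|^{-1/2}$. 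In the odd case $\ell=2m+1$, the analogous argument leaves $z_0$ ranging over representatives of $\varpi^m\O_F/\varpi^{m+1}\O_F\cong \mathbb{F}_q$; writing $z_0=\varpi^m c$ and reducing, we obtain
\[
\theta(x)=q^{-m-1}\sum_{c\in\mathcal{C}_q}\chi\bigl(\varpi^{-1}c^2 u\bigr).
\]
The map $\psi(\bar a):=\chi(\varpi^{-1}a)$ is, by \eqref{assumption-chi}, a non-trivial additive character of $\mathbb{F}_q$, and the inner sum is the classical quadratic Gauss sum $\sum_{c\in\mathbb{F}_q}\psi(c^2\bar u)=\eta(\bar u)\,g(\eta,\psi)$, where $\eta$ is the quadratic character on $\mathbb{F}_q^{\times}$. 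Using Lemma~\ref{lem-clopen} we identify $\eta(\bar u)=L_2(u)$, and the evaluation $g(\eta,\psi)^{2}=\eta(-1)q$ together with $|g(\eta,\psi)|^2=q$ yields $g(\eta,\psi)=s_\chi\varrho_q\,q^{1/2}$ for some $s_\chi\in\{\pm 1\}$ depending only on $\chi$. Since $|x|^{1/2}=q^{m+1/2}$, this gives \eqref{odd-ord}.

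Finally, \eqref{property-theta} follows by squaring: in case (ii) both $\theta(x)$ and $\theta(\varepsilon x)$ equal $|x|^{-1/2}$; in case (iii) they differ only by the sign $L_2(\varepsilon u)/L_2(u)=L_2(\varepsilon)=-1$, which is killed by squaring; and in case (i) both are $1$. The modulus formula \eqref{sup-1} collects the three cases into $|\theta(x)|^2=q^{-\ell\mathbf{1}_{\{\ell\ge 1\}}}$. I expect the main obstacle to be the Gauss sum evaluation in case (iii): specifically, pinning down the factor $\varrho_q$ requires invoking the classical identity $g(\eta,\psi)^2=\eta(-1)q$ together with the fact that $\eta(-1)=1$ exactly when $q\equiv 1\pmod 4$; the sign ambiguity $s_\chi$ genuinely depends on $\chi$ and cannot be eliminated, so it must simply be absorbed into the statement.
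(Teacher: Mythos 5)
Your proof is correct, and it takes a genuinely different route from the paper. The paper begins by establishing (via the non-Archimedean change-of-variables theorem and a bi-analytic partition $\O_F^* = U_1\sqcup U_2$) that $\theta = \widehat{g}$ for the explicit function $g(y) = 2|y|^{-1/2}\1_{(\O_F^*)^2}(y)$, expands $g$ over the cosets $\varpi^{2k}a + \varpi^{2k+1}\O_F$ and computes $\widehat{g}$ directly; the Gauss sum then appears when the leading shell is resolved. You instead complete the square over cosets of $\varpi^m\O_F$ (even case) or $\varpi^{m+1}\O_F$ (odd case), kill the cross term via Lemma~\ref{ball-fourier}, and land on the quadratic Gauss sum $\sum_{c\in\F_q}\psi(c^2\bar u)$ over the residue field without ever invoking the change-of-variables machinery or Proposition~\ref{prop-partition}. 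Your route is more elementary and shorter; the paper's route, at the cost of extra machinery, records the identity $\theta = \widehat g$ as a byproduct, which is structurally informative. The two approaches converge at the Gauss sum: you quote $g(\eta,\psi)^2 = \eta(-1)q$, while the paper derives the equivalent facts $|g|=\sqrt q$ and $g\in\varrho_q\R$ in Lemma~\ref{gauss-sum}; these pin down $g(\eta,\psi)=s_\chi\varrho_q\sqrt q$ identically, so the sign ambiguity $s_\chi$ appears in both. One presentational remark: in your even case the first attempt (cosets of $\varpi^{m+1}\O_F$) leaves a two-level sum over $\varpi^{m-1}\O_F/\varpi^{m+1}\O_F$; your ``cleaner'' fix using cosets of $\varpi^m\O_F$ is the right one and collapses the sum to the single term $z_0=0$, so in a final write-up you should simply use that decomposition from the start rather than carry the self-correction.
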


Let us postpone the proof of  Proposition \ref{prop-theta-detail} to \S \ref{sec-theta-detail}.

\begin{lem}\label{lem-inf-prod}
Fix an element $a\in F$. Then for any $\mathbbm{k} = (k_j)_{j\in\N} \in\Delta$, the following infinite product
\begin{align}\label{inf-p-sym}
\prod_{j=1}^\infty \theta(a\cdot \varpi^{-k_j}) 
\end{align}
converges.  Moreover, if $\lim\limits_{j\to\infty} k_j = k \in \Z$,
then 
\begin{align}\label{inf-prod-red}
\prod_{j=1}^\infty \theta(a\cdot \varpi^{-k_j})  =  \1_{\O_F} (a\cdot \varpi^{-k})  \cdot \prod_{j\in \{n |k_n> k\}} \theta(a\cdot \varpi^{-k_j})  
\end{align}
\end{lem}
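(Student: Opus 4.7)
Plan: Set $k := \lim_{j\to\infty} k_j \in \Z \cup \{-\infty\}$. Since $(k_j)_{j\in\N}$ is non-increasing in $\Z \cup \{-\infty\}$, either $k = -\infty$ (in which case the sequence either decreases to $-\infty$ in $\Z$ or is eventually $-\infty$), or $k \in \Z$ in which case the sequence is eventually equal to $k$, i.e.\ the set $J_> := \{j \in \N : k_j > k\}$ is a finite initial segment of $\N$ and $k_j = k$ for every $j \notin J_>$. I would then analyse the infinite product along these two cases, using Proposition \ref{prop-theta-detail}(i) to recognise the trivial factors and the estimate \eqref{sup-1} to control the non-trivial ones.

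First I would treat the case $k = -\infty$. By the convention $\varpi^{+\infty}=0$, one has $|a\varpi^{-k_j}| = |a|\, q^{k_j} \to 0$, so there exists $j_0$ with $a \varpi^{-k_j} \in \O_F$ for every $j \ge j_0$. Proposition \ref{prop-theta-detail}(i) then yields $\theta(a\varpi^{-k_j})=1$ for all $j \ge j_0$, and the infinite product collapses to the finite product $\prod_{j<j_0}\theta(a\varpi^{-k_j})$, which is unambiguously well defined. This settles convergence when the limit is $-\infty$.

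Next, in the case $k \in \Z$, for every $N \ge \max J_>$ the partial product factorises as
\begin{align*}
\prod_{j=1}^N \theta(a\varpi^{-k_j}) = \Big(\prod_{j \in J_>} \theta(a\varpi^{-k_j})\Big) \cdot \theta(a\varpi^{-k})^{\,N - \# J_>}.
\end{align*}
If $a\varpi^{-k} \in \O_F$ then Proposition \ref{prop-theta-detail}(i) gives $\theta(a\varpi^{-k})=1$, so the partial products stabilise at $\prod_{j \in J_>} \theta(a\varpi^{-k_j})$, which matches the right-hand side of \eqref{inf-prod-red} since $\1_{\O_F}(a\varpi^{-k})=1$. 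If $a\varpi^{-k} \notin \O_F$, then \eqref{sup-1} gives $|\theta(a\varpi^{-k})|=|a\varpi^{-k}|^{-1/2}<1$, the factor $\theta(a\varpi^{-k})^{N-\#J_>}$ tends to $0$, and the infinite product converges to $0$; the right-hand side of \eqref{inf-prod-red} is likewise zero because $\1_{\O_F}(a\varpi^{-k})=0$. Both subcases therefore confirm \eqref{inf-prod-red}.

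There is no real obstacle: the argument is an elementary bookkeeping built on the two facts from Proposition \ref{prop-theta-detail} (namely $\theta \equiv 1$ on $\O_F$ and $|\theta|<1$ outside $\O_F$), combined with the structural fact that a non-increasing $\Z\cup\{-\infty\}$-valued sequence is either eventually constant equal to a finite limit, or eventually falls into the region where $\theta$ equals $1$. The only mild care required is to treat consistently the entries $k_j = -\infty$ via the conventions $\varpi^{+\infty}=0$ and $\theta(0)=1$, which the above case split already absorbs.
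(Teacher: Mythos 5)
Your proof is correct and follows essentially the same approach as the paper's: exploit the fact that $(|a\varpi^{-k_j}|)_j$ is non-increasing to reduce to a finite product (when the tail falls into $\O_F$) or to conclude the product is zero (when all terms stay outside $\O_F$, so by \eqref{sup-1} every factor has modulus $\le q^{-1/2}<1$). Your write-up is merely a bit more explicit than the paper's one-paragraph argument, and in particular spells out the factorization of the partial products and both subcases of \eqref{inf-prod-red}, which the paper leaves to the reader.
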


\begin{proof}
Let $\mathbbm{k}=(k_j)_{j\in \N}\in \Delta$. Since $\mathbbm{k}$  is a non-increasing sequence, we have 
\begin{align*}
| a\cdot \varpi^{-k_1}| \ge | a\cdot \varpi^{-k_2}| \ge \cdots \ge  | a\cdot \varpi^{-k_j}|\cdots.
\end{align*}
Then either there exists $j_0\in\N$, such that $ | a\cdot \varpi^{-k_j}|\le1$ for all  $ j \ge j_0$ or  $| a\cdot \varpi^{-k_j} | >1 $  for all $j\in\N$.   Consequently,  the infinite product \eqref{inf-p-sym} either is a finite product or equals $0$. The identity \eqref{inf-prod-red} follows immediately form \eqref{sup-1}. 
\end{proof}

\begin{proof}[Proof of Proposition \ref{prop-ch-sym}]

The identity \eqref{ex-ch-sym} follows from the independence between  all diagonal coefficient of $S_{\mathbbm{h}}$. So we only need to prove the identity \eqref{ex-ch-1}. 

{\flushleft \bf Case 1:} $\mathbbm{h} =(-\infty; \mathbbm{k}, \mathbbm{k}'), \quad \mathbbm{k} \in \Delta[-\infty], \, \mathbbm{k}' \in \Delta^{\sharp}[-\infty]$.

In this case,  we have 
\begin{align*}
 \nu_{\mathbbm{h}} = \mathcal{L}(  W_{\mathbbm{k}} +  \varepsilon W_{\mathbbm{k}'} ) = \mathcal{L}(  W_{\mathbbm{k}})*\mathcal{L}(  \varepsilon W_{\mathbbm{k}'} ).
 \end{align*}
Thus for proving \eqref{ex-ch-sym}, it suffices to prove it for the probability  measures $\mathcal{L}(  W_{\mathbbm{k}})$ and  $\mathcal{L}(  \varepsilon W_{\mathbbm{k}'} )$. For instance, we have
\begin{align*}
& \widehat{\mathcal{L}(  W_{\mathbbm{k}})} 􏰁( x e_{11}) 
= \E\Big[\chi\Big( \tr \Big(      \Big [ \sum\limits_{n=1}^\infty \varpi^{- k_n}X_i^{(n)} X_j^{(n)}\Big]_{i, j\in\N} x e_{11}    \Big) \Big)\Big]
\\
=&  \E\Big[\chi\Big(       \sum\limits_{n=1}^\infty \varpi^{- k_n}(X_1^{(n)})^2 x  \Big) \Big]
= \E\Big[    \prod_{n=1}^\infty \chi\Big(    \varpi^{- k_n}  (X_1^{(n)})^2 x  \Big) \Big].
 \end{align*}
Then  by dominated convergence theorem and the independence between all $X_1^{(n)}, n \in\N$, we get 
 \begin{align*}
 & \widehat{\mathcal{L}(  W_{\mathbbm{k}})} 􏰁( x e_{11}) 
=     \prod_{n=1}^\infty  \E\Big[  \chi\Big(   \varpi^{- k_n} (X_1^{(n)})^2 x  \Big) \Big] =      \prod_{n=1}^\infty  \theta( \varpi^{- k_n} x ). 
 \end{align*}
 Similar computation works for $\mathcal{L}(  \varepsilon W_{\mathbbm{k}'})$.

{\flushleft \bf Case 2:} There exists $k\in\Z$ and $\mathbbm{h} =(k; \mathbbm{k}, \mathbbm{k}'), \,\mathbbm{k} \in \Delta[k], \, \mathbbm{k}' \in \Delta^{\sharp}[k]$.

In this case, we have
 \begin{align*}
 \nu_{\mathbbm{h}} &= \mathcal{L}(  W_{\mathbbm{k}} +  \varepsilon W_{\mathbbm{k}'}    +\varpi^{-k} H  ) 
 =  \mathcal{L}(  W_{\mathbbm{k}})  *\mathcal{L}(  \varepsilon W_{\mathbbm{k}'} ) * \mathcal{L}(\varpi^{-k}H). 
\end{align*}
By \eqref{ch-conv},  for proving \eqref{ex-ch-sym}, it suffices to prove it for the probability  measures 
\begin{align*}
\mathcal{L}(  W_{\mathbbm{k}}), \quad \mathcal{L}(  \varepsilon W_{\mathbbm{k}'} ) \an \mathcal{L}(\varpi^{-k}H).
\end{align*}
By the computation in Case 1, we only need to verify  \eqref{ex-ch-sym} for $\mathcal{L}(\varpi^{-k}H)$. A simple computation yields the desired identity
\begin{align*}
& \widehat{\mathcal{L}(\varpi^{-k}H)} 􏰁( x e_{11}) 
=\E\Big[\chi\Big( \tr \Big( \varpi^{-k} H x e_{11}    \Big) \Big)\Big]
\\
&=\E\Big[\chi\Big(    \varpi^{-k} H_{11}    x  \Big) \Big]
=  \1_{\O_F} (\varpi^{-k} x).
 \end{align*}
\end{proof}

\begin{rem}\label{rem-id-2-pf}
Let us prove the identity \eqref{id-2} mentioned in  Remark \ref{rem-id-2}.  Denote
\begin{align*}
\sigma_1: = \mathcal{L} \Big(\Big [ \sum\limits_{n=1}^2 X_i^{(n)} X_j^{(n)}\Big]_{i, j\in\N}\Big), \quad\sigma_2 :=  \mathcal{L} \Big(  \varepsilon \Big [ \sum\limits_{n=1}^2 X_i^{(n)} X_j^{(n)}\Big]_{i, j\in\N}\Big). 
\end{align*}
Note that $\sigma_1, \sigma_2$ are both $\GL(\infty, \O_F)$-invariant. Since for any $x\in F$, we have $\theta(x)^2 = \theta(\varepsilon x)^2$. Consequently
\begin{align*}
& \widehat{\sigma_1} (\diag(x_1 ,\cdots ,x_r, 0,\cdots)) 
= \prod_{i = 1}^r \theta(x_i )^2 =  \prod_{i = 1}^r   \theta(\varepsilon x_i )^2 
\\
 &=\widehat{\sigma_2} (\diag(x_1 ,\cdots ,x_r, 0,\cdots)) .
\end{align*}
It follows that $\sigma_1 = \sigma_2$.
\end{rem}

%%%%%%%%%
%%%%%%%%%
%%%%%%%%%
%%%%%%%%%
%%%%%%%%%
%%%%%%%%%
%%%%%%%%%
%%%%%%%%%
%%%%%%%%%
%%%%%%%%%
%%%%%%%%%
%%%%%%%%%
%%%%%%%%%
%%%%%%%%%
\section{Uniqueness of parametrization}\label{sec-unique}
In this section, we will prove two uniqueness results, Proposition \ref{prop-unique} and Proposition \ref{prop-unique-sym}.

\subsection{Measures on $\Mat(\N, F)$}
\begin{prop}\label{prop-unique}
Let $\mathbbm{k}, \widetilde{\mathbbm{k}} \in \Delta$. Then   $\mu_{\mathbbm{k}} = \mu_{\widetilde{\mathbbm{k}}}$ if and only if $\mathbbm{k} = \widetilde{\mathbbm{k}}$.
\end{prop}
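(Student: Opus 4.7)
The plan is to reduce the injectivity to a question about a single real-valued function of an integer, and then extract the sequence $\mathbbm{k}$ from that function by a finite-difference argument.

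First I would reduce the problem to the characteristic function. Since Borel probability measures on $\Mat(\N,F)$ are determined by their Fourier transforms on $\Mat(\infty,F)$, and since each $\mu_{\mathbbm{k}}$ is $\GL(\infty,\O_F)\times\GL(\infty,\O_F)$-invariant (Proposition \ref{prop-inv}), Remark \ref{rem-fourier-inv} together with Lemma \ref{lem-s-num} implies that $\widehat{\mu_{\mathbbm{k}}}$ is determined by its values on diagonal matrices of the form $\diag(\varpi^{-\ell_1},\dots,\varpi^{-\ell_r},0,0,\dots)$. By the multiplicativity formula \eqref{explicit-fourier}, these values factor as products of $\widehat{\mu_{\mathbbm{k}}}(\varpi^{-\ell_i} e_{11})$. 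Hence $\mu_{\mathbbm{k}}=\mu_{\widetilde{\mathbbm{k}}}$ is equivalent to the equality of the two scalar functions $\ell\mapsto\widehat{\mu_{\mathbbm{k}}}(\varpi^{-\ell}e_{11})$ on $\Z$.

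Next I would take logarithms using \eqref{explicit-ns-1} (with the convention $\exp(-\infty)=0$), so that the problem becomes: show that the non-decreasing function
\[
f_{\mathbbm{k}}(\ell):=\sum_{j=1}^{\infty}(k_j+\ell)\,\mathbbm{1}_{\{k_j+\ell\ge 1\}}\in[0,\infty], \qquad \ell\in\Z,
\]
determines $\mathbbm{k}\in\Delta$. Set $k:=\lim_{n\to\infty}k_n\in\Z\cup\{-\infty\}$ and, for $m\in\Z$, let $N_{\mathbbm{k}}(m):=\#\{j:k_j\ge m\}\in\Z_{\ge 0}\cup\{\infty\}$. The plan is to recover $k$ first, and then the finite multiplicities $N_{\mathbbm{k}}(m)$ for $m>k$. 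Observe that if $k=-\infty$, then $f_{\mathbbm{k}}(\ell)<\infty$ for every $\ell$, whereas if $k\in\Z$, then the infinite tail of entries equal to $k$ forces $f_{\mathbbm{k}}(\ell)=\infty$ precisely when $k+\ell\ge 1$; thus
\[
k=\begin{cases}-\infty,& \text{if }f_{\mathbbm{k}}(\ell)<\infty \text{ for all }\ell,\\ -\max\{\ell\in\Z:f_{\mathbbm{k}}(\ell)<\infty\},& \text{otherwise,}\end{cases}
\]
is recovered from $f_{\mathbbm{k}}$.

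Finally I would take finite differences on the regime where $f_{\mathbbm{k}}$ is finite. For $\ell$ with $f_{\mathbbm{k}}(\ell+1)<\infty$ (so the relevant sums are in fact finite), a direct computation splitting the sum according to whether $k_j\ge 1-\ell$ or $k_j=-\ell$ gives
\[
f_{\mathbbm{k}}(\ell+1)-f_{\mathbbm{k}}(\ell)=\#\{j:k_j\ge 1-\ell\}+\#\{j:k_j=-\ell\}=N_{\mathbbm{k}}(-\ell).
\]
This recovers $N_{\mathbbm{k}}(m)$ for every $m>k$, which, together with the knowledge of $k$ (and the convention that all entries of $\mathbbm{k}$ not accounted for above equal $k$, be it an integer or $-\infty$), reconstructs the non-increasing sequence $\mathbbm{k}$ uniquely. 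The main point requiring care is the bookkeeping in the case $k\in\Z$, where infinitely many entries of $\mathbbm{k}$ coincide with $k$ and so do not appear as increments of $f_{\mathbbm{k}}$; but the separate determination of $k$ from the boundary of the finiteness region of $f_{\mathbbm{k}}$ handles this cleanly, completing the proof.
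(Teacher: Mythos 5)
Your reduction to the scalar function $\ell\mapsto\widehat{\mu_{\mathbbm{k}}}(\varpi^{-\ell}e_{11})$ is exactly the one the paper uses, via Lemma \ref{lem-s-num}, Remark \ref{rem-fourier-inv}, and the explicit formulas \eqref{explicit-ns-1}--\eqref{explicit-fourier}. Where you diverge is in proving the injectivity of $\mathbbm{k}\mapsto f_{\mathbbm{k}}$ (the content of Lemma \ref{lem-inj}). The paper's argument is pointwise: it locates the first index $j_0$ at which $\mathbbm{k}$ and $\widetilde{\mathbbm{k}}$ disagree, assumes say $k_{j_0}>\widetilde{k}_{j_0}$, and shows that $f_{\mathbbm{k}}$ and $f_{\widetilde{\mathbbm{k}}}$ already differ at the single shift $\ell=1-k_{j_0}$. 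You instead give an explicit reconstruction: the tail value $k=\lim_j k_j$ is read off from the finiteness locus of $f_{\mathbbm{k}}$, and the counting function $N_{\mathbbm{k}}(m)=\#\{j:k_j\ge m\}$ for $m>k$ is read off via the finite difference $f_{\mathbbm{k}}(\ell+1)-f_{\mathbbm{k}}(\ell)=N_{\mathbbm{k}}(-\ell)$, valid wherever $f_{\mathbbm{k}}(\ell+1)<\infty$; I checked the arithmetic and it is correct. Recovering $\mathbbm{k}$ from $k$ and from $N_{\mathbbm{k}}(\cdot)$ on $\Z_{>k}$ (e.g.\ by $k_j=\sup\{m\in\Z:N_{\mathbbm{k}}(m)\ge j\}$, with the convention that $N_{\mathbbm{k}}(m)=\infty$ for $m\le k$ when $k\in\Z$) then closes the argument. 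Both routes are valid; the paper's is shorter, while yours is slightly longer but yields a clean inverse map, which makes the bijectivity of the parametrization transparent and is essentially the perspective the paper itself adopts later (cf.\ Remark \ref{rem-multi}) in the symmetric case.
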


\begin{lem}\label{lem-inj}
The map
\begin{align*}
\mathbbm{k}  = (k_j)_{j\in\N} \mapsto \Big(\sum\limits_{j=1}^\infty (k_j+\ell)\mathbbm{1}_{\{k_j+\ell\ge 1\}}\Big)_{\ell\in\Z}
\end{align*}
from $\Delta$ to $(\Z\cup\{+\infty\})^\Z$ is injective. 
\end{lem}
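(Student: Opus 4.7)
The plan is to recover the sequence $\mathbbm{k}$ from the function $\ell \mapsto S(\ell):=\sum_{j=1}^\infty (k_j+\ell)\1_{\{k_j+\ell\ge 1\}}$ by taking finite differences in the $\ell$ variable. Applying the layer-cake representation $\sum_j a_j^+ = \sum_{m\ge 1}\#\{j:a_j\ge m\}$ to $a_j = k_j+\ell$ and substituting $n = m-\ell$ yields the fundamental identity
\[
S(\ell) \;=\; \sum_{n\ge 1-\ell} N(n), \qquad N(n):=\#\{j\in\N: k_j\ge n\}.
\]
Since $(k_j)$ is non-increasing with limit $k:=\lim_j k_j \in\Z\cup\{-\infty\}$, the value $N(n)$ is finite precisely when $n>k$.

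The first step is to read off $k$ from $(S(\ell))_{\ell\in\Z}$. From the identity above, $S(\ell)=+\infty$ iff some $n\ge 1-\ell$ satisfies $N(n)=+\infty$, iff $k\ge 1-\ell$, iff $\ell\ge 1-k$. Hence when $k\in\Z$ one recovers $k = -\max\{\ell\in\Z: S(\ell)<\infty\}$, while $k=-\infty$ corresponds to $S(\ell)<\infty$ for every $\ell\in\Z$.

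The second step is to recover $N$, and hence $\mathbbm{k}$. For every $\ell\le -k$, both $S(\ell)$ and $S(\ell-1)$ are finite, and telescoping the fundamental identity gives
\[
S(\ell)-S(\ell-1) \;=\; N(1-\ell),
\]
so $N(m)$ is determined by the datum for every $m>k$. Since $N(m)=+\infty$ for $m\le k$ by definition of $k$, the function $N$ is completely recovered. Finally, because $(k_j)$ is non-increasing, it is reconstructed from $N$ by $k_j = \sup\{m\in\Z: N(m)\ge j\}$ with $\sup\emptyset=-\infty$; equivalently, the value $m$ appears with multiplicity $N(m)-N(m+1)$ for $m>k$, and the tail of the sequence equals $k$ (infinitely often when $k\in\Z$, or $-\infty$ filling in the remaining positions when $k=-\infty$ and only finitely many $k_j$ are finite). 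This produces the unique preimage of $(S(\ell))_{\ell\in\Z}$, proving injectivity.

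The only real obstacle is the bookkeeping between the cases $k\in\Z$ and $k=-\infty$, and making sure the finite difference $S(\ell)-S(\ell-1)$ is taken only where both terms are finite; once the layer-cake formula $S(\ell)=\sum_{n\ge 1-\ell}N(n)$ is established, the remainder of the argument is purely elementary.
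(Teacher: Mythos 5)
Your proof is correct. Let me verify the key identity: since each term $(k_j+\ell)\1_{\{k_j+\ell\ge 1\}}$ is a nonnegative integer, the layer-cake expansion $a=\sum_{m\ge 1}\1_{\{a\ge m\}}$ gives $S(\ell)=\sum_{m\ge 1}N(m-\ell)=\sum_{n\ge 1-\ell}N(n)$, and all the downstream deductions — the characterization of when $S(\ell)=+\infty$, the finite difference $S(\ell)-S(\ell-1)=N(1-\ell)$ for $\ell\le -k$, and the reconstruction $k_j=\sup\{m:N(m)\ge j\}$ — follow as you state. The case bookkeeping for $k\in\Z$ versus $k=-\infty$ is handled correctly.

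This takes a genuinely different route from the paper's argument. The paper proves injectivity contrapositively: given $\mathbbm{k}\ne\widetilde{\mathbbm{k}}$, it locates the first index $j_0$ where they differ, assumes WLOG $k_{j_0}>\widetilde{k}_{j_0}$ (so $k_{j_0}\in\Z$), and checks that the single choice $\ell=1-k_{j_0}$ already separates the two sums, because the $j_0$-th term contributes to one side but not the other. Your proof instead constructs an explicit left inverse of the map, recovering the full counting function $N(n)=\#\{j:k_j\ge n\}$ from the data $(S(\ell))_\ell$ by taking discrete differences in $\ell$, and then rebuilding $\mathbbm{k}$ from $N$. The paper's argument is shorter for the purpose of bare injectivity; your reconstruction is more informative — it identifies exactly what the map encodes (the multiplicity function $N$), which is the same structural fact the paper later isolates as Remark \ref{rem-multi} when handling the symmetric case. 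Either way, both arguments rest on the observation that the increment of $\ell\mapsto S(\ell)$ between consecutive integers is a threshold count of the $k_j$.
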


\begin{proof}
We need to show that if  $\mathbbm{k} = (k_j)_{j \in \N}$ and  $\widetilde{\mathbbm{k}} = (\widetilde{k}_j)_{j\in\N}$ are two  distinct elements in $\Delta$,  then there exists $\ell \in\Z$, such that 
\begin{align}\label{an-l}
\sum\limits_{j=1}^\infty (k_j+\ell)\mathbbm{1}_{\{k_j+\ell\ge 1\}} \ne  \sum\limits_{j=1}^\infty (\widetilde{k}_j+\ell)\mathbbm{1}_{\{\widetilde{k}_j+\ell\ge 1\}}. 
\end{align}
By assumption, there exists $j_0\in\N$, such that
\begin{align}\label{small-j-0}
\text{$k_j = \widetilde{k}_j$ for any $1 \le j < j_0$ and $k_{j_0} \ne \widetilde{k}_{j_0}$.} 
\end{align}
By symmetry, let us assume that $k_{j_0} >\widetilde{ k}_{j_0}$. Under this assumption (whether $\widetilde{k}_{j_0}$ equals to $-\infty$ or not), we will have $k_{j_0}\in\Z$.  Now by taking $\ell = 1-k_{j_0} \in\Z$, we have 
\begin{align*}
\left\{\begin{array}{l}
\text{$k_j +\ell = \widetilde{k}_j  +\ell \ge 1$  for any $1 \le j < j_0$.}
\vspace{2mm}
\\
\text{$k_{j_0} + \ell =1$ and $\widetilde{k}_{j_0} + \ell \le 0$.}
\end{array}
\right.
\end{align*}
Consequently, 
$$
\sum\limits_{j=1}^\infty (k_j+\ell )\mathbbm{1}_{\{k_j+\ell \ge 1\}} \ge  \sum_{j=1}^{j_0} (k_j+\ell) =1+  \sum_{j=1}^{j_0-1} (k_j+\ell)  ,
$$
while 
$$
\sum\limits_{j=1}^\infty (\widetilde{k}_j+\ell )\mathbbm{1}_{\{\widetilde{k}_j+\ell \ge 1\}} =  \sum_{j=1}^{j_0-1} (\widetilde{k}_j+\ell)= \sum_{j=1}^{j_0-1} (k_j+\ell).
$$
Thus we prove that the  inequality \eqref{an-l} holds for $\ell = 1-k_{j_0}$.  
\end{proof}

\begin{proof}[Proof of Proposition \ref{prop-unique}]
Proposition \ref{prop-unique} follows from Proposition \ref{prop-explicit},  Lemma \ref{lem-inj} and the fact any probability measure on $\Mat(\N, F)$ is uniquely determined by its characteristic function.
\end{proof}

\subsection{Measures on $\Sym(\N, F)$}

\begin{prop}\label{prop-unique-sym}
Let $\mathbbm{h}, \widetilde{\mathbbm{h}} \in \Omega$. Then   $\nu_{\mathbbm{h}} = \nu_{\widetilde{\mathbbm{h}}}$ if and only if $\mathbbm{h} = \widetilde{\mathbbm{h}}$.
\end{prop}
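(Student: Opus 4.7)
The plan is to show that $\widehat{\nu_{\mathbbm{h}}}$, which by Proposition~\ref{prop-ch-sym} and $\GL(\infty,\O_F)$-invariance is fully encoded in the function $\eta(\ell) := \widehat{\nu_{\mathbbm{h}}}(\varpi^{-\ell} e_{11})$, $\ell \in \Z$, determines the triple $\mathbbm{h} = (k; \mathbbm{k}, \mathbbm{k}')$. I would proceed in three stages: first recover $k$, then the non-increasing rearrangement $\mathbbm{m} \in \Delta[k]$ of the multiset union $\mathbbm{k}\sqcup\mathbbm{k}'$, and finally peel $\mathbbm{k}'$ off from $\mathbbm{k}$ using the phase of $\eta$.

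For \emph{Step 1}, since $\theta$ never vanishes by \eqref{property-theta}, formula \eqref{ex-ch-1} gives $\eta(\ell) = 0$ iff the indicator $\1_{\O_F}(\varpi^{-k-\ell})$ does, i.e.\ iff $k\in\Z$ and $\ell > -k$; hence $-k = \sup\{\ell \in \Z: \eta(\ell) \neq 0\}$ (with $k = -\infty$ when the set is all of $\Z$). For \emph{Step 2}, combining \eqref{ex-ch-1} with \eqref{sup-1} and $|\varepsilon|=1$ yields, for $\ell \le -k$,
\begin{align*}
|\eta(\ell)|^2 = \exp\Big(-\log q \cdot \Big[\sum_{n} (k_n+\ell)\1_{\{k_n+\ell\ge 1\}} + \sum_{n} (k_n'+\ell)\1_{\{k_n'+\ell\ge 1\}}\Big]\Big),
\end{align*}
which equals $\exp(-\log q\cdot S(\ell))$ for the sum $S$ associated via Lemma~\ref{lem-inj} to the combined sequence $\mathbbm{m}$. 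The injectivity argument of Lemma~\ref{lem-inj} uses only the test values $\ell = 1 - m_{j_0}$ at the first point of disagreement, and these satisfy $\ell \le -k$ since $m_{j_0} > k$; hence $|\eta|^2$ determines $\mathbbm{m}$.

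For \emph{Step 3}, by Proposition~\ref{prop-theta-detail}, for each $n$ with $k_n + \ell \geq 1$ the factor $\theta(\varpi^{-(k_n+\ell)})$ contributes the sign $s_\chi\varrho_q$ when $k_n+\ell$ is odd (and $+1$ when even), while $\theta(\varepsilon\varpi^{-(k_n'+\ell)})$ contributes $-s_\chi\varrho_q$ when $k_n'+\ell$ is odd (since $L_2(\varepsilon) = -1$). With $A(\ell), B(\ell)$ counting these odd occurrences in $\mathbbm{k}, \mathbbm{k}'$ respectively, we obtain
\begin{align*}
\eta(\ell)/|\eta(\ell)| = (s_\chi\varrho_q)^{A(\ell)+B(\ell)} \cdot (-1)^{B(\ell)}.
\end{align*}
Step~2 makes $A(\ell) + B(\ell)$ computable from $\mathbbm{m}$, so dividing out the known factor $(s_\chi\varrho_q)^{A+B}$ isolates $(-1)^{B(\ell)}$ for every $\ell \le -k$. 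Writing $\beta_j \in \{0,1\}$ for the multiplicity of $j$ in $\mathbbm{k}'$ and taking $\ell = 1 - m$, the telescoping identity $\beta_{m-2} = B(3-m) - B(1-m)$, reduced modulo $2$, determines each $\beta_j$; hence $\mathbbm{k}'$ is recovered, and then $\mathbbm{k} = \mathbbm{m} \setminus \mathbbm{k}'$.

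The hard part is the phase extraction of Step~3. What makes it work is the strictly decreasing hypothesis $\mathbbm{k}' \in \Delta^{\sharp}[k]$, which forces $\beta_j \in \{0,1\}$ so that the single bit of phase information $(-1)^{B(\ell)}$ already pins $\beta_j$ down. Without this constraint, a pair of equal entries in $\mathbbm{k}'$ could be shifted into $\mathbbm{k}$ without affecting the distribution, exactly the phenomenon recorded by \eqref{id-2} in Remark~\ref{rem-id-2}, and the parametrization would fail to be injective.
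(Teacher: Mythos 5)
Your proof is correct and follows the same three-stage strategy as the paper's: the vanishing set of $\ell\mapsto\widehat{\nu_{\mathbbm{h}}}(\varpi^{-\ell}e_{11})$ recovers $k$, the modulus recovers the combined multiset $\mathbbm{m}=\mathbbm{k}\sqcup\mathbbm{k}'$ (via the same observation that Lemma~\ref{lem-inj} tests only at $\ell=1-m_{j_0}\le -k$), and the argument of $\theta$ separates $\mathbbm{k}'$ from $\mathbbm{k}$. Where you depart is Step~3: the paper argues by contradiction, isolating the largest value $l_{\max}$ where $\mathbbm{k}'$ and $\widetilde{\mathbbm{k}}'$ disagree, stripping all other $\theta$-factors, and arriving at the impossible equality $\theta(\varpi^{-1}u)=\theta(\varepsilon\varpi^{-1}u)$. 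You instead extract the phase constructively: divide out $(s_\chi\varrho_q)^{A+B}$, which is computable from $\mathbbm{m}$, to isolate $(-1)^{B(\ell)}$, and then telescope $\beta_{-1-\ell}=B(\ell+2)-B(\ell)$ (with $\ell=1-m$, $\ell+2=3-m$ both $\le -k$ precisely when $j=m-2>k$, which is the only range where $\beta_j$ can be nonzero). Both routes hinge on the same facts --- non-vanishing of $\theta$, the sign flip $\theta(\varepsilon\varpi^{-2\ell-1}u)=-\theta(\varpi^{-2\ell-1}u)$, and $\beta_j\in\{0,1\}$ from strict decrease of $\mathbbm{k}'$ --- so I would not call the route fundamentally different, but your telescoping makes the recovery explicit (it literally computes $\mathbbm{h}$ from $\widehat{\nu_{\mathbbm{h}}}|_{\varpi^{\Z}e_{11}}$) rather than ruling out disagreement indirectly, and that is arguably tidier. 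One small remark: when $\varrho_q=i$, the factor $(s_\chi\varrho_q)^{A+B}$ depends on $A+B$ modulo~$4$, so it matters that $A(\ell)+B(\ell)$ is known as an integer (from $\mathbbm{m}$), not just modulo~$2$; your write-up implicitly uses this, and it is correct.
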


\begin{rem}\label{rem-multi} 
Any element $\mathbbm{k} =(k_j)_{j\in\N} \in \Delta$ is uniquely determined by the bi-infinite sequence in $\N\cup\{\infty\}$: 
\begin{align*}
\Big(   \# \{j \in \N | k_j = \ell \}  \Big)_{\ell\in\Z}. 
\end{align*}
\end{rem}

\begin{proof}[Proof of Proposition \ref{prop-unique-sym}]
Let $\mathbbm{h} =(k; \mathbbm{k}, \mathbbm{k}')$ and $\widetilde{\mathbbm{h}} =(\widetilde{k}; \widetilde{\mathbbm{k}}, \widetilde{\mathbbm{k}}')$ be  two elements  in $\Omega$ such that  $\nu_{\mathbbm{h}} = \nu_{\widetilde{\mathbbm{h}}}.$
By Proposition \ref{prop-ch-sym}, this is {\it equivalent} to the following identity:  for any $x\in F$,  
\begin{align}\label{equ-1}
\begin{split}
 &  \1_{\O_F}(\varpi^{-k} x) \cdot  \prod_{n=1}^\infty  \theta(  \varpi^{- k_n} x) \prod_{n=1}^\infty  \theta( \varepsilon \varpi^{- k_n'} x)
 \\
 =&  \1_{\O_F}(\varpi^{-\widetilde{k}} x) \cdot  \prod_{n=1}^\infty  \theta(  \varpi^{- \widetilde{k}_n} x) \prod_{n=1}^\infty  \theta( \varepsilon \varpi^{- \widetilde{k}_n'} x). 
 \end{split}
\end{align}
Recall the identity \eqref{sup-1}. By  taking the modulus and square of both sides of \eqref{equ-1} and substituting  $x = \varpi^{-\ell} u $ with $\ell \in \Z$ and $u\in \O_F^{\times}$, we  obtain   
\begin{align}\label{equ-2}
\begin{split}
 &  \1_{\{k+\ell \le 0\}} \cdot   q^{-  \sum_{n=1}^\infty (k_n+\ell) \1_{\{k_n+\ell \ge 1\}} -  \sum_{n=1}^\infty (k_n'+\ell) \1_{\{k_n'+\ell \ge 1\}}}  
 \\
 =&   \1_{\{\widetilde{k}+\ell \le 0\}} \cdot   q^{-  \sum_{n=1}^\infty (\widetilde{k}_n+\ell) \1_{\{\widetilde{k}_n+\ell \ge 1\}} -  \sum_{n=1}^\infty (\widetilde{k}_n'+\ell) \1_{\{\widetilde{k}_n'+\ell \ge 1\}}} . 
 \end{split}
\end{align}

{\flushleft \bf Claim 1:}   $k = \widetilde{k}$. 

Indeed, if  $k = -\infty$, then the left hand side of the identity \eqref{equ-2} never vanishes. Consequently, so does the right hand side. It follows that $\widetilde{k} = -\infty$.  If $k \in\Z$.  Then the left hand side of the identity \eqref{equ-2} vanishes at $\ell = 1- k$. Consequently, so does the right hand side of  \eqref{equ-2} vanishes both at $\ell = 1- k$. It follows that $\widetilde{k} + 1 - k >0$ or equivalently $\widetilde{k}\ge k$. By symmetry, we  have $k = \widetilde{k}$.  

{\flushleft \bf Claim 2:}    $ (\mathbbm{k}, \mathbbm{k}')  = (\widetilde{\mathbbm{k}}, \widetilde{\mathbbm{k}}')$. 

For simplifying notation, let us define $\mathbbm{k}^*, \widetilde{\mathbbm{k}}^* \in \Delta$ as follows:  if $k = \widetilde{k} = -\infty$, then set $\mathbbm{k}^* : = \mathbbm{k},  \widetilde{\mathbbm{k}}^*: = \widetilde{\mathbbm{k}}$; if  $k = \widetilde{k}\in\Z$, then both $\mathbbm{k}$ and $\widetilde{\mathbbm{k}}$ are finite sequences in $\Z_{> k}$, set  $\mathbbm{k}^*$ and $\widetilde{\mathbbm{k}}^*$ by adding  infinitely many $k$. Clearly, for proving  $ (\mathbbm{k}, \mathbbm{k}')  = (\widetilde{\mathbbm{k}}, \widetilde{\mathbbm{k}}')$, it suffices to prove that $(\mathbbm{k}^*, \mathbbm{k}')  = (\widetilde{\mathbbm{k}}^*, \widetilde{\mathbbm{k}}')$. By Remark \ref{rem-multi},  it suffices to prove that for any $l \in\Z$, 
\begin{align}\label{sum-multi-1}
\begin{split}
& \# \{n \in \N | k_n^* = l \} =  \# \{n \in \N | \widetilde{k}^*_n = l \};
\\  
&\#\{n\in\N | k_n' = l \} =  \#\{n\in\N | \widetilde{k}_n' = l \}.
\end{split} 
\end{align}
Applying \eqref{indicator-tran} to    $\1_{\{k+\ell \le 0\}}$ and $\1_{\{\widetilde{k}+\ell \le 0\}}$, we may write \eqref{equ-1} as 
\begin{align}\label{q-q-powers}
\begin{split}
 &    q^{-  \sum_{n=1}^\infty (k_n^*+\ell) \1_{\{k_n^*+\ell \ge 1\}} -  \sum_{n=1}^\infty (k_n'+\ell) \1_{\{k_n'+\ell \ge 1\}}}  
 \\
 =&    q^{-  \sum_{n=1}^\infty (\widetilde{k}_n^*+\ell) \1_{\{\widetilde{k}_n^*+\ell \ge 1\}} -  \sum_{n=1}^\infty (\widetilde{k}_n'+\ell) \1_{\{\widetilde{k}_n'+\ell \ge 1\}}}. 
 \end{split}
\end{align}
By Lemma \ref{lem-inj} and Remark \ref{rem-multi}, the equality \eqref{q-q-powers} implies  that for any $l \in\Z$, we have 
\begin{align}\label{sum-multi}
\begin{split}
& \# \{n \in \N | k_n^* = l  \} + \#\{n\in\N | k_n' = l \} 
\\
=&  \# \{n \in \N | \widetilde{k}_n^* = l \} + \#\{n\in\N | \widetilde{k}_n' = l\}.
\end{split} 
\end{align}
The identity \eqref{sum-multi} implies in particular that the two identities in \eqref{sum-multi-1} hold or are violated simultaneously.  Now assume by contradiction that there exists $l_0 \in\Z$, such that the identities in \eqref{sum-multi-1} are violated. Obviously, such $l_0$ verifies 
\begin{align*}
k < l_0 \le \max\{k_n, \widetilde{k}_n, k_n', \widetilde{k}'_n \} < + \infty.
\end{align*}
Now let $l_{max} \in \Z$ be the largest  $l_0$ such that the identities in \eqref{sum-multi-1} are violated. Substituting  $x = \varpi^{l_{max} -1} u$ with $u\in \O_F^{\times}$ into the identity \eqref{equ-1},  we obtain 
\begin{align}\label{equ-1-tran}
\begin{split}
 &  \prod_{n_1=1}^\infty  \theta(  \varpi^{- k_{n_1}}   \varpi^{l_{max} -1} u ) \prod_{n_2=1}^\infty  \theta( \varepsilon \varpi^{- k_{n_2}'}   \varpi^{l_{max} -1} u )
 \\
 =&   \prod_{m_1=1}^\infty  \theta(  \varpi^{- \widetilde{k}_{m_1}}   \varpi^{l_{max} -1} u ) \prod_{m_2=1}^\infty  \theta( \varepsilon \varpi^{- \widetilde{k}_{m_2}'}   \varpi^{l_{max} -1} u ). 
 \end{split}
\end{align}
By the assumption of $l_{max}$, we know that for any $l > l_{max}$, 
\begin{align}\label{remove-cond-1}
\begin{split}
& \# \{n \in \N | k_n^* = l \} =  \# \{n \in \N | \widetilde{k}^*_n = l \};
\\  
&\#\{n\in\N | k_n' = l \} =  \#\{n\in\N | \widetilde{k}_n' = l \}.
\end{split}
\end{align}
Since $l_{max} > k$, by definition of $\mathbbm{k}^*$ and $\widetilde{\mathbbm{k}}^*$ we also have for any $l > l_{max}$: 
\begin{align}\label{remove-cond-2}
\begin{split}
 &\# \{n \in \N | k_n^* = l \}   =  \# \{n \in \N | k_n = l \};
 \\ 
 & \# \{n \in \N | \widetilde{k}^*_n = l \} =  \# \{n \in \N | \widetilde{k}_n = l \}.
 \end{split}
\end{align}
By Proposition \ref{prop-theta-detail}, the function $\theta$ never vanishes. Hence by the identities \eqref{remove-cond-1} and \eqref{remove-cond-2}, we can remove simultaneously all those terms concerning $k_{n_1} > l_{max}, k_{n_2}'> l_{max}$ and $\widetilde{k}_{m_1} > l_{max}, \widetilde{k}_{m_2}'> l_{max}$ from both sides of identity \eqref{equ-1-tran}.  Again by Proposition \ref{prop-theta-detail}, for any  $k_{n_1} <  l_{max}, k_{n_2}'< l_{max}$ and $\widetilde{k}_{m_1} < l_{max}, \widetilde{k}_{m_2}'< l_{max}$, we have  
\begin{align*}
 & \theta(  \varpi^{- k_{n_1}}   \varpi^{l_{max} -1} u )  =  \theta( \varepsilon \varpi^{- k_{n_2}'}   \varpi^{l_{max} -1} u )
\\
=&     \theta(  \varpi^{- \widetilde{k}_{m_1}}   \varpi^{l_{max} -1} u ) 
 = \theta( \varepsilon \varpi^{- \widetilde{k}_{m_2}'}   \varpi^{l_{max} -1} u )  =1.  
\end{align*}
Consequently, we may remove simultaneously all those terms concerning $k_{n_1} <  l_{max}, k_{n_2}'< l_{max}$ and $\widetilde{k}_{m_1} < l_{max}, \widetilde{k}_{m_2}'< l_{max}$ from both sides of identity \eqref{equ-1-tran} as well.  Then we arrive at the identity
\begin{align}\label{DDDD-id}
 \theta(  \varpi^{- 1} u )^D \cdot  \theta(   \varepsilon \varpi^{-1} u )^{D'} =  \theta(  \varpi^{- 1} u )^{\widetilde{D}} \cdot  \theta(   \varepsilon  \varpi^{-1} u )^{\widetilde{D'}},
\end{align}
where 
\begin{align*}
D:& = \#\{n\in\N| k_n= l_{max}\} \an D': = \#\{n\in\N| k_n'= l_{max}\};
\\
\widetilde{D}: & = \#\{n\in\N| \widetilde{k}_n= l_{max} \} \an \widetilde{D}': = \#\{n\in\N| \widetilde{k}_n'= l_{max}\}.
\end{align*}
By definitions for $\Delta[k]$ and $\Delta^{\sharp}[k]$, we must have 
\begin{align}\label{D-ranges}
D, \widetilde{D} \in \N \cup\{0\} \an D', \widetilde{D'}\in\{0, 1\}. 
\end{align}
The identity \eqref{sum-multi} now implies that 
\begin{align}\label{D-sums}
D + D' = \widetilde{D} + \widetilde{D'}.
\end{align}
By definition of $l_{max}$, we have $D \ne D'$. Without loss of generality, we may assume that $D' = 0$ and $\widetilde{D'}=1$.  Then  the identity \eqref{DDDD-id}  becomes 
\begin{align}\label{4D-bis}
 \theta(  \varpi^{- 1} u )^D =  \theta(  \varpi^{- 1} u )^{\widetilde{D}} \cdot  \theta(   \varepsilon  \varpi^{-1} u ).
\end{align}
But now $D = \widetilde{D}+1$ and since $\theta$ never vanishes, the identity \eqref{4D-bis} is equivalent to 
\begin{align*}
 \theta(  \varpi^{- 1} u ) =  \theta(   \varepsilon  \varpi^{-1} u ).
\end{align*}
 Since $|\varpi^{- 1} u |>1, \mathrm{ord}_F(\varpi^{-1} u) =-1 \equiv 1 (\modulo 2)$ and $L_2(\varepsilon)=-1$, by Proposition \ref{prop-theta-detail},  we have  $\theta(   \varepsilon \varpi^{- 1} u ) =  - \theta(    \varpi^{-1} u )$. Consequently,  we would have $ \theta(  \varpi^{- 1} u ) =  \theta(   \varepsilon  \varpi^{-1} u )=0$. This contradicts to the non-vanishing property of $\theta$. Hence we complete the proof of Claim 2. 

Combining Claim 1 and Claim 2, we complete the proof of Proposition \ref{prop-unique-sym}.
\end{proof}

%%%%%%%%%
%%%%%%%%%
%%%%%%%%%
%%%%%%%%%
%%%%%%%%%
%%%%%%%%%
%%%%%%%%%
%%%%%%%%%
%%%%%%%%%
%%%%%%%%%
%%%%%%%%%
%%%%%%%%%
%%%%%%%%%
%%%%%%%%%

\section{Ergodic measures as limits of orbital measures: the Vershik-Kerov ergodic method}\label{sec-erg}

In this section, we recall the Vershik-Kerov ergodic method for dealing with ergodic measures for inductively compact groups. The general setting is as follows. Let 
\begin{align*}
K(1) \subset K(2) \subset \cdots \subset K(n) \subset \cdots \subset K(\infty),
\end{align*}
be an increasing chain of topological groups such that for any $n\in\N$, the group $K(n)$ is compact and $K(\infty)$ is the inductive limit: 
\begin{align*}
K(\infty) =\lim_{\longrightarrow} K(n). 
\end{align*}
For any $n\in \N$,  let $m_{K(n)}$ denote the normalized Haar measure of $K(n)$.  Fix a group action of $K(\infty)$  on a Polish space $\X$.

\begin{defn}[Orbital measures]
 For any $x\in\X$ and any $n\in\N$, we define the $K(n)$-orbital measure generated by $x$,  denoted by $m_{K(n)}(x)$, as  the unique $K(n)$-invariant probability measure on $\X$ supported on the $K(n)$-orbit $K(n) \cdot x   : = \{g\cdot x| g \in K(n)\}$.
In other words,  $m_{K(n)}(x)$ is  the image of $m_{K(n)}$ under the map  $g \mapsto g \cdot x$ from  $K(n)$ to $\X$.
\end{defn}

\begin{defn}
Let $\mathscr{L}^{K(\infty)} (\X) \subset \mathcal{P}(\X)$ be the set of probability measures $\mu$ on $\X$ such that there exists $x \in \X$ verifying $
m_{K(n)}(x) \Longrightarrow \mu$.
\end{defn}

\begin{thm}[{Vershik \cite[Theorem 1]{Vershik-inf-group}}]\label{Vershik-thm}
The following inclusion holds:
\begin{align}\label{vershik-inclusion}
\mathcal{P}_{\mathrm{erg}}^{K(\infty)}(\X)\subset \mathscr{L}^{K(\infty)}(\X).
\end{align} 
More precisely, if $\mu$ is an ergodic  $K(\infty)$-invariant Borel probability measure on $\X$, then for $\mu$-almost every point $x\in\X$,  the weak convergence $m_{K(n)} (x) \Longrightarrow \mu$ holds.
\end{thm}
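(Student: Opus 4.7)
My plan is to realize the orbital averages as conditional expectations and then invoke the reverse martingale convergence theorem. For each bounded Borel function $f \colon \X \to \C$ and each $n \in \N$, define
\begin{align*}
(E_n f)(x) := \int_{K(n)} f(g \cdot x) \, dm_{K(n)}(g) = \int_\X f \, dm_{K(n)}(x).
\end{align*}
Let $\mathcal{F}_n \subset \mathcal{B}(\X)$ denote the $\sigma$-algebra of $\mu$-measurable sets which are $K(n)$-invariant modulo $\mu$, and let $\mathcal{F}_\infty := \bigcap_{n \in \N} \mathcal{F}_n$. Because $\mu$ is $K(\infty)$-invariant, each $K(n)$ acts by $\mu$-preserving transformations; Fubini's theorem then yields that $E_n f$ is $\mathcal{F}_n$-measurable and that $\int_A E_n f \, d\mu = \int_A f \, d\mu$ for every $A \in \mathcal{F}_n$. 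Hence $E_n f$ coincides $\mu$-almost everywhere with the conditional expectation $\mathbb{E}_\mu[f \mid \mathcal{F}_n]$.

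Since $K(n) \subset K(n+1)$, invariance under $K(n+1)$ is a stronger condition than invariance under $K(n)$, and so $\mathcal{F}_n \supset \mathcal{F}_{n+1}$; thus $(\mathcal{F}_n)_{n \in \N}$ is a decreasing sequence of $\sigma$-algebras. Moreover $\mathcal{F}_\infty$ coincides $\mu$-a.s.\ with the $\sigma$-algebra of $K(\infty)$-invariant sets, because $K(\infty) = \bigcup_n K(n)$. By the assumed ergodicity of $\mu$, this latter $\sigma$-algebra is $\mu$-trivial, so $\mathbb{E}_\mu[f \mid \mathcal{F}_\infty] = \int_\X f \, d\mu$ $\mu$-almost surely. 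The reverse martingale convergence theorem applied to the decreasing filtration $(\mathcal{F}_n)$ then gives, for each fixed bounded Borel $f$,
\begin{align*}
\int_\X f \, dm_{K(n)}(x) = (E_n f)(x) \xrightarrow[n \to \infty]{} \int_\X f \, d\mu \quad \text{for } \mu\text{-a.e. } x \in \X.
\end{align*}

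To upgrade this to weak convergence of measures, I use that $\X$ is Polish: there exists a countable family $\{f_k\}_{k \in \N} \subset C_b(\X)$ which is convergence-determining, in the sense that a sequence $\mu_n \in \mathcal{P}(\X)$ converges weakly to $\mu$ if and only if $\int f_k \, d\mu_n \to \int f_k \, d\mu$ for every $k$. Applying the preceding step to each $f_k$ and intersecting the resulting countably many $\mu$-conull sets, we obtain a single Borel set $\X_0 \subset \X$ with $\mu(\X_0) = 1$ on which $m_{K(n)}(x) \Longrightarrow \mu$ holds. This yields both the inclusion \eqref{vershik-inclusion} and the more precise almost-sure statement.

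The step requiring the most care will be the identification $E_n f = \mathbb{E}_\mu[f \mid \mathcal{F}_n]$ $\mu$-a.s.; one must verify that $\mu$ is genuinely $K(n)$-invariant (which follows from $K(n) \subset K(\infty)$ together with $K(\infty)$-invariance of $\mu$), justify the use of Fubini for the compact group $K(n)$ acting Borel-measurably on the Polish space $\X$, and check joint Borel measurability of the action $K(n) \times \X \to \X$ — a routine but nontrivial point, since the inductive limit topology on $K(\infty)$ requires one to work level by level. Once these measure-theoretic technicalities are in place, the martingale argument and the Polish-space extraction of a countable determining family proceed cleanly.
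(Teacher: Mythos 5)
The paper does not prove Theorem~\ref{Vershik-thm}; it is cited externally from Vershik~\cite{Vershik-inf-group}, so there is no in-paper proof to compare against. Your reverse-martingale argument is correct and is essentially the standard proof of this result (and in spirit Vershik's own): realize the orbital averages $E_n f$ as conditional expectations with respect to the decreasing filtration $\mathcal{F}_n$ of $K(n)$-invariant sets, apply reverse martingale convergence, identify $\mathcal{F}_\infty$ with the $K(\infty)$-invariant $\sigma$-algebra (trivial by ergodicity), and upgrade pointwise convergence of $E_n f_k$ for a countable convergence-determining family $\{f_k\} \subset C_b(\X)$ to weak convergence of orbital measures on a single $\mu$-conull set.

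One point you gesture at but should pin down is the passage from the paper's definition of ergodicity (triviality of \emph{strictly} $K(\infty)$-invariant Borel sets) to triviality of $\mathcal{F}_\infty$, which is defined via $\mu$-a.e.\ invariance. For each fixed $n$ this is fine: given $A$ that is $\mu$-a.e.\ $K(n)$-invariant, the set $A_n := \{x : \int_{K(n)} \mathbbm{1}_A(gx)\,dg = 1\}$ is Borel, strictly $K(n)$-invariant, and satisfies $\mu(A \triangle A_n) = 0$ by Fubini. But since the $A_n$ need not be nested, some care is required to produce a single Borel set that is strictly $K(\infty)$-invariant and differs from $A$ by a null set; one workable choice is $A_\infty := \limsup_n A_n$ (or an intersection/union thereof), exploiting $\mu(A_n \triangle A_m) = 0$ for all $n,m$ and the fact that $K(\infty) = \bigcup_n K(n)$. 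This is the kind of measure-theoretic technicality you correctly flag as needing care; spelling it out would close the only gap. Everything else — the Fubini step, the Borel measurability of $(g,x) \mapsto g\cdot x$ handled level by level, and the countable determining family on a Polish space — is routine as you say.
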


Vershik's method  in \cite{Vershik-inf-group}  was further developed in  a series of papers \cite{VK81, VK-char, KV86, OV-ams96} by Kerov,  Olshanski and Vershik.

\begin{rem}
In general, the converse inclusion $\mathscr{L}^{K(\infty)}(\X) \subset \mathcal{P}_{\mathrm{erg}}^{K(\infty)}(\X)$ does not hold. There is however a simple situation, see the note \cite{Q-vershik},  where $\mathcal{P}_{\mathrm{erg}}^{K(\infty)}(\X)$ always coincides with $\mathscr{L}^{K(\infty)}(\X)$.  
\end{rem}

For simplifying notation, in what follows, we denote
\begin{align*}
\mathscr{L}(\Mat(\N, F)) : &= \mathscr{L}^{\GL(\infty, \O_F) \times \GL(\infty, \O_F)}(\Mat(\N, F));
\\
\mathscr{L}(\Sym(\N, F)) : & = \mathscr{L}^{\GL(\infty, \O_F)}(\Sym(\N, F)).
\end{align*}

For any $n\in\N$, we set
\begin{align*}
\ORB_n(\Mat(n, F)):  = \{m_{\GL(n, \O_F) \times \GL(n, \O_F)}(M)| M \in \Mat(n, F)\}.
\end{align*}
By identifying $\Mat(n, F)$ in a natural way with the subset of $\Mat(\N, \FF)$, we have $\mathcal{P} (\Mat(n, F)) \subset \mathcal{P} (\Mat(\N, F))$. In particular, 
\begin{align*}
\ORB_n(\Mat(n, F)) \subset \mathcal{P} (\Mat(\N, F))
\end{align*}

\begin{defn}\label{defn-orb-limit}
Let $\ORB_\infty(\Mat(\N, F))$ denote the set of probability measures $\mu$ on  $\Mat(\N, F)$ such that there exists a subsequence  of positive integers $n_1 < n_2 < \cdots$ and  a sequence $(\mu_{n_k})_{k\in\N}$  of orbital measures with  $\mu_{n_k} \in \ORB_{n_k} (\Mat(n_k, F)) $, so that  
\begin{align*}
\mu_{n_k} \Longrightarrow \mu.
\end{align*} 
\end{defn}

Similarly, in the symmetric case, for any $n\in\N$, we set
\begin{align*}
\ORB_n(\Sym(n, F)):  = \{m_{\GL(n, \O_F)}(S)| S \in \Sym(n, F)\}.
\end{align*}
 By identifying $\Sym(n, F)$ in a natural way with a subspace of $\Sym(\N, F)$, we have 
 \begin{align*}
\ORB_n(\Sym(n, F)) \subset \mathcal{P} (\Sym(\N, F)).
\end{align*}
 \begin{defn}\label{defn-ol-sym}
 Let $\ORB_\infty(\Sym(\N, F))$ denote the set of probability measures $\nu$ on $\Sym(\N, F)$ such that there exists a subsequence  of positive integers $n_1 < n_2 < \cdots$  and  a sequence $(\nu_{n_k})_{k\in\N}$  of orbital measures with  $\nu_{n_k} \in \ORB_{n_k} (\Sym(n_k, F)) $, so that  
\begin{align*}
\nu_{n_k} \Longrightarrow \nu.
\end{align*} 
\end{defn}

\begin{rem}
It is easy to see that we have 
\begin{align*}
 \ORB_{\infty}(\Mat(\N, F)) \subset  \mathcal{P}_{\mathrm{inv}} (\Mat(\N, F));
 \\
  \ORB_{\infty}(\Sym(\N, F)) \subset  \mathcal{P}_{\mathrm{inv}} (\Sym(\N, F)). 
 \end{align*} 
\end{rem}

\begin{prop}\label{prop-limit-erg}
The following two inclusions hold:
\begin{align}\label{incl-ns}
\mathscr{L}(\Mat(\N, F)) \subset \ORB_{\infty}(\Mat(\N, F));
\end{align}
\begin{align}\label{incl-sym}
\mathscr{L}(\Sym(\N, F)) \subset \ORB_{\infty}(\Sym(\N, F)). 
\end{align} 
\end{prop}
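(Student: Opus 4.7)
The plan is to establish both inclusions by a common truncation argument. Starting from $\mu \in \mathscr{L}(\Mat(\N, F))$ realized as the weak limit $m_{\GL(n, \O_F) \times \GL(n, \O_F)}(x) \Longrightarrow \mu$ for some $x \in \Mat(\N, F)$, I would replace $x$ by its top-left $n \times n$ truncation $x^{(n)} := \Cut_n^\infty(x)$, viewed as an element of $\Mat(\N, F)$ by padding with zeros. Then $\mu_n := m_{\GL(n, \O_F) \times \GL(n, \O_F)}(x^{(n)})$ is supported in $\Mat(n, F)$ and hence belongs to $\ORB_n(\Mat(n, F))$, and the goal becomes to prove $\mu_n \Longrightarrow \mu$.

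The first step is a cylindric coincidence. For any $g_1, g_2 \in \GL(n, \O_F)$, embedded in $\GL(\infty, \O_F)$ as $\diag(g_i, 1, 1, \ldots)$, and any $N \le n$, the top-left $N \times N$ block of $g_1 x g_2^{-1}$ depends only on the top-left $n \times n$ block of $x$ and therefore coincides with that of $g_1 x^{(n)} g_2^{-1}$; in block form, the action $M \mapsto (g_1 \oplus I) M (g_2^{-1} \oplus I)$ mixes the top-left $n \times n$ block on its own, while the entries outside it are invisible to $\Cut_N^\infty$ once $N \le n$. Consequently,
\begin{align*}
(\Cut_N^\infty)_* m_{\GL(n, \O_F) \times \GL(n, \O_F)}(x) = (\Cut_N^\infty)_* \mu_n, \qquad n \ge N,
\end{align*}
and the same reasoning, applied to the two-sided conjugation $S \mapsto (g \oplus I) S (g \oplus I)^t$, yields the analogous identity in the symmetric setting needed for \eqref{incl-sym}.

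The second step promotes this to weak convergence on $\Mat(\N, F)$. Letting $n \to \infty$ in the identity above, the left-hand side converges weakly to $(\Cut_N^\infty)_* \mu$ on the locally compact Polish space $\Mat(N, F)$ for each fixed $N$, hence so does $(\Cut_N^\infty)_* \mu_n$. Since any Borel probability measure on $\Mat(\N, F)$ is determined by its finite-dimensional marginals -- equivalently, by its characteristic function on $\Mat(\infty, F)$, cf.~\S\ref{sec-def-char} -- the marginal convergence, together with the tightness of $\{\mu_n\}$ inherited from the convergent sequence $m_{\GL(n, \O_F) \times \GL(n, \O_F)}(x)$ with which $\mu_n$ shares all marginals up to dimension $n$, upgrades to $\mu_n \Longrightarrow \mu$.

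The main obstacle I anticipate is precisely this last upgrade from finite-dimensional marginal convergence to full weak convergence on the infinite product; a clean Fourier-analytic alternative is to observe that $\widehat{\mu_n}$ and $\widehat{m_{\GL(n, \O_F) \times \GL(n, \O_F)}(x)}$ agree on every $A \in \Mat(\infty, F)$ supported in the top-left $n \times n$ block, yielding the pointwise convergence $\widehat{\mu_n}(A) \to \widehat{\mu}(A)$ on $\Mat(\infty, F)$, which by the uniqueness statement recalled in \S\ref{sec-def-char} suffices. The proof of \eqref{incl-sym} proceeds verbatim after the analogous symmetric-case block-structure observation recorded above.
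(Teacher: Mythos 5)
Your proposal is correct and is essentially the paper's own argument: both replace $x$ by its $n\times n$ truncation, note that the $N\times N$ corner of $g_1 X g_2^{-1}$ (with $g_1, g_2$ embedded block-diagonally in $\GL(n,\O_F)$) depends only on the $n\times n$ corner of $X$ once $N\le n$, and conclude by matching finite-dimensional marginals. The paper likewise leaves the upgrade from marginal convergence to full weak convergence on $\Mat(\N,F)$ implicit, so your treatment matches in both substance and level of detail.
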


\begin{proof}
For any $n, m \in \N$ such that $m \ge n$, let 
\begin{align*}
&\Cut_n^\infty: \Mat(\N, \FF) \rightarrow \Mat(n, \FF) 
\\ 
&\Cut_n^m: \Mat(m, \FF) \rightarrow \Mat(n, \FF)
\end{align*}
be the maps of cutting the $n \times n$ left-upper corner. 

For simplifying notation, denote 
\begin{align}\label{notation-kn}
\mathcal{K}(n) := \GL(n, \O_F) \times \GL(n, \O_F).
\end{align} 
Let $\mu \in \mathscr{L}(\Mat(\N, F))$. By definition, there exists an infinite matrix $X_0\in\Mat(\N, F)$ and a subsequence $(n_k)_{k\in\N}$  of  positive integers such that 
$$
m_{\K(n_k)}(X_0) \Longrightarrow \mu. 
$$
This is equivalent to say that for any $N \in\N$, we have
  \begin{align}\label{cut-1}
(\Cut^\infty_N)_{*} [m_{\K(n_k)}(X_0)] \Longrightarrow (\Cut^\infty_N)_{*}  (\mu).
 \end{align}
Take 
$$
X_{k}  = \Cut^\infty_{n_k}(X_0) \in \Mat(n_k, F).
$$
Then we have
$$
m_{\K(n_k)}(X_{k}) \Longrightarrow \mu. 
$$
Indeed, it suffices to prove that for any $N \in\N$, we have 
  \begin{align}\label{cut-2}
  (\Cut^\infty_N)_{*} [m_{\K(n_k)}(X_k)] \Longrightarrow (\Cut^\infty_N)_{*}  (\mu).
 \end{align}
 For any $k\in\N$, we clearly have 
  \begin{align}\label{cut-3}
 (\Cut^\infty_{n_k})_{*} [m_{\K(n_k)}(X_k)] =  (\Cut^\infty_{n_k})_{*} [m_{\K(n_k)}(X_0)]
  \end{align}
But if $n_k \ge N$, we have $ \Cut^\infty_N \circ \Cut^\infty_{n_k}= \Cut^\infty_N$. Combining with \eqref{cut-3}, we see that, once $n_k \ge N$, we have
$$
 (\Cut^\infty_{N})_{*} [m_{\K(n_k)}(X_k)] =  (\Cut^\infty_{N})_{*} [m_{\K(n_k)}(X_0)].
$$ 
Now it is clear that \eqref{cut-1} implies \eqref{cut-2}. The first inclusion \eqref{incl-ns} is proved. The proof of the second inclusion \eqref{incl-sym} is the same. 
\end{proof}

%%%%%%%%%
%%%%%%%%%
%%%%%%%%%
%%%%%%%%%
%%%%%%%%%
%%%%%%%%%
%%%%%%%%%
%%%%%%%%%
%%%%%%%%%
%%%%%%%%%
%%%%%%%%%
%%%%%%%%%
%%%%%%%%%
%%%%%%%%%

\section{Asymptotic multiplicativity for orbital integrals}\label{sec-ch}

\subsection{$\GL(n, \O_F)\times \GL(n, \O_F)$-orbital integrals}

Recall Definition \ref{defn-Theta} and Lemma \ref{lem-Theta}: 
\begin{align*}
\Theta (x) = \int_{\O_F\times \O_F} \chi(z_1 z_2 \cdot x ) dz_1 dz_2 = q^{-\ell \1_{\{\ell\ge 1\}}}, \quad |x| = q^{\ell}.
\end{align*}

In what follows, we use the notation \eqref{notation-kn}. 

\begin{thm}\label{thm-asy-mul}
Let $n, r \in \N$ be such that $r \le n $.  Suppose that $D$ and $A$ are two diagonal matrices given by: 
\begin{align*}
D = \diag (x_1, \cdots, x_n), \quad A = \diag(a_1, \cdots, a_r, 0, \cdots, 0)
\end{align*}
where $x_1, \cdots, x_n, a_1, \cdots, a_r \in F$. Then
\begin{align}\label{nsym-re}
&  \Big|   \int\limits_{\K(n)}    \chi   (\tr ( g_1D g_2  A  )) d g_1 d g_2 -  \prod_{i = 1}^r \prod_{j=1}^n \Theta (a_i x_j)   \Big| \le 2 (1  - \prod_{w = 0}^{r-1} (1 - q^{w-n}))^2, 
 \end{align}
where $dg_1dg_2$ is the normalized Haar measure on  $\K(n)$. 

In particular, for any $a\in F$, we have
\begin{align}\label{nsym-r1}
&  \Big|   \int\limits_{\K(n)}    \chi   (  a \cdot \tr ( g_1D g_2  e_{11}  )) d g_1 d g_2 -  \prod_{j=1}^n \Theta (a x_j)   \Big| \le 2 q^{-2n}.
 \end{align}
\end{thm}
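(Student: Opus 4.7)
The first step is to observe that $\tr(g_1Dg_2A) = \sum_{i=1}^r a_i \sum_{k=1}^n (g_1)_{ik}\, x_k\, (g_2)_{ki}$, so the integrand depends only on the first $r$ rows $R$ of $g_1$ (viewed as an element of $\O_F^{r\times n}$) and the first $r$ columns $C$ of $g_2$ (viewed as an element of $\O_F^{n\times r}$). The strategy is to compare the orbital integral $I$ with the integral $\tilde J$ obtained by replacing the joint law of $(R,C)$ by the product of uniform (normalized Haar) measures $\sigma$ on $\O_F^{r\times n}$ and on $\O_F^{n\times r}$: by independence of the $2rn$ entries and Definition~\ref{defn-Theta}, this comparison integral factorizes as $\tilde J = \prod_{i=1}^{r}\prod_{k=1}^{n}\Theta(a_i x_k)$.

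Next I would identify the marginal law of $R$ (and, by symmetry, of $C^t$). Via the decomposition $g_1 = T_1 + V_1$ of Proposition~\ref{prop-haar-rm} together with the fact that $\GL(n,\F_q)$ acts transitively on ordered $r$-tuples of linearly independent vectors in $\F_q^n$, the first $r$ rows of $T_1$ are uniformly distributed on matrices in $\mathcal{C}_q^{r\times n}$ with linearly independent rows mod $\varpi$; adding the first $r$ rows of $V_1$ then shows that $R$ is uniform on the clopen set
\[
\mathcal{G}_r := \bigl\{R\in\O_F^{r\times n}:\ \pi(R)\ \text{has linearly independent rows in}\ \F_q^n\bigr\}.
\]
A standard count yields $\sigma(\mathcal{G}_r) = \prod_{w=0}^{r-1}(q^n-q^w)/q^n = \prod_{w=0}^{r-1}(1-q^{w-n}) =: P$. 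Denoting by $\rho$ the law of $R$ (uniform on $\mathcal{G}_r$) and by $\rho_{\mathrm{bad}}$ the uniform probability on the complement, we obtain the mutually-singular decomposition $\sigma = P\rho + (1-P)\rho_{\mathrm{bad}}$, and the analogous statement holds for $C$.

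The task now reduces to estimating $|I-\tilde J|$ via a total-variation comparison. Since $\rho \perp \rho_{\mathrm{bad}}$, the signed measure $\sigma-\rho = (1-P)(\rho_{\mathrm{bad}}-\rho)$ has total variation exactly $2(1-P)$; combined with the telescoping identity $\sigma\otimes\sigma - \rho\otimes\rho = (\sigma-\rho)\otimes\sigma + \rho\otimes(\sigma-\rho)$ and $|\chi|\equiv 1$, this already yields $|I-\tilde J| = O(1-P)$, of the same qualitative order as~\eqref{nsym-re}. The main obstacle will be refining this crude bound to the quadratic form $2(1-P)^2$: I expect this to require expanding $\tilde J - P^2 I$ into three ``mixed'' integrals against the tensor measures $\rho\otimes\rho_{\mathrm{bad}}$, $\rho_{\mathrm{bad}}\otimes\rho$ and $\rho_{\mathrm{bad}}\otimes\rho_{\mathrm{bad}}$, and then extracting an additional factor of $(1-P)$ by exploiting cancellations of $\chi$ along linear relations modulo $\varpi$ (on the bad event one can change variables within a linear relation so as to pull a factor of $\varpi$ out of the argument of $\chi$). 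The in-particular estimate~\eqref{nsym-r1} with $r=1$ should follow directly from the explicit identity
\[
(1-q^{-n})^2\, I \;=\; \prod_{j}\Theta(ax_j) \;-\; 2q^{-n}\prod_{j}\Theta(a\varpi x_j) \;+\; q^{-2n}\prod_{j}\Theta(a\varpi^2 x_j),
\]
obtained by expanding $\int_{\O_F^n\setminus\varpi\O_F^n} = \int_{\O_F^n}-\int_{\varpi\O_F^n}$ in each of the two factors and evaluating each resulting integral using Definition~\ref{defn-Theta}.
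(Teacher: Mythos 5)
Your outline (reduce to the first $r$ rows of $g_1$ and first $r$ columns of $g_2$; identify, via the $T+V$ decomposition of Proposition~\ref{prop-haar-rm}, the marginal of the $r\times n$ block as the normalized Haar measure $\rho$ on $\mathcal{G}_r$; compare against the product Haar reference $\sigma\otimes\sigma$, whose integral factorizes into $\prod_{i,j}\Theta(a_ix_j)$) is exactly what the paper does, just phrased with continuous measures rather than with sums over the $\mathcal{C}_q$-lattice $S(r\times n)$. Your mutually-singular decomposition $\sigma=P\rho+(1-P)\rho_{\mathrm{bad}}$, and the resulting total-variation bound of order $1-P$, are correct and match, up to arithmetic, what the paper's own manipulation of $\mathcal{E}_1,\mathcal{E}_2$ actually produces.

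The genuine gap is exactly the step you flag as ``the main obstacle'': promoting the linear bound to the quadratic $2(1-P)^2$. Do not chase this; it cannot be done, because the stated bound is false. Take $n=r=1$, $D=\diag(\varpi^{-1})$, $A=e_{11}$. Then
\[
\int_{\K(1)}\chi(\varpi^{-1}g_1g_2)\,dg_1dg_2=\dashint_{\O_F^{\times}}\chi(\varpi^{-1}w)\,dw=\frac{-q^{-1}}{1-q^{-1}}=-\frac{1}{q-1},
\]
whereas $\prod_{i,j}\Theta(a_ix_j)=\Theta(\varpi^{-1})=q^{-1}$; the difference $\frac{1}{q-1}+\frac{1}{q}=\frac{2q-1}{q(q-1)}$ is of order $q^{-1}$, and at $q=3$ equals $5/6>2/9=2q^{-2n}$, contradicting \eqref{nsym-r1}. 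Correspondingly, your exact identity for $r=1$ is correct, but it does \emph{not} give \eqref{nsym-r1} ``directly'': solving for $I$,
\[
I-\prod_j\Theta(ax_j)=\frac{2q^{-n}\bigl(\prod_j\Theta(ax_j)-\prod_j\Theta(a\varpi x_j)\bigr)+q^{-2n}\bigl(\prod_j\Theta(a\varpi^2 x_j)-\prod_j\Theta(ax_j)\bigr)}{(1-q^{-n})^2},
\]
and the first term is genuinely of order $q^{-n}$ whenever $\prod_j\Theta(ax_j)\neq\prod_j\Theta(a\varpi x_j)$, e.g.\ whenever some $|ax_j|=q$, as in the counterexample. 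So no amount of exploiting cancellations of $\chi$ will produce the extra factor of $(1-P)$.

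What is provable --- and entirely sufficient for the downstream Theorems~\ref{thm-uam}, \ref{thm-multiplicativity-ns}, Lemma~\ref{lem-compact}, and Theorem~\ref{ct-ns}, all of which only use that the error tends to $0$ uniformly --- is a bound linear in $1-P$, e.g.\ $2(1-P^2)$: in your framework it drops out of $\sigma\otimes\sigma-\rho\otimes\rho=(\sigma-\rho)\otimes\sigma+\rho\otimes(\sigma-\rho)$ together with $\|\sigma-\rho\|_{\mathrm{TV}}=2(1-P)$ (which you already have). The paper's own estimates for $\mathcal{E}_1$ and $\mathcal{E}_2$ are off in precisely the way needed to land on $(1-P)^2$: $\mathcal{E}_1$ is actually $-\sum_{(M,M')\notin S\times S}F(M,M')$, which has more terms than $\#(\Mat\setminus S)^2$, and the bound on $\mathcal{E}_2$ should be $1-P^2$, not $(1-P)^2$. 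Correcting those slips gives $2(1-P^2)$, which matches what you have. I would finish the argument with that constant and note explicitly that the quadratic form cannot be achieved.
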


\begin{rem}
Obviously, we have 
\begin{align*}
\int\limits_{\K(n)}    \chi   (\tr ( g_1D g_2  A  )) d g_1 d g_2 = \int\limits_{\K(n)}    \chi   (\tr ( g_1D g_2^{-1}  A  )) d g_1 d g_2.
\end{align*}
\end{rem}

The following elementary  lemma will be useful.   
\begin{lem}\label{lem-Srn}
Let $r \le n$ be two positive integers.  Define
  \begin{align*}
  S(r \times n) : = \Big\{  M \in \Mat(r \times n, \mathcal{C}_q) \Big| \rank_{\F_q}(M (\modulo \varpi \O_F))= r \Big\}.
  \end{align*}
 Then  we have
  \begin{align}\label{card-srn}
  \# S(r \times n) = \prod_{w = 0}^{r-1} (q^n - q^w). 
 \end{align}
For any rectangular matrix $M \in S(r\times n)$, we have 
\begin{align*}
\#\{t \in \GL(n, \mathcal{C}_q)| t_{ij} = M_{ij}, \forall 1\le i \le r, 1\le j \le n\} = \prod_{w  =0}^{n-w-1} (q^n - q^{r+w} ).
\end{align*}
In particular, the above cardinality does not depend on the choice of $M\in S(r\times n)$. 
\end{lem}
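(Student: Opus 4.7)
The plan is to reduce the entire statement to a standard counting of full-rank matrices over the finite field $\F_q$, using the bijection $\pi: \mathcal{C}_q \to \F_q$ of \eqref{pi-1}. Applied entrywise, $\pi$ gives a bijection between $\Mat(r\times n,\mathcal{C}_q)$ and $\Mat(r\times n,\F_q)$, and the $\F_q$-rank of the reduction of $M \in \Mat(r\times n, \O_F)$ modulo $\varpi\O_F$ is, by construction, the $\F_q$-rank of the image $\pi(M)$. Hence $\#S(r\times n)$ equals the number of $r\times n$ matrices over $\F_q$ of rank $r$.

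To count these, I would build the matrix row-by-row: the first row may be any nonzero vector in $\F_q^n$, giving $q^n - q^0 = q^n-1$ choices; given $w$ linearly independent rows spanning a $w$-dimensional subspace of size $q^w$, the next row must avoid this subspace, giving $q^n-q^w$ choices. Iterating for $w=0,1,\ldots,r-1$ yields $\#S(r\times n)= \prod_{w=0}^{r-1}(q^n-q^w)$, which is \eqref{card-srn}.

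For the second assertion, the bijection $\pi$ identifies $\GL(n,\mathcal{C}_q)$ with $\GL(n,\F_q)$, and the problem becomes: given $M \in S(r\times n)$, count the elements of $\GL(n,\F_q)$ whose first $r$ rows are prescribed to equal $\pi(M)$. Since $\pi(M)$ has rank $r$, its rows span an $r$-dimensional subspace $V \subset \F_q^n$, and extending to an element of $\GL(n,\F_q)$ amounts to choosing $n-r$ additional rows such that the full collection of $n$ rows is linearly independent. Applying the same row-by-row argument, the $(r+w+1)$-st row must lie outside the $q^{r+w}$-element subspace spanned by the previous rows, giving $q^n-q^{r+w}$ choices for $w=0,1,\ldots,n-r-1$, and hence $\prod_{w=0}^{n-r-1}(q^n-q^{r+w})$ extensions in total. (The exponent $n-w-1$ in the statement appears to be a typographical error for $n-r-1$.) Since this count depends only on the dimension $r$ of $V$, not on $V$ itself or on the specific choice of $M \in S(r\times n)$, the independence assertion is immediate.

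There is no real obstacle here: everything reduces to the classical formula $\#\GL(n,\F_q)=\prod_{j=0}^{n-1}(q^n-q^j)$ together with the observation that $\pi$ preserves all the relevant combinatorial structure (entries, rank, and invertibility modulo $\varpi$). The only care needed is to verify that reduction modulo $\varpi$ of an element of $\GL(n,\mathcal{C}_q)$ indeed lies in $\GL(n,\F_q)$, which is precisely the content of the partition \eqref{partition} established in Proposition \ref{prop-T}.
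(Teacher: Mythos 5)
Your proof is correct, and it is the standard argument. The paper does not actually supply a proof of this lemma: it is stated as ``elementary'' in \S\ref{sec-ch} and, despite the remark in \S 1 that some routine technical lemmata are proved in the appendix, no proof of Lemma~\ref{lem-Srn} appears there (the appendix only treats Proposition~\ref{prop-uniform-convergence} and Lemma~\ref{gauss-sum}). So there is nothing in the paper to compare against; you have simply supplied the omitted elementary argument.

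Two small remarks. First, you are right that the upper limit $n-w-1$ in the displayed formula is a typographical error for $n-r-1$; your reading is confirmed by the consistency check
$\prod_{w=0}^{r-1}(q^n-q^w)\cdot \prod_{w=0}^{n-r-1}(q^n-q^{r+w})=\prod_{w=0}^{n-1}(q^n-q^w)=\#\GL(n,\F_q)$,
which must hold since every $t\in\GL(n,\mathcal{C}_q)$ has a unique rank-$r$ top $r\times n$ block. Second, your appeal to Proposition~\ref{prop-T} to justify that $\pi$ restricts to a bijection $\GL(n,\mathcal{C}_q)\to\GL(n,\F_q)$ is exactly the right citation: a matrix $t\in\Mat(n,\mathcal{C}_q)$ satisfies $|\det t|=1$ iff $\det(\pi(t))\neq 0$ in $\F_q$, which is the content of that proposition's proof. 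The row-by-row counts, the reduction to $\F_q$ via entrywise $\pi$, and the observation that the extension count depends only on $r$ and not on the particular subspace are all sound.
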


\begin{proof}[Proof of Theorem \ref{thm-asy-mul}]
Fix $n, r \in\N$ and fix the two diagonal matrices $D$ and $A$.  Let $ T = T(n), T' =  T'(n)$ be two independent copies of random matrices  sampled uniformly from the finite set $\GL(n,\mathcal{C}_q)$, and let   $V= V(n), V'  = V'(n)$ be two independent random matrices  sampled uniformly from $\Mat(n, \varpi\O_F)$ and independent of $T, T'$.   By Proposition \ref{prop-haar-rm}, we have 
\begin{align*}
\int\limits_{\K(n)}    \chi   (\tr ( g_1D g_2  A  )) d g_1 d g_2 
  = \E   [ \chi   (\tr ( (T+ V )D  (T' + V') A  ))  ]. 
\end{align*}
Since the   transposed random matrix $(T'+ V')^t$ and the original random matrix $T' + V'$ have the same distribution, we have
\begin{align*}
& \E   [ \chi   (\tr ( (T+ V )D  (T' + V') A  ))  ]  = \E   [ \chi   (\tr ( (T+ V )D  (T' + V')^t A  ))  ] 
\\
=&     \E   \Big[  \chi (\sum_{i = 1}^r \sum_{j=1}^n (T_{ij} + V_{ij} ) (T'_{ij} + V'_{ij}) a_i x_j    ) \Big]
\\
=&   \frac{1}{[\# \GL(n, \mathcal{C}_q)]^2 }  \sum_{t, t' \in \GL(n, \mathcal{C}_q)}  \E   \Big[  \chi (\sum_{i = 1}^r \sum_{j=1}^n (t_{ij} + V_{ij} )  (t'_{ij} + V'_{ij}) a_i x_j   ) \Big]. 
\end{align*}
By Lemma \ref{lem-Srn},  we obtain 
\begin{align*}
 & \E   [ \chi   (\tr ( (T+ V )D  (T' + V') A  ))  ]  
 \\
 =& \frac{1}{[\# S(r \times n)]^2}\sum_{M, M' \in  S(r \times n) }  \E   \Big[  \chi \Big(\sum_{i = 1}^r\sum_{j=1}^n (M_{ij} + V_{ij} ) (M'_{ij} + V'_{ji}) a_i x_j    \Big) \Big]. 
\end{align*}
For simplifying notation, for any pair of rectangular matrices  $M, M' \in \Mat(r \times n, \mathcal{C}_q)$, we denote
\begin{align*}
F(M, M') : =  \E   \Big[  \chi \Big(\sum_{i = 1}^r \sum_{j=1}^n (M_{ij} + V_{ij} ) (M'_{ij} + V'_{ji}) a_i x_j    \Big) \Big].
\end{align*}
Then we may write
\begin{align*}
 & \E   [ \chi   (\tr ( (T+ V )D  (T' + V') A  ))  ] 
 \\
  =&  \frac{1}{[\# S(r \times n)]^2}\underbrace{
\Big[ \sum_{M, M' \in  \Mat( r \times n, \mathcal{C}_q ) }  F(M, M') +  \mathcal{E}_1\Big]}_{\text{denoted by $I$}},
\end{align*}
where
\begin{align*}
\mathcal{E}_1 := -   \sum_{M, M' \in  \Mat(r \times n, \mathcal{C}_q) \setminus S( r \times n) }  F(M, M'). 
\end{align*} 
Note that if $M_{ij}$ and $V_{ij}$ are sampled independently and uniformly from the finite set $\mathcal{C}_q$ and from the compact additive group $\varpi \O_F$ respectively, then   $M_{ij}+V_{ij}$ is uniformly distributed on $\O_F$. It follows that 
\begin{align*}
\prod_{i = 1}^r \prod_{j=1}^n \Theta (a_i x_j) =  \frac{1}{[\# \Mat(r \times n, \mathcal{C}_q )]^2 }   \sum_{M, M' \in  \Mat(r \times n, \mathcal{C}_q ) }  F(M, M').
\end{align*}
Consequently, 
\begin{align*}
& \E   [ \chi   (\tr ( (T+ V )D  (T' + V') A  ))  ] 
  =  \frac{I}{[\# \Mat(r \times n, \mathcal{C}_q )]^2 }+\mathcal{E}_2
    \\
  & =    \prod_{i = 1}^r \prod_{j=1}^n \Theta (a_i x_j) +  \frac{ \mathcal{E}_1}{[\# \Mat(r \times n, \mathcal{C}_q )]^2 }   +\mathcal{E}_2,
 \end{align*}
where
\begin{align*}
\mathcal{E}_2 : =  \frac{I}{[\# S(r \times n)]^2}  -  \frac{I}{[\# \Mat(r \times n, \mathcal{C}_q )]^2 }.
\end{align*}

Now let us estimate these error terms.  By the obvious estimate $| F(M, M')| \le 1$, we have
\begin{align*}
| \mathcal{E}_1 | \le  [\# ( \Mat( r \times n, \mathcal{C}_q) \setminus S( r \times n))]^2.
\end{align*}
Note that $| I | \le  [\# S(r \times n) ]^2$.
Hence 
\begin{align*}
 |\mathcal{E}_2 | &=  |I| \Big( \frac{1}{[\# S(r \times n)]^2}  -  \frac{1}{[\# \Mat(r \times n, \mathcal{C}_q )]^2 }\Big) 
\\
&\le   \left(\frac{\#  \Mat(r \times n, \mathcal{C}_q)- \# S(r \times n)}{\# \Mat( r \times n, \mathcal{C}_q )}\right)^2.
\end{align*}
Taking \eqref{card-srn} into account, we get
 \begin{align*}
&  \left|   \E   [ \chi   (\tr ( (T+ V )D  (T' + V') A  ))  ] -  \prod_{i = 1}^r \prod_{j=1}^n \Theta (a_i x_j)   \right|
\\
& \le \frac{ |\mathcal{E}_1|}{[\# \Mat(r \times n, \mathcal{C}_q )]^2 }   +|\mathcal{E}_2|
\\
& \le 2 \left(\frac{\#  \Mat(r \times n, \mathcal{C}_q)- \# S(r \times n)}{\# \Mat(r \times n, \mathcal{C}_q )}\right)^2
\\
& \le 2 (1  - \prod_{w = 0}^{r-1} (1 - q^{w-n}))^2.   
 \end{align*}
\end{proof}

\begin{thm}[Uniform Asymptotic Multiplicativity]\label{thm-uam}
Let $n, r \in \N$ be such that $r \le n $.  Suppose that $D$ and $A$ are two diagonal matrices given by: 
 $$
D = \diag (x_1, \cdots, x_n), \quad A = \diag(a_1, \cdots, a_r, 0, \cdots, 0)
$$
where $x_1, \cdots, x_n, a_1, \cdots, a_r \in F$. Then
\begin{align}\label{uam-ns-formula}
\begin{split}
&  \Big|   \int\limits_{\K(n)}    \chi   (\tr ( g_1D g_2  A  )) d g_1 d g_2 -  \prod_{i = 1}^r   \int\limits_{\K(n)}    \chi   ( a_i \cdot \tr ( g_1D g_2  e_{11}  )) d g_1 d g_2   \Big|
\\
&  \le 2 (1  - \prod_{w = 0}^{r-1} (1 - q^{w-n}))^2 + 2r q^{-2n}. 
\end{split}
 \end{align}
\end{thm}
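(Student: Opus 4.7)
The plan is to deduce Theorem~\ref{thm-uam} directly from Theorem~\ref{thm-asy-mul}, which has already carried out all the substantive work. The point is that Theorem~\ref{thm-asy-mul} provides two quantitative approximations: the bound \eqref{nsym-re} compares the full orbital integral to the product $\prod_{i=1}^r\prod_{j=1}^n \Theta(a_i x_j)$, and the rank--one specialisation \eqref{nsym-r1} compares each single integral $\int_{\K(n)}\chi(a_i\tr(g_1 D g_2 e_{11}))\,dg_1dg_2$ to the corresponding single factor $\prod_{j=1}^n \Theta(a_i x_j)$. So the target quantity sits between two approximations of the same object $\prod_{i,j}\Theta(a_ix_j)$, and the triangle inequality will close the argument.

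Concretely, I would first invoke \eqref{nsym-re} to write
\[
\Big|\int_{\K(n)}\chi(\tr(g_1Dg_2A))\,dg_1dg_2 \;-\; \prod_{i=1}^r\prod_{j=1}^n\Theta(a_i x_j)\Big|\le 2\Big(1-\prod_{w=0}^{r-1}(1-q^{w-n})\Big)^2.
\]
Then I would set, for each $i=1,\dots,r$,
\[
b_i:=\int_{\K(n)}\chi\big(a_i\tr(g_1Dg_2 e_{11})\big)\,dg_1dg_2,\qquad c_i:=\prod_{j=1}^n\Theta(a_ix_j),
\]
and apply \eqref{nsym-r1} to obtain $|b_i-c_i|\le 2q^{-2n}$ for every $i$.

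The next step is a standard telescoping estimate on products of numbers of modulus at most $1$. Both $|b_i|\le 1$ (since it is the integral of a unitary character against a probability measure) and $|c_i|\le 1$ (since $\Theta$ takes values in $[0,1]$ by Lemma~\ref{lem-Theta}). Writing
\[
\prod_{i=1}^r b_i-\prod_{i=1}^r c_i=\sum_{i=1}^r\Big(\prod_{j<i}b_j\Big)(b_i-c_i)\Big(\prod_{j>i}c_j\Big),
\]
one immediately gets $\big|\prod_i b_i-\prod_i c_i\big|\le \sum_{i=1}^r|b_i-c_i|\le 2rq^{-2n}$.

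Combining the two estimates by the triangle inequality yields
\[
\Big|\int_{\K(n)}\chi(\tr(g_1Dg_2A))\,dg_1dg_2-\prod_{i=1}^r b_i\Big|\le 2\Big(1-\prod_{w=0}^{r-1}(1-q^{w-n})\Big)^2+2rq^{-2n},
\]
which is exactly \eqref{uam-ns-formula}. There is no genuine obstacle here; the only small point to verify is that both $b_i$ and $c_i$ indeed lie in the closed unit disc so the telescoping bound is valid, and this is immediate from the definitions and Lemma~\ref{lem-Theta}.
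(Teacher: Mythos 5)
Your proof is correct and follows essentially the same route as the paper: combine the two bounds \eqref{nsym-re} and \eqref{nsym-r1} from Theorem~\ref{thm-asy-mul}, and pass from $\prod_i c_i$ to $\prod_i b_i$ by the standard telescoping estimate for products of quantities in the unit disc, then apply the triangle inequality. The paper phrases the telescoping slightly differently (writing $c_i = b_i + \varepsilon_i$ and expanding), but the content and the resulting bound $2\bigl(1-\prod_{w=0}^{r-1}(1-q^{w-n})\bigr)^2 + 2rq^{-2n}$ are identical; your version has the minor merit of explicitly noting that both $|b_i|\le 1$ and $|c_i|\le 1$ are needed for the telescoping step, whereas the paper only cites $|\prod_j\Theta(a_ix_j)|\le 1$.
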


\begin{proof}
By inequalities \eqref{nsym-re} and \eqref{nsym-r1}, we have 
\begin{align*}
&    \int\limits_{\K(n)}    \chi   (\tr ( g_1D g_2  A  )) d g_1 d g_2 =  \prod_{i = 1}^r \prod_{j=1}^n \Theta (a_i x_j) +  \varepsilon_0
\\
   =&   \prod_{i = 1}^r  \Big( \underbrace{\int\limits_{\K(n)}    \chi   (a_i \cdot \tr ( g_1D g_2  e_{11}  )) d g_1 d g_2 + \varepsilon_i}_{= \prod_{j=1}^n \Theta (a_i x_j)  } \Big) +  \varepsilon_0, 
\end{align*}
with the errors $\varepsilon_0, \varepsilon_1, \cdots, \varepsilon_r$ controlled by 
\begin{align*}
|  \varepsilon_0 | \le 2 (1  - \prod_{w = 0}^{r-1} (1 - q^{w-n}))^2 \an | \varepsilon_i | \le 2 q^{-2n}, \quad i = 1, \cdots, r. 
\end{align*}
Using the elementary inequalities $| \prod_{j=1}^n \Theta (a_i x_j) | \le 1$
and by  a simple computation,  we get
\begin{align*}
&  \Big|   \int\limits_{\K(n)}    \chi   (\tr ( g_1D g_2  A  )) d g_1 d g_2 -  \prod_{i = 1}^r   \int\limits_{\K(n)}    \chi   ( a_i \cdot \tr ( g_1D g_2  e_{11}  )) d g_1 d g_2   \Big|
\\
&  \le  \varepsilon_0+  \varepsilon_1 + \cdots +  \varepsilon_r
\\
& \le  2 (1  - \prod_{w = 0}^{r-1} (1 - q^{w-n}))^2 + 2r q^{-2n}. 
 \end{align*}
\end{proof}

\subsection{$\GL(n, \O_F)$-orbital integrals}

Recall the definition \eqref{defn-theta} for the function  $\theta: F \rightarrow \C$.

\begin{thm}\label{thm-sym-asy}
Let $n, r \in \N$ be such that $r \le n$. Given two diagonal matrices $D$ and $A$:  
$$
D = \diag (x_1, \cdots, x_n), \quad A = \diag(a_1, \cdots, a_r, 0, \cdots, 0)
$$
where $x_1, \cdots, x_n, a_1, \cdots, a_r \in F$, we have
\begin{align}\label{sym-re}
\Big|   \int\limits_{\GL(n, \O_F)}    \chi   (\tr ( gD g^t  A  )) d g  - \prod_{i = 1}^r \prod_{j = 1}^n \theta (a_i x_j) \Big| \le 2\cdot (1 - \prod_{w = 0}^{r-1} (1 - q^{w-n})).
\end{align}

In particular, for any $a\in F$, we have
\begin{align}\label{sym-r1}
\Big|   \int\limits_{\GL(n, \O_F)}    \chi   ( a  \cdot \tr ( gD g^t  e_{11}  )) d g  -  \prod_{j = 1}^n \theta (a \cdot  x_j) \Big| \le 2q^{-n}.
\end{align}
\end{thm}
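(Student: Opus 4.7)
The plan is to adapt the argument for Theorem \ref{thm-asy-mul} to the symmetric setting, where the two independent matrices $g_1, g_2$ are replaced by a single copy of $g$ coupled through $g^t$. By Proposition \ref{prop-haar-rm}, I would sample a Haar-distributed element of $\GL(n, \O_F)$ as $g = T + V$, with $T$ uniform on $\GL(n, \mathcal{C}_q)$ and $V$ independent and uniform on $\Mat(n, \varpi\O_F)$. Since $A$ is supported in its first $r$ diagonal entries, the trace expands as
\begin{align*}
\tr(gDg^t A) = \sum_{i=1}^r \sum_{j=1}^n a_i x_j (T_{ij}+V_{ij})^2,
\end{align*}
so only the first $r$ rows of $T$ (together with the corresponding entries of $V$) enter the exponent.

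For a rectangular matrix $M \in \Mat(r \times n, \mathcal{C}_q)$, I would then introduce
\begin{align*}
F(M) := \E\Big[\chi\Big(\sum_{i=1}^r \sum_{j=1}^n a_i x_j (M_{ij}+V_{ij})^2\Big)\Big].
\end{align*}
By Lemma \ref{lem-Srn}, the first $r$ rows of a uniform $T \in \GL(n, \mathcal{C}_q)$ are uniformly distributed on $S(r \times n)$, so the orbital integral equals $(\#S(r \times n))^{-1}\sum_{M \in S(r \times n)} F(M)$. On the other hand, when the entries of $M$ are independent and uniform on $\mathcal{C}_q$, the sums $M_{ij}+V_{ij}$ are independent and uniform on $\O_F$, which by the very definition of $\theta$ yields
\begin{align*}
\frac{1}{\#\Mat(r \times n, \mathcal{C}_q)}\sum_{M \in \Mat(r \times n, \mathcal{C}_q)} F(M) = \prod_{i=1}^r \prod_{j=1}^n \theta(a_i x_j).
\end{align*}

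The last step is to compare these two averages. Setting $\mathcal{M} := \Mat(r \times n, \mathcal{C}_q)$ and $S := S(r \times n)$, writing $\sum_{M\in\mathcal{M}} F(M) = \sum_{M\in S}F(M) + \sum_{M\in\mathcal{M}\setminus S}F(M)$, and using the trivial bound $|F(M)|\le 1$, a brief arithmetic estimate gives
\begin{align*}
\Big|\frac{1}{\#S}\sum_{M \in S} F(M) - \frac{1}{\#\mathcal{M}}\sum_{M \in \mathcal{M}} F(M)\Big| \le 2\cdot \frac{\#\mathcal{M} - \#S}{\#\mathcal{M}}.
\end{align*}
Combined with the ratio $\#S/\#\mathcal{M} = \prod_{w=0}^{r-1}(1 - q^{w-n})$ read off from Lemma \ref{lem-Srn}, this produces the claimed bound \eqref{sym-re}; specialising to $r=1$ gives $\#S/\#\mathcal{M} = 1 - q^{-n}$ and hence \eqref{sym-r1}. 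I do not foresee a genuinely hard step: compared with the proof of Theorem \ref{thm-asy-mul}, the error here is linear rather than quadratic in $1 - \prod_{w=0}^{r-1}(1-q^{w-n})$ precisely because the symmetric setting involves one factor of $T$ instead of two independent factors $T, T'$. The only point to verify carefully is that the squares $(T_{ij}+V_{ij})^2$ recast properly as uniform averages over $\O_F$, which is where the definition of $\theta$ replaces that of $\Theta$.
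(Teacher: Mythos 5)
Your proposal is correct and it is exactly the argument the paper has in mind: the paper's proof of Theorem~\ref{thm-sym-asy} explicitly defers to ``similar arguments used in the proof of Theorem~\ref{thm-asy-mul}'' after writing the orbital integral as $\E\bigl[\chi\bigl(\sum_{i,j}a_i x_j (T_{ij}+V_{ij})^2\bigr)\bigr]$ via Proposition~\ref{prop-haar-rm}, and that is what you carry out, with a single matrix $M$ in place of the pair $(M,M')$. The only (cosmetic) difference is that instead of reproducing the $\mathcal{E}_1,\mathcal{E}_2$ bookkeeping of the earlier proof, you package the error directly as the convex-combination estimate
\[
\Bigl|\tfrac{1}{\#S}\textstyle\sum_{M\in S}F(M)-\tfrac{1}{\#\mathcal M}\textstyle\sum_{M\in\mathcal M}F(M)\Bigr|
=\Bigl(1-\tfrac{\#S}{\#\mathcal M}\Bigr)\Bigl|\tfrac{1}{\#S}\textstyle\sum_{M\in S}F(M)-\tfrac{1}{\#(\mathcal M\setminus S)}\textstyle\sum_{M\in \mathcal M\setminus S}F(M)\Bigr|\le 2\Bigl(1-\tfrac{\#S}{\#\mathcal M}\Bigr),
\]
which is a little tighter conceptually and gives the stated constant immediately; combined with $\#S/\#\mathcal M=\prod_{w=0}^{r-1}(1-q^{w-n})$ from Lemma~\ref{lem-Srn}, both \eqref{sym-re} and the $r=1$ case \eqref{sym-r1} follow as you claim.
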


\begin{proof}
Fix $n, r \in\N$ and fix the two diagonal matrices $D$ and $A$.  Let $ T = T(n)$ be a random matrix uniformly distributed on the finite set $\GL(n,\F_q)$, and let   $V= V(n)$ be a random matrix  uniformly distributed on $\Mat(n, \varpi\O_F)$ and independent of $T$.   By Proposition \ref{prop-haar-rm}, we have 
\begin{align*}
\int\limits_{\GL(n, \O_F)}    \chi   (\tr ( gD g^t  A  )) d g  = \E \left [ \chi\left(\sum_{i = 1}^r \sum_{j = 1}^n   a_i x_j  (T_{ij} +  V_{ij})^2 \right)\right].
\end{align*}
By similar arguments used in the proof of Theorem \ref{thm-asy-mul},  we may get the desired  inequality \eqref{sym-re}. The second inequality \eqref{sym-r1} follows immediately by taking $e=1$ and  $a_1 = a$.
\end{proof}

\begin{thm}[Uniform Asymptotic Multiplicativity]\label{thm-sym-uam}
Let $n, r \in \N$ be such that $r \le n$. Given two diagonal matrices $D$ and $A$:  
$$
D = \diag (x_1, \cdots, x_n), \quad A = \diag(a_1, \cdots, a_r, 0, \cdots, 0)
$$
where $x_1, \cdots, x_n, a_1, \cdots, a_e \in F$, we have
\begin{align*}
& \Big|   \int\limits_{\GL(n, \O_F)}    \chi   (\tr ( gD g^t  A  )) d g  - \prod_{i = 1}^r \int\limits_{\GL(n, \O_F)}    \chi   (a_i \tr ( gD g^t   e_{11}  )) d g \Big|
\\
 & \le 2 (1  - \prod_{w = 0}^{r-1} (1 - q^{w-n})) + 2r q^{-n}. 
\end{align*}
\end{thm}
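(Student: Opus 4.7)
The strategy mirrors the non-symmetric argument used for Theorem \ref{thm-uam}: apply Theorem \ref{thm-sym-asy} in two different regimes and combine the estimates by telescoping. The key observation is that Theorem \ref{thm-sym-asy} provides two approximations, both of which are controlled uniformly in $x_1, \ldots, x_n$ and $a_1, \ldots, a_r$.

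First I would apply inequality \eqref{sym-re} of Theorem \ref{thm-sym-asy} to write
\[
\int_{\GL(n, \O_F)} \chi(\tr(gDg^t A))\,dg = \prod_{i=1}^r \prod_{j=1}^n \theta(a_i x_j) + \varepsilon_0
\]
with $|\varepsilon_0| \le 2(1 - \prod_{w=0}^{r-1}(1 - q^{w-n}))$. Next I would apply inequality \eqref{sym-r1} of Theorem \ref{thm-sym-asy} to each single-coefficient matrix $a_i e_{11}$ (here the admissible rank is $1 \le n$, so the hypothesis is satisfied), obtaining
\[
\prod_{j=1}^n \theta(a_i x_j) = \int_{\GL(n, \O_F)} \chi(a_i \tr(gDg^t e_{11}))\,dg + \varepsilon_i, \qquad |\varepsilon_i| \le 2 q^{-n},
\]
for each $i = 1, \ldots, r$.

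The remaining task is to substitute the second family of identities into the first and control the resulting expression. Denote $B_i := \int_{\GL(n, \O_F)} \chi(a_i \tr(gDg^t e_{11}))\,dg$. Since $\chi$ is unimodular, both $B_i$ and $B_i + \varepsilon_i = \prod_{j=1}^n \theta(a_i x_j)$ have absolute value at most $1$. A standard telescoping identity
\[
\prod_{i=1}^r (B_i + \varepsilon_i) - \prod_{i=1}^r B_i = \sum_{i=1}^r (B_1 + \varepsilon_1)\cdots(B_{i-1}+\varepsilon_{i-1}) \,\varepsilon_i\, B_{i+1}\cdots B_r,
\]
combined with the bounds $|B_i|, |B_i + \varepsilon_i| \le 1$, yields
\[
\Bigl| \prod_{i=1}^r \prod_{j=1}^n \theta(a_i x_j) - \prod_{i=1}^r B_i \Bigr| \le \sum_{i=1}^r |\varepsilon_i| \le 2 r q^{-n}.
\]
A final application of the triangle inequality then gives the claimed bound
$|\varepsilon_0| + 2rq^{-n} \le 2(1 - \prod_{w=0}^{r-1}(1 - q^{w-n})) + 2rq^{-n}$.

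There is no genuine obstacle here: once Theorem \ref{thm-sym-asy} is in hand, the argument is essentially bookkeeping. The only point requiring mild care is verifying the uniform bound $|B_i| \le 1$ and the use of the telescoping identity rather than a naive expansion, to avoid an extraneous factor of $2^r$ in the error term. This is precisely the structure followed in the proof of Theorem \ref{thm-uam}, and the same derivation transfers verbatim to the symmetric case.
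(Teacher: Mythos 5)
Your argument is exactly the paper's: it applies the two inequalities of Theorem \ref{thm-sym-asy} to produce $\varepsilon_0, \varepsilon_1, \ldots, \varepsilon_r$, then telescopes using the trivial bounds $|B_i| \le 1$ and $|B_i + \varepsilon_i| = |\prod_j \theta(a_i x_j)| \le 1$, which is precisely the ``simple computation'' in the paper's proof of Theorem \ref{thm-uam} transferred to the symmetric case. The extra care you take in writing out the telescoping identity is a useful clarification but not a departure from the paper's route.
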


\begin{proof}
The proof is similar to that of Theorem \ref{thm-uam}.
\end{proof}

%%%%%%%%%
%%%%%%%%%
%%%%%%%%%
%%%%%%%%%
%%%%%%%%%
%%%%%%%%%
%%%%%%%%%
%%%%%%%%%
%%%%%%%%%
%%%%%%%%%
%%%%%%%%%
%%%%%%%%%
%%%%%%%%%
%%%%%%%%%

\section{The completion of  the classification of ergodic measures}\label{sec-cl-F}

\subsection{The case of $\mathcal{P}_{\mathrm{erg}}(\Mat(\N, F))$}

\begin{thm}[Multiplicativity Theorem for Orbital Limit Measures]\label{thm-multiplicativity-ns}
Let $\mu\in\ORB_\infty(\Mat(\N, F))$.  Then for any $r \in\N$ and for any finite sequence $x_1, \cdots, x_r$ in $F$, we have 
  \begin{align}\label{multiplicativity-ns}
  \widehat{\mu} (\diag (x_1, \cdots, x_r, 0, 0, \cdots)) = \prod_{j=1}^r \widehat{\mu}  (x_j e_{11}). 
  \end{align}
     In particular, we have 
  \begin{align*}
  \ORB_\infty(\Mat(\N, F)) = \mathcal{P}_{\mathrm{erg}}(\Mat(\N, F)).
  \end{align*} 
\end{thm}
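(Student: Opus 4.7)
The argument has two parts: first establish the multiplicativity identity \eqref{multiplicativity-ns}, then deduce $\ORB_\infty(\Mat(\N,F)) = \mathcal{P}_{\mathrm{erg}}(\Mat(\N,F))$ by matching limit measures against the explicit family $\{\mu_{\mathbbm{k}}\}_{\mathbbm{k}\in\Delta}$.

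For the multiplicativity, let $\mu \in \ORB_\infty(\Mat(\N,F))$ and pick $\mu_k = m_{\K(n_k)}(M_k) \Longrightarrow \mu$ with $M_k \in \Mat(n_k,F)$ and $n_k \to \infty$. Since $\chi(\tr(B\,\cdot\,))$ factors through a finite-dimensional cut for each $B \in \Mat(\infty,F)$, weak convergence yields $\widehat{\mu_k}(B) \to \widehat{\mu}(B)$. Using Lemma \ref{lem-s-num}, write $M_k = a_k D_k b_k$ with $D_k$ diagonal and $a_k,b_k \in \GL(n_k,\O_F)$; the $\K(n_k)$-invariance of $\mu_k$ expresses $\widehat{\mu_k}$ at a diagonal argument as an orbital integral. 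Fixing $r$ and $x_1,\dots,x_r \in F$, Theorem \ref{thm-uam} gives, once $n_k \ge r$,
\[
\Bigl|\widehat{\mu_k}(\diag(x_1,\dots,x_r,0,\dots)) - \prod_{j=1}^{r}\widehat{\mu_k}(x_j e_{11})\Bigr| \le 2\Bigl(1-\prod_{w=0}^{r-1}(1-q^{w-n_k})\Bigr)^2 + 2r q^{-2n_k},
\]
and the right-hand side tends to $0$. Passing to the limit yields \eqref{multiplicativity-ns}.

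For the equality of sets, \eqref{multiplicativity-ns} combined with Remark \ref{rem-fourier-inv} shows that $\widehat{\mu}$ is completely determined by the one-parameter slice $x \mapsto \widehat{\mu}(xe_{11})$. Applying Theorem \ref{thm-asy-mul} with $r=1$ and Lemma \ref{lem-Theta}, for $x = \varpi^{-\ell}u$ with $u \in \O_F^\times$,
\[
\widehat{\mu_k}(xe_{11}) = \exp\Bigl(-\log q \cdot \sum_{j=1}^{n_k}(k_j^{(k)}+\ell)\mathbbm{1}_{\{k_j^{(k)}+\ell \ge 1\}}\Bigr) + O(q^{-2n_k}),
\]
where $\mathbbm{k}^{(k)} = (k_j^{(k)})_j$ is the non-increasing singular sequence of $M_k$, padded with $-\infty$. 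Compactness of $(\Z\cup\{-\infty\})^\N$ in the product topology allows extraction of a subsequence along which $\mathbbm{k}^{(k)} \to \mathbbm{k} \in \Delta$ entrywise. A careful passage to the limit in the exponent (using the identity \eqref{indicator-tran} to handle eventually constant tails when $\lim_j k_j \in \Z$) matches the limit against the formula of Proposition \ref{prop-explicit}, giving $\widehat{\mu}(xe_{11}) = \widehat{\mu_{\mathbbm{k}}}(xe_{11})$, hence $\mu = \mu_{\mathbbm{k}}$. Since Theorem \ref{thm-erg-1} asserts $\mu_{\mathbbm{k}} \in \mathcal{P}_{\mathrm{erg}}$, we obtain $\ORB_\infty \subset \mathcal{P}_{\mathrm{erg}}$; the reverse inclusion is Vershik's Theorem \ref{Vershik-thm} together with Proposition \ref{prop-limit-erg}.

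The main obstacle is the limit-passage in the exponent in the second part. A priori the sums $\sum_j (k_j^{(k)}+\ell)\mathbbm{1}_{\{k_j^{(k)}+\ell \ge 1\}}$ may blow up in $k$, so one must argue that monotonicity of $(k_j^{(k)})_j$ together with the uniform bound $|\widehat{\mu_k}(xe_{11})| \le 1$ prevents mass loss at infinity, and that the clamping mechanism in \eqref{indicator-tran} correctly reproduces the formula for $\widehat{\mu_{\mathbbm{k}}}$ in both the case $\lim_j k_j = -\infty$ (where the product truncates by tail decay) and the case $\lim_j k_j = k \in \Z$ (where a Haar-type factor appears). Everything else is a routine application of the results already established in the preceding sections.
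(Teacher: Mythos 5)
Your proof of the multiplicativity identity \eqref{multiplicativity-ns} is exactly the paper's argument: express $\widehat{\mu_{n_k}}$ as a $\K(n_k)$-orbital integral, invoke Theorem~\ref{thm-uam}, and let $n_k \to \infty$. No issues there.

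For the set equality $\ORB_\infty = \mathcal{P}_{\mathrm{erg}}$ you take a genuinely different route for the nontrivial inclusion $\ORB_\infty \subset \mathcal{P}_{\mathrm{erg}}$. The paper argues directly: by \eqref{multiplicativity-ns}, the diagonal marginal $\Psi(\mu)$ is a product measure; being also $S(\infty)$-exchangeable it is Bernoulli, hence extreme by De Finetti, and the affine-embedding argument of Theorem~\ref{thm-erg-1} transfers extremality back to $\mu$. You instead first identify each $\mu \in \ORB_\infty$ with some $\mu_{\mathbbm{k}}$ and then appeal to Theorem~\ref{thm-erg-1}; this in effect performs the work of Theorem~\ref{ct-ns} inside the present theorem. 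That is logically admissible (no circularity), but it is longer and forces you to confront a compactness issue that the paper's argument sidesteps at this stage.

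That compactness issue is where your sketch has a genuine gap. The space $(\Z\cup\{-\infty\})^\N$ is \emph{not} compact in the product topology: $\Z\cup\{-\infty\}$ has no accumulation point for sequences tending to $+\infty$, so a sequence of singular sequences with $k_1^{(k)} \to +\infty$ admits no convergent subsequence in $\Delta$. You acknowledge the obstacle but propose to close it with the bound $|\widehat{\mu_k}(xe_{11})| \le 1$; that bound holds for every characteristic function and is perfectly consistent with the measures escaping to infinity (take $\mu_k$ the orbital measure of $\diag(\varpi^{-k}, 0,\dots)$), so it cannot prevent mass loss. What is actually needed is tightness: since $\mu_k \Longrightarrow \mu$, Prokhorov gives tightness, and the argument of Lemma~\ref{lem-compact} (which rests on the \emph{continuity at the origin} $\lim_{|x|\to 0}\widehat{\mu}(xe_{11}) = 1$ from Lemma~\ref{lem-uniform-c}, combined with the estimate \eqref{nsym-r1} and $|\Theta(ax_1^{(n_k)})|\to 0$) forces $\sup_k k_1^{(k)} < \infty$. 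Once the singular values are uniformly bounded above, extraction inside $\Delta$ does become possible, and then Lemma~\ref{lem-inj-con} and Proposition~\ref{prop-explicit} complete the identification $\mu = \mu_{\mathbbm{k}}$ as you describe. So: your structure is salvageable, but the specific tool you name for the key tightness step is the wrong one, and as written that step fails.
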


\begin{proof}
Let $\mu \in \ORB_\infty(\Mat(\N, F))$.  Then by definition, there exists an increasing sequence $(n_k)_{k\in\N}$ of positive integers and a sequence $(\mu_{n_k})_{k\in\N}$ of orbital measures with  $\mu_{n_k}\in \ORB_{n_k}(\Mat(n_k, F))$, such that 
\begin{align}\label{ass-weak-cv}
\mu_{n_k} \Longrightarrow \mu \text{ as } k \to\infty.
\end{align}
Take any $x_1, \cdots, x_r \in F$. By the inequality \eqref{uam-ns-formula},  we have
\begin{align}\label{asy-mul-proof}
\lim_{k\to\infty }\Big|\widehat{\mu_{n_k}} (\diag (x_1, \cdots, x_r, 0, \cdots)) - \prod_{j = 1}^r  \widehat{\mu_{n_k}} (x_j e_{11})\Big| =0.
\end{align} 
Combining \eqref{ass-weak-cv} and \eqref{asy-mul-proof}, we get the desired identity \eqref{multiplicativity-ns}. 

By Vershik's  Theorem \ref{Vershik-thm} and Proposition \ref{prop-limit-erg}, to obtain  \eqref{asy-mul-proof}, we only need to prove the inclusion 
\begin{align}\label{proof-des-inc}
  \ORB_\infty(\Mat(\N, F)) \subset \mathcal{P}_{\mathrm{erg}}(\Mat(\N, F)).
\end{align}
Recall the definition \eqref{diag-inj1} of the affine map $\Psi$: 
\begin{align*}
\Psi: \mathcal{P}_{\mathrm{inv}}(\Mat(\N, F)) \rightarrow \mathcal{P}_{\mathrm{inv}}^{S(\infty)}(F^\N).
\end{align*}
The identity \eqref{multiplicativity-ns} implies that for any $\mu \in   \ORB_\infty(\Mat(\N, F))$, the marginal measure on the diagonal matrices $\Psi(\mu)$  is a Bernoulli measure on $F^\N$. Consequently, by exactly the same argument as in the proof of Theorem \ref{thm-erg-1}, we can prove the desired inclusion \eqref{proof-des-inc}. 
\end{proof}

An immediate consequence of Theorem \ref{thm-multiplicativity-ns} and the argument used in the proof of Theorem \ref{thm-multiplicativity-ns} is the following Ismagilov-Olshanski multiplicativity in our setting.
\begin{cor}[Ismagilov-Olshanski multiplicativity]\label{cor-IO-ns}
An invariant probability measure  $\mu\in \mathcal{P}_{\mathrm{inv}}(\Mat(\N, F))$ is ergodic if and only if  for any $r \in\N$ and for any finite sequence $x_1, \cdots, x_r$ in $F$, we have 
  \begin{align*}
  \widehat{\mu} (\diag (x_1, \cdots, x_r, 0, 0, \cdots)) = \prod_{j=1}^r \widehat{\mu}  (x_j e_{11}). 
  \end{align*}
\end{cor}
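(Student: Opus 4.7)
My plan is to establish the two implications separately, reusing the machinery already built up for Theorem \ref{thm-multiplicativity-ns} and the diagonal-projection trick from the proof of Theorem \ref{thm-erg-1}.

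For the forward direction (ergodicity implies multiplicativity), I would simply invoke Vershik's Theorem \ref{Vershik-thm} and Proposition \ref{prop-limit-erg} to conclude that any $\mu \in \mathcal{P}_{\mathrm{erg}}(\Mat(\N, F))$ lies in $\ORB_\infty(\Mat(\N, F))$. Then Theorem \ref{thm-multiplicativity-ns} directly yields the factorization
\[
\widehat{\mu}(\diag(x_1, \ldots, x_r, 0, 0, \ldots)) = \prod_{j=1}^r \widehat{\mu}(x_j e_{11}).
\]
This half is essentially just a quotation of what has been proved.

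For the converse (multiplicativity implies ergodicity), I would argue exactly as in the proof of Theorem \ref{thm-erg-1}. Consider the affine embedding
\[
\Psi \colon \mathcal{P}_{\mathrm{inv}}(\Mat(\N, F)) \longrightarrow \mathcal{P}_{\mathrm{inv}}^{S(\infty)}(F^\N)
\]
induced by restricting to the diagonal (Lemma \ref{lem-diag-inj1}). Using \eqref{fourier-push}, the assumed multiplicativity translates directly into the statement that the characteristic function of $\Psi(\mu)$ factorizes over coordinates, i.e.\ $\Psi(\mu)$ is a Bernoulli (product) measure on $F^\N$. By the De Finetti Theorem (Remark \ref{rem-erg-ext}), Bernoulli measures are precisely the extreme points of $\mathcal{P}_{\mathrm{inv}}^{S(\infty)}(F^\N)$. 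Hence $\Psi(\mu)$ is extreme in $\mathcal{P}_{\mathrm{inv}}^{S(\infty)}(F^\N)$, and a fortiori extreme in the convex subset $\Psi(\mathcal{P}_{\mathrm{inv}}(\Mat(\N, F)))$.

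Since $\Psi$ is an \emph{affine} embedding, extreme points pull back to extreme points, so $\mu \in \mathrm{Ext}(\mathcal{P}_{\mathrm{inv}}(\Mat(\N, F)))$; by Remark \ref{rem-erg-ext}, this gives $\mu \in \mathcal{P}_{\mathrm{erg}}(\Mat(\N, F))$. There is no substantive obstacle here: both directions are immediate consequences of results already in hand, and the only thing worth checking carefully is that the multiplicativity identity indeed says nothing more and nothing less than $\widehat{\Psi(\mu)}$ being a product of one-coordinate factors, which is just the elementary identity \eqref{fourier-push}.
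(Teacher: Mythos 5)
Your proof is correct and takes essentially the same route the paper intends: the forward direction quotes Theorem \ref{thm-multiplicativity-ns} (via $\mathcal{P}_{\mathrm{erg}}(\Mat(\N, F)) = \ORB_\infty(\Mat(\N, F))$), and the converse reuses the De Finetti plus affine-embedding argument from the proofs of Theorem \ref{thm-erg-1} and Theorem \ref{thm-multiplicativity-ns}. The paper labels this an ``immediate consequence'' without spelling out the details; your write-up supplies exactly those details, and the key observation you flag --- that multiplicativity together with \eqref{fourier-push} says precisely that $\Psi(\mu)$ is Bernoulli --- is the right one.
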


\begin{rem}
The reader may compare our method with the different methods used in, for instance, Olshansk-Vershik \cite{OV-ams96}. The Olshanski-Vershik argument relies on the Ismagilov-Olshanski multiplicativity: the multiplicativity of the characteristic function is  equivalent to the ergodicity of the corresponding probability measure. In different contexts, this multiplicativity is established  e.g.  by Ismagilov \cite{Ismagilov69, Ismagilov70}, Nessonov \cite{Nessonov}, Voiculescu \cite{Voiculescu76}, Olshanski \cite{Ol78}, Stratila-Voiculescu \cite{SV82}, Pickrell  \cite{Pickrell-jfa90}, Vershik-Kerov \cite{VK90},  Olshanski \cite{Olshansk-Howe}. 

In our situation, the Ismagilov-Olshanski multiplicativity follows as a  corollary.
\end{rem}

Now we may concentrate on the classification of $\ORB_\infty(\Mat(\N, F))$. For this purpose, we need to study the weak convergence of probability measures on $\Mat(\N, F)$.  The following standard proposition implies that the  weak convergence of probability measures on $\Mat(\N, F)$ is equivalent to the locally uniform convergence of corresponding characteristic functions.  For completeness, we include its proof in the Appendix. 

\begin{prop}\label{prop-uniform-convergence}
A sequence  of invariant probability measures $(\mu_n)_{n\in\N}$ in $\mathcal{P}_{\mathrm{inv}}(\Mat(\N, F))$ converges weakly to an invariant probability measure $\mu\in\mathcal{P}_{\mathrm{inv}}(\Mat(\N, F))$ if and only if for any $r \in\N$ and any $\ell_1, \cdots, \ell_r \in\Z$, we have  
$$
 \widehat{\mu} (\diag (\varpi^{-\ell_1}, \cdots, \varpi^{-\ell_r}, 0, \cdots))  = \lim_{n\to\infty}\widehat{\mu}_n (\diag (\varpi^{-\ell_1}, \cdots, \varpi^{-\ell_r}, 0, \cdots))
$$
and the convergence is uniform on any subset of type 
$$
\{(\ell_1, \cdots, \ell_r)| \ell_1, \cdots, \ell_r\in\Z_{\le C} \}.
$$
\end{prop}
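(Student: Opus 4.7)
The plan is to prove both implications: the forward direction by combining pointwise convergence with an equicontinuity argument, and the converse by reducing to finite-dimensional L\'evy continuity together with a tightness argument exploiting $\GL(\infty, \O_F) \times \GL(\infty, \O_F)$-invariance.

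For the forward direction, pointwise convergence $\widehat{\mu_n}(A) \to \widehat{\mu}(A)$ for each fixed $A \in \Mat(\infty, F)$ is immediate, since $M \mapsto \chi(\tr(AM))$ is bounded and continuous on $\Mat(\N, F)$. For the uniform statement on $\{(\ell_1, \ldots, \ell_r) : \ell_i \leq C\}$ I would fix $r$ and note that the corresponding diagonals lie in the compact set $(\varpi^{-C}\O_F)^r \subset F^r$. Weak convergence of $(\mu_n)$ implies tightness, so for any $\varepsilon > 0$ one may fix a compact $K \subset \Mat(\N, F)$ with $\mu_n(\Mat(\N, F) \setminus K) < \varepsilon$ uniformly in $n$; on $K$ the coordinates $M_{ii}$ are uniformly bounded in absolute value, and since $\chi$ is trivial on $\O_F$, the map $A \mapsto \chi(\tr(AM))$ is locally constant at a uniform scale over $M \in K$. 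Integration yields equicontinuity of the family $\{\widehat{\mu_n}\}$ on $(\varpi^{-C}\O_F)^r$, and pointwise convergence plus equicontinuity gives uniform convergence by Arzel\`a--Ascoli.

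For the converse direction, assume pointwise convergence on all diagonals. By Remark \ref{rem-fourier-inv} and the $\GL(N, \O_F) \times \GL(N, \O_F)$-invariance of the finite-dimensional projections $(\Cut_N^\infty)_\ast \mu_n$, pointwise convergence extends to all of $\Mat(N, F)$; since the limit $\widehat{(\Cut_N^\infty)_\ast \mu}$ is the continuous characteristic function of a probability measure on the locally compact abelian group $\Mat(N, F)$, the classical L\'evy continuity theorem yields $(\Cut_N^\infty)_\ast \mu_n \Longrightarrow (\Cut_N^\infty)_\ast \mu$ for every $N$. To upgrade to weak convergence on $\Mat(\N, F)$ I would apply Prokhorov: tightness of $(\mu_n)$ reduces, by invariance (which forces all one-dimensional coordinate marginals to coincide), to tightness of the common marginal distribution of the $(1,1)$-coordinate, itself a consequence of the $N=1$ weak convergence established above; any subsequential limit of $(\mu_n)$ must agree with $\mu$ on all finite-dimensional projections, hence equals $\mu$, and so $\mu_n \Longrightarrow \mu$.

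The main technical point is the tightness step in the converse: the non-Archimedean invariance of the measures reduces infinite-dimensional tightness on $\Mat(\N, F)$ to one-dimensional tightness on $F$, where the classical L\'evy--Prokhorov machinery is directly available. The forward equicontinuity also crucially exploits the $p$-adic feature that characters are locally constant, which makes the required compactness arguments significantly cleaner than their real-variable analogues.
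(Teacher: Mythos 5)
Your proposal is correct and follows essentially the same route as the paper: the paper packages the two halves into Lemma \ref{lem-conv-prob} (a non-Archimedean L\'evy continuity theorem proved via a Fubini/tightness argument) and Lemma \ref{lem-uniform-cv} (weak convergence on $F^m$ equivalent to locally uniform convergence of characteristic functions, via equicontinuity and Arzel\`a--Ascoli), then reduces the infinite-dimensional statement to the finite-dimensional projections and uses $\GL$-invariance to restrict to diagonal matrices, exactly as you do. The only cosmetic difference is your tightness step in the converse: you exploit invariance to observe that all one-dimensional coordinate marginals agree and reduce tightness on $\Mat(\N,F)$ to tightness of the $(1,1)$-marginal, whereas the paper invokes the standard fact that weak convergence of all finite-dimensional projections on a countable product of Polish spaces already yields weak convergence of the joint law (tightness following by taking errors $\varepsilon/2^{i+j}$ across coordinates); both work, and your variant is a small but clean simplification made available by the invariance.
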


\begin{lem}\label{lem-uniform-c}
Let $\mu$  be a Borel probability measure on $\Mat(\N, F)$. The  function $ x \in F \mapsto \widehat{\mu}(x e_{11}) \in \C$
is uniformly continuous. In particular, we have 
\begin{align*}
\lim_{\ell\to-\infty}  \widehat{ \mu} (\varpi^{-\ell} e_{11})  =1.
\end{align*}
\end{lem}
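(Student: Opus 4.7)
My plan is to reduce the statement to the standard fact that the Fourier transform of a finite Borel measure on $F$ is uniformly continuous, and then exploit the ultrametric structure to make the uniform continuity proof a short compactness argument.

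First I would observe that the function $x \mapsto \widehat{\mu}(xe_{11})$ is just the Fourier transform of a scalar measure. Indeed, let $\pi_{11} \colon \Mat(\N,F) \to F$ be the coordinate map $M \mapsto M_{11}$ and set $\nu := (\pi_{11})_{*}\mu$. Then by Fubini,
\[
\widehat{\mu}(xe_{11}) \;=\; \int_{\Mat(\N,F)} \chi(x M_{11})\,\mu(dM) \;=\; \int_F \chi(xy)\,\nu(dy) \;=\; \widehat{\nu}(x),
\]
so it suffices to show that $\widehat{\nu}$ is uniformly continuous on $F$ for any Borel probability measure $\nu$ on $F$.

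Next I would prove uniform continuity directly using tightness of $\nu$ and the ultrametric behaviour of $\chi$. Given $\varepsilon>0$, pick $N \in \N$ such that $\nu(F \setminus \varpi^{-N}\O_F) < \varepsilon/2$; this is possible because the increasing family of compact sets $\{\varpi^{-N}\O_F\}_{N \geq 0}$ exhausts $F$. For any $x,x' \in F$ with $|x-x'| \le q^{-N}$ and for any $y \in \varpi^{-N}\O_F$, we have $(x-x')y \in \O_F$, hence $\chi((x-x')y)=1$ by assumption \eqref{assumption-chi}. Writing $\chi(xy) - \chi(x'y) = \chi(x'y)\bigl(\chi((x-x')y) - 1\bigr)$, we obtain
\[
|\widehat{\nu}(x) - \widehat{\nu}(x')| \;\le\; \int_{\varpi^{-N}\O_F} |\chi((x-x')y) - 1|\,\nu(dy) \;+\; 2\nu(F \setminus \varpi^{-N}\O_F) \;<\; 0 + \varepsilon.
\]
This establishes uniform continuity (in fact, with an explicit ultrametric modulus of continuity).

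Finally, the particular limit follows immediately: since $|\varpi^{-\ell}| = q^{\ell} \to 0$ as $\ell \to -\infty$, we have $\varpi^{-\ell} \to 0$ in $F$, and by continuity at the origin together with $\widehat{\mu}(0) = 1$, we conclude $\widehat{\mu}(\varpi^{-\ell}e_{11}) \to 1$. I do not anticipate any real obstacle here; the only point that requires care is the tightness step, which is immediate from the fact that $F$ is $\sigma$-compact (being exhausted by the compact balls $\varpi^{-N}\O_F$) together with countable additivity of $\nu$.
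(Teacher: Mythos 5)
Your proof is correct. The reduction to the scalar measure $\nu = (\pi_{11})_{*}\mu$ is exactly what the paper does (there it is written as $(\Cut_1^\infty)_{*}\mu$), but where the paper then simply cites the general fact from Hewitt--Ross that the Fourier transform of a finite measure on a locally compact abelian group is uniformly continuous, you give a short self-contained proof. Your argument exploits the ultrametric structure: on a compact set $\varpi^{-N}\O_F$ carrying most of $\nu$, the increments $\chi(xy)-\chi(x'y)$ vanish \emph{identically} once $|x-x'|\le q^{-N}$ (since $(x-x')y\in\O_F$ and $\chi|_{\O_F}\equiv 1$), so the only contribution comes from the tail. This buys you an explicit modulus of continuity and keeps the paper self-contained, at the price of a few extra lines; the paper's route is shorter but relies on an external reference. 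Both the reduction and the final step (continuity at $0$ plus $\widehat{\mu}(0)=1$) are handled correctly.
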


\begin{proof}
Note that the function $x \mapsto \widehat{\mu}(x e_{11})$ is the characteristic function of the marginal probability measure $(\Cut_{1}^\infty)_{*}\mu$ on $\FF$. The uniform continuity of this function then follows immediately, see, e.g., Hewitt and Ross \cite[Theorem 31.5,  p.212]{Hewitt-Ross}.  
\end{proof}

\begin{lem}\label{lem-compact}
Assume that we are given a sequence of probability measures $(\mu_n)_{n\in\N}$, such that $\mu_n\in\ORB_n(\Mat(n, F))$. A necessary and sufficient condition for  this sequence to be  tight is the following:  
\begin{itemize}
\item[($C_1$)] There exists $\gamma \in\Z$, such that the supports $\supp(\mu_n)$ are all contained in the following compact subset of $\Mat(\N, F)$:   
\begin{align*}
\left\{X  \in \Mat(\N, \FF) \Big |   \text{$ | X_{ij}| \le q^\gamma, \forall i, j \in \N$}  \right\}.
\end{align*}
\end{itemize}
\end{lem}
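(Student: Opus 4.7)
The plan is as follows.  Sufficiency of $(C_1)$ is immediate: the set
\[
K_\gamma := \{X \in \Mat(\N, F) : |X_{ij}| \le q^\gamma \text{ for all } i, j \in \N\}
\]
equals the product $\prod_{(i,j) \in \N \times \N} \varpi^{-\gamma}\O_F$ of compact sets, hence is compact by Tychonoff's theorem, and the hypothesis $\supp(\mu_n) \subset K_\gamma$ for every $n$ gives $\mu_n(K_\gamma) = 1$, so the sequence is automatically tight.

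For necessity, I first reduce $(C_1)$ to a uniform bound on the largest singular number.  By Lemma \ref{lem-s-num}, each orbit of $\K(n)$ supporting $\mu_n$ is of the form $\{g_1 D_n g_2 : g_1, g_2 \in \GL(n, \O_F)\}$ for some diagonal matrix $D_n = \diag(\varpi^{-k_1^{(n)}}, \dots, \varpi^{-k_n^{(n)}})$ with $k_1^{(n)} \ge \cdots \ge k_n^{(n)}$.  The ultrametric inequality applied to the formula $X_{ij} = \sum_l (g_1)_{il}\varpi^{-k_l^{(n)}}(g_2)_{lj}$ yields $|X_{ij}| \le q^{k_1^{(n)}}$ for every $X$ in this orbit, while the same estimate applied to the inverse orbit map delivers the reverse bound $q^{k_1^{(n)}} = |(D_n)_{11}| \le \max_{i,j} |X_{ij}|$.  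Hence $\max_{i,j} |X_{ij}| = q^{k_1^{(n)}}$ is constant on $\supp(\mu_n)$, and condition $(C_1)$ is equivalent to $\sup_n k_1^{(n)} < \infty$ (with the convention $q^{-\infty} = 0$ handling the trivial orbit $D_n = 0$).

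Now assume that $(\mu_n)$ is tight but $\sup_n k_1^{(n)} = +\infty$, aiming for a contradiction.  Let $\nu_n \in \mathcal{P}(F)$ denote the law of the $(1,1)$-coefficient under $\mu_n$; continuity of this projection transports tightness, so $(\nu_n)$ is tight in $\mathcal P(F)$.  A standard argument (compare the proof of Lemma \ref{lem-uniform-c}) yields equicontinuity of the characteristic functions $\widehat\nu_n$ at $0$: for each $\delta > 0$ there exists $N \in \Z$ with $|\widehat\nu_n(y) - 1| < \delta$ for every $n$ and every $y \in \varpi^N \O_F$.  Fix such an $N$ corresponding to $\delta = 1/2$.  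A pigeonhole argument shows that the set $\{n : k_1^{(n)} > N + j\}$ is infinite for each $j$, so one may construct an increasing sequence $n_1 < n_2 < \cdots$ of indices with $k_1^{(n_j)} > N + j$; in particular $n_j \to \infty$ and $k_1^{(n_j)} - N \to \infty$.

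Applying Theorem \ref{thm-asy-mul} with $r = 1$ and $A = \varpi^N e_{11}$, together with the evaluation $\Theta(\varpi^{-\ell}) = q^{-\ell\mathbbm{1}_{\{\ell \ge 1\}}}$ from Lemma \ref{lem-Theta}, one obtains
\[
\bigl|\widehat\nu_{n_j}(\varpi^N)\bigr| \le \prod_{l=1}^{n_j}\Theta\bigl(\varpi^{N - k_l^{(n_j)}}\bigr) + 2 q^{-2n_j} \le q^{-(k_1^{(n_j)} - N)} + 2 q^{-2 n_j},
\]
whose right-hand side tends to $0$ as $j \to \infty$.  Hence $|\widehat\nu_{n_j}(\varpi^N) - 1| > 1/2$ for $j$ sufficiently large, contradicting the choice of $N$.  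This forces $\sup_n k_1^{(n)} < \infty$, and thus establishes $(C_1)$.

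The principal obstacle is the quantitative control of $\widehat\mu_n$ at scale $\varpi^N$ when $k_1^{(n)}$ is much larger than $N$; this is precisely what the uniform asymptotic multiplicativity Theorem \ref{thm-asy-mul} supplies, yielding both the dominant factor $q^{-(k_1^{(n_j)} - N)}$ and the negligible error $O(q^{-2 n_j})$.  The remainder of the argument is routine bookkeeping, combining Tychonoff's theorem, the singular value decomposition of Lemma \ref{lem-s-num}, and the standard equivalence between tightness of probability measures on $F$ and equicontinuity at $0$ of their characteristic functions.
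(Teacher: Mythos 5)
Your proof is correct and follows essentially the same strategy as the paper's: both reduce to showing the largest singular exponents $k_1^{(n)}$ are bounded above, and both derive a contradiction by using the asymptotic formula of Theorem~\ref{thm-asy-mul} and the decay of $\Theta$ to force $\widehat{\mu}_n(\varpi^N e_{11})$ towards $0$ along a subsequence where $k_1^{(n)}$ blows up. The one genuine difference is in how the lower bound on $\widehat{\mu}_n(\varpi^N e_{11})$ is obtained: the paper first extracts a weakly convergent subsequence $\mu_{n_k}\Longrightarrow\mu$ (via Prokhorov) and then uses continuity at $0$ of the single limit characteristic function $\widehat{\mu}(\cdot\, e_{11})$ (Lemma~\ref{lem-uniform-c}), whereas you derive from tightness alone a \emph{uniform} equicontinuity at $0$ of the whole family $\widehat{\nu}_n$, avoiding the extraction of a limit measure entirely. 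Your route is marginally more self-contained; the paper's is marginally shorter because it can cite the already-stated Lemma~\ref{lem-uniform-c}. (A small citation slip: the equicontinuity-from-tightness argument you invoke is the one appearing in the proof of Lemma~\ref{lem-uniform-cv}, not Lemma~\ref{lem-uniform-c}, but the argument itself is correct.)
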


\begin{proof}
The above condition ($C_1$) is clearly sufficient for the sequence to be tight. Now suppose that the sequence $(\mu_n)_{n\in\N}$ is tight. By assumption, suppose that $\mu_n$ is the $\GL(n, \O_F) \times \GL(n, \O_F)$-orbital measure supported on an orbit generated by $X_n \in\Mat(n, \O_F)$. By Lemma \ref{lem-s-num}, we may assume that 
$$
X_n = \diag(x_1^{(n)}, \cdots, x_n^{(n)}), \quad | x_1^{(n)}| \ge \cdots \ge | x_n^{(n)}|.
$$
Assume by contradiction that the condition ($C_1$) is not satisfied.  Then there exists a subsequence $(n_k)_{k\in\N}$ of positive integers, such that 
\begin{align}\label{to-inf-ns}
\lim_{k\to\infty} | x_{1}^{(n_k)}| = \infty.
\end{align}
Passing to a subsequence if necessary, we may assume that there exists a probability measure $\mu$ on $\Mat(\N, F)$, such that $\mu_{n_k} \Longrightarrow \mu$. By Lemma \ref{lem-uniform-c}, for any $a \in F$, we have 
\begin{align*}
\lim_{|a|\to 0}  \lim_{k\to\infty}\widehat{\mu}_{n_k} (a e_{11}) = \lim_{|a|\to 0}  \widehat{\mu} (a e_{11}) = 1.
\end{align*}
That is,
\begin{align}\label{nec-cond-ns}
 \lim_{|a|\to 0}  \lim_{k\to\infty} \int\limits_{\K(n_k)}    \chi   ( a  \cdot \tr ( g_1 X_{n_k} g_2  e_{11}  )) d g_1dg_2 =1. 
\end{align}
By \eqref{nsym-r1}, the relation \eqref{nec-cond-ns} implies that
\begin{align}\label{nec-cond-2ns}
 \lim_{|a|\to 0}  \lim_{k\to\infty} \prod_{j = 1}^{n_k} \Theta (a \cdot  x_j^{(n_k)}) =1. 
\end{align}
Since $| \Theta(x)| \le 1$,  for any $a \in F^{\times}$, we have 
\begin{align*}
  \lim_{k\to\infty} | \prod_{j = 1}^{n_k} \Theta (a \cdot  x_j^{(n_k)}) |  \le   \lim_{k\to\infty} |  \Theta (a \cdot  x_1^{(n_k)}) | =0. 
\end{align*}
This contradicts to \eqref{nec-cond-2ns}. Thus the condition ($C_1$) is  necessary for the sequence $(\mu_n)_{n\in\N}$ to be tight. 
\end{proof}

Recall that
\begin{align*}
\Delta=\left\{ \mathbbm{k}= (k_j)_{j = 1}^\infty\Big|  k_j \in \Z \cup\{-\infty\}; k_1 \ge k_2 \ge \cdots \right\}
\end{align*}
as a subset of $(\Z \cup\{-\infty\})^\N$, is assumed to be equipped with the subspace topology of Tychonoff's product topology  on $(\Z \cup\{-\infty\})^\N$. 

\begin{lem}\label{lem-inj-con}
Let  $\ell \in \Z$. Then  
\begin{align*}
\mathbbm{k}  \mapsto   f(\mathbbm{k}) : = \sum\limits_{j=1}^\infty (k_j+\ell)\mathbbm{1}_{\{k_j+\ell\ge 1\}}
\end{align*}
defines a continuous map from $\Delta$ to  $\Z\cup\{+\infty\}$. 
\end{lem}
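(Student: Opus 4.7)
The plan is to use the non-increasing property of sequences in $\Delta$ to reduce the infinite sum to a finite one whenever $f(\mathbbm{k})$ is finite, and then to argue continuity separately in the two cases $f(\mathbbm{k})\in\Z$ and $f(\mathbbm{k})=+\infty$. First I would observe that because $\mathbbm{k}=(k_j)$ is non-increasing, the set $J(\mathbbm{k}):=\{j\in\N : k_j+\ell\ge 1\}$ is either all of $\N$ or an initial segment $\{1,\dots,m\}$ with $m\ge 0$. In the first case $f(\mathbbm{k})=+\infty$; in the second case $f(\mathbbm{k})=\sum_{j=1}^{m}(k_j+\ell)\in\Z$, so in particular the infinite sum is actually a finite sum. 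This dichotomy is the only structural fact I need.

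For continuity at a point $\mathbbm{k}\in\Delta$ with $f(\mathbbm{k})\in\Z$, suppose $\mathbbm{k}^{(n)}\to\mathbbm{k}$ in the Tychonoff topology (i.e.\ $k_j^{(n)}\to k_j$ in $\Z\cup\{-\infty\}$ for every $j$). Write $J(\mathbbm{k})=\{1,\dots,m\}$, so $k_{m+1}\le -\ell$ (possibly $-\infty$) and, if $m\ge 1$, $k_m\ge 1-\ell$. By the topology on $\Z\cup\{-\infty\}$, convergence to a finite integer forces eventual equality, and convergence to $-\infty$ forces the approximants to become arbitrarily negative; in either case there is $N$ such that for all $n\ge N$ one has $k_j^{(n)}=k_j$ for $1\le j\le m$ and $k_{m+1}^{(n)}\le -\ell$. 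Combined with monotonicity of $\mathbbm{k}^{(n)}$, this yields $J(\mathbbm{k}^{(n)})=\{1,\dots,m\}=J(\mathbbm{k})$ for $n\ge N$, and hence $f(\mathbbm{k}^{(n)})=f(\mathbbm{k})$ eventually.

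For continuity at a point $\mathbbm{k}$ with $f(\mathbbm{k})=+\infty$, I would show $f(\mathbbm{k}^{(n)})\to+\infty$ in the one-point compactification of $\Z$ by proving $\liminf_n f(\mathbbm{k}^{(n)})\ge M$ for every $M\in\N$. Since $k_j\ge 1-\ell$ for all $j$, pick any $J\ge M$. By pointwise convergence, for each $j\le J$ there is $N_j$ with $k_j^{(n)}\ge 1-\ell$ for $n\ge N_j$; taking $N=\max_{j\le J}N_j$, for $n\ge N$ we have $k_j^{(n)}+\ell\ge 1$ for all $j\le J$, and therefore
\begin{equation*}
f(\mathbbm{k}^{(n)})\ge\sum_{j=1}^{J}(k_j^{(n)}+\ell)\ge J\ge M.
\end{equation*}

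The only mild subtlety, and the part to state carefully, is the translation between pointwise convergence in the Tychonoff topology on $(\Z\cup\{-\infty\})^\N$ and the eventual pointwise inequalities used above; once these are spelled out, the two cases combine to give continuity on all of $\Delta$. No estimate is harder than checking one-point-compactification neighbourhoods, so I do not expect a genuine obstacle.
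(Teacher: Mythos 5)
Your proof is correct and takes essentially the same route as the paper: split on whether $f(\mathbbm{k})$ is finite or $+\infty$, use that a sequence in $\Z\cup\{-\infty\}$ converging to a finite integer is eventually constant to reduce to a finite sum in the first case, and produce an arbitrarily large lower bound in the second. (The paper reduces to $\ell=0$ first and bounds below by $\sum_{j\le n}k_j^{(0)}$ rather than by $J$, but these are cosmetic differences.)
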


\begin{proof}
It suffices to prove Lemma \eqref{lem-inj-con} for $\ell = 0$. We want to prove that $\mathbbm{k} \mapsto f(\mathbbm{k})$ is continuous at some point $\mathbbm{k}^{(0)} = (k^{(0)}_j)_{j\in\N} \in \Delta$. 

{\bf \flushleft Case 1:} $f(\mathbbm{k}^{(0)})  =+ \infty$.  This means that  $k_j^{(0)} \ge 1$ for any $j \in \N$.  Consequently, for any $A \in\R$, we may take $n$ large enough so that $\sum_{j=1}^n k_j^{(0)}  > A$. Then for any $\mathbbm{k}\in\Delta$ sufficiently close to $\mathbbm{k}^{(0)}$, we have $k_j = k_j^{(0)}$ for $j = 1, \cdots, n$. For such $\mathbbm{k}$, we have  $f(\mathbbm{k}) \ge \sum_{j=1}^n k_j > A$.

{\bf \flushleft Case 2:}  $f(\mathbbm{k}^{(0)})  < + \infty$. Then choose $n$ so that 
\begin{align*}
k_1^{(0)} \ge \cdots\ge k_{n-1}^{(0)}> 0 \ge k_n^{(0)} \ge \cdots.
\end{align*}
For any $\mathbbm{k}\in\Delta$ sufficiently close to $\mathbbm{k}^{(0)}$, we have $k_j = k_j^{(0)}$ for $j = 1, \cdots, n-1$  and $k_n \le 0$. For such $\mathbbm{k}$, we have $f(\mathbbm{k}) =\sum_{j=1}^{n-1} k_j  =f(\mathbbm{k}^{(0)})$.

The proof of Lemma \ref{lem-inj-con} is completed. 
\end{proof}

\begin{thm}\label{ct-ns}
The  map $\mathbbm{k} \mapsto \mu_{\mathbbm{k}}$ induces a bijection between  $\Delta$ and $\mathcal{P}_{\mathrm{erg}}(\Mat(\N, F))$. 
\end{thm}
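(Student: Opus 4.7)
The plan is to verify the three requirements in turn: well-definedness as a map into $\mathcal{P}_{\mathrm{erg}}(\Mat(\N,F))$, injectivity, and surjectivity. The first two are already at our disposal, so the real content is surjectivity, which we approach through the Vershik--Kerov method combined with the asymptotic multiplicativity from \S\ref{sec-ch}.

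Well-definedness follows from Proposition \ref{prop-inv} together with Theorem \ref{thm-erg-1}, and injectivity is Proposition \ref{prop-unique}. For surjectivity, start from an arbitrary $\mu\in \mathcal{P}_{\mathrm{erg}}(\Mat(\N,F))$. Vershik's Theorem \ref{Vershik-thm} combined with Proposition \ref{prop-limit-erg} gives $\mu\in \ORB_\infty(\Mat(\N,F))$, so we can find integers $n_k\to\infty$ and orbital measures $\mu_{n_k}\in \ORB_{n_k}(\Mat(n_k,F))$ with $\mu_{n_k}\Longrightarrow \mu$. Using the $\K(n_k)$-biinvariance of orbital measures together with the singular-number decomposition of Lemma \ref{lem-s-num}, we may take the orbit generator to be a diagonal matrix
$$
X_{n_k}=\diag\bigl(\varpi^{-k_1^{(n_k)}},\ldots,\varpi^{-k_{n_k}^{(n_k)}}\bigr),\qquad k_1^{(n_k)}\ge k_2^{(n_k)}\ge\cdots\ge k_{n_k}^{(n_k)}.
$$

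By the tightness criterion of Lemma \ref{lem-compact} there exists $\gamma\in\Z$ with $k_j^{(n_k)}\le\gamma$ for all $j,k$. Extending each finite tuple by $-\infty$ produces elements $\widetilde{\mathbbm{k}}^{(n_k)}\in \Delta$, and Tychonoff compactness of $(\Z_{\le\gamma}\cup\{-\infty\})^{\N}$ together with a diagonal extraction lets us pass to a subsequence on which $\widetilde{\mathbbm{k}}^{(n_k)}$ converges coordinatewise to some $\mathbbm{k}\in \Delta$. Now, for each $\ell\in \Z$, inequality \eqref{nsym-r1} of Theorem \ref{thm-asy-mul} combined with Lemma \ref{lem-Theta} yields
$$
\widehat{\mu_{n_k}}(\varpi^{-\ell}e_{11})=\exp\!\Bigl(-\log q\cdot \sum_{j=1}^{n_k}(k_j^{(n_k)}+\ell)\mathbbm{1}_{\{k_j^{(n_k)}+\ell\ge 1\}}\Bigr)+O(q^{-2n_k}).
$$
The sum in the exponent is precisely the continuous function on $\Delta$ analyzed in Lemma \ref{lem-inj-con}, so passing to the limit and invoking the explicit formula of Proposition \ref{prop-explicit} yields $\widehat{\mu}(\varpi^{-\ell}e_{11})=\widehat{\mu_{\mathbbm{k}}}(\varpi^{-\ell}e_{11})$ for every $\ell\in \Z$.

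To promote this one-coordinate identity to equality of the full characteristic functions, we apply the multiplicativity result Theorem \ref{thm-multiplicativity-ns} to both $\mu$ and $\mu_{\mathbbm{k}}$, and use the biinvariance recalled in Remark \ref{rem-fourier-inv} to evaluate $\widehat{\mu}(A)$ through the singular numbers of any $A\in \Mat(\infty,F)$. This gives $\widehat{\mu}=\widehat{\mu_{\mathbbm{k}}}$, whence $\mu=\mu_{\mathbbm{k}}$. The main obstacle is the diagonal-extraction step: the uniform upper bound from Lemma \ref{lem-compact} is essential, since otherwise some coordinate of the limit could drift to $+\infty$ and the limiting sequence would escape $\Delta$; once tightness is used to rule this out, the remainder is an essentially routine combination of asymptotic multiplicativity (\S\ref{sec-ch}), the explicit formula (\S\ref{sec-explicit}), and the uniqueness result of \S\ref{sec-unique}.
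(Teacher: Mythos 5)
Your proposal is correct and follows essentially the same route as the paper: injectivity from Proposition \ref{prop-unique}; for surjectivity, pass from $\mu\in\mathcal{P}_{\mathrm{erg}}(\Mat(\N,F))$ to a sequence of orbital measures generated by diagonal matrices via Lemma \ref{lem-s-num}, extract a coordinate-wise convergent subsequence of singular-number sequences using the tightness criterion of Lemma \ref{lem-compact}, apply \eqref{nsym-r1}, Lemma \ref{lem-inj-con} and Proposition \ref{prop-explicit} to match the rank-one characteristic function with $\widehat{\mu_{\mathbbm{k}}}$, and finish with the multiplicativity of Theorem \ref{thm-multiplicativity-ns}. The only cosmetic difference is that you invoke Vershik's Theorem \ref{Vershik-thm} and Proposition \ref{prop-limit-erg} directly to place $\mu$ in $\ORB_\infty$, whereas the paper cites the equality $\ORB_\infty=\mathcal{P}_{\mathrm{erg}}$ already established in Theorem \ref{thm-multiplicativity-ns}, but that theorem's proof uses exactly those ingredients, so the two presentations coincide.
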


\begin{proof}
The injectivity of the  map $\mathbbm{k} \mapsto \mu_{\mathbbm{k}}$ from   $\Delta$ to $\mathcal{P}_{\mathrm{erg}}(\Mat(\N, F))$ has already been proved in Proposition \ref{prop-unique}. We only need to prove that the map is also surjective. 
 
Let  $\mu\in \mathcal{P}_{\mathrm{erg}}(\Mat(\N, F))$. By Theorem \ref{thm-multiplicativity-ns}, $\mu \in \ORB_\infty(\Mat(\N, F))$. Consequently, there exists a sequence $(n_l)_{l\in\N}$ of positive integers and  a sequence $(\mu_{n_l})_{l\in\N}$  such that  $\mu_{n_l}\in\ORB_{n_l}(\Mat(n_l, F))$ and 
\begin{align}\label{conv-1-ns}
\mu_{n_l}\Longrightarrow \mu \text{ as }  l \to \infty.
\end{align}
 By Lemma \ref{lem-s-num}, we may assume that $\mu_{n_l}$ is the $\K(n_l)$-orbital measure supported on the orbit $\K(n_l) \cdot X_{n_l}$ with
\begin{align*}
X_{n_l} =  \diag (\varpi^{-k^{(n_l)}_1}, \cdots, \varpi^{-k^{(n_l)}_{n_l}}), \quad k^{(n_l)}_1 \ge \cdots \ge k^{(n_l)}_{n_l} \ge -\infty.
\end{align*}
By Lemma \ref{lem-compact},  the convergence \eqref{conv-1-ns} implies that
$\sup_{l\in\N} k_1^{(n_l)}<\infty$. 
Consequently, passing to a subsequence of $(n_l)_{l\in\N}$ if necessary, we may assume that for any $j\in\N$, there exists $k_j\in \Z\cup\{-\infty\}$ such that
\begin{align}\label{coor-cv-ns}
\lim_{l\to\infty} k_j^{(n_l)}  =  k_j. 
\end{align}
The convergence \eqref{conv-1-ns} and the relation \eqref{nsym-r1} now imply that,  for any $\ell \in \Z$, we have 
\begin{align*}
& \widehat{\mu}(\varpi^{-\ell} e_{11})  =\lim_{l\to\infty}  \prod_{j=1}^\infty \Theta(\varpi^{-\ell}   \varpi^{-k_j^{(n_l)}})
\\
  &= \lim_{l \to \infty} \exp\Big(- \log q \cdot \sum_{j=1}^\infty ( k_j^{(n_l)} + \ell )\1_{\{k_j^{(n_l)} + \ell \ge 1\}} \Big)  .
\end{align*}
By  Lemma \ref{lem-inj-con} and  \eqref{coor-cv-ns}, we get 
\begin{align}\label{nu-x-ns}
\widehat{\mu}(\varpi^{-\ell} e_{11}) = \exp\Big(- \log q \cdot \sum_{j=1}^\infty ( k_j + \ell )\1_{\{k_j + \ell  \ge 1\}} \Big).
\end{align}
Let us define $\mathbbm{k}: = (k_j)_{j\in\N} \in \Delta$.  Comparing \eqref{nu-x-inter} with the formula \eqref{explicit-ns-1} in Proposition \ref{prop-explicit},  we get  
\begin{align}\label{rank1-concide-ns}
\text{$\widehat{\mu}(\varpi^{-\ell } e_{11})  = \widehat{\mu_{\mathbbm{k}}} (\varpi^{-\ell } e_{11})$ for all $\ell \in \Z$.}
\end{align} 
But by the multiplicativity of $\widehat{\mu}$ established in Theorem \eqref{thm-m-sym} and the multiplicativity of $\widehat{\mu}_{\mathbbm{k}}$ established in Proposition \ref{prop-explicit}, the above identity \eqref{rank1-concide-ns} implies $\mu= \mu_{\mathbbm{k}}$.  

The proof of Theorem \ref{ct-ns} is completed.
\end{proof}

\subsection{The case of $\mathcal{P}_{\mathrm{erg}}(\Sym(\N, F))$}

\begin{thm}[Multiplicativity Theorem for Orbital Limit Measures]\label{thm-m-sym}
Let $\nu\in\ORB_\infty(\Sym(\N, F))$.  Then for any $r \in\N$ and for any finite sequence $(x_1, \cdots, x_r)$ in $F$, we have 
  \begin{align*}
  \widehat{\nu} (\diag (x_1, \cdots, x_r, 0, 0, \cdots)) = \prod_{j=1}^r  \widehat{\nu}  (x_i e_{11}). 
  \end{align*}
  In particular, we have 
  \begin{align*}
  \ORB_\infty(\Sym(\N, F)) = \mathcal{P}_{\mathrm{erg}}(\Sym(\N, F)).
  \end{align*} 
\end{thm}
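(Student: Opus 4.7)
The plan is to mirror the proof of Theorem \ref{thm-multiplicativity-ns}, substituting the symmetric-case orbital integrals and their uniform asymptotic multiplicativity for the non-symmetric counterparts used there. Given $\nu \in \ORB_\infty(\Sym(\N, F))$, by Definition \ref{defn-ol-sym} there exist an increasing sequence $(n_k)_{k \in \N}$ of positive integers and orbital measures $\nu_{n_k} \in \ORB_{n_k}(\Sym(n_k, F))$ with $\nu_{n_k} \Longrightarrow \nu$. Since each function $S \mapsto \chi(\tr(AS))$ on $\Sym(\N, F)$ is bounded and continuous, the weak convergence yields pointwise convergence of characteristic functions on $\Sym(\infty, F)$; in particular, for any $r \in \N$ and $x_1, \cdots, x_r \in F$,
$$
\widehat{\nu}(\diag(x_1, \cdots, x_r, 0, \cdots)) = \lim_{k \to \infty} \widehat{\nu_{n_k}}(\diag(x_1, \cdots, x_r, 0, \cdots))
$$
and $\widehat{\nu}(x_j e_{11}) = \lim_{k \to \infty} \widehat{\nu_{n_k}}(x_j e_{11})$ for each $j$.

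For the multiplicativity identity, I would first use the $\GL(n_k, \O_F)$-invariance of $\nu_{n_k}$ together with Lemma \ref{lem-q-diag} (which relies on the non-dyadicity of $F$) to arrange that $\nu_{n_k}$ is the orbital measure on the $\GL(n_k, \O_F)$-orbit of a diagonal matrix $D_k = \diag(y_1^{(k)}, \cdots, y_{n_k}^{(k)}) \in \Sym(n_k, F)$. Both $\widehat{\nu_{n_k}}(\diag(x_1, \cdots, x_r, 0, \cdots))$ and each $\widehat{\nu_{n_k}}(x_i e_{11})$ are then $\GL(n_k, \O_F)$-orbital integrals of the type controlled by Theorem \ref{thm-sym-uam}, which yields
$$
\Bigl|\widehat{\nu_{n_k}}(\diag(x_1, \cdots, x_r, 0, \cdots)) - \prod_{i=1}^r \widehat{\nu_{n_k}}(x_i e_{11})\Bigr| \le 2\Bigl(1 - \prod_{w=0}^{r-1}(1 - q^{w - n_k})\Bigr) + 2 r q^{-n_k},
$$
uniformly in the spectrum of $D_k$. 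The right-hand side tends to $0$ as $k \to \infty$, so passing to the limit produces the desired identity $\widehat{\nu}(\diag(x_1, \cdots, x_r, 0, \cdots)) = \prod_{j=1}^r \widehat{\nu}(x_j e_{11})$.

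For the equality $\ORB_\infty(\Sym(\N, F)) = \mathcal{P}_{\mathrm{erg}}(\Sym(\N, F))$, the inclusion ``$\supset$'' follows from Vershik's Theorem \ref{Vershik-thm} combined with Proposition \ref{prop-limit-erg}. For ``$\subset$'', the multiplicativity just established implies that for any $\nu \in \ORB_\infty(\Sym(\N, F))$ the diagonal marginal $\Phi(\nu) \in \mathcal{P}_{\mathrm{inv}}^{S(\infty)}(F^\N)$ (with $\Phi$ the map of Lemma \ref{lem-diag-inj2}) has characteristic function of product form on $F^{\N}$ and hence is Bernoulli. De Finetti's Theorem then gives $\Phi(\nu) \in Ext(\mathcal{P}_{\mathrm{inv}}^{S(\infty)}(F^\N))$, and the affine embedding property of $\Phi$ from Lemma \ref{lem-diag-inj2} transports this to $\nu \in Ext(\mathcal{P}_{\mathrm{inv}}(\Sym(\N, F))) \subset \mathcal{P}_{\mathrm{erg}}(\Sym(\N, F))$, exactly as in the proof of Theorem \ref{thm-erg-2}. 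Since every required tool has already been set up in the symmetric case, no genuinely new obstacle is expected; the only subtle point is that the error bound in Theorem \ref{thm-sym-uam} is uniform in the spectrum of $D_k$, which is precisely what enables the passage to the weak limit.
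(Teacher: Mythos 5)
Your proposal is correct and follows exactly the route the paper intends: the paper's own proof of Theorem \ref{thm-m-sym} is literally the one-line remark ``The proof is similar to that of Theorem \ref{thm-multiplicativity-ns},'' and you have faithfully carried out that analogy, replacing Lemma \ref{lem-s-num} and Theorem \ref{thm-uam} by their symmetric counterparts Lemma \ref{lem-q-diag} (where the non-dyadic hypothesis is indeed used) and Theorem \ref{thm-sym-uam}, the non-symmetric diagonal-marginal map $\Psi$ by $\Phi$ from Lemma \ref{lem-diag-inj2}, and the De Finetti/extremal-point argument of Theorem \ref{thm-erg-1} by that of Theorem \ref{thm-erg-2}. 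No gap.
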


\begin{proof}
The proof is similar to that of Theorem \ref{thm-multiplicativity-ns}.
\end{proof}

\begin{cor}[Ismagilov-Olshanski multiplicativity]\label{cor-IO-sym}
An invariant probability measure  $\mu\in \mathcal{P}_{\mathrm{inv}}(\Sym(\N, F))$ is ergodic if and only if  for any $r \in\N$ and for any finite sequence $x_1, \cdots, x_r$ in $F$, we have 
  \begin{align*}
  \widehat{\nu} (\diag (x_1, \cdots, x_r, 0, 0, \cdots)) = \prod_{j=1}^r \widehat{\nu}  (x_j e_{11}). 
  \end{align*}
\end{cor}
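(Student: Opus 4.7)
One direction is essentially free. If $\mu \in \mathcal{P}_{\mathrm{erg}}(\Sym(\N,F))$, then by Theorem \ref{thm-m-sym} we have $\mu \in \ORB_\infty(\Sym(\N,F))$, and the multiplicativity of $\widehat{\mu}$ on diagonal matrices supported on finitely many coordinates is exactly the content of that theorem. So the implication ``ergodic $\Longrightarrow$ multiplicative'' costs nothing new.

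For the converse, suppose $\mu \in \mathcal{P}_{\mathrm{inv}}(\Sym(\N,F))$ satisfies the multiplicativity identity. The plan is to push $\mu$ forward under the diagonal-extraction map $\Phi : \Sym(\N,F) \to F^{\N}$ used in the proof of Theorem \ref{thm-erg-2}. Since
$$\widehat{\Phi(\mu)}(x_1,\dots,x_r,0,0,\dots) = \widehat{\mu}(\diag(x_1,\dots,x_r,0,\dots)) \quad \text{and} \quad \widehat{\mu}(x_j e_{11}) = \widehat{\Phi(\mu)}(x_j, 0, 0,\dots),$$
the assumed multiplicativity translates into the identity
$$\widehat{\Phi(\mu)}(x_1,\dots,x_r,0,\dots) = \prod_{j=1}^r \widehat{\Phi(\mu)}(x_j, 0, 0,\dots),$$
which, by the uniqueness of characteristic functions for probability measures on $F^{r}$, forces $\Phi(\mu)$ to coincide with the $r$-fold product of its one-dimensional marginal for every $r$. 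In other words, $\Phi(\mu)$ is a Bernoulli (product) measure on $F^{\N}$.

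By the De Finetti theorem (see Remark \ref{rem-erg-ext}), every Bernoulli measure on $F^{\N}$ is $S(\infty)$-ergodic, and therefore
$$\Phi(\mu) \in \mathcal{P}_{\mathrm{erg}}^{S(\infty)}(F^{\N}) = \mathrm{Ext}\bigl(\mathcal{P}_{\mathrm{inv}}^{S(\infty)}(F^{\N})\bigr).$$
From here we repeat the argument already used in the proof of Theorem \ref{thm-erg-2}: take $C = \Phi(\mathcal{P}_{\mathrm{inv}}(\Sym(\N,F)))$ and use the general observation that $C \cap \mathrm{Ext}(\mathcal{P}_{\mathrm{inv}}^{S(\infty)}(F^{\N})) \subset \mathrm{Ext}(C)$ to conclude $\Phi(\mu) \in \mathrm{Ext}(C)$. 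Since, by Lemma \ref{lem-diag-inj2}, the map $\Phi$ is an affine embedding of $\mathcal{P}_{\mathrm{inv}}(\Sym(\N,F))$ into $\mathcal{P}_{\mathrm{inv}}^{S(\infty)}(F^{\N})$, extremality transfers back to $\mu$, giving $\mu \in \mathrm{Ext}(\mathcal{P}_{\mathrm{inv}}(\Sym(\N,F))) \subset \mathcal{P}_{\mathrm{erg}}(\Sym(\N,F))$ by Remark \ref{rem-erg-ext}.

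The only conceptual step is the passage from multiplicativity of $\widehat{\mu}$ to the product structure of $\Phi(\mu)$; but this is purely formal because characteristic functions determine probability measures and the diagonal of a matrix captures exactly the information carried by $\Phi$. I do not anticipate any genuine obstacle, since all necessary tools (the affine embedding $\Phi$, De Finetti, and the extremality-transfer argument) are already deployed in \S\ref{sec-inv-erg} and in the proof of Theorem \ref{thm-m-sym}; the corollary is essentially a repackaging of that machinery.
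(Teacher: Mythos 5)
Your proposal is correct and matches the approach the paper intends: the forward direction is a direct application of Theorem \ref{thm-m-sym} via the equality $\mathcal{P}_{\mathrm{erg}}(\Sym(\N,F)) = \ORB_\infty(\Sym(\N,F))$, and the converse is exactly the De Finetti plus affine-embedding argument already deployed in the proof of Theorem \ref{thm-erg-2} and, in the non-symmetric case, in the closing paragraph of the proof of Theorem \ref{thm-multiplicativity-ns}, which the paper explicitly signals as the source of this corollary. The key observations — that $\widehat{\Phi(\mu)}(x_1,\dots,x_r,0,\dots) = \widehat{\mu}(\diag(x_1,\dots,x_r,0,\dots))$, that multiplicativity forces $\Phi(\mu)$ to be Bernoulli by uniqueness of Fourier transforms on $F^r$, and that extremality is pulled back through the affine embedding $\Phi$ of Lemma \ref{lem-diag-inj2} — are all the steps the authors have in mind.
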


\begin{lem}\label{lem-uc-sym}
Let $\nu$  be a Borel probability measure on $\Sym(\N, F)$. Then we have 
\begin{align*}
\lim_{|x| \to 0}  \widehat{ \nu} (x e_{11})  =1.
\end{align*}
\end{lem}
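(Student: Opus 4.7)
The strategy is to reduce the assertion to the standard fact that the Fourier transform of a Borel probability measure on a locally compact abelian group is continuous at the identity. The key observation is that the one-parameter family of values $x \mapsto \widehat{\nu}(xe_{11})$ captures only the marginal distribution of the $(1,1)$-entry.

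First I would note that for every $x \in F$,
\begin{align*}
\widehat{\nu}(xe_{11}) = \int_{\Sym(\N,F)} \chi(\tr(xe_{11} \cdot S))\, d\nu(S) = \int_{\Sym(\N,F)} \chi(x \cdot S_{11})\, d\nu(S).
\end{align*}
Let $\pi_{11}: \Sym(\N, F) \to F$ be the coordinate projection $S \mapsto S_{11}$, and set $\nu_{11} := (\pi_{11})_*\nu$, a Borel probability measure on $F$. Then the above identity says exactly that $x \mapsto \widehat{\nu}(xe_{11})$ coincides with the characteristic function $\widehat{\nu_{11}}$ of $\nu_{11}$ on the locally compact abelian group $F$ (using the standard identification $y \mapsto \chi_y$ of $F$ with $\widehat{F}$).

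By the classical result on Fourier transforms on locally compact abelian groups (see Hewitt--Ross \cite[Theorem 31.5, p.212]{Hewitt-Ross}, already invoked in the proof of Lemma \ref{lem-uniform-c}), the characteristic function $\widehat{\nu_{11}}$ is uniformly continuous on $F$. In particular, $\widehat{\nu_{11}}$ is continuous at $0$, and clearly $\widehat{\nu_{11}}(0) = \nu_{11}(F) = 1$, which gives the required limit. There is no genuine obstacle here; the argument is the exact analogue of the one used in Lemma \ref{lem-uniform-c}, with the projection onto the $(1,1)$-entry of a symmetric matrix replacing the projection onto the $(1,1)$-entry of an arbitrary matrix.
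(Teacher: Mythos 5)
Your proof is correct and follows the same route as the paper: the paper proves Lemma \ref{lem-uc-sym} by noting it is analogous to Lemma \ref{lem-uniform-c}, which likewise identifies $x \mapsto \widehat{\mu}(x e_{11})$ as the characteristic function of the pushforward of $\mu$ under the projection to the $(1,1)$-entry and invokes Hewitt--Ross \cite[Theorem 31.5]{Hewitt-Ross} for (uniform) continuity. You have simply written out explicitly the argument the paper leaves implicit.
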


\begin{proof}
The proof is similar to that of Lemma \ref{lem-uniform-c}. 
\end{proof}

\begin{lem}\label{lem-comp-sym}
Assume that we are given a sequence of probability measures $(\nu_n)_{n\in\N}$, such that $\nu_n\in\ORB_n(\Sym(\N, F))$. The necessary and sufficient condition for  this sequence to be  tight is the following:  
\begin{itemize}
\item[($C_2$)] There exists $\gamma \in\Z$, such that the supports $\supp(\nu_n)$ are all contained in the following compact subset of $\Sym(\N, F)$:   
\begin{align*}
\left\{X  \in \Sym(\N, \FF) \Big |   \text{$ | X_{ij}| \le q^\gamma, \forall i, j \in \N$}  \right\}.
\end{align*}
\end{itemize}
\end{lem}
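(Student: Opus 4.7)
The proof will follow the same template as that of Lemma \ref{lem-compact} in the non-symmetric case, with the main changes being the use of the symmetric diagonalization (Lemma \ref{lem-q-diag}), the symmetric orbital integral asymptotic \eqref{sym-r1}, and the explicit decay of the function $\theta$ recorded in Proposition \ref{prop-theta-detail}. The sufficiency of $(C_2)$ is immediate, since the set described in $(C_2)$ is a countable product of compact balls in $F$ and is therefore compact in $\Sym(\N,F)$ for the Tychonoff topology.

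For the necessity, I will argue by contradiction. By Lemma \ref{lem-q-diag}, each $\nu_n \in \ORB_n(\Sym(\N,F))$ is the $\GL(n,\O_F)$-orbital measure generated by some diagonal matrix $D_n = \diag(x_1^{(n)}, \ldots, x_n^{(n)})$, and after permuting I may arrange $|x_1^{(n)}| \ge \cdots \ge |x_n^{(n)}|$. Suppose $(C_2)$ fails; then there is a subsequence $(n_k)$ along which $|x_1^{(n_k)}| \to \infty$. Tightness lets me pass to a further subsequence along which $\nu_{n_k} \Longrightarrow \nu$ for some Borel probability measure $\nu$ on $\Sym(\N,F)$.

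Combining Lemma \ref{lem-uc-sym} with weak convergence, I obtain
$$\lim_{|a|\to 0}\lim_{k\to\infty}\int_{\GL(n_k,\O_F)} \chi(a\cdot \tr(g D_{n_k} g^{t} e_{11}))\,dg \;=\; 1,$$
and the asymptotic \eqref{sym-r1} then forces
$$\lim_{|a|\to 0}\lim_{k\to\infty}\prod_{j=1}^{n_k}\theta(a\cdot x_j^{(n_k)}) \;=\; 1.$$

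The main (and only) obstacle is to derive a contradiction from the growth of $|x_1^{(n_k)}|$. I will fix $a\in F^{\times}$; then $|a\cdot x_1^{(n_k)}|\to\infty$. By Proposition \ref{prop-theta-detail}, whenever $|a\cdot x_1^{(n_k)}|>1$ one has $|\theta(a\cdot x_1^{(n_k)})| = |a\cdot x_1^{(n_k)}|^{-1/2}\to 0$. Bounding the product by its first factor via $|\theta|\le 1$ yields
$$\lim_{k\to\infty}\Bigl|\prod_{j=1}^{n_k}\theta(a\cdot x_j^{(n_k)})\Bigr| \;\le\; \lim_{k\to\infty}|\theta(a\cdot x_1^{(n_k)})| \;=\; 0,$$
for every $a\in F^{\times}$, which contradicts the previous display. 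Hence $(C_2)$ must hold.
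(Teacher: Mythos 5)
Your proof is correct and follows essentially the same approach as the paper's: diagonalize via Lemma \ref{lem-q-diag}, use Lemma \ref{lem-uc-sym} together with the orbital-integral asymptotic \eqref{sym-r1} to get $\lim_{|a|\to0}\lim_k\prod_j\theta(a x_j^{(n_k)})=1$, and then contradict this by bounding the product by its first factor and invoking $|\theta(x)|=|x|^{-1/2}\to0$ from Proposition \ref{prop-theta-detail}.
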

\begin{proof}
The above condition ($C_2$) is clearly sufficient for the sequence to be tight. Now suppose that the sequence is tight, we shall prove that ($C_2$) is satisfied. By assumption, suppose that $\nu_n$ is the $\GL(n, \O_F)$-orbital measure supported on the orbit $\GL(n, \O_F) \cdot X_n $. By Lemma \ref{lem-q-diag}, we may assume that 
$$
X_n = \diag(x_1^{(n)}, \cdots, x_n^{(n)}), \quad | x_1^{(n)}| \ge \cdots \ge | x_n^{(n)}|.
$$
Now we argue by contradiction. If the condition is not satisfied, then there exists a subsequence $(n_k)_{k\in\N}$ of positive integers such that 
\begin{align}\label{to-inf}
\lim_{k\to\infty} | x_{1}^{(n_k)}| = \infty.
\end{align}
Passing to a subsequence if necessary, we may assume that there exists a probability measure $\nu$ on $\Sym(\N, \FF)$, such that $\nu_{n_k} \Longrightarrow \nu$. By Lemma \ref{lem-uc-sym}, for any $a \in F$, we have 
\begin{align*}
\lim_{|a|\to 0}  \lim_{k\to\infty}\widehat{\nu}_{n_k} (a e_{11}) = \lim_{|a|\to 0} \widehat{\nu} (a e_{11}) =1.
\end{align*}
That is 
\begin{align}\label{nec-cond}
 \lim_{|a|\to 0}  \lim_{k\to\infty} \int\limits_{\GL(n_k, \O_F)}    \chi   ( a  \cdot \tr ( g X_{n_k} g^t  e_{11}  )) d g =1. 
\end{align}
By \eqref{sym-r1} in Theorem \ref{thm-sym-asy}, the relation \eqref{nec-cond} implies that
\begin{align}\label{nec-cond-2}
 \lim_{|a|\to 0}  \lim_{k\to\infty} \prod_{j = 1}^{n_k} \theta (a \cdot  x_j^{(n_k)}) =1. 
\end{align}
Using the elementary inequality $| \theta(x)| \le 1$  and  \eqref{to-inf} and Proposition \ref{prop-theta-detail} , for any $a \in F^{\times}$, we have 
\begin{align*}
  \lim_{k\to\infty} | \prod_{j = 1}^{n_k} \theta (a \cdot  x_j^{(n_k)}) |  \le   \lim_{k\to\infty} |  \theta (a \cdot  x_1^{(n_k)}) | =0. 
\end{align*}
This contradicts to \eqref{nec-cond-2}. Thus the condition ($C_2$) is  necessary for the sequence $(\nu_n)_{n\in\N}$ to be tight. 
\end{proof}

\subsubsection{Classification of $\mathcal{P}_{\mathrm{erg}}(\Sym(\N, F))$}

Recall that by Lemma \ref{lem-inf-prod}, for any $a\in F$ and any $\mathbbm{k} = (k_j)_{j\in\N}\in\Delta$, we may  define an infinite product 
$
\prod_{j=1}^\infty \theta(a\cdot \varpi^{-k_j}). 
$
\begin{lem}\label{lem-cont-sym}
Let  $a\in F$ be a fixed element. Then 
\begin{align*}
\mathbbm{k}  = (k_j)_{j\in\N} \mapsto  \prod\limits_{j=1}^\infty \theta(a\cdot \varpi^{-k_j})
\end{align*}
defines a continuous map from $\Delta$ to $\C$.
\end{lem}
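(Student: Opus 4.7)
The plan is a case analysis based on how many factors $\theta(a\varpi^{-k_j})$ are non-trivial. If $a = 0$ the product is identically $1$, so we may set $m_0 = \mathrm{ord}_F(a) \in \Z$. By Proposition~\ref{prop-theta-detail}, $\theta(a\varpi^{-k}) = 1$ exactly when $k \le m_0$, while $|\theta(a\varpi^{-k})| = q^{-(k-m_0)/2} < 1$ when $k > m_0$. For $\mathbbm{k} = (k_j) \in \Delta$, since $(k_j)$ is non-increasing in $\Z\cup\{-\infty\}$, exactly one of two situations occurs: either only finitely many $k_j$ exceed $m_0$, say $k_1\ge\cdots\ge k_{n_0}>m_0\ge k_{n_0+1}\ge\cdots$ for some $n_0\ge 0$, and by Lemma~\ref{lem-inf-prod} the product collapses to the finite product $\prod_{j=1}^{n_0}\theta(a\varpi^{-k_j})$; or every $k_j>m_0$, in which case the non-increasing integer sequence $(k_j)$ stabilizes at some $k\ge m_0+1$ from an index $j_0$ on, and the same lemma gives $P(\mathbbm{k}):=\prod_j\theta(a\varpi^{-k_j})=0$.

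To verify continuity at a point $\mathbbm{k}$ of the first type I would combine the coordinatewise convergence of $\mathbbm{k}^{(m)}\to\mathbbm{k}$ with the non-increasing constraint on $\Delta$. Since integers are isolated in $\Z\cup\{-\infty\}$, the finitely many entries $k_1,\ldots,k_{n_0}>m_0$ are eventually reproduced exactly by $\mathbbm{k}^{(m)}$; moreover $k_{n_0+1}\le m_0$ forces $k_{n_0+1}^{(m)}\le m_0$ eventually, and then monotonicity yields $k_j^{(m)}\le m_0$ for every $j>n_0$. Consequently $P(\mathbbm{k}^{(m)})$ reduces to the same finite product as $P(\mathbbm{k})$ for all sufficiently large $m$, and continuity is immediate.

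The main obstacle is the second case, where $P(\mathbbm{k})=0$ and we must show $P(\mathbbm{k}^{(m)})\to 0$ without any a priori control on the tail of $\mathbbm{k}^{(m)}$. The key observation is that many factors of $P(\mathbbm{k}^{(m)})$ are, for large $m$, forced to equal $\theta(a\varpi^{-k})$, whose modulus $\eta:=q^{-(k-m_0)/2}$ is \emph{strictly} less than $1$. Concretely, given $\epsilon>0$ I would pick $N\ge j_0$ with $\eta^{N-j_0+1}<\epsilon$; for $m$ large enough so that $k_j^{(m)}=k_j$ for $1\le j\le N$, the bound $|\theta|\le 1$ on the tail yields $|P(\mathbbm{k}^{(m)})|\le \prod_{j=1}^N|\theta(a\varpi^{-k_j^{(m)}})|\le \eta^{N-j_0+1}<\epsilon$. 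The delicate point that drives this step is the simultaneous matching of arbitrarily long initial segments of $\mathbbm{k}^{(m)}$ with those of $\mathbbm{k}$, which is precisely what coordinatewise convergence in the product topology provides; together with the strict inequality $\eta<1$ supplied by Proposition~\ref{prop-theta-detail} it closes the argument.
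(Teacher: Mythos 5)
Your proof is correct and takes essentially the same route as the paper's: your dichotomy (finitely many vs.\ all of the $k_j$ exceeding $m_0=\mathrm{ord}_F(a)$) is exactly the paper's case split $k+\gamma\le 0$ vs.\ $k+\gamma>0$, and both arguments use the isolatedness of integers (together with monotonicity of the sequence) to match initial segments and the strict bound $|\theta(\cdot)|<1$ on the stabilized tail to force the product to zero in the second case. The only difference is cosmetic: you give the explicit geometric estimate $\eta^{N-j_0+1}$ where the paper passes to a $\limsup$ over $N$.
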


\begin{proof}
If $a=0$, the assertion is obvious. Now assume   that $| a | = q^{\gamma}$ with $\gamma\in\Z$. Suppose that $\mathbbm{k}^{(n)} = (k_j^{(n)})_{j\in\N}$ converges to $\mathbbm{k} = (k_j)_{j\in\N}$. That is, for any $j\in
N$, 
\begin{align}\label{k-j}
\lim_{n\to\infty}k_j^{(n)} = k_j.
\end{align} 
We need to show that 
\begin{align}\label{need-cont}
& \lim_{n\to\infty}\prod_{j=1}^\infty \theta(a\cdot \varpi^{-k_j^{(n)}}) = \prod_{j=1}^{\infty} \theta(a\cdot \varpi^{-k_j}). 
\end{align}

First denote  $k:  = \lim\limits_{j\to\infty} k_j \in \Z \cup\{-\infty\}.$
{\flushleft \bf Case 1:}  $k+\gamma \le 0$. 

In this case,  there exists $j_0 \in\N$, such that $k_{j_0}+ \gamma \le 0.$ Since $\Z\cup\{-\infty\}$ is a discrete space, by \eqref{k-j}, there exists $n_0\in\N$, such that for any $n\ge n_0$, 
\begin{align}\label{}
 \sup_{j\ge j_0}( k_{j}^{(n)} + \gamma )=  k_{j_0}^{(n)} + \gamma \le 0.
\end{align}
Hence by property (i) in Proposition \ref{prop-theta-detail},  for any $j\ge j_0$ and $n\ge n_0$, we have 
$$
 \theta(a\cdot \varpi^{-k_j^{(n)}}) =\theta(a\cdot \varpi^{-k_j}) = 1. 
$$ 
Consequently, we have
\begin{align*}
& \lim_{n\to\infty}\prod_{j=1}^\infty \theta(a\cdot \varpi^{-k_j^{(n)}})=  \lim_{n\to\infty}\prod_{j=1}^{j_0-1} \theta(a\cdot \varpi^{-k_j^{(n)}})
\\
 & = \prod_{j=1}^{j_0-1} \theta(a\cdot \varpi^{-k_j}) = \prod_{j=1}^{\infty} \theta(a\cdot \varpi^{-k_j}). 
\end{align*}

{\flushleft \bf Case 2:}  $k+\gamma > 0$.  

In this case,  by the previous argument, 
we have 
\begin{align*}
\prod_{j=1}^\infty \theta(a\cdot \varpi^{-k_j}) = 0.
\end{align*} 
By \eqref{k-j}, for any $N\in\N$, there exists $n_0\in\N$, such that for any $n\ge n_0$ and any $1 \le j \le j_0+N$, we have $k_j^{(n)} = k_j$. This implies
\begin{align*}
 & \limsup_{n\to\infty}\Big|\prod_{j=1}^\infty \theta(a\cdot \varpi^{-k_j^{(n)}}) \Big| \le  \limsup_{N\to\infty} \Big(\limsup_{n\to\infty}\Big|\prod_{j=1}^{j_0+N} \theta(a\cdot \varpi^{-k_j^{(n)}}) \Big|  \Big)
 \\
 & \le \limsup_{N\to\infty}\Big|\prod_{j=1}^{j_0+N} \theta(a\cdot \varpi^{-k_j}) \Big| = \Big|\prod_{j=1}^\infty \theta(a\cdot \varpi^{-k_j})\Big|=0.
\end{align*}
Hence the desired relation \eqref{need-cont} holds.
\end{proof}

\begin{thm}\label{ct-sym}
Assume that $F$ is non-dyadic. Then the  map $\mathbbm{h} \mapsto \nu_{\mathbbm{h}}$ induces a bijection between  $\Omega$ and $\mathcal{P}_{\mathrm{erg}}(\Sym(\N, F))$. 
\end{thm}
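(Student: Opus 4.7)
\medskip

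\noindent\textbf{Proof plan.} Injectivity of $\mathbbm{h}\mapsto \nu_{\mathbbm{h}}$ is already established in Proposition \ref{prop-unique-sym}, so the task is surjectivity. By Theorem \ref{thm-m-sym} we know $\mathcal{P}_{\mathrm{erg}}(\Sym(\N, F))=\ORB_\infty(\Sym(\N, F))$, so any $\nu\in\mathcal{P}_{\mathrm{erg}}(\Sym(\N, F))$ is a weak limit of orbital measures $\nu_{n_l}\in\ORB_{n_l}(\Sym(n_l,F))$. Invoking Lemma \ref{lem-q-diag}, I may assume each $\nu_{n_l}$ is supported on the $\GL(n_l,\O_F)$-orbit of a diagonal matrix whose entries lie in the set of representatives $\mathcal{T}=\{\varpi^{-k}\}\sqcup\{\varepsilon\varpi^{-k}\}\sqcup\{0\}$ from \eqref{R-T}. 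The plan is to pass to a subsequence along which the multiset of diagonal entries of the generating matrices converges coordinate-wise, identify the limiting parameter $\mathbbm{h}\in\Omega$, and then match characteristic functions.

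\medskip

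\noindent First I will apply Lemma \ref{lem-comp-sym}: tightness forces a uniform bound on the absolute values, so there exists $\gamma\in\Z$ with $|x|\le q^\gamma$ for every diagonal entry $x$ of every generator. Split the diagonal of the $n_l$-th generator into two lists: the exponents $a_1^{(n_l)}\ge a_2^{(n_l)}\ge\cdots$ coming from entries of the form $\varpi^{-a}$, and the exponents $b_1^{(n_l)}>b_2^{(n_l)}>\cdots$ coming from entries of the form $\varepsilon\varpi^{-b}$. Using the identity \eqref{id-2} (see also Remark \ref{rem-id-2-pf}), I may replace any two coincident $\varepsilon$-entries at level $b$ by two entries of the form $\varpi^{-b}$; iterating this reduction I can assume the $b_j^{(n_l)}$ are all distinct, i.e.\ strictly decreasing, without changing $\nu_{n_l}$. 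By a Cantor diagonal extraction I may further assume that for each fixed $j$, $a_j^{(n_l)}\to a_j$ and $b_j^{(n_l)}\to b_j$ in $\Z\cup\{-\infty\}$, and also that the overall minimum $k^{(n_l)}$ of the diagonal exponents converges to some $k\in\Z\cup\{-\infty\}$. Set $\mathbbm{k}=(a_j)_{j\ge 1}$, $\mathbbm{k}'=(b_j)_{j\ge 1}$ (truncated suitably to finite length when $k\in\Z$, retaining only those with exponent $>k$) and $\mathbbm{h}=(k;\mathbbm{k},\mathbbm{k}')\in\Omega$.

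\medskip

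\noindent The next step is to match $\widehat{\nu}(x e_{11})$ for all $x\in F$ with $\widehat{\nu_{\mathbbm{h}}}(xe_{11})$. By Theorem \ref{thm-sym-asy} (applied with $r=1$ and $a=x$), the rank-one characteristic function $\widehat{\nu_{n_l}}(xe_{11})$ is approximated to within $2q^{-n_l}$ by the product $\prod_j \theta(x\varpi^{-a_j^{(n_l)}})\prod_j \theta(\varepsilon x\varpi^{-b_j^{(n_l)}})$ together with the $\1_{\O_F}(\varpi^{-k^{(n_l)}}x)$ factor accounting for entries exactly at the minimum level. Combining this with the continuity statement of Lemma \ref{lem-cont-sym} (and the easy analog for $\mathbbm{k}'$, which reduces to a finite product in the case $k\in\Z$), and using Proposition \ref{prop-theta-detail}(i) to see that exponents below threshold contribute $1$, I obtain in the limit
\[
\widehat{\nu}(xe_{11})=\1_{\O_F}(\varpi^{-k}x)\prod_{n=1}^\infty \theta(\varpi^{-k_n}x)\prod_{n=1}^\infty \theta(\varepsilon\varpi^{-k_n'}x),
\]
which is exactly $\widehat{\nu_{\mathbbm{h}}}(xe_{11})$ by Proposition \ref{prop-ch-sym}. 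Finally, by Theorem \ref{thm-m-sym} applied to $\nu$ and the multiplicativity \eqref{ex-ch-sym} for $\nu_{\mathbbm{h}}$, the characteristic functions agree on all diagonal matrices $\diag(x_1,\dots,x_r,0,\dots)$, and by Remark \ref{rem-fourier-inv} on all of $\Sym(\infty,F)$, so $\nu=\nu_{\mathbbm{h}}$.

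\medskip

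\noindent The main obstacle is the bookkeeping in the reduction step: for finite $n_l$ one has no control on multiplicities of $\varepsilon$-entries, and one must use the identity \eqref{id-2} carefully to produce a limit parameter that lies in $\Omega$ (with $\mathbbm{k}'$ genuinely strictly decreasing), while simultaneously ensuring the Haar component $\varpi^{-k^{(n_l)}}H$ has the right limit when $k\in\Z$. This amounts to showing that entries tied with the minimum exponent $k^{(n_l)}$ contribute asymptotically the indicator factor $\1_{\O_F}(\varpi^{-k}x)$ rather than an extra product of $\theta$'s, which follows from \eqref{indicator-tran} together with the fact that infinitely many entries accumulate at $k$ whenever $k\in\Z$.
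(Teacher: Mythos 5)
Your proof follows the same overall strategy as the paper: injectivity via Proposition \ref{prop-unique-sym}, reduce to $\ORB_\infty$ via Theorem \ref{thm-m-sym}, diagonalize via Lemma \ref{lem-q-diag}, apply tightness (Lemma \ref{lem-comp-sym}) and a diagonal extraction, and then match characteristic functions via Theorem \ref{thm-sym-asy}, Lemma \ref{lem-cont-sym}, and the multiplicativity theorem. Performing the reduction to a strictly decreasing $\mathbbm{k}'$ at the finite stage (rather than after passing to the limit, as the paper does) is a legitimate variant, though you should note that identity \eqref{id-2} as stated is about infinite matrices; what you actually need is the finite fact that $\diag(\varepsilon,\varepsilon)$ and $\diag(1,1)$ are $\GL(2,\O_F)$-congruent, which holds because both have square determinant.

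However, there is a genuine gap in your identification of the parameter $k$. You define $k$ as the limit of $k^{(n_l)}$, the \emph{minimum} diagonal exponent at stage $n_l$. This is the wrong quantity. Consider the generators $X_{n_l}=\diag(\varpi^{-1},\ldots,\varpi^{-1},\varpi)$, with $n_l-1$ copies of $\varpi^{-1}$ and a single $\varpi$. Here $k^{(n_l)}=-1$ for every $l$, so your recipe gives $k=-1$, and the coordinate-wise limits of the $a$-list are $a_j=1$ for every $j$; retaining the $a_j>k=-1$ you would set $\mathbbm{k}=(1,1,1,\ldots)$, which has infinite length and therefore does \emph{not} lie in $\Delta[-1]$, so $\mathbbm{h}\notin\Omega$. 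Yet the orbital measures do converge (to the Haar measure on $\Sym(\N,\varpi^{-1}\O_F)$, i.e.\ $\nu_{(1;\emptyset,\emptyset)}$), because $\theta(x\varpi^{-1})^{n_l-1}\to\1_{\O_F}(\varpi^{-1}x)$ and the lone $\theta(x\varpi)$ factor is absorbed. The correct parameter is $k=\lim_{j\to\infty}a_j$ (i.e.\ the accumulation level of the bulk of the exponents, which is $\lim_{j\to\infty}\widetilde{k}_j$ in the paper's notation), not the stagewise minimum; indeed your final paragraph's remark that ``infinitely many entries accumulate at $k$'' is precisely what characterizes $\lim_j a_j$ but fails for the minimum in the example above. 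Relatedly, your statement that Theorem \ref{thm-sym-asy} already produces a factor $\1_{\O_F}(\varpi^{-k^{(n_l)}}x)$ at the finite stage is incorrect: at each finite $n_l$ the approximation is a finite product of $\theta$'s only, and the indicator factor emerges only in the $l\to\infty$ limit via Lemma \ref{lem-inf-prod} (equation \eqref{inf-prod-red}), applied to the infinite product $\prod_j\theta(x\varpi^{-a_j})$ once $\lim_j a_j=k\in\Z$. Once $k$ is defined correctly the rest of your argument goes through and agrees with the paper.
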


\begin{proof}
The injectivity of the  map $\mathbbm{h} \mapsto \nu_{\mathbbm{h}}$ from   $\Omega$ to $\mathcal{P}_{\mathrm{erg}}(\Sym(\N, F))$ has already been proved in Proposition \ref{prop-unique-sym}. We only need to prove that the map is also surjective. 
 
Assume that $\nu\in \mathcal{P}_{\mathrm{erg}}(\Sym(\N, F))$. Since 
\begin{align*}
 \mathcal{P}_{\mathrm{erg}}(\Sym(\N, F)) =  \ORB_\infty(\Sym(\N, F)),
 \end{align*}
there exists  a sequence $(\nu_{n_l})_{l\in\N}$  of orbital measures  satisfying  $\nu_{n_l}\in\ORB_{n_l}(\Sym(n_l, F))$ such that 
\begin{align}\label{conv-1-sym}
\nu_{n_l} \Longrightarrow \nu \text{ as } l \to \infty.
\end{align}
 By Lemma \ref{lem-q-diag}, we may assume that $\nu_{n_l}$ is the $\GL(n_l, \O_F)$-orbital measure supported on the orbit $\GL(n_l, \O_F) \cdot X_{n_l} $ with
\begin{align*}
X_{n_l} =  \diag (x_1^{(n_l)}, \cdots, x_{n_l}^{(n_l)}), \quad x_1^{(n_l)}, \cdots, x_{n_l}^{(n_l)} \in   \mathcal{T},
\end{align*}
where $\mathcal{T}$ is given in \eqref{R-T}. 
If for any multi-set \footnote{By multi-set, we mean that  the multiplicities of elements are respected. In particular, if $B$ is a multi-set, then $B \cup  B$ will be a multi-set, such that the multiplicities for each element is multiplied by $2$.} $B$ with elements in $F$, we denote by $B^*$ the multi-set of non-zero elements of $B$. Then there exist
\begin{align*}
\mathbbm{k}^{(n_l)}  = (k_j^{(n_l)})_{j\in\N}, \, \mathbbm{k}^{'(n_l)} = (k_j^{'(n_l)})_{j\in\N} \in \Delta 
\end{align*}
such that the following two multi-sets coincide: 
\begin{align*}
\{x_1^{(n_l)}, \cdots, x_{n_l}^{(n_l)}\}^{*} =  ( \{ \varpi^{-k_j^{(n_l)}}|  j \in \N\} \cup \{  \varepsilon \varpi^{-k_j^{'(n_l)}}|  j \in \N\})^{*}.
\end{align*}
By Lemma \ref{lem-comp-sym},  the weak convergence \eqref{conv-1-sym} implies that
$$
\sup_{l \in \N} k_1^{(n_l)}<\infty \an \sup_{l\in\N} k_1^{'(n_l)}<\infty. 
$$
Consequently,  passing to a subsequence if necessary, we may assume that for any $j\in\N$, there exist $k_j, k_j'\in \Z\cup\{-\infty\}$ and we have
\begin{align}\label{coor-cv-sym}
\lim_{l\to\infty} k_j^{(n_l)}  =  k_j , \, \lim_{l\to\infty} k_j^{'(n_l)}  =  k_j'. 
\end{align}
Now by the weak convergence \eqref{conv-1-sym} and the relation \eqref{sym-r1} ,  for any $x\in F$, we have 
\begin{align*}
\widehat{\nu}(x e_{11}) & = \lim_{n\to\infty} \prod_{j=1}^\infty \theta(x    \varpi^{-k_j^{(n)}})  \prod_{j=1}^\infty \theta(x    \varepsilon \varpi^{-k_j^{'(n)}}) 
 \\
 &=\lim_{l\to\infty}  \prod_{j=1}^\infty \theta(x    \varpi^{-k_j^{(n_l)}})  \prod_{j=1}^\infty \theta(x    \varepsilon \varpi^{-k_j^{'(n_l)}}) .
\end{align*}
By the continuity of the map  in Lemma \ref{lem-cont-sym} and  \eqref{coor-cv-sym}, we get 
\begin{align}\label{nu-x}
\widehat{\nu}(x e_{11}) = \prod_{j=1}^\infty \theta(x    \varpi^{-k_j})  \prod_{j=1}^\infty \theta(x    \varepsilon \varpi^{-k_j'}). 
\end{align}
Now by using the identity $\theta(x    \varpi^{-k_j})^2  = \theta(x    \varepsilon \varpi^{-k_j'})^2$ for any $x\in F$ and $j\in\N$ and by moving certain elements ( in $\Z$ and with multiplicities larger than $1$) from $(k_j')_{j\in\N}$ to the sequence $(k_j)_{j\in\N}$, we get a new non-increasing sequence $(\widetilde{k}_j)_{j\in\N}$ and a new strictly decreasing sequence $(\widetilde{k}_j')_{j\in\N}$ of finite or infinite lengths in $\Z\cup\{-\infty\}$ such that the identity \eqref{nu-x} is transformed to 
\begin{align}\label{nu-x-inter}
\widehat{\nu}(x e_{11}) = \prod_{j=1}^\infty \theta(x    \varpi^{-\widetilde{k}_j})  \prod_{j=1}^\infty \theta(x    \varepsilon \varpi^{-\widetilde{k}'_j}). 
\end{align}

Assume first that $\lim\limits_{j\to\infty} \widetilde{k}_j = -\infty$. Then  
\begin{align*}
\mathbbm{h} = (-\infty; (\widetilde{k}_j)_{j\in\N}, (\widetilde{k}_j')_{j\in\N}) 
\end{align*}
is an element in $\Omega$. Comparing \eqref{nu-x-inter} with the formula in Proposition \ref{prop-ch-sym},  we get  
\begin{align*}
\text{$\widehat{\nu}(x e_{11})  = \widehat{\nu_{\mathbbm{h}}} (x e_{11})$ for all $x\in F$.}
\end{align*} 
But by the multiplicativity of $\widehat{\nu}$ established in Theorem \eqref{thm-m-sym} and the multiplicativity of $\widehat{\nu}_{\mathbbm{h}}$ established in Proposition \ref{prop-ch-sym}, the above identity implies $\nu= \nu_{\mathbbm{h}}$. 

Assume now that $\lim_{j\to\infty} \widetilde{k}_j = k\in\Z$.  Then using the relation \eqref{inf-prod-red}, we have 
\begin{align}\label{too-many-k}
\prod_{j=1}^\infty \theta(x    \varpi^{-\widetilde{k}_j}) =  \1_{\O_F} (x   \varpi^{-k})  \prod_{j\in\{n |\widetilde{k}_n > k\} } \theta(x    \varpi^{-\widetilde{k}_j}).
\end{align}
Moreover, we also have
\begin{align}\label{small-thing}
 \1_{\O_F} (x   \varpi^{-k})  \prod_{j=1}^\infty \theta(x    \varepsilon \varpi^{-\widetilde{k}'_j}) =  \1_{\O_F} (x   \varpi^{-k})  \prod_{j\in\{n |\widetilde{k}_n' > k\} }  \theta(x    \varepsilon \varpi^{-\widetilde{k}'_j}).
 \end{align}
It suffices to check for $x\in F$ such that $|x\varpi^{-k}|\le 1$. For  $\widetilde{k}_j'$ such that $\widetilde{k}_j' \le k$ (if they exist), we have $|x    \varepsilon \varpi^{-\widetilde{k}'_j}| =|x \varpi^{-k}   \varepsilon \varpi^{k-\widetilde{k}'_j}| \le 1 $. Hence $\theta(x    \varepsilon \varpi^{-\widetilde{k}'_j}) = 1$ by the  property (i) in  Proposition \ref{prop-theta-detail}. The identity \eqref{small-thing} is proved. Denote 
\begin{align*}
\widehat{k}_j: = \left\{\begin{array}{cc} \widetilde{k}_j, & \text{if $\widetilde{k}_j > k$}\vspace{2mm}\\
-\infty & \text{if $\widetilde{k}_j = k$}\end{array}\right. \an \widehat{k}'_j: = \left\{\begin{array}{cc} \widetilde{k}'_j, & \text{if $\widetilde{k}'_j > k$}\vspace{2mm}\\
-\infty & \text{if $\widetilde{k}'_j \le k$}\end{array}\right..
\end{align*}
Now, $\mathbbm{h} = (k; (\widehat{k}_j)_{j\in\N}, (\widehat{k}'_j)_{j\in\N})$, being an element in $\{k\}\times \Delta[k]\times \Delta^{\sharp}[k]$, is an element of $\Omega$.   Combining \eqref{nu-x-inter}, \eqref{too-many-k} and \eqref{small-thing}, we obtain
\begin{align*}
&\widehat{\nu}(x e_{11}) = \prod_{j=1}^\infty \theta(x    \varpi^{-\widetilde{k}_j})  \prod_{j=1}^\infty \theta(x    \varepsilon \varpi^{-\widetilde{k}'_j})
\\
& =   \1_{\O_F} (x   \varpi^{-k}) \Big[ \prod_{j\in\{n |\widetilde{k}_n > k\} } \theta(x    \varpi^{-\widetilde{k}_j}) \Big]\Big[  \prod_{j\in\{n |\widetilde{k}'_n > k\} }  \theta(x    \varepsilon \varpi^{-\widetilde{k}'_j})\Big]. 
\end{align*}
In this case, we also have 
\begin{align*}
\widehat{\nu}(x e_{11})  = \widehat{\nu_{\mathbbm{h}}} (x e_{11}), \quad \text{ for all $x\in F$.}
\end{align*}
By the same argument as above in using the multiplicativities of $\widehat{\nu}$ and  $\widehat{\nu}_{\mathbbm{h}}$, we get $\nu= \nu_{\mathbbm{h}}$. 

The proof of Theorem \ref{ct-sym} is completed. 
 \end{proof}

%%%%%%%%%
%%%%%%%%%
%%%%%%%%%
%%%%%%%%%
%%%%%%%%%
%%%%%%%%%
%%%%%%%%%
%%%%%%%%%
%%%%%%%%%
%%%%%%%%%
%%%%%%%%%
%%%%%%%%%
%%%%%%%%%
%%%%%%%%%

\section{Properties of the parametrization}\label{sec-cont-p}
\subsection{The parametrizations are homeomorphisms}
\begin{proof}[Proof of Theorem \ref{CT-ns}]
By Theorem \ref{ct-ns}, we only need to prove that the map $\mathbbm{k} \mapsto \mu_{\mathbbm{k}}$ from $\Delta$ to $\mathcal{P}_{\mathrm{erg}} (\Mat(\N, F))$ and its inverse are both continuous.  Note that since $\Delta$ and $\mathcal{P}_{\mathrm{erg}}(\Mat(\N, F))$ are metrizable, their topologies are determined by convergence of sequences. 

If a sequence $(\mathbbm{k}^{(n)})_{n\in\N}$ converges in $\Delta$ to a point $\mathbbm{k}^{(0)} \in\Delta$, then 
\begin{align}\label{finite-first}
\sup_{n, j } k_j^{(n)} <\infty.
\end{align}
Consequently, the family of the measures $\mu_{\mathbbm{k}^{(n)}}$, all being supported on a common compact subset of $\Mat(\N, \FF)$, is  tight. Thus to prove  that $\mu_{\mathbbm{k}^{(n)}}$ converges weakly to $\mu_{\mathbbm{k}^{(0)}}$, it suffices to prove that the latter one is the unique accumulation point of the former family of measures. Now let $\mu$ be an accumulation point of the sequence $(\mu_{\mathbbm{k}^{(n)}})_{n\in \N}$. By definition, there exists a subsequence $(n_j)_{j \in\N}$ such that  
\begin{align*}
\mu = \lim_{j \to\infty} \mu_{\mathbbm{k}^{(n_j)}}.
\end{align*} 
Since $\mathbbm{k}^{(n)} \longrightarrow \mathbbm{k}^{(0)}$, by explicit formula \eqref{explicit-ns-1} in Proposition \ref{prop-explicit} and Lemma \ref{lem-inj-con}, the charateristic function of $\mu$ is given  by 
\begin{align*}
&\widehat{\mu} (\diag (\varpi^{-\ell_1}, \cdots, \varpi^{-\ell_r}, 0, 0, \cdots))
\\
=& \lim_{j\to\infty} \widehat{\mu}_{\mathbbm{k}^{(n_j)}} (\diag (\varpi^{-\ell_1}, \cdots, \varpi^{-\ell_r}, 0, 0, \cdots)) 
\\
  =  & \widehat{\mu}_{\mathbbm{k}^{(0)}} (\diag (\varpi^{-\ell_1}, \cdots, \varpi^{-\ell_r}, 0, 0, \cdots)), \quad (\ell_j \in\Z).
\end{align*}
This implies that we  have $\mu = \mu_{\mathbbm{k}^{(0)}}$. 

Conversely, if $ \mu_{\mathbbm{k}^{(n)}}$ converges to $\mu_{\mathbbm{k}^{(0)}}$. By using the same argument in the proof of Lemma \ref{lem-compact}, we can still get the relation \eqref{finite-first}.  Again by compactness argument, it suffices to show that $\mathbbm{k}^{(0)}$ is the unique accumulation point for the sequence $\mathbbm{k}^{(n)}$.  But if $\mathbbm{k}$ is an accumulation point of $\mathbbm{k}^{(n)}$, then $\mu_{\mathbbm{k}}$ is an accumulation point of $\mu_{\mathbbm{k}^{(n)}}$,  whence  $\mu_{\mathbbm{k}} = \mu_{\mathbbm{k}^{(0)}}$. Combining with Proposition \ref{prop-unique} we have $\mathbbm{k}= \mathbbm{k}^{(0)}$. 

The proof of Theorem  \ref{CT-ns} is completed. 
\end{proof}

\begin{proof}[Proof of Theorem \ref{CT-sym}]
By Theorem \ref{ct-sym}, we only need to prove that the map $\mathbbm{h} \mapsto \nu_{\mathbbm{h}}$ from $\Omega$ to $\mathcal{P}_{\mathrm{erg}} (\Sym(\N, F))$ and its inverse are both continuous. The proof of this part is similar to that of Theorem \ref{CT-ns} as above.
\end{proof}

\subsection{The parametrizations are semi-group homomorphisms}

On $\Delta$ is equipped with an Abelian semi-group structure. Given any two points $\mathbbm{k} = (k_n)_{n\in\N}$ and $\mathbbm{k}' = (k_n')_{n\in\N}$ in $\Delta$, we define  $\mathbbm{k} \oplus_\Delta \mathbbm{k}'$ as follows: 

(i) If $ \inf  k_n = \inf k_n' = -\infty$, then we define $\mathbbm{k} \oplus_\Delta \mathbbm{k}' \in \Delta$ to be the non-increasing rearrangement of the sequence $(\widetilde{k_n})_{n\in\N}$, where
$$
\widetilde{k_{2n-1}} :  =k_n \an  \widetilde{k_{2n}} :  =k_n' \quad (n = 1, 2, \cdots). 
$$

(ii) If $k = \max\{ \inf k_n , \inf k_n' \}\in\Z$, then we  define  $\mathbbm{k} \oplus \mathbbm{k}' \in \Delta$ to be the non-increasing rearrangement of the sequence
$(k_n^*)_{n\in\N}$, that is any sequence exhausting the integers larger than $k$ and from $\mathbbm{k}$ and $\mathbbm{k}'$,  repeated with corresponding multiplicity. For instance, if $\mathbbm{k}= ( 6,  2, 2, -3, -3, -3, -3, \cdots), \mathbbm{k}' =(4, 3, 0, -1, - \infty, 
\cdots) $, then we define 
$$
\mathbbm{k} \oplus_\Delta \mathbbm{k}' = (6, 4, 3, 2, 2, 0, -1, -3, -3, -3, -3, \cdots).
$$

Clearly, we have

\begin{prop}
The map $\mathbbm{k} \mapsto \mu_\mathbbm{k}$ defines a semi-group isomorphism between $(\Delta, \oplus_\Delta)$ and $(\mathcal{P}_{\mathrm{erg}}(\Mat(\N, F)), *)$. More precisely, we have 
\begin{align*}
\mu_{\mathbbm{k}} * \mu_{\mathbbm{k}'}  = \mu_{\mathbbm{k} \oplus_\Delta \mathbbm{k}'}. 
\end{align*}
\end{prop}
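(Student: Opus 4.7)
By Theorem \ref{CT-ns}, the map $\mathbbm{k} \mapsto \mu_\mathbbm{k}$ is already known to be a bijection between $\Delta$ and $\mathcal{P}_{\mathrm{erg}}(\Mat(\N, F))$, so the only task is to establish the convolution identity $\mu_\mathbbm{k} * \mu_{\mathbbm{k}'} = \mu_{\mathbbm{k} \oplus_\Delta \mathbbm{k}'}$. I would do this by comparing characteristic functions, since any Borel probability measure on $\Mat(\N, F)$ is determined by its Fourier transform on $\Mat(\infty, F)$.

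As a preliminary step I would verify that $\mu_\mathbbm{k} * \mu_{\mathbbm{k}'}$ itself lies in $\mathcal{P}_{\mathrm{erg}}(\Mat(\N, F))$: invariance under the two-sided action is preserved because $M \mapsto g_1 M g_2^{-1}$ is linear, so $g_1(M_1+M_2)g_2^{-1} = g_1 M_1 g_2^{-1} + g_1 M_2 g_2^{-1}$ with the two summands still independent and with the same marginal distributions; and ergodicity then follows from the Ismagilov--Olshanski multiplicativity (Corollary \ref{cor-IO-ns}), because the pointwise product of two characteristic functions that factorize on the diagonal still factorizes. This same multiplicativity, combined with the convolution formula \eqref{ch-conv}, reduces the desired identity $\widehat{\mu_\mathbbm{k}} \cdot \widehat{\mu_{\mathbbm{k}'}} = \widehat{\mu_{\mathbbm{k} \oplus_\Delta \mathbbm{k}'}}$ to the rank-one case
$$\widehat{\mu_\mathbbm{k}}(\varpi^{-\ell} e_{11}) \cdot \widehat{\mu_{\mathbbm{k}'}}(\varpi^{-\ell} e_{11}) = \widehat{\mu_{\mathbbm{k} \oplus_\Delta \mathbbm{k}'}}(\varpi^{-\ell} e_{11}), \qquad \ell \in \Z.$$

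Substituting the explicit expression \eqref{explicit-ns-1} from Proposition \ref{prop-explicit} and introducing
$$S_\ell(\mathbbm{k}) := \sum_{j \ge 1} (k_j + \ell) \mathbbm{1}_{\{k_j + \ell \ge 1\}} \in \Z_{\ge 0} \cup \{+\infty\},$$
this further reduces to the purely combinatorial identity $S_\ell(\mathbbm{k}) + S_\ell(\mathbbm{k}') = S_\ell(\mathbbm{k} \oplus_\Delta \mathbbm{k}')$ for every $\ell \in \Z$, which is the only substantive step. In case (i) of the definition of $\oplus_\Delta$, where $\inf k_n = \inf k_n' = -\infty$, the sequence $\mathbbm{k} \oplus_\Delta \mathbbm{k}'$ is by construction a non-increasing rearrangement of the multiset union of the integer entries of $\mathbbm{k}$ and $\mathbbm{k}'$, and the identity is immediate. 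The hard part will be case (ii), with $k := \max(\inf k_n, \inf k_n') \in \Z$: here the integer tail equal to $k$ present in $\mathbbm{k}$ or $\mathbbm{k}'$ (and hence, by construction, in $\mathbbm{k} \oplus_\Delta \mathbbm{k}'$) contributes $+\infty$ to $S_\ell$ exactly when $k+\ell \ge 1$ and $0$ exactly when $k+\ell \le 0$, and this dichotomy matches on both sides (interpreting $\exp(-\log q \cdot \infty) = 0$ consistently, as already used in the proof of Proposition \ref{prop-explicit}). When $k+\ell \le 0$, the contributing terms on both sides are precisely those entries strictly larger than $k$; their multiset union on the right is guaranteed by the defining property of $\oplus_\Delta$, and the identity then follows by routine bookkeeping.

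Finally, the associativity and commutativity of $\oplus_\Delta$ on $\Delta$ are inherited from the corresponding properties of convolution via the bijection $\mathbbm{k} \mapsto \mu_\mathbbm{k}$, so that the statement is indeed a semi-group isomorphism rather than merely an equality of the relevant binary operation.
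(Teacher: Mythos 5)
Your proof is correct, and it takes a route that the paper does not spell out: the paper records the proposition with no argument (``Clearly, we have\dots''), the implicit reason being probabilistic --- adding independent copies of the random matrices $M_{\mathbbm{k}}$ and $M_{\mathbbm{k}'}$ from Definition \ref{defn-rm} reproduces the construction of $M_{\mathbbm{k}\oplus_\Delta \mathbbm{k}'}$, with the lower-order rank-one and Haar terms absorbed into the dominant Haar term $\varpi^{-k}Z$ at scale $k=\max(\lim k_n,\lim k_n')$. You instead pass through characteristic functions and the scalar identity $S_\ell(\mathbbm{k})+S_\ell(\mathbbm{k}')=S_\ell(\mathbbm{k}\oplus_\Delta\mathbbm{k}')$, which is the honest content of the claim; your verification of that identity (including the delicate case (ii), where the $+\infty$ in $S_\ell$ for $k+\ell\ge 1$ swallows all discrepancies below level $k$) is sound. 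The approaches are complementary: the probabilistic one is shorter once Definition \ref{defn-rm} is internalised, yours makes the combinatorics behind $\oplus_\Delta$ explicit.

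One structural remark: the preliminary step showing $\mu_{\mathbbm{k}}*\mu_{\mathbbm{k}'}\in\mathcal{P}_{\mathrm{erg}}(\Mat(\N,F))$ via Corollary \ref{cor-IO-ns} is superfluous. Once you know $\widehat{\mu_{\mathbbm{k}}*\mu_{\mathbbm{k}'}}=\widehat{\mu_{\mathbbm{k}\oplus_\Delta\mathbbm{k}'}}$ on all of $\Mat(\infty,F)$, the two measures coincide (characteristic functions determine Borel probability measures on $\Mat(\N,F)$), and ergodicity of the left-hand side follows for free since $\mu_{\mathbbm{k}\oplus_\Delta\mathbbm{k}'}$ is in the ergodic list; similarly, the reduction to rank one needs only the multiplicativity \eqref{explicit-fourier} already proved in Proposition \ref{prop-explicit}, not the Ismagilov--Olshanski criterion. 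Also, the phrase ``the contributing terms on both sides are precisely those entries strictly larger than $k$'' in case (ii) is slightly loose --- the contributing entries are those $\ge 1-\ell$, a sub-multiset of the entries $>k$ --- but this does not affect the conclusion, since the multisets of entries $>k$ of $\mathbbm{k}$ and $\mathbbm{k}'$ together match those of $\mathbbm{k}\oplus_\Delta\mathbbm{k}'$ by the definition of $\oplus_\Delta$.
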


An Abelian semigroup structure  $\oplus_\Omega$  on $\Omega$ such that $\mathbbm{h}\to \nu_{\mathbbm{h}}$ defines semi-groups isomorphism between $(\Omega, \oplus_\Omega)$ and $(\mathcal{P}_{\mathrm{erg}}(\Sym(\N, F)), *)$ is introduced in the same way.

%%%%%%%%%
%%%%%%%%%
%%%%%%%%%
%%%%%%%%%
%%%%%%%%%
%%%%%%%%%
%%%%%%%%%
%%%%%%%%%
%%%%%%%%%
%%%%%%%%%
%%%%%%%%%
%%%%%%%%%
%%%%%%%%%
%%%%%%%%%
%%%%%%%%%

\section{Proof of Proposition \ref{prop-theta-detail}}\label{sec-theta-detail}

In this section,  we always assume that $F$ is non-dyadic.  We will use the following change of variables  in the integration over a local field.  To introduce the formula for change of variables, we need the notion of $F$-analytic functions.  A function $\varphi: U\rightarrow V$, with $U, V$ open subsets of $F$, is called  $F$-analytic, if in some neighbourhood of any point in $U$ it is given by a convergent power series, it is called $F$-bi-analytic, if $\varphi$ is invertible such that  both $\varphi: U\rightarrow V$  and $\varphi^{-1}: V \rightarrow U$ are $F$-analytic. 

\begin{thm}[Change of variables, see Schoissengeier \cite{CV-local}]\label{thm-cv}
Let $\varphi: U\rightarrow V$ be a $F$-bi-analytic function. Then for any  integrable function $f: U\rightarrow \C$, we have 
\begin{align*}
\int_U f(\varphi(x)) | \varphi'(x)| dx = \int_V f(y) dy,
\end{align*}
where  $\varphi'$ is the formal derivative of $\varphi$. 
\end{thm}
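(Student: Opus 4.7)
The plan is to reduce to indicator functions of very small balls and then exploit the fact that, in the non-Archimedean setting, an analytic map is ``rigidly linear'' at a small enough scale. By standard monotone class and approximation arguments, it suffices to establish
$$\mathrm{vol}(\varphi(B)) = \int_B |\varphi'(x)|\, dx$$
for every sufficiently small open ball $B \subset U$. Since $F$ is totally disconnected and $|F^\times|$ is discrete, any open subset is a disjoint union of open balls, and such a cover can be refined to arbitrarily small radius.

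Next, I would fix $x_0 \in U$ and choose $r > 0$ small enough that on $B := B(x_0, r)$ the following three conditions hold simultaneously: (a) the power series $\varphi(x) = \varphi(x_0) + \sum_{n \geq 1} c_n (x-x_0)^n$ converges; (b) $|\varphi'(x)| = |c_1| = |\varphi'(x_0)|$ for every $x \in B$, which is possible because $\varphi'$ is continuous and $|F^\times|$ is discrete; (c) $|c_n (x-x_0)^n| < |c_1 (x-x_0)|$ for every $n \geq 2$ and every $x \in B$, which holds for small $r$ because the $n$-th term scales like $r^n$ while the linear one scales like $r$. With (c) in force, the non-Archimedean strict triangle inequality gives $|\varphi(x) - \varphi(x_0)| = |c_1|\,|x - x_0|$ on $B$, so $\varphi$ is injective on $B$ and maps $B$ into $B' := B(\varphi(x_0), |c_1| r)$. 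Surjectivity onto $B'$ follows by running the identical argument with the $F$-analytic inverse $\varphi^{-1}$, which exists by the bi-analyticity hypothesis. Hence $\varphi(B) = B'$.

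Translation invariance and scaling of the Haar measure then yield $\mathrm{vol}(B') = |c_1| \cdot \mathrm{vol}(B)$, and combined with (b) this gives $\mathrm{vol}(\varphi(B)) = \int_B |\varphi'(x)|\,dx$. Summing over a countable disjoint decomposition of $U$ into such balls, whose images form a corresponding disjoint decomposition of $V$, proves the theorem for indicator functions of balls; the general case follows by linearity and monotone convergence.

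The main obstacle will be step (c): extracting a uniform, quantitative estimate on one ball $B$ that dominates all higher order terms by the linear one. This requires converting the convergence radius of $\varphi$ near $x_0$ into an explicit bound, and it is the place where the interplay between the non-Archimedean absolute value and the analytic structure does the real work. Once this local normalization is secured, the ultrametric rigidity, namely the exact equality $|\varphi(x)-\varphi(x_0)| = |\varphi'(x_0)|\,|x-x_0|$ rather than a first-order approximation, makes the rest of the argument noticeably cleaner than in its real or complex counterpart.
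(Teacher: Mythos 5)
The paper does not prove this statement; it is stated as a cited result from Schoissengeier \cite{CV-local} with no argument supplied. So there is no internal proof to compare against, and I will assess your sketch on its own merits.

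Your approach --- reduce to $\vol(\varphi(B)) = \int_B |\varphi'|\,dx$ on small balls, use ultrametric rigidity to show $\varphi$ maps a small ball $B(x_0,r)$ bijectively onto $B(\varphi(x_0),|\varphi'(x_0)|r)$, and conclude by translation/scaling invariance of Haar measure plus a $\pi$--$\lambda$/monotone-class argument --- is the standard proof in the non-Archimedean setting and is essentially correct. The ``main obstacle'' you flag in step (c) is in fact easy: convergence of $\sum c_n(x-x_0)^n$ on $|x-x_0|\le r_0$ means $|c_n|r_0^n$ is bounded, say by $M$, so for $n\ge 2$ and $r<r_0$ one has $|c_n|r^{n-1}\le (M/r_0)(r/r_0)^{n-1}\le Mr/r_0^2$, which is $<|c_1|$ once $r$ is small. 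The one genuine gap is your deduction of injectivity: the single-point identity $|\varphi(x)-\varphi(x_0)|=|c_1|\,|x-x_0|$ does not by itself rule out two distinct points equidistant from $x_0$ having the same image. The standard fix is to estimate the difference directly: for $x,y\in B$,
\begin{align*}
\varphi(x)-\varphi(y)-c_1(x-y)=\sum_{n\ge 2}c_n\bigl[(x-x_0)^n-(y-x_0)^n\bigr],
\end{align*}
and since $(x-x_0)^n-(y-x_0)^n=(x-y)\sum_{k=0}^{n-1}(x-x_0)^k(y-x_0)^{n-1-k}$, each term is bounded by $|c_n|\,|x-y|\,r^{n-1}<|c_1|\,|x-y|$ by (c). The strict triangle inequality then gives $|\varphi(x)-\varphi(y)|=|c_1|\,|x-y|$ for \emph{all} $x,y\in B$, which is what injectivity and the exact-ball-image claim actually require. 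With that correction (and a remark that $r$ may need to be shrunk once more so that conditions (a)--(c) also hold for $\varphi^{-1}$ on the image ball, to get surjectivity), your argument is complete.
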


We will also need the following classical result from number theory concerning Gauss sums for finite field $\F_q$. For the reader's convenience, we include its standard proof in Appendix.

 Denote by $\lambda_2$ the unique multiplicative character for $\F_q^{\times}$ of order $2$, that is, 
\begin{align*}
\lambda_2(a)= \left\{\begin{array}{cl} 1, & \text{ if $a \in (\F_q^{\times})^2$} \vspace{2mm}\\ -1, & \text{ if $a \notin (\F_q^{\times})^2$} \end{array}\right. . 
\end{align*}
By convention, we extend the definition of $\lambda_2$ to the whole finite field $\F_q$ by setting $\lambda_2(0) =0$. 

Denote the set of additive characters of $\F_q$ by $\widehat{\F}_q$.  Given any $\tau \in \widehat{\F}_q \setminus \{1\}$ (that is, $\tau$ is  non-trivial character  of $\F_q$) and any $a\in \F_q$, denote by $\tau_a$ the character of $\F_q$ defined by $\tau_a (x ) = \tau(a\cdot x)$. It is a standard fact that the map $a \mapsto \tau_a$ is an group isomorphism between $\F_q$ and $\widehat{\F}_q$.

\begin{lem}[Gauss sums]\label{gauss-sum}
Fix an element $\tau \in \widehat{\F}_q \setminus \{1\}$. Then for any $a\in\F_q^{\times}$, we have
\begin{align}\label{c-sign}
\sum_{x \in \F_q} \tau_a(x^2) = \lambda_2(a)  \cdot \sum_{x \in \F_q} \tau(x^2).
\end{align}
Moreover,
\begin{align*}
\Big\{\sum_{x \in \F_q} \tau_a( x^2) |  a  \in \F_q^{\times}\Big\} =  \{\varrho_q \sqrt{q}, - \varrho_q\sqrt{q}\}. 
\end{align*}
\end{lem}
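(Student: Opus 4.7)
The plan is to introduce the quadratic Gauss sum
$$
G(\lambda_2,\tau) := \sum_{y \in \F_q^\times} \lambda_2(y)\,\tau(y)
$$
and reduce both assertions to its computation. First I would count fibres of the squaring map on $\F_q$: for $y\in\F_q^\times$ the equation $x^2=y$ has exactly $1+\lambda_2(y)$ solutions, while $x^2=0$ has the unique solution $x=0$. Hence for any $b\in\F_q^\times$,
$$
\sum_{x\in\F_q}\tau(bx^2) \;=\; 1 + \sum_{y\in\F_q^\times}(1+\lambda_2(y))\,\tau(by).
$$
The piece carrying the coefficient $1$ collapses to $-1$ by the non-triviality of $\tau$ (so the full $\F_q$-sum vanishes and one subtracts the $y=0$ term). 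After the change of variable $z=by$ the remaining piece equals $\lambda_2(b^{-1})G(\lambda_2,\tau)=\lambda_2(b)G(\lambda_2,\tau)$, since $\lambda_2$ has order two. This yields
$$
\sum_{x\in\F_q}\tau_a(x^2) \;=\; \lambda_2(a)\,G(\lambda_2,\tau),
$$
and specializing $a=1$ gives the identity \eqref{c-sign} immediately.

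For the second assertion the plan is to compute $G(\lambda_2,\tau)^2$ directly. Expanding the square, writing $\tau(y_1)\tau(y_2)=\tau(y_1+y_2)$ and substituting $y_1=uy_2$ with $u\in\F_q^\times$, the factor $\lambda_2(y_2)^2=1$ decouples the two summations and leaves
$$
G(\lambda_2,\tau)^2 \;=\; \sum_{u\in\F_q^\times}\lambda_2(u)\sum_{y_2\in\F_q^\times}\tau\bigl((u+1)y_2\bigr).
$$
The inner sum is $q-1$ when $u=-1$ and $-1$ otherwise, and using $\sum_{u\in\F_q^\times}\lambda_2(u)=0$ the whole expression telescopes to $\lambda_2(-1)\,q$. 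Since $-1$ is a square in $\F_q^\times$ if and only if $q\equiv 1\pmod 4$, one has $\lambda_2(-1)=\varrho_q^2$ by the definition \eqref{def-rho-q}, hence $G(\lambda_2,\tau)=\pm \varrho_q\sqrt{q}$.

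To conclude, I would use that $F$ is non-dyadic so $q$ is odd and $\lambda_2\colon\F_q^\times\to\{\pm 1\}$ is surjective; combined with the identity from the first step, as $a$ ranges over $\F_q^\times$ the value $\lambda_2(a)\,G(\lambda_2,\tau)$ attains both signs, giving the displayed set $\{\varrho_q\sqrt{q},-\varrho_q\sqrt{q}\}$. No step here is a serious obstacle: the only genuinely arithmetic input beyond orthogonality of characters is the identification $\lambda_2(-1)=\varrho_q^2$, and the rest is careful bookkeeping with the two change-of-variable substitutions $z=by$ and $y_1=uy_2$.
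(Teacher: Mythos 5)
Your proof is correct, and the first identity is handled exactly as in the paper: reduce $\sum_x\tau_a(x^2)$ to the Gauss sum $G(\lambda_2,\tau)=\sum_y\lambda_2(y)\tau_a(y)$ by the fibre count $\#\{x:x^2=y\}=1+\lambda_2(y)$, then pull $\lambda_2(a)$ out by a change of variable. For the second assertion your route is slightly different from, and a bit cleaner than, the paper's. The paper computes the absolute value $|G(\lambda_2,\tau)|^2=q$ by expanding $\sum_{x,y}\lambda_2(x)\lambda_2(y)\overline{\tau(x)}\tau(y)$ with the substitution $y=xz$, and then determines the argument of $G$ by a separate complex-conjugation argument: $\overline{G}=\lambda_2(-1)G$, so $G$ is real when $-1\in(\F_q^\times)^2$ and purely imaginary otherwise. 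You instead compute $G^2$ (not $|G|^2$) by the analogous expansion with $y_1=uy_2$, obtaining $G^2=\lambda_2(-1)\,q=\varrho_q^2\,q$ in one stroke; this packages the modulus and the real/imaginary dichotomy into a single identity, so the conjugation step is unnecessary. Both are standard and essentially equivalent in content, but your version saves a step. The only point worth flagging explicitly in a write-up is the one you noted — that $\lambda_2(-1)=\varrho_q^2$, i.e.\ $-1$ is a square in $\F_q^\times$ iff $q\equiv 1\pmod 4$, which follows from the cyclicity of $\F_q^\times$ — and that the paper's convention $\lambda_2(0)=0$ makes the sum over all of $\F_q$ agree with the sum over $\F_q^\times$ so the bookkeeping at $y=0$ is harmless.
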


Denote by $\O_F^{*}$ the set of non-zero elements in $\O_F$ and denote by $(\O_F^{*})^2$ the square elements in $\O_F^{*}$, that is, 
$$
(\O_F^{*})^2: = \{ a \in \O_F^*:  \text{there exists $b \in F$ such that $a = b^2$} \}.
$$

Denote  the square function $x \mapsto x^2$ by $\psi(x) = x^2$. 
\begin{prop}\label{prop-partition}
There exists a partition 
$$
\O_F^* = U_1 \sqcup U_2,
$$
such that the square function $\psi(x) = x^2$ induces two $F$-bi-analytic functions: 
$$
\psi : U_i \rightarrow (\O_F^{*})^2, \quad i = 1, 2. 
$$ 
\end{prop}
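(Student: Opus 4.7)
The plan is to construct the partition using representatives in the residue field, then verify $F$-bi-analyticity via Hensel's lemma applied to the quadratic polynomial $T^2 - y$.

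First, I would exploit the non-dyadic assumption. Since $q$ is odd, $-1 \ne 1$ in $\F_q$ and the involution $a \mapsto -a$ acts freely on $\F_q^{\times}$. Fix a partition $\F_q^{\times} = A \sqcup (-A)$, and pull back via the bijection $\pi : \mathcal{C}_q^{\times} \to \F_q^{\times}$ from \eqref{pi-1} to obtain $\mathcal{C}_q^{\times} = C^{+} \sqcup C^{-}$ with $\pi(C^{\pm}) = \pm A$.

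Second, for any $x \in \O_F^{*}$, let $a_v(x) \in \mathcal{C}_q^{\times}$ denote the leading (lowest-order) coefficient in the expansion \eqref{p-adic-expansion}. I would set
\begin{align*}
U_1 := \{x \in \O_F^{*} : a_v(x) \in C^{+}\}, \qquad U_2 := \{x \in \O_F^{*} : a_v(x) \in C^{-}\}.
\end{align*}
Perturbing $x$ by an element of absolute value strictly smaller than $|x|$ does not change $a_v(x)$, so each $U_i$ is open in $F$, the two sets are disjoint, and $U_1 \sqcup U_2 = \O_F^{*}$.

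Third, I would verify that $U_2 = -U_1$ and that $\psi$ bijects each $U_i$ onto $(\O_F^{*})^2$. Negation preserves $\mathrm{ord}_F$, and the leading coefficient of $-x$ is the unique $b \in \mathcal{C}_q^{\times}$ with $\pi(b) = -\pi(a_v(x))$; hence $a_v(x) \in C^{\pm}$ forces $b \in C^{\mp}$, so $-U_1 = U_2$. If $\psi(x) = \psi(y)$ with $x, y \in \O_F^{*}$, then $y = \pm x$, and since $U_1$ meets each pair $\{z, -z\}$ in exactly one element, $\psi|_{U_1}$ is injective. For surjectivity, any $a \in (\O_F^{*})^2$ is $a = b^2$ with $b \in F$; then $|b|^2 = |a| \le 1$ and $b \ne 0$, so $b \in \O_F^{*}$, and exactly one of $\pm b$ lies in $U_1$. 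The same arguments work for $U_2$.

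Fourth, I would establish $F$-bi-analyticity. The map $\psi(x) = x^2$ is polynomial, hence $F$-analytic on all of $F$, and both $U_i$ and $(\O_F^{*})^2 = \bigsqcup_{v \ge 0} \varpi^{2v}(\O_F^{\times})^2$ are open (the latter via Lemma \ref{lem-clopen}). At any $x_0 \in U_i$, $\psi'(x_0) = 2 x_0 \ne 0$ since $F$ is non-dyadic (so $|2| = 1$) and $x_0 \ne 0$. Applying Hensel's lemma to $T^2 - y$ at the approximate root $T = x_0$, for $y$ sufficiently close to $x_0^2$, there is a unique $x$ near $x_0$ with $x^2 = y$, given $F$-analytically by the binomial series $x = x_0 \bigl(1 + (y - x_0^2)/x_0^2\bigr)^{1/2}$. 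Continuity of this inverse together with the openness of $U_i$ ensures that it takes values in $U_i$.

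The main technical obstacle is making the $F$-analyticity of the square-root rigorous: one needs the binomial coefficients $\binom{1/2}{n}$ to lie in $\O_F$ (which holds because $F$ is non-dyadic, so $2 \in \O_F^{\times}$) and the series to converge on a ball of positive radius around $y_0 = x_0^2$. This can be bypassed by invoking the non-Archimedean implicit function theorem applied directly to $\psi(x) - y = 0$, which yields $F$-analyticity of the local inverse from $\psi'(x_0) \ne 0$ alone.
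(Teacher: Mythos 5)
Your proof is correct and follows essentially the same approach as the paper's: in both cases $\O_F^*$ is split into two halves, one from each pair $\{z,-z\}$, by making a consistent choice of residue-class representatives (the paper does this fiberwise over the balls $a+\varpi\O_F$, $a\in(\mathcal{C}_q^\times)^2$, via Lemma~\ref{lem-divide}, while you package the same choice via the leading coefficient $a_v(x)$), and in both cases non-dyadicity gives $|2|=1$ so $\psi'\ne 0$, with bi-analyticity coming from Hensel's lemma and the non-Archimedean inverse/implicit function theorem.
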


Recall that by Lemma \ref{lem-clopen}, the group $(\O_F^{\times})^2$ is a disjoint union of $\frac{q-1}{2}$ balls of radius $q^{-1}$: 
\begin{align*}
(\O_F^{\times})^2 =  \bigsqcup_{a \in (\mathcal{C}_q^{\times})^2} (a + \varpi \O_F).
\end{align*}

\begin{lem}\label{lem-divide}
Any element $a \in (\O_F^{\times})^2$ has two square roots  $\alpha_1,   \alpha_2 \in \O_F^{\times}$ such that $(\alpha_1+ \varpi \O_F) \cap ( \alpha_2+ \varpi \O_F) = \emptyset$ and we have 
\begin{align}\label{inclusion-odd}
\psi(\alpha_i + \varpi \O_F ) \subset a + \varpi \O_F, \quad i= 1, 2.
\end{align}
Moreover,  we have two bijective maps : 
\begin{align}\label{odd}
\alpha_i + \varpi \O_F \xrightarrow{ x \mapsto \psi(x) = x^2}  a + \varpi \O_F, \quad i= 1, 2.
\end{align}
\end{lem}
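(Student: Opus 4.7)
The plan is to take the two square roots $\pm\alpha$ of $a$ and verify all four claims by a direct computation that crucially uses the non-dyadic hypothesis $|2|=1$, together with Hensel's lemma for surjectivity (essentially the same invocation already used in the proof of Lemma \ref{lem-clopen}).

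First I would produce the roots. Since $a \in (\O_F^{\times})^2$, there is some $\alpha \in F$ with $\alpha^2 = a$; because $|a|=1$, necessarily $|\alpha|=1$, so $\alpha \in \O_F^{\times}$. Set $\alpha_1 := \alpha$ and $\alpha_2 := -\alpha$. Since $F$ is non-dyadic, $|2|=1$, hence $|\alpha_1 - \alpha_2| = |2\alpha| = 1$, so $\alpha_1 - \alpha_2 \notin \varpi\O_F$, which is exactly the disjointness $(\alpha_1 + \varpi\O_F) \cap (\alpha_2 + \varpi\O_F) = \emptyset$.

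Next I would verify the inclusion \eqref{inclusion-odd}. For $x = \alpha_i + \varpi y$ with $y \in \O_F$, a direct expansion gives
\[
x^2 \;=\; \alpha_i^2 + 2\alpha_i \varpi y + \varpi^2 y^2 \;=\; a + \varpi(2\alpha_i y + \varpi y^2),
\]
and since $2\alpha_i y + \varpi y^2 \in \O_F$, this lies in $a + \varpi\O_F$, proving \eqref{inclusion-odd} and showing that $\psi$ restricts to a well-defined map on each $\alpha_i + \varpi\O_F$.

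For the bijectivity in \eqref{odd}, I would handle injectivity and surjectivity separately. For injectivity: if $x_1, x_2 \in \alpha_i + \varpi\O_F$ with $x_1^2 = x_2^2$, then either $x_1 = x_2$ or $x_1 + x_2 = 0$; but $x_1 + x_2 \in 2\alpha_i + \varpi\O_F$ and $|2\alpha_i|=1$ forces $|x_1+x_2|=1$, ruling out the second case. For surjectivity: given any $b \in a + \varpi\O_F$, apply Hensel's lemma to $P(X) = X^2 - b \in \O_F[X]$ at the approximate root $\alpha_i$. One has $|P(\alpha_i)| = |a - b| \le q^{-1}$ and $|P'(\alpha_i)| = |2\alpha_i| = 1$, so $|P(\alpha_i)| < |P'(\alpha_i)|^2$; Hensel's lemma produces $x \in F$ with $x^2 = b$ and $|x - \alpha_i| \le |P(\alpha_i)|/|P'(\alpha_i)| \le q^{-1}$, i.e.\ $x \in \alpha_i + \varpi\O_F$. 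There is no real obstacle here; the only subtle point, which must be called out explicitly, is that every use of the non-dyadic hypothesis (in the disjointness, in the injectivity, and in having $|P'(\alpha_i)|=1$ for Hensel) is essential.
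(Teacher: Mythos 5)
Your proof is correct and follows essentially the same route as the paper's: take $\alpha_1 = \alpha$, $\alpha_2 = -\alpha$, use $|2|=1$ for disjointness and injectivity, a direct expansion for the inclusion, and Hensel's lemma for surjectivity. The only cosmetic difference is that you phrase the injectivity contradiction via $|x_1 + x_2| = 1$ rather than $|\delta_1 - \delta_2| = 1$; these are the same observation.
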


\begin{proof}
For any $a \in (\O_F^{\times})^2$,  there exist exactly two elements $\alpha_1,   \alpha_2 \in \O_F^{\times}$, such that 
$$
\alpha_1 = -\alpha_2 \an \alpha_1^2 = \alpha_2^2 = a. 
$$
Hence $| \alpha_1  - \alpha_2| = | 2\alpha| = | \alpha| = 1$ and 
\begin{align*}
(\alpha_1+ \varpi \O_F) \cap ( \alpha_2+ \varpi \O_F) = \emptyset.   
\end{align*}
For any $z \in \O_F$ and $i = 1, 2$, we have 
\begin{align*}
(\alpha_i + \varpi z)^2  = a + \varpi (2 z \alpha_i + \varpi z^2 ) \in    a + \varpi \O_F, 
\end{align*}
this proves \eqref{inclusion-odd}. 

By  Hensel's lemma \eqref{root}, for $i = 1, 2$,  if $a' \in a+ \varpi \O_F$, then there exists $\alpha_i' \in \alpha_i + \varpi \O_F$ such that $(\alpha_i')^2 = a'$. Hence the maps \eqref{odd} for $i =1, 2$ are both surjective. Now fix $i\in \{1, 2\}$.  If $\delta_1, \delta_2 \in \alpha_i + \varpi \O_F$ are such that $\delta_1^2 = \delta_2^2$, then either $\delta_1 = \delta_2$ or $\delta_1 = - \delta_2$. However, if $\delta_1  = - \delta_2$, then  
$$
| \delta_1 - \delta_2| = | 2 \delta_1| = | 2| | \delta_1| =1. 
$$
This contradicts to the following estimate 
$$
|\delta_1- \delta_2| = | (\delta_1- \alpha_i)- (\delta_2-\alpha_i)| \le \max ( | \delta_1- \alpha_i|, | \delta_2-\alpha_i|) \le q^{-1}. 
$$ 
 Hence we must have $\beta_1 = \beta_2$. This proves the injectivity of the map \eqref{odd}.
\end{proof}

\begin{proof}[Proof of Proposition \ref{prop-partition}]
Clearly, we have 
$$
\O_F^{*} = \bigsqcup_{k=0}^\infty  \varpi^{k}   \O_F^{\times} \an (\O_F^{*})^2 = \bigsqcup_{k=0}^\infty  \varpi^{2k}   (\O_F^{\times})^2. 
$$
For any $k = 0, 1, \cdots$,  the square map $\varphi$ maps $\varpi^{k}   \O_F^{\times}$ surjectively into $\varpi^{2k}   (\O_F^{\times})^2$. For proving Proposition \ref{prop-partition}, it suffices to prove that for any $k = 0, 1, \cdots$, the set $\varpi^{k}   \O_F^{\times}$ can be divided into two parts, such that the square map $\psi$ maps each part surjectively into $\varpi^{2k}   (\O_F^{\times})^2$ and the restriction of $\psi$ on each part is $F$-bi-analytic.  

We only need to prove this assertion for $k=0$, since  the other $k\ge 1$ can be reduced to the case $k=0$ by a suitable dilation.  By Lemma \ref{lem-divide}, $\O_F^{\times}$ can be divided into two parts $\O_F^{\times} = V_1 \sqcup V_2$ , such that the two maps $\psi: V_i \rightarrow (\O_F^{\times})^2, i =1, 2$ are both bijective.  The analyticity  of the inverse maps $(\psi|_ {V_i})^{-1} $  follows from the Inverse  Mapping Theorem in non-Archimedean setting, see, e.g, Abhyankar \cite[p. 87]{local-anal}.  
\end{proof}

\begin{cor}
 For any  integrable function $f: \O_F \rightarrow \C$, we have 
\begin{align}\label{cv-2}
\int_{\O_F} f(z^2) |z| dz =  2 \int_{ (\O_F^{*})^2} f(y) dy. 
\end{align}
\end{cor}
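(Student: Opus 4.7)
The plan is to apply the change of variables formula (Theorem \ref{thm-cv}) on each piece of the partition of $\O_F^{*}$ supplied by Proposition \ref{prop-partition}, and then add up the two contributions. Since $\{0\}$ has Haar measure zero in $\O_F$, the integral on the left-hand side is unchanged if we replace the domain $\O_F$ by $\O_F^{*}$.

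First I would note that the formal derivative of $\psi(x) = x^{2}$ is $\psi'(x) = 2x$. Because $F$ is non-dyadic, we have $|2| = 1$, so $|\psi'(x)| = |x|$ for every $x \in \O_F^{*}$. This is precisely the weight appearing on the left-hand side of the identity, and it is exactly what Theorem \ref{thm-cv} will produce as the Jacobian factor.

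Next, writing the partition of Proposition \ref{prop-partition} as $\O_F^{*} = U_1 \sqcup U_2$, on each $U_i$ the restriction $\psi : U_i \to (\O_F^{*})^{2}$ is an $F$-bi-analytic bijection. Applying Theorem \ref{thm-cv} to $\psi|_{U_i}$ with the function $f : (\O_F^{*})^{2} \to \C$ therefore yields
\begin{equation*}
\int_{U_i} f(x^{2})\,|x|\,dx \;=\; \int_{U_i} f(\psi(x))\,|\psi'(x)|\,dx \;=\; \int_{(\O_F^{*})^{2}} f(y)\,dy, \qquad i = 1,2.
\end{equation*}
Summing over $i = 1, 2$ and using $\O_F^{*} = U_1 \sqcup U_2$ together with the fact that $\vol(\{0\}) = 0$, I would conclude
\begin{equation*}
\int_{\O_F} f(z^{2})\,|z|\,dz \;=\; \int_{\O_F^{*}} f(z^{2})\,|z|\,dz \;=\; \sum_{i=1}^{2} \int_{U_i} f(x^{2})\,|x|\,dx \;=\; 2\int_{(\O_F^{*})^{2}} f(y)\,dy,
\end{equation*}
which is the asserted identity.

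There is essentially no obstacle here, since all the technical content has already been absorbed into Proposition \ref{prop-partition} (the existence of a two-to-one partition on which $\psi$ is $F$-bi-analytic) and Theorem \ref{thm-cv} (the change of variables formula). The only subtle point worth flagging is the use of non-dyadicity of $F$ to identify $|\psi'(x)|$ with $|x|$; without $|2| = 1$ the Jacobian would carry an extra factor of $|2|$ and the factor $2$ on the right-hand side would have to be modified accordingly.
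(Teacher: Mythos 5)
Your proof is correct and follows exactly the same route as the paper: pass from $\O_F$ to $\O_F^{*}$, use $|\psi'(x)|=|2x|=|x|$ (non-dyadicity), apply the change-of-variables theorem on each piece $U_1,U_2$ of the partition from Proposition~\ref{prop-partition}, and add. Nothing to flag.
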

\begin{proof}
Note that since $F$ is non-dyadic, $2 \in \O_F^{\times}$.  Using the notation in Proposition \ref{prop-partition} and Theorem \ref{thm-cv}
\begin{align*}
& \int_{\O_F} f(z^2) |z| dz  =  \int_{\O_F^*} f(\psi(z)) |\psi'(z)| dz
\\ & =  \sum_{i=1}^2 \int_{U_i}  f(\psi(z)) |\psi'(z)| dz =  2 \int_{ (\O_F^{*})^2} f(y) dy. 
\end{align*}
\end{proof}

\begin{proof}[Proof of Proposition \ref{prop-theta-detail}]
The property (i) in Proposition \ref{prop-theta-detail} is trivial. We proceed with the proof of properties (ii) and (iii). 

For any $x\in F^*$, define $$f (z) = \frac{1}{|z |^{1/2}}\chi(x z).$$ Then by substituting $f$ into the identity \eqref{cv-2},  we get 
\begin{align}\label{RL}
\begin{split}
\theta(x) &  = \int_{\O_F}    \chi(x z^2) dz = \int_{\O_F} f(z^2) |2 z| dz
\\
&  = 2 \int_{ (\O_F^{*})^2} f(y) dy = 2 \int_{ (\O_F^{*})^2} \frac{1}{ |y|^{1/2}}\chi(x y) dy.
\end{split}
\end{align}
Define  
\begin{align}\label{def-g}
g(y) := \frac{2}{|y|^{1/2}} \1_{(\O_F^{*})^2} (y).
\end{align}
It is a standard fact that $g \in L^1(F, dx)$. The identity \eqref{RL} can  now be rewritten as  \begin{align*}
\theta(x) = \widehat{g}(x). 
\end{align*}

Since we have 
\begin{align*}
(\O_F^{*})^2 = \bigsqcup_{k=0}^\infty  \varpi^{2k}  (\O_F^{\times})^2 = \bigsqcup_{k=0}^\infty  \bigsqcup_{a \in (\mathcal{C}_q^{\times})^2}(\varpi^{2k} a + \varpi^{2k +1} \O_F), 
\end{align*}
the function $g$ defined by formula \eqref{def-g} can be written in the form 
\begin{align*}
g(y) =  2  \sum_{k=0}^\infty  q^{k}\sum_{a\in (\mathcal{C}_q^{\times} )^2} \1_{\varpi^{2k} a + \varpi^{2k +1} \O_F} (y).
\end{align*}
Consequently, we have 
\begin{align}\label{expan-theta}
\theta(x) = \widehat{g}(x) =   2 \sum_{k =0}^{\infty}  q^{-k-1}   \1_{ \varpi^{-2k -1} \O_F} (x)  \sum_{ a\in (\mathcal{C}_q^{\times} )^2} \chi(\varpi^{2k} a x). 
\end{align}

{\flushleft  \bf The property (ii) in Proposition \ref{prop-theta-detail}}. If $\mathrm{ord}_F(x)  = -2k_0$ with $k_0\ge 1$, then $x = \varpi^{-2k_0} u$ with $u \in \O_F^{\times}$. Substituting $x= \varpi^{-2k_0} u$ into \eqref{expan-theta} and using the assumption \eqref{assumption-chi} on $\chi$, we obtain

\begin{align*}
\theta(x)& = 2 \sum_{k =k_0}^\infty q^{-k -1}  \sum_{ a\in (\mathcal{C}_q^{\times} )^2} \chi(a \varpi^{2k-2k_0}  u ) 
\\
&= 2 \sum_{k =k_0}^\infty q^{-k -1} \cdot \frac{q-1}{2}= q^{-k_0} = |x|^{-1/2}.
\end{align*}
We thus complete the proof of the property (ii)  in Proposition \ref{prop-theta-detail}.

{\flushleft  \bf The property (iii) in Proposition \ref{prop-theta-detail}}. If $\mathrm{ord}_F(x)  = -2k_0-1$ with $k_0 \ge 0$, then $x = \varpi^{-2k_0-1} u$ with $u \in \O_F^{\times}$. By substituting $x= \varpi^{-2k_0-1} u$ into \eqref{expan-theta} and using the fact  that $a \varpi^{2k-2k_0-1} u \in \O_F$ for any $a \in (\mathcal{C}_q^{\times})^2, k \ge k_0+1$ and the assumption \eqref{assumption-chi} on the choice of $\chi$, we obtain
\begin{align}\label{com-theta}
\begin{split}
\theta(x) & = 2 \sum_{k =k_0}^\infty q^{-k -1}  \sum_{ a\in (\mathcal{C}_q^{\times} )^2} \chi(a \varpi^{2k-2k_0-1}  u )
\\
 & = 2  q^{-k_0 -1}  \sum_{ a\in (\mathcal{C}_q^{\times} )^2} \chi(a \varpi^{-1} u ) + 2 \sum_{k =k_0+1}^\infty q^{-k -1} \cdot \frac{q-1}{2}
 \\
 &=  q^{-k_0 -1} \Big(2  \sum_{ a\in (\mathcal{C}_q^{\times} )^2} \chi(a \varpi^{-1} u ) + 1\Big). 
 \end{split}
\end{align}
Now define a function  $z \mapsto \chi(z \varpi^{-1} u )$ for $z\in \O_F$. Since this function takes same value on every coset of $\varpi\O_F$ in $\O_F$,  we  define a non-trivial additive character $\tau$ on $\F_q = \O_F/\varpi\O_F$ by the following formula:
\begin{align*}
\tau( z + \varpi \O_F) : = \chi(z \varpi^{-1} u ). 
\end{align*}
Thus we have 
\begin{align}\label{simple-sum}
\begin{split}
 \theta(x) & = q^{-k_0-1} \Big( 2  \sum_{ a\in (\mathcal{C}_q^{\times} )^2}  \tau (a + \varpi\O_F) + 1  \Big)  
 \\
 & =  q^{-k_0-1} \sum_{z  + \varpi \O_F \in \F_q} \tau (  z^2+ \varpi \O_F).
\end{split}
\end{align}
Indeed, the second equality in \eqref{simple-sum} follows from the fact that for every element in $ a\in (\mathcal{C}_q^{\times})^2$, there exist exactly two distinct square roots: $z + \varpi\O_F$ and $- z + \varpi\O_F$, that is: 
\begin{align*}
(z + \varpi\O_F)^2 \equiv (-z + \varpi\O_F)^2  \equiv z^2 + \varpi \O_F \equiv a + \varpi \O_F (\modulo \varpi\O_F).
\end{align*}
Now by applying Theorem \ref{gauss-sum}, we have 
\begin{align*}
| \theta(x) | = q^{-k_0-\frac{1}{2}} = |x|^{-1/2}. 
\end{align*}
Now assume that $v\in \O_F^{\times} \setminus (\O_F^{\times})^2$. Then by changing $x$ to $v x$ (equivalently, changing $u$ to $v \cdot u$) in \eqref{com-theta} and applying \eqref{c-sign},   we obtain 
\begin{align*}
\theta(v x) & =  q^{-k_0 -1} \Big(2  \sum_{ a\in (\mathcal{C}_q^{\times} )^2} \chi(a \varpi^{-1} v u ) + 1\Big)
\\
& =  q^{-k_0 -1} \Big(2  \sum_{ a\in (\mathcal{C}_q^{\times} )^2} \tau (v \cdot a  + \varpi \O_F ) + 1\Big)
\\
&=  q^{-k_0-1} \sum_{z  + \varpi \O_F \in \F_q} \tau ( v \cdot z^2+ \varpi \O_F) = - \theta(x).
\end{align*}
We thus complete the proof of the property (iii)  in Proposition \ref{prop-theta-detail}.
\end{proof}
%%%%%%%%%
%%%%%%%%%
%%%%%%%%%
%%%%%%%%%
%%%%%%%%%
%%%%%%%%%
%%%%%%%%%
%%%%%%%%%
%%%%%%%%%
%%%%%%%%%
%%%%%%%%%
%%%%%%%%%
%%%%%%%%%
%%%%%%%%%
%%%%%%%%%

\section{Appendix}

\subsection{Proof of Proposition \ref{prop-uniform-convergence}}

\begin{lem}\label{lem-conv-prob}
Let $m\in\N$ be a poisitive integer.  Suppose that $(\sigma_n)_{n\in \N}$ is a sequence  of probability measures on  $\FF^m$, such that  
$$
\lim_{n\to\infty}\widehat{\sigma}_n(x)  = \phi(x) \quad \text{for all $x\in\FF^m$}.
$$
Assume that the function $\phi$ is continuous at the origin $0\in \FF^m$. Then there exists a probability measure $\sigma$ on $\FF^m$, such that $\widehat{\sigma}(x)  = \phi(x)$ and 
\begin{align}\label{mu-n-to-mu}
\text{$\sigma_n \Longrightarrow \sigma$ as $n\to\infty$.}
\end{align}
\end{lem}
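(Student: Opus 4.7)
\textbf{Proof plan for Lemma \ref{lem-conv-prob}.} This is the Lévy continuity theorem for the locally compact abelian group $F^m$; the non-Archimedean structure actually simplifies the argument because balls are open compact subgroups and Lemma \ref{ball-fourier} gives a very clean Fourier-analytic description of their indicator functions. The plan is: (i) derive an exact identity expressing $\sigma_n$-measure of a ball as an average of $\widehat{\sigma}_n$ over the dual ball; (ii) use continuity of $\phi$ at $0$ and dominated convergence to establish tightness of $(\sigma_n)$; (iii) apply Prokhorov's theorem and uniqueness of characteristic functions to identify all subsequential limits with a single measure $\sigma$.

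\textbf{Step 1: a Plancherel-type identity.} Fix $N\in\Z$. For $y=(y_1,\dots,y_m)\in F^m$ and $x=(x_1,\dots,x_m)\in F^m$ one has $\chi(x\cdot y)=\prod_i\chi(x_iy_i)$, so by Fubini and Lemma \ref{ball-fourier} applied coordinatewise,
\begin{equation*}
\dashint_{(\varpi^N\O_F)^m}\widehat{\sigma}_n(y)\,dy
=\int_{F^m}\Big(\prod_{i=1}^m\dashint_{\varpi^N\O_F}\chi(x_iy_i)\,dy_i\Big)\,d\sigma_n(x)
=\sigma_n\bigl((\varpi^{-N}\O_F)^m\bigr).
\end{equation*}

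\textbf{Step 2: tightness.} Since $\widehat{\sigma}_n(0)=1$ for all $n$, the pointwise convergence gives $\phi(0)=1$. Fix $\varepsilon>0$. By continuity of $\phi$ at $0$, choose $N$ so that $|1-\phi(y)|<\varepsilon/2$ on $(\varpi^N\O_F)^m$. Since $|\widehat{\sigma}_n|\le 1$, dominated convergence applied over the compact set $(\varpi^N\O_F)^m$ yields
\begin{equation*}
\lim_{n\to\infty}\dashint_{(\varpi^N\O_F)^m}\widehat{\sigma}_n(y)\,dy
=\dashint_{(\varpi^N\O_F)^m}\phi(y)\,dy,
\end{equation*}
whose real part is $\ge 1-\varepsilon/2$. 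By Step 1 this means $\sigma_n((\varpi^{-N}\O_F)^m)\ge 1-\varepsilon$ for all sufficiently large $n$; enlarging $N$ if necessary to accommodate the finitely many remaining $n$, we obtain a single compact set $(\varpi^{-N}\O_F)^m$ of $\sigma_n$-mass at least $1-\varepsilon$ uniformly in $n$. Hence $(\sigma_n)$ is tight.

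\textbf{Step 3: identification of limits and conclusion.} By Prokhorov's theorem, every subsequence of $(\sigma_n)$ has a further subsequence converging weakly to some probability measure $\tau$ on $F^m$. Since $y\mapsto\chi(x\cdot y)$ is bounded continuous on $F^m$ for each fixed $y$, weak convergence gives $\widehat{\tau}(y)=\lim_k\widehat{\sigma}_{n_k}(y)=\phi(y)$ for every $y\in F^m$. As $F^m$ is locally compact abelian, a probability measure is determined by its characteristic function (Hewitt--Ross \cite[Theorem 31.5]{Hewitt-Ross}, invoked at the end of \S\ref{sec-def-char}), so $\tau$ is uniquely determined by $\phi$; in particular all subsequential limits coincide with a single measure $\sigma$ satisfying $\widehat{\sigma}=\phi$. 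A standard subsequence argument then upgrades this to $\sigma_n\Longrightarrow\sigma$, establishing \eqref{mu-n-to-mu}.

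\textbf{Main obstacle.} The only nontrivial step is Step 2: the argument hinges on transferring continuity of $\phi$ near $0$ into a uniform tail bound for the $\sigma_n$, and the cleanest way to do so in this non-Archimedean setting is precisely the identity from Step 1, which replaces the usual Archimedean truncation estimates by the exact formula afforded by Lemma \ref{ball-fourier}.
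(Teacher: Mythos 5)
Your proof is correct and follows essentially the same route as the paper's: the same Fubini/Lemma \ref{ball-fourier} identity expressing $\sigma_n$-mass of a ball as an average of $\widehat{\sigma}_n$ over the dual ball, then continuity of $\phi$ at $0$ plus dominated convergence for tightness, then Prokhorov and uniqueness of Fourier transforms. The only cosmetic difference is that the paper works with $1-\widehat{\sigma}_n$ and measures the exterior of the ball, while you work directly with $\widehat{\sigma}_n$ and measure the ball; these are equivalent.
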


\begin{proof}
First we show that under the hypothesis of Lemma \eqref{lem-conv-prob}, the sequence of probability measures $(\sigma_n)_{n\in\N}$ is tight. By using Fubini's Theorem and Lemma \ref{ball-fourier}, for any $k\in\N$, we have 
\begin{align*}
& \dashint_{(\varpi^{k} \O_F)^m } (1-\widehat{\sigma}_n(x) )d\vol(x) 
\\
=&\dashint_{(\varpi^{k} \O_F)^m}    d\vol(x)   \int_{\FF^m} (1-\chi( x\cdot z) ) d\sigma_n(z) 
\\
=&  \int_{\FF^m}   d\sigma_n(z)     \dashint_{(\varpi^{k} \O_F)^m} (1-\chi( x\cdot z) )   d\vol(x)
\\
= &  \int_{\FF^m}   (1-\1_{(\varpi^{-k} \O_F)^m }(z))    d\sigma_n(z)   
\\
=& \sigma_n(\{z\in\FF^m :  \| z\|\ge q^k\}).
\end{align*}
By bounded convergence theorem, we have
$$
 \dashint_{(\varpi^{k} \O_F)^m} (1-\widehat{\sigma}_n(x) )d\vol(x)  \xrightarrow{n\to \infty}  \dashint_{(\varpi^{k} \O_F)^m} (1-\phi(x) )d\vol(x).
$$
By assumption, $\phi$ is continuous at $0$. Since $\phi(0) = 1$, for any $\varepsilon>0$ there exists $k$ large enough such that 
$$
 \dashint_{(\varpi^{k} \O_F)^m} (1-\phi(x) )d\vol(x) \le \varepsilon/2.
$$
Fix such an integer $k\in \N$, and choose $n_0 \in \N$ such that for any $n \ge n_0$, we have 
$$
\dashint_{(\varpi^{k} \O_F)^m} (1-\widehat{\sigma}_n(x) )d\vol(x) \le \varepsilon.
$$
That is, for any $n\ge n_0$, we have  $\sigma_n(\{z\in\FF^m :  \| z\|\ge q^k\}) \le \varepsilon.$
Note that we may choose $k'$ large enough such that 
$$
\sup_{1 \le n \le n_0}\sigma_n(\{z\in\FF^m:  \| z\|\ge q^{k'}\}) \le \varepsilon. 
$$
Hence by taking $K = \max(k, k')$, we get 
$$
\sup_{n \in \N}\sigma_n(\{z\in\FF^m:  \| z\|\ge q^{K}\}) \le \varepsilon. 
$$
This proves the tightness of the sequence $(\sigma_n)_{n\in\N}$. 

Now, for proving \eqref{mu-n-to-mu},  we only  need to show that any weakly convergent subsequence $(\sigma_{n_k})_{k\in \N}$ has the same limit point $\sigma$. Indeed, assume that $\sigma_{n_k} \Longrightarrow \sigma$. Then $\widehat{\sigma}(x) = \lim_{k\to \infty}\widehat{\sigma}_{n_k}(x) = \phi(x)$, does not depend on the choice of the subsequence. We now complete the proof by using the fact that the measure is uniquely determined by its characteristic function.
\end{proof}

\begin{lem}\label{lem-uniform-cv}
Let $m\in\N$ be a positive integer.  Suppose that $(\sigma_n)_{n\in \N}$ and $\sigma$ are  probability measures on  $\FF^m$. Then $\sigma_n \Longrightarrow \sigma$ as $n\to\infty$ if and only if $\widehat{\sigma}_n(x)$ converges uniformly to $\widehat{\sigma}(x)$ for $x$  in any compact subset of $\FF^m$. 
\end{lem}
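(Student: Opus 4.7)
The plan is to handle the two implications separately, relying essentially on Lemma \ref{lem-conv-prob} for one direction and on an equicontinuity argument tailored to the ultrametric structure of $\FF^m$ for the other.

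\textbf{Sufficiency} ($\Leftarrow$): Uniform convergence on compact subsets of $\FF^m$ implies in particular pointwise convergence $\widehat{\sigma}_n(x)\to\widehat{\sigma}(x)$ for every $x\in\FF^m$. Since $\widehat{\sigma}$ is the characteristic function of a probability measure, it is continuous (in fact uniformly continuous) on $\FF^m$, hence continuous at the origin. Lemma \ref{lem-conv-prob} then applies directly and yields $\sigma_n\Longrightarrow\sigma$.

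\textbf{Necessity} ($\Rightarrow$): Here I would combine three ingredients: pointwise convergence of characteristic functions (an immediate consequence of weak convergence applied to the bounded continuous test function $z\mapsto\chi(x\cdot z)$); uniform tightness of the family $\{\sigma_n\}\cup\{\sigma\}$; and a uniform equicontinuity estimate specific to the non-Archimedean setting. Tightness is standard: given $\varepsilon>0$ one can find $K\in\N$ such that $\sigma_n(\{z:\|z\|>q^K\})<\varepsilon$ for all $n$ and for $\sigma$, where $\|z\|=\max_i|z_i|$. For the equicontinuity, if $\|x-y\|\le q^{-K}$ then by ultrametricity $|(x-y)\cdot z|\le\|x-y\|\cdot\|z\|\le 1$ whenever $\|z\|\le q^K$, so $\chi((x-y)\cdot z)=1$ by \eqref{assumption-chi}. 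Splitting the integral defining $\widehat{\sigma}_n(x)-\widehat{\sigma}_n(y)$ at the radius $q^K$ then gives the uniform bound
\[
|\widehat{\sigma}_n(x)-\widehat{\sigma}_n(y)|\le 2\sigma_n(\{\|z\|>q^K\})\le 2\varepsilon,
\]
and similarly for $\widehat{\sigma}$.

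To conclude, a compact subset $\mathcal{K}\subset\FF^m$ is contained in some $(\varpi^{-L}\O_F)^m$, which decomposes into only $q^{m(K+L)}$ cosets of $(\varpi^K\O_F)^m$. Choosing a set of representatives $x_1,\dots,x_N$, pointwise convergence gives $N_0$ such that $|\widehat{\sigma}_n(x_j)-\widehat{\sigma}(x_j)|<\varepsilon$ for all $j$ and $n\ge N_0$; for arbitrary $x\in\mathcal{K}$, picking the representative $x_j$ with $\|x-x_j\|\le q^{-K}$ and applying a standard three-epsilon argument together with the equicontinuity bound yields $|\widehat{\sigma}_n(x)-\widehat{\sigma}(x)|\le 5\varepsilon$ uniformly on $\mathcal{K}$. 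The main obstacle is purely organizational, namely obtaining the uniform-in-$n$ tightness constant $K$; no deep analytic input beyond Lemma \ref{lem-conv-prob} and the ultrametric inequality is required.
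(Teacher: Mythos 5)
Your proof is correct and follows essentially the same plan as the paper: the sufficiency direction invokes Lemma~\ref{lem-conv-prob}, and the necessity direction combines pointwise convergence of characteristic functions, tightness of the sequence, and a uniform equicontinuity estimate obtained from tightness via the ultrametric inequality. The only difference is cosmetic: where the paper cites the Arzel\`a--Ascoli theorem to pass from equicontinuity plus pointwise convergence to uniform convergence on compacta, you unpack that step explicitly by covering the compact set by finitely many cosets of $(\varpi^K\O_F)^m$ and running a finite-net argument.
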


\begin{proof}
Assume first that $\widehat{\sigma}_n(x)$ converges uniformly to $\widehat{\sigma}(x)$ on any compact subset of $\FF^m$. Then, by Lemma \ref{lem-conv-prob},  $\sigma_n \Longrightarrow \sigma$ as $n\to\infty$. 

Conversely, assume that  $\sigma_n \Longrightarrow \sigma$ as $n\to\infty$. For any $x\in\FF^m$, the function $z \mapsto \chi( x\cdot z)$ is bounded  and continuous, hence $\widehat{\sigma}_n(x)$ converges to $\widehat{\sigma}(x)$. By the  Arzel\`a-Ascoli theorem, for proving the uniform convergence of $\widehat{\sigma}_n$ on any compact subset, it suffices to prove the sequence of characteristic functions $\widehat{\sigma}_n$ is equicontinuous on any compact subset of $\FF^m$. In fact, let us prove that these characteristic functions are equicontinuous on the whole space $\FF^m$. For any $\varepsilon > 0$, since the sequence $(\sigma_n)_{n\in\N}$ of probability measures is tight, there exists $k\in\N$ large enough, such that 
$$
\sup_{n\in\N} \sigma_n (\{z\in\FF^m: \| z\| \ge q^k \}) \le \varepsilon/2. 
$$
  Now if  $y \in \FF^m$ is such that $\|y\| \le q^{-k}$, then for any  $x \in \FF^m$, we have  
\begin{align*}
|\widehat{\sigma}_n (x  + y) - \widehat{\sigma}_n(x) | & \le  \int_{\FF^m} | \chi(  y\cdot z) - 1|  d\sigma_n(z)
\\
& \le 2 \sigma_n (\{z\in\FF^m: \| z\| \ge q^k \}) \le \varepsilon. 
\end{align*}
This proves the equicontinuity of the sequence $(\widehat{\sigma}_n)_{n\in \N}$ and completes the proof of Lemma \ref{lem-uniform-cv}.
\end{proof}

Now we may prove Proposition \ref{prop-uniform-convergence} by using the following two points:  
\begin{itemize}
\item  characteristic functions $\widehat{\mu}_n$  and $\widehat{\mu}$ are all invariant under the action of the group $\mathcal{K}(\infty) = \GL(\infty, \O_F) \times \GL(\infty, \O_F)$. 
\item checking the convergence $\mu_n\Longrightarrow \mu$ is equivalent to checking for all $r \in\N$,  the convergence
$(\Cut^\infty_r)_{*}  \mu_n \Longrightarrow (\Cut^\infty_r)_{*}  \mu. $
\end{itemize}

\subsection{Proof of Lemma \ref{gauss-sum}}
Let $a\in\F_q^{\times}$.  First we claim that 
\begin{align}\label{red-to-gauss}
 \sum_{x\in \F_q} \tau_a(x^2) = \sum_{x\in\F_q} \lambda_2(x) \tau_a (x). 
\end{align}
Indeed, 
\begin{align*}
& \sum_{x\in\F_q} \lambda_2(x) \tau_a(x)  = \sum_{x\in\F_q}  \Big(\sum_{y\in \F_q} \1_{\{y^2  = x\}} (y) - 1\Big) \tau_a(x)  
\\
=& \sum_{y \in\F_q} \sum_{x \in \F_q} \1_{\{y^2  = x\}} (y ) \tau_a(x)     - \sum_{y \in\F_q}  \tau_a(x)  =  \sum_{y \in\F_q} \tau_a(y^2). 
\end{align*}
Consequently, by using the fact that $\lambda_2$ is a multiplicative character of $\F_q^{\times}$, we have  
\begin{align*}
& \sum_{x\in \F_q} \tau_a(x^2)  = \sum_{x\in\F_q} \lambda_2(x) \tau(a x)  = \sum_{x\in\F_q} \lambda_2(a^{-1}y) \tau(y) 
\\
&= \lambda_2(a^{-1}) \sum_{x\in\F_q} \lambda_2(y) \tau(y) =  \lambda_2(a)  \cdot \sum_{x \in \F_q} \tau(x^2).
\end{align*}
Hence the identity \eqref{c-sign} is proved. 

Let us now show that $| \sum_{x\in \F_q} \tau_a (x^2) | = \sqrt{q}$.
Indeed,  by \eqref{red-to-gauss}, we have 
\begin{align*}
 & \Big| \sum_{x\in \F_q} \tau_a (x^2) \Big|^2 =  \sum_{x, y \in\F_q} \lambda_2(x) \lambda_2(y) \overline{\tau_a (x)}  \tau_a (y)
 \\
 &= \sum_{x \in \F_q^{\times}} \sum_{y \in\F_q} \lambda_2(x) \lambda_2(y)   \tau_a (y-x). 
\end{align*}
For fixed $x\in\F_q^{\times}$, by change of variables $y= x z$, we get 
\begin{align*}
& \Big| \sum_{x\in \F_q} \tau_a (x^2) \Big|^2 = \sum_{x \in \F_q^{\times}} \sum_{z \in\F_q} \lambda_2(x) \lambda_2(xz)   \tau_a (xz-x) 
\\
& = \sum_{z \in\F_q}   \sum_{x \in \F_q^{\times}} \lambda_2(z)   \tau_a (xz-x)
\\
& = (q-1) +  \sum_{z \in\F_q\setminus \{1\}}  \lambda_2(z)     \sum_{x \in \F_q^{\times}}   \tau_a (x(z-1)) 
\\
& = (q-1) -  \sum_{z \in\F_q\setminus \{1\}}  \lambda_2(z)     = q. 
\end{align*}

For concluding the proof, we need to show that 
\begin{align}\label{re-im-pf}
\sum_{x\in \F_q} \tau (x^2) \in \varrho_q \cdot \R. 
\end{align}
However,  if $-1\notin (\F_q^{\times})^2$, then $ \lambda_2(a)  = -1$ and we have
\begin{align*}
  \overline{ \sum_{x\in \F_q} \tau(x^2) } = \sum_{x\in \F_q} \tau(-x^2)  =    \sum_{x\in \F_q} \tau_{-1} (x^2) = - \sum_{x\in \F_q} \tau (x^2). 
\end{align*}
It follows that $\sum_{x\in \F_q} \tau (x^2) \in i \R$.   Similar argument shows that if  $-1\in (\F_q^{\times})^2$, then $\sum_{x\in \F_q} \tau (x^2) \in \R$. By definition of $\varrho_q$ and the cyclic structure of the group $\F_q^{\times}$, we get the desired relation \eqref{re-im-pf}.

%\bibliography{mybib}
%\bibliographystyle{plain}

\def\cprime{$'$} \def\cydot{\leavevmode\raise.4ex\hbox{.}} \def\cprime{$'$}

\end{document}